\documentclass{article}
\usepackage[margin=1in]{geometry}
\usepackage{xcolor}
\usepackage{hyperref}
\usepackage{bbm}
\usepackage{graphicx}
\usepackage[utf8]{inputenc}
\usepackage[T1]{fontenc}
\usepackage{amsmath,amsthm,amssymb}

\newtheorem{lemma}{Lemma}
\newtheorem{theorem}{Theorem}
\newtheorem{remark}{Remark}
\newtheorem{proposition}{Proposition}

\newtheorem{definition}{Definition}
\newtheorem{corollary}{Corollary}

\def\<#1>{\left\langle #1 \right\rangle}


\newcommand{\w}{\ensuremath{\mathrm{w}}}

\newcommand{\z}{\ensuremath{\mathrm{z}}}


\newcommand{\bP}{\ensuremath{p}}
\newcommand{\bQ}{\ensuremath{q}}






\newcommand{\FP}{\ensuremath{\Pi}}
\newcommand{\axisDir}{\ensuremath{\mathrm{A}}}
\newcommand{\angleFP}{\ensuremath{\theta}}
\newcommand{\sA}{\ensuremath{\alpha}}
\newcommand{\sB}{\ensuremath{\beta}}

\newcommand{\sPhi}{\ensuremath{\varphi}}



\newcommand{\V}{\ensuremath{\mathcal{V}}}


\newcommand{\ci}{\ensuremath{\mathrm{i}\,}}
\newcommand{\qi}{\ensuremath{\mathbbm{i}}}
\newcommand{\qj}{\ensuremath{\mathbbm{j}}}
\newcommand{\qk}{\ensuremath{\mathbbm{k}}}

\newcommand{\N}{\ensuremath{\mathbbm{N}}}

\newcommand{\R}{\ensuremath{\mathbbm{R}}}
\newcommand{\C}{\ensuremath{\mathbbm{C}}}

\renewcommand{\H}{\ensuremath{\mathbbm{H}}}
\renewcommand{\S}{\ensuremath{\mathbbm{S}}}

\renewcommand{\Re}{\ensuremath{\mathrm{Re}}}
\newcommand{\Imc}{\ensuremath{\mathrm{Im}_{\C}}}
\newcommand{\Imq}{\ensuremath{\mathrm{Im}_{\H}}}

\graphicspath{{figures/}}
\author{Alexander I. Bobenko, Tim Hoffmann, Andrew O. Sageman-Furnas}
\date{\today}
\title{Isothermic tori with one family of planar curvature lines and area constrained hyperbolic elastica}

\AtEndDocument{\bigskip{\footnotesize%
		\noindent Alexander I. Bobenko: \texttt{bobenko@math.tu-berlin.de} \\
		\textsc{Institute of Mathematics, Secretary MA 8-4, \\ Technical University of Berlin, 10623 Berlin, Germany} \par 
		\addvspace{\medskipamount}
		\noindent Tim Hoffmann: \texttt{tim.hoffmann@ma.tum.de} \\
		\textsc{Department of Mathematics, \\ Technical University of Munich, 85748 Garching, Germany} \par
		\addvspace{\medskipamount}
		\noindent Andrew O. Sageman-Furnas: \texttt{asagema@ncsu.edu} \\
		\textsc{Department of Mathematics, \\ North Carolina State University, Raleigh, NC 27695, USA} \par
}}
\hypersetup{
 pdfauthor={},
 pdftitle={},
 pdfkeywords={},
 pdfsubject={},
 pdfcreator={Emacs 25.1.1 (Org mode 8.3.4)}, 
 pdflang={English},
 final
}

\begin{document}
\maketitle

\begin{abstract}
In 1883, Darboux gave a local classification of isothermic surfaces with one family of planar curvature lines using complex analytic methods. His choice of real reduction cannot contain tori.

We classify isothermic tori with one family of planar curvature lines. They are found in the second real reduction of Darboux's description. We give explicit theta function formulas for the family of plane curves. These curves are particular area constrained hyperbolic elastica. With a Euclidean gauge, the Euler--Lagrange equation is lower order than expected.

In our companion paper~\cite{short-compact-bonnet} we use such isothermic tori to construct the first examples of compact Bonnet pairs: two isometric tori related by a mean curvature preserving isometry. They are also the first pair of isometric compact immersions that are analytic.
 
Additionally, we study the finite dimensional moduli space characterizing when the second family of curvature lines is spherical. Isothermic tori with planar and spherical curvature lines are natural generalizations of Wente constant mean curvature tori, discovered in 1986. Wente tori are recovered in a limit case of our formulas.
\end{abstract}

\tableofcontents


\section{Introduction}
\label{sec:intro}

Isothermic surfaces are prominent in differential geometry. The most well-known examples are minimal and constant mean curvature surfaces, quadrics, and surfaces of revolution. Isothermic surfaces are governed by an integrable system~\cite{CieliskiGoldsteinSym1995,CieslinskiKobus2017,BurstallHertrich-JerominPeditPinkall_CurvedFlats}, which provides a hierarchy of special classes~\cite{BurstallSantos2012}. Moreover, they are natural objects in conformal geometry~\cite{BurstallFerusLeschkePeditPinkall2002,Hertrich-Jeromin2003,Hertrich-JerominMussoNicolodi,MR4622235} and related geometric functionals like the Willmore energy~\cite{MR1795097,BohlePetersPinkall2008,Riviere2014,MR4291610}. They also have discrete analogs~\cite{BobenkoPinkall1996,MR2392888,LamPinkall2017,BurstallChoHertrich-JerominPemberRossman23} and generalizations to higher dimensions~\cite{Palmer88,Schief2001,Ma2005,Tojeiro2006,BurstallDonaldsonPeditPinkall2011,MR2927680,DajczerTojeiro2016}.

Our study of isothermic tori is motivated by the seemingly unrelated longstanding problem: do a metric and mean curvature function determine a unique compact surface in Euclidean three space?

In 1981, Lawson and Tribuzy proved that each metric and nonconstant mean curvature function gives at most two compact immersions~\cite{LawTri}. However, it was unknown if two such immersions exist. Two noncongruent immersed surfaces form a \emph{Bonnet pair} if they are related by a mean curvauture preserving isometry. The above question is sometimes called the global Bonnet pair problem. Kamberov--Pedit--Pinkall gave a local characterization of Bonnet pairs in terms of conformal transformations of isothermic surfaces~\cite{KPP}. So, one can ask if there exists a compact isothermic surface that gives rise to compact Bonnet pairs.

We resolve the global Bonnet problem using this approach in our companion paper~\cite{short-compact-bonnet}. We explicitly construct the first examples of noncongruent isometric tori with the same mean curvature. They are also the first pair of isometric compact immersions that are analytic. These isometric tori arise from an isothermic torus with one family of planar curvature lines. The classification of such isothermic tori is the focus of the present paper. Numerical experiments using a discrete differential geometry theory of Bonnet pairs~\cite{HSFW17,HSFW24} arising from discrete isothermic surfaces~\cite{BobenkoPinkall1996} revealed that smooth Bonnet pair tori may arise from smooth isothermic tori with one family of planar curvature lines.

In 1883, Darboux derived a local classification of isothermic surfaces with one family of planar curvature lines \cite{Darboux1883,Darboux1896}. He gave a geometric construction for such surfaces, together with a description for the family of planar curves in terms of theta functions. He has not considered the global problem, however, and his formulas do not include tori. The reason is that the reality conditions he imposed on his complex analytic investigations only lead to elliptic curves with rectangular period lattices. In Section~\ref{sec:isothermic-planar-u-local} we complete Darboux's local classification by considering the real reduction on elliptic curves with rhombic period lattices. In Section~\ref{sec:isothermic-planar-u-global} we prove that this missing rhombic case contains isothermic tori.

This missing case played an important role for the further development of the global theory of surfaces. Darboux's classification was partially motivated by the study of constant mean curvature surfaces and was very influential. Follow-up work showed that constant mean curvature surfaces are a limiting case of Darboux's formulas, but also only considered elliptic curves of rectangular type~\cite{Adam1893}. Nearly 100 years later, Wente tori were discovered. They were the first examples of compact constant mean curvature surfaces that are not round spheres~\cite{Wente1986}. Their existence sparked a flurry of new discoveries~\cite{PS1989,Kapouleas1990,KorevaarKusner93}. Wente tori have one family of planar curvature lines~\cite{Abresch1987,Walter1987} that is governed by an elliptic curve of rhombic type.

In contrast to the constant mean curvature case, isothermic surfaces with one family of planar curvature lines depend on a functional freedom. For an isothermic surface $f(u,v)$ with planar $u$-lines there exists a mapping of all planar $u$-lines into a common plane such that the family of planar curves $u \mapsto \gamma(u, \w)$ is holomorphic with respect to $u + \ci \w$ for a \emph{reparametrization function} $\w(v)$. Conversely, given a particular holomorphic family of planar curves $\gamma$, each choice of reparametrization function $\w(v)$ determines an isothermic surface with one family of planar curvature lines. The surface depends on the choice of reparametrization function $\w(v)$, which is the functional freedom.

In Section~\ref{sec:hyperbolic-elastica} we show these holomorphic families of plane curves are families of area constrained hyperbolic elastica. This exemplifies the relationship between isothermic surfaces with spherical curvature lines and constrained elastica in space forms~\cite{ChoPemberSzewieczek2021}. It also generalizes the observation that Willmore tori of revolution have profile curves that are hyperbolic elastica~\cite{MR0751827}. We remark that rotational linear Weingarten surfaces in space forms also have profile curves that are critical for a suitable curvature energy~\cite{MR4089078,montaldo2020existence}.

Our Euclidean gauge on these area constrained hyperbolic elastic curves has an unexpected outcome. In Theorem~\ref{thm:elasticaLowerOrder} we prove the Euler--Lagrange equation is first order in the Euclidean unit tangent vector. This is one order less than the usual Euler--Lagrange equation for hyperbolia elastica, which is first order in the hyperbolic curvature function~\cite{MR0772124,MR0844630,MR1314459,MR3210758}. 

Our main global results are in Section~\ref{sec:isothermic-planar-u-global}. In Theorem~\ref{thm:planarIsothermicCylinderFormulas}, we classify all isothermic cylinders with one family of closed planar curvature lines. The functional freedom $\w(v)$ remains. Moreover, their sphere inversion and Christoffel duality symmetries are given in Theorem~\ref{thm:planarIsothermicCylinderGeometry}. For an isothermic cylinder $f$ to close into a torus the reparametrization function $\w(v)$ must be periodic. In Theorem~\ref{thm:rationalityTorusClosing} we show that $f$ either closes after one period of $\w(v)$, or is generated by the rotation of a fundamental piece around an axis. So, $f$ is a torus when the rotation angle is a rational multiple of $2\pi$.

In Section~\ref{sec:isothermic-planar-u-spherical-v} we consider the local theory of isothermic surfaces with one family of planar curvature lines with the additional geometric restriction that the second family of curvature lines is spherical. We show that the moduli space is finite dimensional, and in Theorem~\ref{thm:sphericalEllipticCurve} prove that the spherical curvature lines are governed by a second elliptic curve. In Section~\ref{sec:isothermic-planar-u-spherical-v-global} we consider global aspects.

Wente tori are constant mean curvature surfaces with one family of planar and one family of spherical curvature lines. Isothermic tori with planar and spherical curvature lines are their natural generalizations. Motivated by the Bonnet pair problem, Bernstein~\cite{Bernstein2001} constructed new examples of isothermic tori with two families of spherical curvature lines, but these did not give rise to Bonnet pair tori. Our complete classification in the planar and spherical case gives important explicit formulas for closing conditions. In Proposition~\ref{prop:bigZPrimeAtOmegaMovingFrameCoefficients} we compute the axis containing the centers of the spherical curvature line spheres. In our companion paper, we show this gives analytic expressions for the closing conditions of the resulting Bonnet pair tori, allowing us to prove existence~\cite{short-compact-bonnet}.

Isothermic tori with one family of planar curvature lines therefore helped resolve multiple longstanding questions on global surface theory, so far. They give rise to the first examples of compacts Bonnet pairs, the first pair of analytic isometric compact immersions, and Wente tori were the first examples of compact constant mean curvature surfaces that are not round spheres. The explicit classification given here may prove useful in other global problems. Especially, since there is unexpected functional freedom in the construction and a relationship to hyperbolic geometry. 

\paragraph{Acknowledgements}
We thank Gudrun Szewieczek for helpful discussions about constrained elastica in space forms. This research was supported by the DFG Collaborative Research Center TRR 109 ``Discretization in Geometry and Dynamics''.


\section{Preliminaries}
\label{sec:preliminaries}
\subsection{Structure equations for isothermic surfaces}
\label{sec:diff-eq-surfaces}

Let ${\mathcal F}$ be a smooth orientable surface in 3-dimensional Euclidean space.
The Euclidean metric induces a metric $\Omega$ on this surface, which
in turn generates the complex structure of a Riemann surface $\mathcal R$.
 Under such a parametrization, which is called  conformal, the surface
$\mathcal F$ is given by an immersion
$$
f=(f_1, f_2, f_3) : {\mathcal R} \rightarrow \R^3,
$$
and the metric is conformal: $\Omega=e^{2h}\, d \z d\bar{\z}$, where $\z$ is
a local coordinate on $\mathcal R$. Denote by $u$ and $v$ its real and imaginary parts: $ \z =u+\ci v$.

The tangent vectors $f_u, f_v$  together with the unit normal $n:{\mathcal R}\to \mathbb{S}^2$ define a conformal moving frame on the surface:
\begin{align*}
\<f_u, f_u>=\<f_v, f_v>&=e^{2h},\
\<f_u,f_v>=0,& \<f_u, n>=\<f_v,n>=0,\ \<n,n>=1.&
\end{align*}

\begin{remark}
	The function $e^h$ plays a key role in our considerations. Even though this is formally the square root of the metric function $e^{2h}$, we often refer to $e^h$ as the metric and $h$ as the metric factor.
\end{remark}

A parametrization that is simultaneously conformal and curvature line is called {\em isothermic}. We refer to $f$ as an \emph{isothermic immersion}.
In this case the preimages of the curvature lines are the lines
$u={\rm const}$ and $v={\rm const}$ on the parameter domain, where
$\z =u+\ci v$ is a conformal coordinate. Equivalently, a parametrization is isothermic if it is conformal and $f_{uv}$ lies in the tangent plane, i.e.,
\begin{equation}   \label{eq:f_uv-isothermic}
f_{uv}\in {\rm span}\{ f_u, f_v \}.
\end{equation}
A surface  is called {\em isothermic} if it admits an isothermic parametrization.

The differential equations describing isothermic surfaces simplify in isothermic coordinates. The frame equations are as follows: 
\begin{eqnarray}
\left(\begin{array}{c}
f_u\\
f_v\\
n
\end{array}\right)_u=
\left(\begin{array}{ccc}
h_u & -h_v & k_1e^{2h}\\
h_v & h_u & 0\\
-k_1 & 0 & 0
\end{array}\right)
\left(\begin{array}{c}
f_u\\
f_v\\
n
\end{array}\right),
\label{eq:isothermicFrameEquationsDU}
\\
\left(\begin{array}{c}
f_u\\
f_v\\
n
\end{array}\right)_v=
\left(\begin{array}{ccc}
h_v & h_u &  0\\
-h_u & h_v & k_2e^{2h}\\
0 & -k_2 & 0
\end{array}\right)
\left(\begin{array}{c}
f_u\\
f_v\\
n
\end{array}\right).
\label{eq:isothermicFrameEquationsDV}
\end{eqnarray}
Here $k_1, k_2$ are the principal curvatures along the $u$ and $v$ curvature lines.

The first, second, and the third fundamental forms are given by
\begin{eqnarray}
\label{eq:fund.forms_isothermic}
\<df,df> & =& e^{2h} (du^2+dv^2), \nonumber\\
- \< df,dn >&=&e^{2h}(k_1du^2+k_2dv^2), \\
\<dn,dn>&=&e^{2h}(k_1^2 du^2+k_2^2 dv^2).\nonumber
\end{eqnarray}
The Gauss--Codazzi integrability conditions are
\begin{eqnarray}
& h_{uu}+h_{vv}+k_1k_2e^{2h}=0,  \label{eq:Gauss_isothermic}\\
& {k_2}_u=h_u(k_1-k_2),\quad {k_1}_v=h_v(k_2-k_1).\label{eq:Codazzi_isothermic}
\end{eqnarray}

\subsection{Christoffel dual isothermic surface}

Let $D\subset \C$ be a simply connected domain and $f:D\to \R^3$ be an isothermic immersion without umbilic points. Its differential is $df=f_u du+f_v dv$. An important property of an isothermic immersion is that the following form is closed.
\begin{equation}
\label{eq:isothermic_dual}
df^*:=e^{-2h}(f_u du -f_v dv).
\end{equation}
The corresponding immersion $f^*:D\to \R$, which is determined up to a translation, is also isothermic and called the {\em (Christoffel) dual isothermic surface}~\cite{Christoffel1867,JensenMussoNicolodi2016}. Relation \eqref{eq:isothermic_dual} is an involution.

Note that the dual isothermic surface is defined through one-forms and the periodicity properties of $f:{\mathcal R}\to \R^3$ are not respected.

\subsection{Quaternionic description of surfaces}
\label{sec:quaternionic-desc}

We construct and investigate surfaces in $\R^3$ by analytic methods.
For this purpose it is convenient to rewrite the immersion in terms of quaternions.
This quaternionic description is useful for studying general curves and surfaces in 3- and
4-space, and particular special classes of surfaces \cite{Bob_CMC1991, DPW, KPP, BurstallFerusLeschkePeditPinkall2002}.

Let us denote the algebra of quaternions by $\H$, the multiplicative quaternion
group by ${\H_*}={\H}\setminus\{0\}$, and their standard basis by $\{{\bf1}
,{\qi},{\qj},{\qk}\}$, where
\begin{align}
	{\qi}{\qj}={\qk},\
	{\qj}{\qk}={\qi},\
	{\qk}{\qi}={\qj},\
	\qi^2 = \qj^2 = \qk^2 = -1.                                      \label{eq:quaternion_algebra}
\end{align}
This basis can be represented for example by the matrices 
\begin{eqnarray*}
	{\bf1}=\left( \begin {array}{cc} 1&0\\0&1 \end{array}\right), \
\qi=\left( \begin {array}{cc} 0&-\ci \\-\ci &0 \end{array}\right),\ 
\qj=\left( \begin {array}{cc} 0&{-1}\\1&0 \end{array}\right),\
\qk=\left( \begin {array}{cc} -\ci &0\\0& \ci \end{array}\right).
\end{eqnarray*}

We identify
$\H$ with 4-dimensional Euclidean space
$$
q=q_0 {\bf 1}+ q_1 \qi+q_2 \qj+q_3 \qk
\longleftrightarrow \ q=(q_0, q_1, q_2, q_3)\in\R^4.
$$
The length of a quaternion is $|q|^2 = q \overline q$, where $\overline q =q_0 {\bf 1}- q_1 \qi-q_2 \qj-q_3 \qk$ is the conjugate of $q$. The inverse of $q \neq 0$ is $q^{-1} = \frac{\overline q}{|q|^2}.$ The sphere $\S^3\subset\R^4$ is naturally identified with the group of unitary
quaternions $\H_1=\mathrm{SU}(2)$.

Three dimensional Euclidean space $\R^3$ is identified with the space of imaginary quaternions
${\rm Im}\ {\H}$
\begin{eqnarray}
X =X_1\qi +X_2\qj +X_3\qk \in{\rm Im}\ {\H}\
\longleftrightarrow \ X=(X_1, X_2, X_3)\in\R^3.  \label{eq:R^3=ImH}
\end{eqnarray}
The conjugate of $X \in \R^3$ is $\overline X = -X$. The scalar and the cross products of vectors in terms of quaternions are
\begin{eqnarray} 
XY=-\<X,Y>+X \times Y, 	 \label{eq:products}
\end{eqnarray}
in particular
$$
[X,Y]=XY-YX=2 X\times Y.
$$
Throughout this article we will not distinguish quaternions, their matrix representation, and their vectors in $\R^3$. For example vectors $f$ and $n$ are also identified with imaginary quaternions, and we can also write the Christoffel dual one-form \eqref{eq:isothermic_dual} as $df^*=-f_u^{-1} du + f_v^{-1} dv.$

Moreover, we identify the space of complex numbers $\C$ with the span of ${\bf1}$ and $\qi$.
$$
z = a + b \ci \longleftrightarrow z = a {\bf1} + b \qi.
$$
In particular, we will often use a copy of the complex plane in the span of $\qj, \qk$ written as $\C \qj$. For $z \in \C$
$$
z \qj = (a + b \ci ) \qj = a \qj + b \qk.
$$
Note that $(a + b \ci) \qj = z \qj = \qj \bar z = \qj (a - b \ci)$.

For clarity we use $\Imc : \C \to \R$ and $\Imq: \H \to {\rm Im}\ {\H}$ to distinguish between the complex and quaternionic imaginary part. Note, in particular, that under these identifications, for $z = (a + b \ci) \in \C$, $\Imc z = b$ while $\Imq  = b \ \qi$. There is no ambiguity for the real part $\Re z = a$.

We will extensively use the actions of quaternions on $\R^3$. For $q \in \H$ and $X \in \R^3$, the action
$X \mapsto q^{-1} X q$ 
rotates $X$ about an axis parallel to $\Imq q$,
while $X \mapsto \overline{q} X q$ rotates about $\Imq q$ and scales by $|q|^2$.


\section{Isothermic surfaces with one family of planar curvature lines: local theory}
\label{sec:isothermic-planar-u-local}

Darboux proved that isothermic surfaces with one family of planar curvature lines come in three types. They are surfaces of revolution, have their curvature line planes tangent to a cylinder, or have their curvature line planes tangent to a cone~\cite{Darboux1883}. The cone case is \emph{generic} in the sense that the normals of the curvature line planes span three dimensional space. We call the corresponding surfaces \emph{isothermic surfaces with one generic family of planar curvature lines}. Darboux focuses on the generic case. He derives explicit formulas in terms of theta functions for the family of planar curves, together with their corresponding cone tangent lines~\cite{Darboux1883}. Remarkably, he further integrated the structure equations to find the immersion coordinates.

Complex analytic arguments form the basis of Darboux's calculations. As written, the formulas suggest a real reduction where the involved theta functions are defined on a rectangular lattice. This choice, however, leads to isothermic surfaces with one family of planar curvature lines that are not closed, implying that such surfaces cannot be tori.

As it turns out, there is a second real reduction on rhombic lattices. We show in Section~\ref{sec:isothermic-planar-u-global} these yield isothermic surfaces with one family of planar curvature lines that are closed, which can be further closed into tori.

In this section we extend Darboux's local classification by deriving the second real rhombic reduction throughout. We follow Darboux's method to integrate the structure equations for the metric. Then, starting from Section~\ref{subsubsec:conformalFrame} we take a different approach. The analytic extension of the metric gives a family of plane curves, and we use quaternions to rotate these curves into the surface's planar curvature lines.

Moreover, in Section~\ref{sec:hyperbolic-elastica} we show that the each plane curve is an area constrained elastica in the hyperbolic plane. 

\subsection{Structure equations}
The Gauss--Codazzi equations for an isothermic surface $f(u,v)$ in terms of the metric and principal curvatures are \eqref{eq:Gauss_isothermic} and \eqref{eq:Codazzi_isothermic}. Instead of the principal curvatures, Darboux \cite{Darboux1883} introduces the functions
\begin{equation}
	\label{eq:PandQ}
	\bP=k_1e^h \quad \text{and} \quad \bQ=k_2 e^h,
\end{equation}
leading to the third fundamental form expressed as $\<dn,dn>=\bP^2 du^2+\bQ^2 dv^2.$

\begin{proposition}
	The integrability conditions of an isothermic surface with metric $e^{2h}$ and curvature functions $\bP,\bQ$ defined by (\ref{eq:PandQ}) are
	\begin{align}
	\label{eq:mPQGC}
	h_{uu} + h_{vv} = - \bP \bQ, \\
	\label{eq:PvGC}
	\bP_v = h_v \bQ, \\
	\label{eq:QuGC}
	\bQ_u = h_u \bP.
	\end{align}
\end{proposition}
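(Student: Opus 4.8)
The proposition is a direct translation of the Gauss--Codazzi equations \eqref{eq:Gauss_isothermic}--\eqref{eq:Codazzi_isothermic} under the substitution $\bP = k_1 e^h$, $\bQ = k_2 e^h$. The plan is therefore to simply carry out this substitution and simplify.

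First I would handle the Gauss equation \eqref{eq:Gauss_isothermic}: $h_{uu} + h_{vv} + k_1 k_2 e^{2h} = 0$. Since $\bP\bQ = k_1 k_2 e^{2h}$, this is immediately \eqref{eq:mPQGC}.

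Next I would treat the two Codazzi equations \eqref{eq:Codazzi_isothermic}. For $\bP_v$: differentiating $\bP = k_1 e^h$ in $v$ gives $\bP_v = (k_1)_v e^h + k_1 e^h h_v$. Substituting $(k_1)_v = h_v(k_2 - k_1)$ from \eqref{eq:Codazzi_isothermic} yields $\bP_v = h_v(k_2 - k_1)e^h + k_1 h_v e^h = h_v k_2 e^h = h_v \bQ$, which is \eqref{eq:PvGC}. Symmetrically, $\bQ_u = (k_2)_u e^h + k_2 e^h h_u$, and substituting $(k_2)_u = h_u(k_1 - k_2)$ gives $\bQ_u = h_u(k_1 - k_2)e^h + k_2 h_u e^h = h_u k_1 e^h = h_u \bP$, which is \eqref{eq:QuGC}. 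Conversely, the same computations run backward show these three equations imply \eqref{eq:Gauss_isothermic}--\eqref{eq:Codazzi_isothermic}, so the integrability conditions are genuinely equivalent (one should note $e^h > 0$ so the substitution is invertible, $k_i = \bP e^{-h}$ etc.).

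There is no real obstacle here; the only thing to be careful about is keeping track of the cancellation of the $\pm k_1 h_v e^h$ and $\pm k_2 h_u e^h$ cross terms coming from the product rule, and observing that the sign conventions in \eqref{eq:Codazzi_isothermic} are exactly what makes the surviving terms combine into $h_v \bQ$ and $h_u \bP$ respectively. I would present this as a two- or three-line verification.
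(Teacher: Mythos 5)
Your verification is correct and is exactly the computation the paper intends — the paper states this proposition without proof as an immediate consequence of substituting $\bP = k_1 e^h$, $\bQ = k_2 e^h$ into the Gauss--Codazzi equations \eqref{eq:Gauss_isothermic}--\eqref{eq:Codazzi_isothermic}. The cancellation of the product-rule cross terms and the invertibility of the substitution (since $e^h>0$) are precisely the points worth recording, and you have them.
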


Since $f$ is in curvature line parametrization, a $u$-curve of $f$ is planar exactly when its Gauss map $n$ traces a circular arc on the unit sphere. The geodesic curvature of a $u$-curve of $n$ is $\frac{\bP_v}{\bP \bQ}$, and circular arcs are characterized by having constant geodesic curvature. Thus, the $u$-curves of $f$ are planar if and only if 
\begin{align}
	\label{eq:zeroUDerivativeGeodesicCurvature}
\frac{\partial}{\partial u}\left(\frac{\bP_v}{\bP \bQ}\right) = 0.
\end{align}

Combining these four equations, Darboux found an equivalent system that exhibits a more recognizable structure for the metric. The planarity condition has an integration freedom that only depends on $v$, which is incorporated into a \emph{reparametrization function} $v \mapsto \w(v)$.

\begin{theorem}[Darboux~\cite{Darboux1883}]
	\label{lem:isothermicPlanarW}
	An isothermic surface $f(u,v)$ with one family ($u$-curves) of planar curvature lines is characterized by the following equations.
	\begin{enumerate}
		\item (Harmonic) The metric factor $h$ is harmonic with respect to $u + \ci \w(v)$, for some \emph{reparametrization function $v \mapsto \w(v)$}.
		\begin{align}
				\label{eq:harmonic} h_{uu}+h_{\w\w} = 0.
		\end{align}
		\item (Riccati) The metric solves a Riccati equation with real valued coefficients $U(u)$ and $U_1(u)$ that only depend on $u$.
		\begin{align}
			\label{eq:riccati} h_u = U(u) e^h + U_1(u) e^{-h}.
		\end{align}
		\item The curvature functions satisfy
		\begin{align}
		\label{eq:thirdFFPQW}
		\bP = h_{\w} \sqrt{1-\w'(v)^{2}} & \quad \text{and} \quad \bP_{\w} = h_{\w}\bQ.
		\end{align}
	\end{enumerate}
\end{theorem}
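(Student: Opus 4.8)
The plan is to start from the Gauss--Codazzi system \eqref{eq:mPQGC}--\eqref{eq:QuGC} together with the planarity condition \eqref{eq:zeroUDerivativeGeodesicCurvature}, and manipulate these four equations into the claimed form. First I would exploit the planarity condition: integrating \eqref{eq:zeroUDerivativeGeodesicCurvature} in $u$ shows that $\frac{\bP_v}{\bP\bQ}$ is a function of $v$ alone; using \eqref{eq:PvGC} to replace $\bP_v$ by $h_v\bQ$, this says $\frac{h_v}{\bP}$ depends only on $v$. Combined with the first equation of \eqref{eq:thirdFFPQW} (which is to be derived, not assumed), this is exactly what lets us introduce a new independent variable. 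Concretely, define $\w(v)$ by an ODE of the form $\w'(v) = (\text{that function of } v)$, so that after the change of variables $v \mapsto \w$ one has $\bP = h_\w\sqrt{1-\w'^2}$; the integration constant in solving for $\w$ is the functional freedom alluded to in the statement.

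Next I would establish harmonicity \eqref{eq:harmonic}. The idea is to combine the Gauss equation \eqref{eq:mPQGC}, $h_{uu}+h_{vv} = -\bP\bQ$, with the chain-rule expansion of $h_{\w\w}$ in terms of $h_{vv}$ and $h_v$, and with the expression for $\bP\bQ$ coming from \eqref{eq:PvGC}/\eqref{eq:QuGC} and the definition of $\w$. The cross terms involving $\w''$ should cancel against the $\bP\bQ$ term precisely because of how $\w$ was defined, leaving $h_{uu}+h_{\w\w}=0$. For the Riccati equation \eqref{eq:riccati}, I would use \eqref{eq:QuGC}, $\bQ_u = h_u\bP$: since $h$ is now harmonic in $\z = u+\ci\w$, the function $h_u$ is the real part of a holomorphic function, and one can integrate the relation between $\bQ$ and $h$ in the $u$-direction. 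Writing $\bQ$ in terms of $h$ and its conjugate harmonic function and feeding back into \eqref{eq:QuGC}, one expects an expression $h_u = U(u)e^h + U_1(u)e^{-h}$ where the coefficients emerge as the "constants of integration" in $u$, hence functions of $u$ only; that they are \emph{real}-valued follows from the reality of $h$. Finally, $\bP_\w = h_\w\bQ$ is just \eqref{eq:PvGC} rewritten via the chain rule ($\bP_v = \w'\bP_\w$, $h_v = \w' h_\w$).

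The converse direction — that equations \eqref{eq:harmonic}--\eqref{eq:thirdFFPQW} reconstruct an isothermic surface with planar $u$-curves — requires checking that from $h$, $\bP$, $\bQ$ one recovers $k_1 = \bP e^{-h}$, $k_2 = \bQ e^{-h}$ satisfying \eqref{eq:Gauss_isothermic}--\eqref{eq:Codazzi_isothermic}, and that \eqref{eq:zeroUDerivativeGeodesicCurvature} holds; this amounts to running the above implications backward, which is routine once the forward direction is set up, with the main point being that $\sqrt{1-\w'^2}$ is well-defined and the reconstructed $n$-curves have constant geodesic curvature in $u$.

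The step I expect to be the main obstacle is the careful bookkeeping in deriving the Riccati equation \eqref{eq:riccati}: one must integrate \eqref{eq:QuGC} in $u$ while keeping track of which quantities are holomorphic in $\z = u + \ci\w$ versus which are genuine functions of $u$ alone, and then show that the resulting integration data organizes into exactly two real coefficient functions $U(u)$, $U_1(u)$ multiplying $e^h$ and $e^{-h}$. Getting the reality and the $u$-only dependence of these coefficients to fall out cleanly, rather than as an ad hoc observation, is the delicate part; Darboux's original argument presumably proceeds by an analogous analytic-continuation maneuver, and I would follow that template while being explicit about the real reduction.
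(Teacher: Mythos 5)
Your treatment of the planarity condition, the change of variables $v\mapsto\w$, the harmonic equation, and the curvature relations \eqref{eq:thirdFFPQW} matches the paper's proof: integrating \eqref{eq:zeroUDerivativeGeodesicCurvature} in $u$ gives $\bP\bQ = V(v)\bP_v$, whence $\bP = V(v)h_v$ and $\bQ = V' + V h_{vv}/h_v$ by \eqref{eq:PvGC}; substituting the product into \eqref{eq:mPQGC} and setting $\w'(v)=(1+V(v)^2)^{-1/2}$ produces \eqref{eq:harmonic} and the first of \eqref{eq:thirdFFPQW}, and $\bP_\w=h_\w\bQ$ is the chain-rule rewrite of \eqref{eq:PvGC}. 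That part of your plan is sound and is essentially Darboux's route as presented in the paper.

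The gap is in your derivation of the Riccati equation \eqref{eq:riccati}, which you yourself flag as the delicate step. You propose to integrate the remaining Codazzi relation $\bQ_u=h_u\bP$ ``in the $u$-direction'' and to extract $U(u)$, $U_1(u)$ as ``constants of integration in $u$, hence functions of $u$ only.'' This is backwards: integration constants arising from a $u$-integration are functions of the \emph{other} variable, so this mechanism cannot produce coefficients depending only on $u$, and the appeal to the harmonic conjugate of $h$ does not repair it. What actually works (and what the paper does) is to first reduce $\bQ_u=h_u\bP$, using the expressions \eqref{eq:PQwithV}, to the scalar identity $h_u h_v = \partial_u(h_{vv}/h_v)$, rewrite it via equality of mixed partials as $h_u h_\w = \partial_\w\bigl(h_{u\w}/h_\w\bigr)$ (equation \eqref{eq:huhw}), and then integrate \emph{twice with respect to $\w$}: multiplying by $h_{u\w}/h_\w$ and integrating gives $h_u^2 = (h_{u\w}/h_\w)^2 + 2c_1(u)$; solving for $h_{u\w}/h_\w=\partial_\w h_u / h_\w \cdot h_\w$ and integrating in $\w$ once more gives $\log\bigl(h_u+\sqrt{h_u^2-2c_1(u)}\bigr)=h+c_2(u)$, and solving for $h_u$ yields exactly $h_u = U(u)e^h+U_1(u)e^{-h}$ with $U=\tfrac12 e^{c_2}$, $U_1=c_1e^{-c_2}$. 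The coefficients depend only on $u$ precisely because the integrations were performed in $\w$; reality is then immediate from the reality of $h$. Without this (or an equivalent) computation your argument for item 2 of the theorem does not go through.
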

\begin{proof}
	We show how to obtain the equations in the theorem statement. The arguments can be reversed to prove equivalence.
	
	Integrate the planarity condition \eqref{eq:zeroUDerivativeGeodesicCurvature} to find 
	\begin{align}
		\bP \bQ = V(v) \bP_v,
	\end{align}
	for some real valued function $V(v)$. Using \eqref{eq:PvGC} we find 
	\begin{align}
		\label{eq:PQwithV}
		\bP = V(v) h_v \quad \text{and} \quad \bQ = V'(v) + V(v) \frac{h_{vv}}{h_v},
	\end{align}
	whose product substituted into \eqref{eq:mPQGC} gives
	\begin{align}
		\label{eq:mPQwithV}
		h_{uu} + (1 + V(v)^2) h_{vv} + V(v) V'(v) h_v = 0.
	\end{align}
	Now, define a real valued reparametrization function $\w(v)$ by
	\begin{align}
		\w'(v) = \frac{1}{\sqrt{1 + V(v)^2}}.
	\end{align}
	This change of variables turns \eqref{eq:mPQwithV} into the harmonic equation \eqref{eq:harmonic} and the left equation of \eqref{eq:PQwithV} and \eqref{eq:PvGC} into the curvature equations \eqref{eq:thirdFFPQW}.
	
	To prove the Riccati equation \eqref{eq:riccati} we use the remaining \eqref{eq:QuGC}. From \eqref{eq:PQwithV}, $\bQ_u =  V(v) \frac{\partial}{\partial u} \left( \frac{h_{vv}}{h_v} \right)$ and $h_u \bP = V(v) h_u h_v$, so
	\begin{align}
		h_u h_v = \frac{\partial}{\partial u} \left( \frac{h_{vv}}{h_v} \right).
	\end{align}
	Using the equality of mixed derivatives, this can be rewritten as
	\begin{align}
		\label{eq:huhw}
		h_u h_{\w} = \frac{\partial}{\partial \w} \left( \frac{h_{u\w}}{h_{\w}} \right).
	\end{align}
	The Riccati equation is found after two integrations with respect to $\w$. First, multiply by $\frac{h_{u\w}}{h_{\w}}$ and integrate with respect to $\w$. Solve for $h_{\w}$ in the result and integrate with respect to $\w$, again. Finally, solve for $h_u$ and observe that the coefficients of $e^{h}$ and $e^{-h}$ are free functions that only depend on $u$. Call them $U(u)$ and $U_1(u)$, respectively.
\end{proof}

\begin{remark}
	\begin{itemize}
		\item These characteristic equations assume the metric depends on both variables. The case when the metric depends on a single variable corresponds to surfaces of revolution.
		\item For global considerations, we give the precise assumptions on admissible reparametrization functions $\w(v)$ in Definition~\ref{def:admissibleReparametrizationFunction}.
	\end{itemize}
\end{remark}

\begin{corollary}
	There exists a real valued function $U_2(u)$ satisfying
	\begin{align}
		\label{eq:hwSquaredU2}
		h_{\w}^2 = - U_1(u)^2 e^{-2h} + 2 U_1'(u) e^{-h} - U_2(u) - 2 U'(u)e^h - U(u)^2 e^{2h}. 
	\end{align}
\end{corollary}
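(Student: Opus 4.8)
\emph{Proof proposal.} The plan is to use only the two defining equations of Theorem~\ref{lem:isothermicPlanarW} for an isothermic surface with planar $u$-curves, namely the Riccati equation~\eqref{eq:riccati} and the harmonicity~\eqref{eq:harmonic}: differentiate~\eqref{eq:riccati} once in $u$, eliminate the resulting $h_{uu}$ via~\eqref{eq:harmonic} to express $h_{\w\w}$ through $h$ alone (with $u$-dependent coefficients), and then recognize the outcome, after multiplication by $2h_\w$, as an exact derivative in $\w$. This is precisely the ``second integration with respect to $\w$'' already mentioned in the proof of Theorem~\ref{lem:isothermicPlanarW}.

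Concretely: differentiating $h_u = U(u)e^h + U_1(u)e^{-h}$ with respect to $u$ and substituting~\eqref{eq:riccati} back into the term proportional to $h_u$, the mixed products $UU_1$ cancel and one is left with
\begin{equation*}
h_{uu} = U'e^h + U_1'e^{-h} + U^2 e^{2h} - U_1^2 e^{-2h}.
\end{equation*}
Replacing $h_{uu}$ by $-h_{\w\w}$ using~\eqref{eq:harmonic} and then multiplying by $2h_\w$, the left side becomes $\partial_\w(h_\w^2)$, while on the right side $2h_\w e^{\pm h} = \pm 2\,\partial_\w e^{\pm h}$ and $2h_\w e^{\pm 2h} = \pm\,\partial_\w e^{\pm 2h}$, with $U,U_1,U',U_1'$ independent of $\w$. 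Hence
\begin{equation*}
\partial_\w\!\left(h_\w^2 + U_1^2 e^{-2h} - 2U_1'e^{-h} + 2U'e^h + U^2 e^{2h}\right) = 0 .
\end{equation*}
Since the parameter domain is connected, the bracketed combination is a function of $u$ only; denote it by $-U_2(u)$. It is real valued because $h$, $U$, and $U_1$ are, and solving for $h_\w^2$ gives exactly~\eqref{eq:hwSquaredU2}. As in Theorem~\ref{lem:isothermicPlanarW}, the computation is performed on the locus where the metric depends on both variables.

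I do not expect a genuine obstacle here; the statement is a one-line integration of an identity built from~\eqref{eq:riccati} and~\eqref{eq:harmonic}. The only points needing care are sign bookkeeping in the two displayed computations, the choice of sign in the definition of $U_2$ so that it matches~\eqref{eq:hwSquaredU2}, and the (routine) remark that the $\w$-independent quantity produced by the integration is a bona fide function of $u$ on the connected parameter domain.
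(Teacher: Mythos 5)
Your proposal is correct and follows exactly the paper's own argument: differentiate the Riccati equation \eqref{eq:riccati} in $u$, use the harmonic equation \eqref{eq:harmonic} to trade $h_{uu}$ for $-h_{\w\w}$, multiply by $2h_{\w}$, and integrate in $\w$, with the integration ``constant'' being the $u$-dependent function $-U_2(u)$. The intermediate identity $h_{uu}=U'e^h+U_1'e^{-h}+U^2e^{2h}-U_1^2e^{-2h}$ and the sign bookkeeping both check out against \eqref{eq:hwSquaredU2}.
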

\begin{proof}
	Differentiate the Riccati equation \eqref{eq:riccati} with respect to $u$ and substitute $h_{uu}$ into the harmonic equation \eqref{eq:harmonic}. Now, multiply by $2h_{\w}$ and integrate with respect to $\w$ to get \eqref{eq:hwSquaredU2}.
\end{proof}

\subsection{Coefficients of the Lam\'e and Riccati equations}

Darboux derived complex analytic formulas for $U(u), U_1(u)$, and $U_2(u)$ by studying the compatibility between the harmonic and Riccati equations. These formulas are expressed in terms of one-dimensional theta functions $\vartheta_i$, defined on a lattice spanned by $\pi$ and $\tau \pi$, and the nome is $q = e^{\ci \pi \tau}$. We use the convention of Whittaker and Watson \cite[Sec. 21.11]{whittaker_watson_1996}. 
In particular,
\begin{align}
\vartheta_4(z, q) = 1 + 2 \sum_{n=1}^\infty (-1)^n q^{n^2} \cos 2nz
\end{align}
is $\pi$-periodic and $\pi\tau$-quasiperiodic as a function of $z$
\begin{align}
\vartheta_4(z + \pi,q) = \vartheta_4(z,q), \quad \vartheta_4(z + \pi \tau, q) = - q^{-1} e^{-2 \ci \z} \vartheta(z, q).
\end{align}
The other $\vartheta_i$ are
\begin{align}
\vartheta_1(z,q)=-\ci q^\frac14 e^{\ci z} \vartheta_4&(z + \frac{\pi \tau}{2},q)\\ \vartheta_2(z,q)=q^\frac14 e^{\ci z} \vartheta_4(z+\frac{\pi+\pi \tau}{2},q),\quad &
\vartheta_3(z,q)=\vartheta_4(z + \frac\pi2,q).\end{align}
Moreover, $\vartheta_1(-z,q) = -\vartheta_1(z,q)$ is an odd function of $z$ while the others are even $\vartheta_i(z,q)=\vartheta_i(-z,q)$ for $i = 2,3,4$.
\begin{lemma}
	The real valued functions $U(u)$ and $U_1(u)$ are real reductions of the two linearly independent solutions of the integer Lam\'e equation
	\begin{align}
		\label{eq:complexUU1LameC1}
		\frac{U''}{U} &= \frac{U_1''}{U_1} = C_1 - 8UU_1 \text{ for some constant } C_1 \in \C,
	\end{align}
	and are given explicitly by
	\begin{align}
		\label{eq:complexUWithRho}
		U(z) &= -\ci \rho \frac{\vartheta_1'(0 , q)}{2 \vartheta_4(\omega, q)} \frac{\vartheta_1(z + \omega, q)}{\vartheta_4(z, q)} e^{-z \frac{\vartheta_4'(\omega, q)}{\vartheta_4(\omega, q)}} \quad \text{ and } \quad \\
		U_1(z) &= -\ci \frac{1}{\rho}\frac{\vartheta_1'(0, q)}{2 \vartheta_4(\omega, q)} \frac{\vartheta_1(z - \omega, q)}{\vartheta_4(z, q)} e^{z \frac{\vartheta_4'(\omega, q)}{\vartheta_4(\omega, q)}}.
		\label{eq:complexU1WithRho}
	\end{align}
	The product $U(z)U_1(z)$ is an elliptic function with a double pole at $z=\frac{\pi\tau}{2}$.
	The function $U_2(u)$ is a real reduction of
	\begin{align}
		\label{eq:complexU2}
		U_2(z) = C_1 - 6 U(z)U_1(z).
	\end{align}
\end{lemma}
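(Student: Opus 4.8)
\emph{Strategy.} Everything follows from the compatibility of the Riccati equation \eqref{eq:riccati} with the first integral \eqref{eq:hwSquaredU2}: this produces a single linear second‑order ODE satisfied by both $U$ and $U_1$ whose potential turns out to be elliptic, hence a Lam\'e equation, which is then integrated in closed form by classical means. Throughout one works with the analytically continued functions $U(z),U_1(z)$ and imposes reality of $U,U_1$ on the appropriate line at the end. \emph{Step 1 (the Lam\'e equations and the formula for $U_2$).} Differentiate \eqref{eq:hwSquaredU2} in $u$. On the left one gets $2h_\w h_{\w u}$; use equality of mixed partials together with the $\w$-derivative of \eqref{eq:riccati}, namely $h_{u\w}=h_\w(U e^h-U_1 e^{-h})$, and then eliminate the surviving $h_\w^2$ with \eqref{eq:hwSquaredU2} and $h_u$ with \eqref{eq:riccati}. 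The outcome is an identity whose two sides are Laurent polynomials in $e^h$ with coefficients depending only on $u$; since $e^h$ is a genuinely nonconstant function of $\w$ for fixed generic $u$, one may equate coefficients of each power $e^{kh}$. The coefficients of $e^{\pm h}$ give $U''=U(U_2-2UU_1)$ and $U_1''=U_1(U_2-2UU_1)$, so $U$ and $U_1$ solve the same linear equation; the constant term gives $U_2'=-6(UU_1)'$, hence $U_2=C_1-6UU_1$ for a constant $C_1$, which is \eqref{eq:complexU2}. Substituting yields $U''/U=U_1''/U_1=C_1-8UU_1$, i.e.\ \eqref{eq:complexUU1LameC1}; the powers $e^{\pm2h},e^{\pm3h}$ then balance automatically and serve as a consistency check.

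\emph{Step 2 ($UU_1$ is elliptic).} Put $P=UU_1$ and $\mathcal Q=C_1-8P$. From $U''=\mathcal Q U$, $U_1''=\mathcal Q U_1$ one computes $P''=2\mathcal Q P+2U'U_1'$ and then $P'''=2\mathcal Q'P+4\mathcal Q P'=4C_1P'-48PP'$; integrating twice gives $(P')^2=-16P^3+4C_1P^2+c_2P+c_3$ for constants $c_2,c_3$. Hence $P$ is an affine image of a Weierstrass $\wp$-function, so it is elliptic with a single double pole per period; near that pole $P\sim-\tfrac14(z-z_0)^{-2}$, so the singular part of $\mathcal Q$ is $2(z-z_0)^{-2}=1\cdot 2\,(z-z_0)^{-2}$ and \eqref{eq:complexUU1LameC1} is the integer ($n=1$) Lam\'e equation. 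The reality reduction imposed — the one that yields an honest surface with smooth real metric $e^h$, the pole lying off the real $u$-axis — forces the lattice to be $\pi\Z+\pi\tau\Z$ and the pole $z_0$ to sit at a half-period. Darboux's classical reduction places $z_0$ at $\tfrac\pi2$ or $\tfrac{\pi+\pi\tau}2$ (rectangular case), whereas the reduction relevant for tori places it at $z_0=\tfrac{\pi\tau}2$, the zero of $\vartheta_4$; this dictates the $\vartheta_4$ in the denominators of \eqref{eq:complexUWithRho}--\eqref{eq:complexU1WithRho}.

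\emph{Step 3 (closed form).} For the integer Lam\'e equation the two Floquet solutions are classical: each has a single simple pole (at the pole of the potential) and a single zero, with a compensating exponential factor. In the Whittaker--Watson normalisation these are exactly $y_\pm(z)=\kappa_\pm\,\dfrac{\vartheta_1(z\pm\omega,q)}{\vartheta_4(z,q)}\,e^{\mp z\,\vartheta_4'(\omega,q)/\vartheta_4(\omega,q)}$, where the spectral parameter $\omega$ encodes $C_1$ through a theta‑constant relation. One checks from the quasi‑periodicity of $\vartheta_1,\vartheta_4$ that $y_\pm$ have the correct monodromy, and that the exponential factors cancel in the product, so $y_+y_-=\kappa_+\kappa_-\,\vartheta_1(z+\omega)\vartheta_1(z-\omega)/\vartheta_4(z)^2$ is elliptic with its double pole precisely at $\tfrac{\pi\tau}2$; matching this with the $P$ of Step 2 fixes $\kappa_+\kappa_-$, $\omega$ and $C_1$, leaving only the scaling $U\mapsto\rho U$, $U_1\mapsto\rho^{-1}U_1$ free (it preserves $P$ and the ratios $U''/U$, $U_1''/U_1$). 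Choosing $\kappa_\pm$ in the symmetric way involving $\vartheta_1'(0,q)$ and $\vartheta_4(\omega,q)$ produces \eqref{eq:complexUWithRho}--\eqref{eq:complexU1WithRho}, and restricting $z$ to the line on which $U,U_1$ (and then $U_2$ via \eqref{eq:complexU2}) are real gives the asserted real reductions. I expect the substance to be concentrated in this step together with the end of Step 2: recognising and integrating the reduced Lam\'e potential, justifying that its pole lies at $\tfrac{\pi\tau}2$ in the reduction relevant here rather than at the classical half-periods, and carrying the theta and exponential prefactors so that $y_+y_-$ reproduces $P$ exactly (in particular matching the integration constants $c_2,c_3$ to theta constants). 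Step 1 is routine but lengthy.
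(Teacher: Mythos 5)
Your Steps 1 and 2 coincide with the paper's argument: the same differentiation of \eqref{eq:hwSquaredU2} combined with the Riccati equation, the same coefficient extraction yielding $U''/U=U_1''/U_1=U_2-2UU_1$ and $(U_2+6UU_1)'=0$, and the same reduction of $\theta=UU_1$ to $(\theta')^2=-16\theta^3+4C_1\theta^2+2C_2\theta+\tilde C$ (the paper additionally pins down $\tilde C=C^2$ via the constant Wronskian, which you leave as an unspecified constant to be matched later --- harmless). Where you genuinely diverge is Step 3. The paper gives a self-contained verification that \eqref{eq:complexUWithRho} solves the Lam\'e equation: it treats $U(z,\omega)$ as an elliptic function of the \emph{spectral} variable $\omega$, expands near its only singularity $\omega=\tfrac{\pi\tau}{2}$, and concludes by a Liouville argument that $U_{zz}/U-g(\omega)+\alpha+2\xi'(z)-2c$ vanishes, which simultaneously produces the relation $C_1=3g(\omega)-3\alpha$. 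You instead cite Hermite's classical solution of the $n=1$ Lam\'e equation and fix the parameters by matching the product $y_+y_-$ with $\theta$. That is a legitimate shortcut (the paper itself acknowledges Hermite), and your observation that ellipticity of $UU_1$ forces $U,U_1$ to be the two Floquet solutions up to the $\rho$-scaling is the right structural point; but be aware that the checks you actually perform --- monodromy of $y_\pm$ and ellipticity of their product --- do not by themselves show that $y_\pm$ satisfy $y''=(C_1-8\theta)y$, so the substance of the lemma is being delegated entirely to the classical theorem rather than verified. One smaller inaccuracy: your remark that the pole of the potential sits at $\tfrac{\pi}{2}$ or $\tfrac{\pi+\pi\tau}{2}$ in ``Darboux's reduction'' and at $\tfrac{\pi\tau}{2}$ only ``in the reduction relevant for tori'' does not match the paper's setup --- the complex formulas of this lemma always place the double pole at $\tfrac{\pi\tau}{2}$ (hence the $\vartheta_4$ denominators), since the shift $z\mapsto z-z_0$ is a free normalization of the autonomous equation \eqref{eq:elliptic_for_theta}; the rectangular versus rhombic distinction enters only afterwards, via the shift and choice of $\rho$ in \eqref{eq:shiftToRhombic}, not through a different choice of $z_0$ here. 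Also note the correct leading behaviour is $\theta\sim-\tfrac14(z-z_0)^{-2}$, as you have it, giving the potential singularity $2(z-z_0)^{-2}=n(n+1)(z-z_0)^{-2}$ with $n=1$.
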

\begin{proof}
	Differentiating \eqref{eq:hwSquaredU2} with respect to $u$ and inserting $h_{u\w}$ found by differentiating \eqref{eq:riccati} with respect to $\w$ gives an equation of the form $A_{-1}(u)e^{-h(u,\w)} + A_0(u) + A_1(u)e^{h(u,\w)} = 0$. This equation must hold for all $\w$, and the coefficients only depend on $u$. Therefore each coefficient must vanish identically, implying
	\begin{align}
	\label{eq:complexUSystem}
		\frac{U_1''}{U_1} = \frac{U''}{U} = U_2-2 U_1 U  \text{ and }
		(U_2 + 6 U U_1)' = 0.
	\end{align}
	
	Integrating the last equation proves \eqref{eq:complexU2}, which then implies \eqref{eq:complexUU1LameC1}.
	Note that the Wronskian $U'U_1 - U U_1'$ is a constant $C$. Defining the product $\theta = U U_1$ one verifies that 
	\begin{align}
	\theta''' = 4 \theta' (C_1 - 12 \theta).		
	\end{align}
Integrating once
$$
\theta''=4C_1\theta -24 \theta^2+C_2,
$$
then multiplying by $\theta'$ and integrating again, we obtain
\begin{equation}
\label{eq:elliptic_for_theta}
\theta'^2=-16\theta^3+4C_1\theta^2+2C_2\theta +\tilde{C}.
\end{equation}
Note that $\tilde{C}=C^2$. Indeed, the identity $\theta'^2-C^2=4\theta U' U_1' $	
implies that for $\theta=0$ one has $\theta'^2=C^2$. 

Equation \eqref{eq:elliptic_for_theta} includes three parameters $C_1, C_2, C^2$, and its general solution depends on one integration constant. Since the equation is autonomous this constant is an arbitrary shift of the variable $z\to z-z_0$.  It is known that it can be explicitly solved by elliptic functions.  Any elliptic function with a single pole of the form $\frac{1}{4(z-z_0)^2} $ solves \eqref{eq:elliptic_for_theta} with some coefficients $C_1, C_2, C^2$.
The free parameters in such a solution 
\begin{equation}
\label{eq:theta}
\theta(z, \omega)=-\frac{\vartheta_1'^2(0)}{4\vartheta_4^2(\omega)}\frac{\vartheta_1(z+\omega)\vartheta_1(z-\omega)}{\vartheta_4^2(z)}.
\end{equation}
are its zero $\omega$ and the lattice parameter $\tau$. Two additional parameters can be obtained by an arbitrary affine transformation $\alpha^2\theta(\alpha z+\beta)$ which  preserves the form of the equation. This transformation changes only the parametrization of the surface but not its geometry.
Observe that the variables in \eqref{eq:theta} can be separated
$$
\theta(z,\omega)=g(z)-g(\omega), \quad \text{where}\quad
g(s)=-\frac{\vartheta_1'^2(0)}{4\vartheta_4^2(0)}\frac{\vartheta_1^2(s)}{\vartheta_4^2(s)}.
$$

It is obvious that \eqref{eq:theta} is the product of \eqref{eq:complexUWithRho} and \eqref{eq:complexU1WithRho}.
It remains to show that $U$ and $U_1$ are solutions of the Lam\'e equation 
\begin{equation}
\label{eq:lame}
y''=(C_1 -8\theta) y
\end{equation}
with the potential \eqref{eq:theta}.  
This is the one-gap potential in the finite-gap integration theory (see, for example \cite{BBEIM}), and the simplest case of the Lam\'e equation. Its solutions were obtained by Hermite. We will show that \eqref{eq:complexUWithRho} is a solution of \eqref{eq:lame} giving a proof typical for the finite gap integration theory.
The claim for $U_1$ follows by the replacement $z\to -z$.
The function $U(z,\omega)$ given by \eqref{eq:complexUWithRho} is an elliptic function  of $\omega$ with the same period lattice. Its only singularity is at $\omega=\frac{\pi\tau}{2}$ and is of the form
\begin{equation}
\label{eq:U_asymptotics}
U(z,\omega=\frac{\pi\tau}{2}+p)=-\frac{\ci\rho}{2p} \left(1+ (\xi(z)-cz)p +O(p^2) \right) e^{-\frac{z}{p}},\quad p\to 0,
\end{equation}
with 
$$
\xi(z)=\frac{\vartheta'_4(z)}{\vartheta_4(z)},\quad c=\frac{\vartheta_1'''(0)}{3 \vartheta_1'(0)}.
$$
To derive \eqref{eq:U_asymptotics} we expand
\begin{eqnarray*}
\frac{\vartheta_4'(\frac{\pi\tau}{2}+p)}{\vartheta_4(\frac{\pi\tau}{2}+p)}=
\frac{\vartheta_1'(p)}{\vartheta_1(p)}-\ci=\frac{1}{p}-\ci + cp+ O(p^2),\\
\frac{\vartheta_1(\frac{\pi\tau}{2}+p+z)}{\vartheta_4(\frac{\pi\tau}{2}+p)}=
\frac{\vartheta_4(p+z)}{\vartheta_1(p)}e^{-\ci z}=
\frac{1}{p}\frac{\vartheta_4(z)}{\vartheta_1'(0)}e^{-\ci z}\left(1+\xi(z)p+O(p^2) \right).
\end{eqnarray*}
Together with
$$
g(\frac{\pi\tau}{2}+p)=\frac{1}{p^2}+\alpha+O(p^2), \quad \alpha= \frac{\vartheta_4''(0)}{\vartheta_4(0)}-c
$$
this implies
\begin{equation}
\label{eq:asymptotic_lame}
\frac{\partial^2 U(z,\omega)/\partial z^2}{U(z,\omega)}-g(\omega)+\alpha+2\xi'(z) -2c= O(p), \ \ p\to 0.
\end{equation}
The left hand side is an elliptic function of $\omega$ with at most one simple pole, therefore a constant. 
Due to \eqref{eq:asymptotic_lame} this constant must be zero.

To identify this with \eqref{eq:lame} we should check that
$$
g(\omega)-\alpha-2\xi'(z)+2c=C_1(\omega)-2g(\omega)+2g(z).
$$
The functions $g(z)-\alpha$ and $c-\xi'(z)$ coincide since they are both elliptic functions of $z$ with a double pole at $z=\frac{\pi\tau}{2}$ of the form
$1/(z-\frac{\pi\tau}{2})^2+o(1)$. 
Finally we obtain
$$
C_1=3g(\omega)-3\alpha.
$$
\end{proof}

The preceding formulas are valid for all complex values of $z \in \C$, parameters $\rho, \omega \in \C$, and on every lattice with complex $\tau \pi \in \C$. To integrate a real isothermic surface with coordinate $u \in \R$ and parameter $\omega \in \R$, it is necessary that $U(u)$ and $U_1(u)$ are real valued. Real reductions occur when the lattice on which the theta functions are defined is symmetric with respect to complex conjugation. This happens in exactly two cases: when the lattice is either rectangular $\tau \in \ci \R$, or rhombic $\tau \in \frac12 + \ci \R$.

\begin{remark}
	\label{rem:realReductionsShiftToRhombic}
	Darboux derived complex formulas, but focused on the rectangular case with $\rho = - \ci$. Note that the reality conditions on $U(u)$ and $U_1(u)$ only determine $\rho$ up to a real factor.
	
	The rhombic case is recovered from the complex formulas by shifting $u, \omega$ and choosing $\rho$ as follows:
	\begin{align}
		\label{eq:shiftToRhombic}
		z \mapsto u + \left( \frac{\pi}{2} + \tau \frac{\pi}{2} \right), \,
		\omega \mapsto \omega - \left( \frac{\pi}{2} + \tau \frac{\pi}{2} \right),
		\rho = e^{i \omega} e^{\left( \frac{\pi}{2} + \tau \frac{\pi}{2} \right) \frac{\vartheta_2'(\omega , q)}{\vartheta_2(\omega , q)}}.
	\end{align}
\end{remark}

\begin{proposition} The functions $U(u)$ and $U_1(u)$ are real valued for all $u \in \R$ and parameter $\omega \in \R$ if and only if one of the following two cases holds.
\label{prop:UU1BothCases}
	\begin{enumerate}
		\item(Rectangular) The lattice is rectangular, i.e., $\tau \in \ci \R$. $U(u),U_1(u)$ are given by \eqref{eq:complexUWithRho}, \eqref{eq:complexU1WithRho}, respectively, with  $\rho = - \ci$ and $z=u$.
		\item(Rhombic) The lattice is rhombic, i.e., $\tau \in \frac12 + \ci \R$, and
		\begin{align}
		\label{eq:rhombicU}
		U(u) &= - \frac{\vartheta_1'(0 , q)}{2 \vartheta_2(\omega, q)} \frac{\vartheta_1(u + \omega, q)}{\vartheta_2(u, q)} e^{-u \frac{\vartheta_2'(\omega, q)}{\vartheta_2(\omega, q)}}, \\
		U_1(u) &= + \frac{\vartheta_1'(0, q)}{2 \vartheta_2(\omega, q)} \frac{\vartheta_1(u - \omega, q)}{\vartheta_2(u, q)} e^{u \frac{\vartheta_2'(\omega, q)}{\vartheta_2(\omega, q)}}.
		\label{eq:rhombicU1}
		\end{align}
	The nome is $q = e^{\ci \pi \tau}$.
	\end{enumerate}
\end{proposition}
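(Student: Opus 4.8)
The plan is to use Schwarz reflection to convert ``$U,U_1$ real on $\R$'' into a functional equation, to read off the constraint on the period lattice from the elliptic product $\theta=UU_1$, and then to pin down the remaining parameters $\rho$ and $\omega$ (together with a recentering of $z$ in the rhombic case) from the explicit formulas \eqref{eq:complexUWithRho}--\eqref{eq:complexU1WithRho}. For the ``if'' direction I would simply check that the two stated parametrizations are real. In the rectangular case the nome $q=e^{\ci\pi\tau}$ lies in $(0,1)$, so every $\vartheta_i(\,\cdot\,,q)$ has real Fourier coefficients and is real on $\R$, the logarithmic derivative $\vartheta_4'(\omega,q)/\vartheta_4(\omega,q)$ is real for real $\omega$, and the normalization $\rho=-\ci$ makes the prefactor $-\ci\rho=-1$ real; hence $U(u),U_1(u)\in\R$. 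In the rhombic case $q=\ci\,e^{-\pi\,\mathrm{Im}\,\tau}$ is purely imaginary; writing the exponent of a generic term of $\vartheta_1$ or $\vartheta_2$ as $(n+\tfrac12)^2=n(n+1)+\tfrac14$ and using that $n(n+1)$ is even shows that $\vartheta_1(z,q)$ and $\vartheta_2(z,q)$ are each $q^{1/4}$ times a series that is real-valued on $\R$. Then the factor $q^{1/4}$ cancels in the ratios $\vartheta_1(u\pm\omega,q)/\vartheta_2(u,q)$ and $\vartheta_1'(0,q)/\vartheta_2(\omega,q)$ appearing in \eqref{eq:rhombicU}--\eqref{eq:rhombicU1}, the log-derivative $\vartheta_2'(\omega,q)/\vartheta_2(\omega,q)$ is again real, and $U(u),U_1(u)\in\R$.

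For the ``only if'' direction I would first note that, by \eqref{eq:theta}, $\theta=UU_1=g(z)-g(\omega)$ is elliptic for the lattice $L=\pi\Z+\pi\tau\Z$, and that since it has a single double pole in a fundamental domain its period lattice is \emph{exactly} $L$. If $U(u),U_1(u)\in\R$ for all real $u$, then $\theta$ is real on $\R$, so by the identity theorem $\overline{\theta(\bar z)}=\theta(z)$ on $\C$; the left-hand side is elliptic for $\bar L$, whence $\bar L\subseteq L$ and, by symmetry, $\bar L=L$. An elementary computation --- write $\bar\tau=a+b\tau$ with $a,b\in\Z$, compare imaginary parts (using $\mathrm{Im}\,\tau\neq0$) to get $b=-1$, hence $2\Re\tau=a\in\Z$ --- then gives, after replacing $\tau$ by an equivalent lattice parameter with $\mathrm{Im}\,\tau>0$, either $\tau\in\ci\R$ (rectangular) or $\tau\in\tfrac12+\ci\R$ (rhombic). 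The same identity also forces the pole set $\tfrac{\pi\tau}{2}+L$ of $\theta$ to be conjugation invariant, i.e.\ $\tfrac{\pi\bar\tau}{2}\in\tfrac{\pi\tau}{2}+L$: this holds automatically in the rectangular case (since $-\pi\tau\in L$) but fails in the rhombic case.

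It then remains to extract the stated formulas in each case. In the rectangular case, once $\omega$ is placed on one of the conjugation-invariant lines for the zeros $\pm\omega$ of $\theta$ --- which, up to the symmetries of the construction, I can take to mean $\omega\in\R$ --- the prefactor in \eqref{eq:complexUWithRho} is $-\ci\rho$ times a function that is already real on $\R$, so reality forces $\rho\in\ci\R$, and one normalizes $\rho=-\ci$. In the rhombic case, since $\tfrac{\pi\tau}{2}$ is not conjugation invariant modulo $L$, the double pole must first be translated to the conjugation-fixed half period $\tfrac\pi2$ using the affine reparametrization freedom $\alpha^2\theta(\alpha z+\beta)$ (which changes the parametrization but not the geometry); the required substitution is exactly $z\mapsto u+(\tfrac\pi2+\tau\tfrac\pi2)$, $\omega\mapsto\omega-(\tfrac\pi2+\tau\tfrac\pi2)$ with $\rho$ as in \eqref{eq:shiftToRhombic}, i.e.\ Remark~\ref{rem:realReductionsShiftToRhombic}. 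Applying the standard half-period translation identities for the $\vartheta_i$ to \eqref{eq:complexUWithRho}--\eqref{eq:complexU1WithRho} --- which turn $\vartheta_4(\,\cdot\,)$ in the denominator into $\vartheta_2(\,\cdot\,)$ up to explicit exponential and constant factors --- and absorbing those factors into the prescribed $\rho$ reduces the complex formulas to \eqref{eq:rhombicU}--\eqref{eq:rhombicU1}, now with $\omega$ real.

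I expect the last step to be the main obstacle: carrying the half-period translation identities through \eqref{eq:complexUWithRho}--\eqref{eq:complexU1WithRho}, fixing a consistent branch of $q^{1/4}$, and checking that every spurious exponential and constant factor is cancelled precisely by the prescribed $\rho$, so that the reduced expressions genuinely have real coefficients. By contrast, the lattice dichotomy is short once one observes that the period lattice of $\theta$ is exactly $L$, and the rectangular normalization of $\rho$ is a direct inspection.
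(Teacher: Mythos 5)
Your proof is correct and, on its substantive computational core, coincides with the paper's: both rely on the shift and choice of $\rho$ from Remark~\ref{rem:realReductionsShiftToRhombic}, on the half-period translation identities that convert the $\vartheta_4$'s of \eqref{eq:complexUWithRho}--\eqref{eq:complexU1WithRho} into $\vartheta_2$'s, and on the fact that a rhombic lattice has purely imaginary nome, so that $\vartheta_1,\vartheta_2$ obey the conjugation symmetry \eqref{eq:theta_conjugation} (your ``$q^{1/4}$ times a series real on $\R$'' is exactly that identity). Where you genuinely go beyond the paper is the ``only if'' direction: the paper merely asserts that real reductions occur exactly when the lattice is conjugation-symmetric, whereas you derive the dichotomy $\tau\in\ci\R$ versus $\tau\in\tfrac12+\ci\R$ by Schwarz-reflecting the elliptic product $\theta=UU_1$ of \eqref{eq:theta}, using that its period lattice is exactly $\pi\Z+\pi\tau\Z$ because it has a single double pole per fundamental domain, and you further note that this pole at $\tfrac{\pi\tau}{2}$ fails to be conjugation-invariant modulo the lattice precisely in the rhombic case, which is what forces the recentering $z\mapsto u+\left(\tfrac\pi2+\tau\tfrac\pi2\right)$. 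That argument is sound and supplies the justification the paper leaves implicit; the remaining bookkeeping you flag as the main obstacle --- pushing the translation identities through and checking that the prescribed $\rho$ absorbs all exponential and constant factors --- is exactly the computation the paper's proof records.
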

\begin{proof}
	Consider the complex formulas \eqref{eq:complexUWithRho} and \eqref{eq:complexUWithRho} for $U(u)$ and $U_1(u)$, respectively. The two real reductions are given by a rectangular lattice or a rhombic lattice. Darboux studied the rectangular case, we prove the rhombic case.
	
	On a rhombic lattice $\tau \in \frac12 + \ci \R$, the shifts and choice of $\rho$ in \eqref{eq:shiftToRhombic} lead to \eqref{eq:rhombicU} and \eqref{eq:rhombicU1}. To verify this, use
$
		\vartheta_4\left(z - \left( \frac{\pi}{2} + \tau \frac{\pi}{2} \right)\right) = e^{\ci \left( z - \frac\pi4 \tau\right)} \vartheta_2(z)$ and $
		\vartheta_1\left( z + (\pi + \tau \pi) \right) = e^{- \ci (2 z + \tau \pi)  }\vartheta_1(z)$, so $
		\frac{\vartheta'_4\left(z - \left( \frac{\pi}{2} + \tau \frac{\pi}{2} \right)\right)}{\vartheta_4\left(z - \left( \frac{\pi}{2} + \tau \frac{\pi}{2} \right)\right)} = \ci + \frac{\vartheta'_2(z)}{\vartheta_2(z)}.$
	Moreover, on a rhombic lattice with $\tau = \frac12 + \ci s$ for some $s \in \R$, the nome is purely imaginary: $q = e^{\ci \pi \tau} = \ci e^{-\pi s} \in \ci \R$.
	We therefore find
	\begin{equation}
	\label{eq:theta_conjugation}
		\overline{\vartheta_1(z,q)} = e^{-\ci\frac{\pi}{4}} \vartheta_1(\overline z,q),\quad
		\overline{\vartheta_2(z,q)} = e^{-\ci\frac{\pi}{4}} \vartheta_2(\overline z,q).
	\end{equation}
This implies the real valuedness of $U(u)$ and $U_1(u)$ exactly when $u$ and $\omega$ are both real.
\end{proof}

The coefficients $U(u)$ and $U_1(u)$ of the Riccati equation \eqref{eq:riccati} are now known.

\subsection{Computing the metric}

Darboux achieved a remarkable feat. He derived an explicit formula for $e^h$ by simultaneously integrating the Riccati and harmonic equations. By applying some rather nontrivial arguments and transformations Darboux has shown that the general solution of the complexified Ricatti equation  
\begin{equation}
\label{eq:Ricatti_complex}
h_z = U(z) e^h + U_1(z)e^{-h}
\end{equation}
with the coefficients \eqref{eq:complexUWithRho}, \eqref{eq:complexU1WithRho} is  given by
\begin{equation}
\label{eq:h_complex}
e^{h(z)}=	 - \ci \frac{1}{\rho} \frac{\vartheta_4\left(\frac{z  - \omega}{2}+ c\right)\vartheta_4\left(\frac{z  - \omega}{2}- c\right)}{\vartheta_1\left(\frac{z + \omega}{2}+ c\right)\vartheta_1\left(\frac{z  + \omega}{2}- c\right)} e^{z \frac{\vartheta_4'(\omega)}{\vartheta_4(\omega)}},
\end{equation}
where $c\in \C$ is an an arbitrary complex constant. 

A verification of this fact leads to a rather involved identity for theta functions. Substituting \eqref{eq:h_complex} into \eqref{eq:Ricatti_complex} we obtain
\begin{multline}
\label{eq:theta_identity_metric}
\frac{\vartheta_1'(0)}{\vartheta_4(z)\vartheta_4(\omega)}
\left(
\frac{\vartheta_4(\frac{z-\omega}{2}+c)\vartheta_4(\frac{z-\omega}{2}-c)}{\vartheta_1(\frac{z+\omega}{2}+c)\vartheta_1(\frac{z+\omega}{2}-c)}\vartheta_1(z+\omega)-
\right. \\ \left. 
\frac{\vartheta_1(\frac{z+\omega}{2}+c)\vartheta_1(\frac{z+\omega}{2}-c)}{\vartheta_4(\frac{z-\omega}{2}+c)\vartheta_4(\frac{z-\omega}{2}-c)}\vartheta_1(z-\omega)
\right)+
\frac{\vartheta_4'(\frac{z-\omega}{2}+c)}{\vartheta_4(\frac{z-\omega}{2}+c)}+ \\
\frac{\vartheta_4'(\frac{z-\omega}{2}-c)}{\vartheta_4(\frac{z-\omega}{2}-c)} - 
\frac{\vartheta_1'(\frac{z+\omega}{2}+c)}{\vartheta_1(\frac{z+\omega}{2}+c)}-
\frac{\vartheta_1'(\frac{z+\omega}{2}-c)}{\vartheta_1(\frac{z+\omega}{2}-c)} +
2 \frac{\vartheta_4'(\omega)}{\vartheta_4(\omega)}=0.
\end{multline}
To show that \eqref{eq:theta_identity_metric} is satisfied first observe that the left hand side, which we denote by $\phi(z,\omega,c)$, is an elliptic function of all its arguments. Considering it as a function of $c$ we check that the residues at all its possible poles $c= \pm\frac{z+\omega}{2}, \frac{\pi\tau}{2}\pm \frac{z-\omega}{2}$ vanish. Therefore the function is independent of $c$. Introducing $\tilde{\phi}(z,\omega):=\phi(z,\omega,0)$ we consider it as an elliptic function of $\omega$. The residues at all possible poles $\omega=\frac{\pi\tau}{2}, \frac{3\pi\tau}{2}, -z, z+\pi\tau$ vanish, therefore it is independent of $\omega$. On the other hand it is an odd function $\tilde{\phi}(z,-\omega)=-\tilde{\phi}(z,\omega)$, thus it vanishes identically.

Further the constant $c$ in \eqref{eq:h_complex} is independent of $z$, but is a function of $\w$. $h(z,c)$ satisfies the complexified harmonic equation 
$h_{zz} + h_{\w\w}=0$ if and only if $c=\frac{\ci\w}{2}$, up to trivial normalizations. Finally this implies
\begin{align}
	\label{eq:complexMetricWithRho}
	e^{h(z,\w)} =  - \ci \frac{1}{\rho} \frac{\vartheta_4\left(\frac{z + \ci \w - \omega}{2}\right)\vartheta_4\left(\frac{z - \ci \w - \omega}{2}\right)}{\vartheta_1\left(\frac{z + \ci \w + \omega}{2}\right)\vartheta_1\left(\frac{z - \ci \w + \omega}{2}\right)} e^{z \frac{\vartheta_4'(\omega)}{\vartheta_4(\omega)}}.
\end{align}

This complex valued equation exhibits the two real reductions, as described above in Remark~\ref{rem:realReductionsShiftToRhombic}.

\begin{proposition}The metric $e^h$ is real valued and positive for all $u \in \R$ and $\omega \in \R$ if and only if one of the following two cases holds.
	\begin{enumerate}
	\item(Rectangular) The lattice is rectangular, i.e., $\tau \in \ci \R$. 
	$e^h$ is given by \eqref{eq:complexMetricWithRho} with  $\rho = - \ci$ and $z=u$.
	\item(Rhombic) The lattice is rhombic, i.e., $\tau \in \frac12 + \ci \R$, 
	\begin{align}
		\label{eq:rhombicMetric}
		e^{h(u,\w)} &= \frac{\vartheta_2\left(\frac{u + \ci \w - \omega}{2}\right)\vartheta_2\left(\frac{u - \ci \w - \omega}{2}\right)}{\vartheta_1\left(\frac{u + \ci \w + \omega}{2}\right)\vartheta_1\left(\frac{u - \ci \w + \omega}{2}\right)} e^{u \frac{\vartheta_2'(\omega)}{\vartheta_2(\omega)}} \\
							&= \frac{\vartheta_2\left(\frac{u-\omega }{2}\right)^2}{\vartheta _2\left(\frac{u+\omega }{2}\right)^2}\frac{\left(1-\frac{\vartheta_1\left(\frac{\ci \w}{2}\right)^2 \vartheta _1\left(\frac{u-\omega }{2}\right)^2}{\vartheta _2\left(\frac{\ci \w}{2}\right)^2 \vartheta_2\left(\frac{u-\omega }{2}\right)^2}\right)}{\left(\frac{\vartheta _1\left(\frac{u+\omega}{2}\right)^2}{\vartheta _2\left(\frac{u+\omega}{2}\right)^2}-\frac{\vartheta _1\left(\frac{\ci \w}{2}\right)^2}{\vartheta _2\left(\frac{\ci \w}{2}\right)^2}\right)} e^{u \frac{\vartheta _2^{\prime }(\omega )}{\vartheta _2(\omega )}}
							\label{eq:rhombicMetricFactorized}
	\end{align}
	The nome is $q = e^{\ci \pi \tau}$.
\end{enumerate}
\end{proposition}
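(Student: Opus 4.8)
The proof will run parallel to that of Proposition~\ref{prop:UU1BothCases}, whose structure it shares. The input is the complex metric formula \eqref{eq:complexMetricWithRho}, which is already established (as the unique solution of the complexified Riccati equation \eqref{eq:Ricatti_complex} with $c=\tfrac{\ci\w}{2}$). For a quotient of the one-dimensional theta functions $\vartheta_1,\vartheta_4$ evaluated at arguments built from a real coordinate $u$ to have any chance of being real valued, the period lattice $\pi\Z+\pi\tau\Z$ must be invariant under complex conjugation, since otherwise $\overline{\vartheta_i(\bar z,q)}$ cannot be re-expressed through theta functions on the same lattice. The only such lattices are the rectangular ones, $\tau\in\ci\R$, and the rhombic ones, $\tau\in\tfrac12+\ci\R$; this is exactly the dichotomy in the statement. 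The rectangular case with $\rho=-\ci$ and $z=u$ is Darboux's classical reduction, so the work is to carry out the rhombic case and to verify positivity.

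For the rhombic case I would substitute the shifts $z\mapsto u+(\tfrac{\pi}{2}+\tau\tfrac{\pi}{2})$, $\omega\mapsto\omega-(\tfrac{\pi}{2}+\tau\tfrac{\pi}{2})$ and the value of $\rho$ from \eqref{eq:shiftToRhombic} into \eqref{eq:complexMetricWithRho}. Under these shifts the arguments $\tfrac{z\pm\ci\w-\omega}{2}$ become $\tfrac{u\pm\ci\w-\omega}{2}+(\tfrac{\pi}{2}+\tau\tfrac{\pi}{2})$ while the arguments $\tfrac{z\pm\ci\w+\omega}{2}$ become $\tfrac{u\pm\ci\w+\omega}{2}$, unchanged. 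Using the same theta shift identities already recorded in the proof of Proposition~\ref{prop:UU1BothCases} --- that $\vartheta_4(\,\cdot\,-(\tfrac{\pi}{2}+\tau\tfrac{\pi}{2}))$ is an explicit exponential times $\vartheta_2$, and that $\vartheta_4'/\vartheta_4$ shifts to $\ci+\vartheta_2'/\vartheta_2$ --- the two $\vartheta_4$ factors in the numerator of \eqref{eq:complexMetricWithRho} turn into $\vartheta_2$ factors, the two $\vartheta_1$ factors in the denominator remain $\vartheta_1$ factors, and all accumulated exponential prefactors, together with the leading $-\ci/\rho$ and the factor $e^{z\vartheta_4'(\omega)/\vartheta_4(\omega)}$, collapse to precisely $e^{u\vartheta_2'(\omega)/\vartheta_2(\omega)}$; the choice of $\rho$ in \eqref{eq:shiftToRhombic} is exactly what removes the residual constant. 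This gives \eqref{eq:rhombicMetric}. The factorized form \eqref{eq:rhombicMetricFactorized} then follows from the classical addition theorems expressing $\vartheta_2(a+b)\vartheta_2(a-b)$ and $\vartheta_1(a+b)\vartheta_1(a-b)$ as linear combinations of $\vartheta_1(a)^2\vartheta_2(b)^2$ and $\vartheta_2(a)^2\vartheta_1(b)^2$, applied with $a=\tfrac{u\mp\omega}{2}$ and $b=\tfrac{\ci\w}{2}$.

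It remains to check reality and positivity. On a rhombic lattice $\tau=\tfrac12+\ci s$ the nome $q=\ci e^{-\pi s}$ is purely imaginary, so the conjugation identities \eqref{eq:theta_conjugation}, $\overline{\vartheta_j(z,q)}=e^{-\ci\pi/4}\vartheta_j(\bar z,q)$ for $j=1,2$, apply. Feeding these into \eqref{eq:rhombicMetric} with $u,\w,\omega\in\R$: complex conjugation interchanges $\tfrac{u+\ci\w-\omega}{2}\leftrightarrow\tfrac{u-\ci\w-\omega}{2}$ and $\tfrac{u+\ci\w+\omega}{2}\leftrightarrow\tfrac{u-\ci\w+\omega}{2}$, so the quotient of theta products is preserved up to the four phase factors $e^{-\ci\pi/4}$, which cancel (two in the numerator, two in the denominator); and $\vartheta_2'(\omega)/\vartheta_2(\omega)\in\R$ since $\vartheta_2(\,\cdot\,,q)$ restricted to $\R$ differs from a real-valued real-analytic function only by the constant phase $e^{\ci\pi/8}$. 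Hence $e^{h(u,\w)}\in\R$. For positivity I would use the factorized form \eqref{eq:rhombicMetricFactorized}: the same identities show that $\vartheta_2(x)^2$ is $e^{\ci\pi/4}$ times a nonnegative real for $x\in\R$, while $\vartheta_1(\ci y/2)^2$ is $-e^{\ci\pi/4}$ times a nonnegative real for $y\in\R$ (the extra sign coming from oddness of $\vartheta_1$). Consequently the prefactor $\vartheta_2(\tfrac{u-\omega}{2})^2/\vartheta_2(\tfrac{u+\omega}{2})^2$ is $\geq 0$, the bracket in the numerator is $1$ plus a nonnegative quantity hence $>0$, the bracket in the denominator is the difference of a nonnegative and a nonpositive quantity hence $\geq 0$, and the exponential is $>0$; therefore $e^{h(u,\w)}$ never changes sign and is strictly positive wherever it is finite and nonzero.

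The main obstacle is the theta-function bookkeeping: keeping track of the exponential and constant prefactors under the rhombic shift so that they collapse exactly to $e^{u\vartheta_2'(\omega)/\vartheta_2(\omega)}$, and verifying that the addition-theorem identity producing \eqref{eq:rhombicMetricFactorized} holds on the nose --- combined with the sign analysis underlying positivity, which is the genuinely new ingredient relative to Proposition~\ref{prop:UU1BothCases}, where $U$ and $U_1$ need not have a definite sign. Neither step is conceptually deep, but the phases $e^{\ci\pi/8}$ and $e^{\ci\pi/4}$ generated by the purely imaginary nome must be tracked carefully so that they cancel in the final quotient and so that the sign of each factor in \eqref{eq:rhombicMetricFactorized} is pinned down unambiguously.
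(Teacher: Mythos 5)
Your proposal is correct and follows essentially the same route as the paper: apply the shifts and the choice of $\rho$ from \eqref{eq:shiftToRhombic} to \eqref{eq:complexMetricWithRho} to obtain \eqref{eq:rhombicMetric}, deduce reality from the conjugation identities \eqref{eq:theta_conjugation} (the four phases $e^{-\ci\pi/4}$ cancel in the quotient, and $\vartheta_2'(\omega)/\vartheta_2(\omega)$ is real), and obtain \eqref{eq:rhombicMetricFactorized} from the two addition theorems with $x=\tfrac{u\mp\omega}{2}$, $y=\tfrac{\ci\w}{2}$. The one place you diverge is positivity: you run a factor-by-factor sign analysis of \eqref{eq:rhombicMetricFactorized}, tracking the phases $e^{\ci\pi/4}$ of $\vartheta_1^2$ and $\vartheta_2^2$ on real and purely imaginary arguments. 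This works (your sign bookkeeping checks out), but the paper gets positivity in one line by observing that \eqref{eq:theta_conjugation} exhibits the right-hand side of \eqref{eq:rhombicMetric} as $W\overline{W}=|W|^2$ with $W=\vartheta_2\bigl(\tfrac{u+\ci\w-\omega}{2}\bigr)\vartheta_1\bigl(\tfrac{u+\ci\w+\omega}{2}\bigr)^{-1}e^{\frac{u}{2}\vartheta_2'(\omega)/\vartheta_2(\omega)}$, since conjugation sends $u+\ci\w\mapsto u-\ci\w$ and the phases cancel; this is cleaner and makes the factorized form unnecessary for the positivity claim, though your analysis does yield the extra information that each bracket in \eqref{eq:rhombicMetricFactorized} is separately nonnegative.
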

\begin{proof}
	In Remark~\ref{rem:realReductionsShiftToRhombic} we give the shifts and choice of $\rho$ to derive the rhombic reduction for $U(u), U_1(u)$. The same shifts and $\rho$ in \eqref{eq:complexMetricWithRho} lead to the real valued rhombic metric \eqref{eq:rhombicMetric}. It is positive due to \eqref{eq:theta_conjugation}. The factorization in the rhombic case \eqref{eq:rhombicMetricFactorized} follows from the theta function addition formulas
	\begin{align}
		\vartheta_2(0)^2 \vartheta_1(x + y)\vartheta_1(x-y) &= \vartheta_1(x)^2\vartheta_2(y)^2-\vartheta_2(x)^2
		\vartheta_1(y)^2, \\
		\vartheta_2(0)^2 \vartheta_2(x + y)\vartheta_2(x-y) &= \vartheta_2(x)^2\vartheta_2(y)^2-\vartheta_1(x)^2
		\vartheta_1(y)^2.
	\end{align}
\end{proof}

Our presentation thus far has closely followed Darboux's, together with highlighting where the rhombic real reduction is found.

We now take a different approach, using quaternions, to explicitly integrate the surface. This allows to consider both real reductions simultaneously to complete Darboux's local classification, and then consider global properties.

\subsection{Deriving the conformal frame}
\label{subsubsec:conformalFrame}
We know the metric $e^{2h}$ of an isothermic surface $f(u,v)$ with one family ($u$-curves) of planar curvature lines. For each $v_0$, the curvature line $f(u,v_0)$ is determined up to Euclidean motion from a planar curve $\gamma(u,v_0)$. Moreover, a classical result known as Joachimsthal's theorem~\cite[p.140 Ex. 59.8]{Eisenhart} states that a plane or sphere cuts a surface along a curvature line if and only if the intersection angle is constant. So, the surface normal $n(u,v_0)$ makes a constant angle $\phi(v_0)$ with the plane containing $f(u,v_0)$.

It turns out the family of tangent vector fields $\gamma_u(u,\w(v))$ is the analytic extension of the metric with respect to the complex variable $u + \ci \w$. The following lemma proves this fact. Along the way, we compute the defining vector fields required to explicitly integrate the family of curves $\gamma(u,\w)$ that form the surface $f(u,v)$.

Moreover, the harmonic equation \eqref{eq:harmonic} is derived using a \emph{reparametrization function} $\w(v)$ that relates $\w$ to the surface coordinate $v$. The geometry of this function is also clarified by the following lemma.

\begin{lemma}
	\label{lem:fPartialDerivatives}
	Let $f(u,v)$ be an isothermic surface with one family ($u$-curves) of planar curvature lines and reparametrization function $\w(v)$.
		
	The family of planar curves is determined by
	\begin{align}
		\label{eq:gammaUExpHPISigma}
		\gamma_u(u,\w) = e^{h(u, \w) + \ci \sigma(u, \w)},
	\end{align}
	where $h + \ci \sigma$ is holomorphic with respect to $u + \ci \w$.
	\begin{align}
		\label{eq:hsigmaCauchyRiemann}
		h_u(u,\w) = \sigma_{\w}(u,\w) \quad \text{ and } \quad h_{\w}(u,\w) = -\sigma_u(u,\w).
	\end{align}

	Moreover, there exists a nonvanishing quaternionic frame $\Phi(v)$ that rotates each planar curve into space, forming an angle $\phi(v)$ with the Gauss map $n(u,v)$, such that
	\begin{align}
		\label{eq:fuIsothermicPlanar}
		f_u(u,v) &= e^{h(u,\w(v))} \Phi(v)^{-1} e^{\ci \sigma(u,\w(v))} \qj \Phi(v), \\
		\label{eq:fvIsothermicPlanar}
		f_v(u,v) &= e^{h(u,\w(v))} \Phi(v)^{-1} \left( -\cos\phi(v) \, \qi + \sin\phi(v) \, e^{\ci \sigma(u,\w(v))} \qk \right) \Phi(v), \\
		\label{eq:nIsothermicPlanar}
		n(u,v) &= \Phi(v)^{-1} \left( \sin\phi(v) \, \qi + \cos\phi(v) \, e^{\ci \sigma(u,\w(v))} \qk \right) \Phi(v).
	\end{align}
	The functions $\Phi(v)$ and $\phi(v)$ depend only on the variable $v$, and are determined by the following.
	\begin{align}
		\label{eq:sincosphi}
		\sin\phi(v) &= \w'(v) , \quad \cos\phi(v) = -\sqrt{1-\w'(v)^2}. \\
		\label{eq:phiPrimePhiInverseW1Formula}
		\Phi'(v)\Phi^{-1}(v) &= -\cos\phi(v) W_1(\w(v)) \qk, 
		\end{align}
		where the complex valued function $W_1(\w)$ is defined by
\begin{align}
		\sigma_{\w}(u,\w(v)) &= 2 \Re \left(W_1(\w(v)) e^{-\ci \sigma(u,\w(v))}\right).
		\label{eq:sigmawForW1}	
\end{align}
\end{lemma}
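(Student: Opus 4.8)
The plan is to exploit the structure equations \eqref{eq:isothermicFrameEquationsDU}--\eqref{eq:isothermicFrameEquationsDV} together with the known harmonic metric $h$ and the planarity of the $u$-curves. First I would work with the unit-speed frame rather than the conformal frame: set $T = e^{-h} f_u$, $S = e^{-h} f_v$, $n$, which is orthonormal. The planarity of each $u$-curve means that $f(\cdot, v_0)$ lies in a fixed plane; equivalently the normal $n(u,v_0)$ stays in a fixed great circle of $\S^2$, making a constant angle $\phi(v_0)$ with that plane by Joachimsthal. Introduce $\sigma(u,\w)$ as the harmonic conjugate of $h$ with respect to $u + \ci \w$ (this exists on the simply connected domain because $h$ is harmonic by \eqref{eq:harmonic}), normalized so that the Cauchy--Riemann equations \eqref{eq:hsigmaCauchyRiemann} hold; then $h + \ci\sigma$ is holomorphic, and since the planar curve $\gamma(\cdot,v_0)$ is conformal with speed $e^h$, its tangent is $\gamma_u = e^{h + \ci\sigma}$ after fixing the integration constant (the orientation of the plane), which is \eqref{eq:gammaUExpHPISigma}.

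Next I would build the quaternionic frame. Using the $\R^3 = \Imq\,\H$ identification, the planar curve's Frenet-type data sits in the plane $\C\qj$ (so $\gamma_u \leftrightarrow e^{\ci\sigma}\qj$, with $\qk$ the in-plane normal and $\qi$ the plane's normal direction). The ansatz is that there is a $v$-dependent rotation $X \mapsto \Phi(v)^{-1} X \Phi(v)$ carrying this model plane into the actual curvature-line plane in space; applied to $e^{\ci\sigma}\qj$ this gives \eqref{eq:fuIsothermicPlanar} after multiplying by the speed $e^h$. For $f_v$ and $n$: they are orthogonal to $f_u$, hence lie in $\mathrm{span}\{\qi, e^{\ci\sigma}\qk\}$ after conjugating back by $\Phi$; writing the normal as making angle $\phi$ with the plane forces the $\cos\phi\,\qi + \sin\phi\, e^{\ci\sigma}\qk$ combination for $n$ (up to the sign convention fixed in \eqref{eq:nIsothermicPlanar}), and then $f_v = e^h\, n \times T \cdot(\text{sign})$ (using $XY = -\langle X,Y\rangle + X\times Y$) produces \eqref{eq:fvIsothermicPlanar}. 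To pin down $\phi(v)$, compare the $\qi$-component: the $v$-derivative of $n$ along the $u$-curve must be consistent with $h$ being harmonic \emph{with respect to $u + \ci\w$} and not with respect to $u + \ci v$. Concretely, the chain rule $\partial_v = \w'(v)\partial_\w$ enters when expressing $f_{uv}$, and matching the isothermic frame equation \eqref{eq:isothermicFrameEquationsDV} (which says $(f_u)_v$ has $n$-component zero) against the differentiated ansatz yields exactly $\sin\phi = \w'$, and the sign of $\cos\phi = -\sqrt{1-\w'^2}$ is fixed by \eqref{eq:thirdFFPQW} together with $\bP = k_1 e^h$, since $\bP = h_\w\sqrt{1-\w'^2}$ must carry a definite sign relative to $h_\w$.

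Finally, to get \eqref{eq:phiPrimePhiInverseW1Formula} I would differentiate the ansatz \eqref{eq:fuIsothermicPlanar} with respect to $v$ and match with \eqref{eq:isothermicFrameEquationsDV}. Writing $\Phi'\Phi^{-1} = \mathsf{a}(u,v)$ a priori, the $v$-derivative of $f_u$ picks up $[\mathsf{a}, \cdot]$ terms plus the $\w$-derivative of $e^{h+\ci\sigma}$ times $\w'$; the requirement that this equal $h_v f_u + h_u f_v$ (no $n$-component, as demanded by \eqref{eq:isothermicFrameEquationsDV}) forces $\mathsf{a}$ to be $u$-independent and to lie in the $\qk$-direction of the model frame, i.e. $\mathsf{a} = \lambda(v)\,\qk$ conjugated appropriately; comparing the surviving scalar equation identifies $\lambda(v) = -\cos\phi(v) W_1(\w(v))$ where $W_1$ is precisely the function capturing the inhomogeneous part, defined by \eqref{eq:sigmawForW1} — note $\sigma_\w = h_u$ is a function of $u+\ci\w$, and its decomposition into $e^{\pm\ci\sigma}$ pieces has coefficient $W_1(\w)$ depending on $\w$ alone exactly because the analytic dependence forces it. The main obstacle I anticipate is the bookkeeping in this last matching: one must carefully separate which terms depend on $u$ (and must cancel by holomorphicity of $h+\ci\sigma$) from those depending on $v$ alone, and correctly propagate the $\w'(v)$ factor from the reparametrization through the quaternionic conjugations without sign errors; verifying that $W_1$ as implicitly defined by \eqref{eq:sigmawForW1} is consistent (single-valued, depending only on $\w$) is the crux, and it should follow from differentiating \eqref{eq:sigmawForW1} in $u$ and using \eqref{eq:hsigmaCauchyRiemann} to see the $u$-dependence drops out.
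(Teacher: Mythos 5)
Your overall machinery is the same as the paper's (quaternionic ansatz $f_u=e^{h}\Phi^{-1}e^{\ci\sigma}\qj\Phi$, then matching $f_{uv}=f_{vu}$ component by component), but there is a genuine gap at the crux of the lemma. You \emph{define} $\sigma$ as the harmonic conjugate of $h$ and then claim $\gamma_u=e^{h+\ci\sigma}$ follows ``since the planar curve is conformal with speed $e^h$.'' Conformality only controls the modulus $|\gamma_u|=e^h$; it says nothing about the argument of $\gamma_u$, i.e.\ the turning angle of the planar curvature line. That the turning angle is (up to a $v$-dependent gauge) the harmonic conjugate of $h$ is precisely the nontrivial content of \eqref{eq:hsigmaCauchyRiemann} and cannot be asserted. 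The paper instead takes $\sigma$ to be the geometric turning angle (so $\gamma_u=e^{h+\ci\sigma}$ is a tautology) and derives \emph{two} scalar relations: the $e^{\ci\sigma}\qj$-component of the compatibility condition, which is clean because $[e^{\ci\sigma}\qj,\Phi'\Phi^{-1}]$ is orthogonal to $e^{\ci\sigma}\qj$, gives $h_\w\,\w'=-\sigma_u\sin\phi$; and computing $n_u=-\bP e^{-h}f_u$ from the ansatz gives $\bP=\sigma_u\cos\phi$, which combined with $\bP=h_\w\sqrt{1-\w'^2}$ from \eqref{eq:thirdFFPQW} gives $h_\w\sqrt{1-\w'^2}=\sigma_u\cos\phi$. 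These two equations \emph{simultaneously} force $h_\w=-\sigma_u$ and $\sin\phi=\w'$, $\cos\phi=-\sqrt{1-\w'^2}$. The second Cauchy--Riemann equation $h_u=\sigma_\w$ is then obtained by regauging $\Phi\mapsto e^{\ci\tilde\sigma/2}\Phi$ to absorb the residual $v$-dependent integration function; your proposal does not address this gauge step at all.

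A secondary problem: you propose to extract $\sin\phi=\w'$ from the ``$n$-component zero'' condition on $(f_u)_v$. But the components of the compatibility equation along $\qi$ and $e^{\ci\sigma}\qk$ are contaminated by the still-unknown $[e^{\ci\sigma}\qj,\Phi'\Phi^{-1}]$, which has nonzero projections onto exactly those two directions; only the $e^{\ci\sigma}\qj$-projection is free of $\Phi'\Phi^{-1}$ and usable at that stage. Your treatment of $W_1$ is closer to correct in spirit: the paper gets the form $\Phi'\Phi^{-1}=-\cos\phi\,W_1\qk$ because, after the Cauchy--Riemann relations are in place, the reduced compatibility condition $[e^{\ci\sigma}\qj,\Phi'\Phi^{-1}]=-h_u\cos\phi\,\qi$ forces $\Phi'\Phi^{-1}$ into the $\qj,\qk$-plane, and expanding the bracket yields \eqref{eq:sigmawForW1}; your appeal to ``the analytic dependence forces it'' gestures at the right fact but the clean derivation is through the structure of $\Phi'\Phi^{-1}$ as a function of $v$ only.
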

\begin{proof}
	The family of planar curves satisfies $\gamma_u = e^{h+\ci \sigma}$ for some real valued function $\sigma$. To show that $h$ and $\sigma$ satisfy the Cauchy--Riemann equations with respect to $u + \ci \w$ we study the compatibility conditions of the surface. We start by proving the formulas for the tangents and Gauss map.
	
	For each $v_0$ there exists a rotation mapping the tangent vector field of $\gamma(u,v_0)$ into its position $f(u,v_0)$ on the surface in space, forming a constant angle $\phi(v_0)$ with the Gauss map. Thus, there exists a unit quaternion valued rotation function $\Phi(v)$ and real valued angle function $\phi(v)$, both depending only on $v$. This gives $f_u(u,v) = \Phi(v)^{-1} \gamma_u(u,\w(v)) \qj \Phi(v)$, which is rewritten as \eqref{eq:fuIsothermicPlanar}. Formula \eqref{eq:nIsothermicPlanar} for the Gauss map $n$ is derived by noting that $e^{\ci \sigma} \qj$ has orthogonal field $\ci e^{\ci \sigma} \qj = e^{\ci \sigma} \qk$ in the $\qj,\qk$-plane, and the angle between $n$ and the rotated $\qj,\qk$-plane is $\phi$. In isothermic coordinates $f_v = n \times f_u = \frac12[n,f_u]$, proving \eqref{eq:fvIsothermicPlanar}.

	A direct computation shows the compatibility condition $f_{uv} = f_{vu}$ is equivalent to
	\begin{equation}
		\begin{aligned}
			\label{eq:prePhifuvCompatibility}
			h_v e^{\ci \sigma} &\qj + \sigma_v e^{\ci \sigma} \qk + [e^{i \sigma} \qj, \Phi'\Phi^{-1}] \\
			= - \sigma_u \sin\phi e^{\ci \sigma} &\qj + h_u \sin\phi e^{\ci \sigma} \qk  -h_u \cos\phi \, \qi. 
		\end{aligned}
	\end{equation}
	Since $\qi, e^{\ci \sigma} \qk, [e^{i \sigma} \qj, \Phi'\Phi^{-1}]$ are orthogonal to $e^{\ci \sigma} \qj$, equating the $e^{\ci \sigma} \qj$ components gives $h_v = - \sigma_u \sin\phi$. Rewriting this in terms of $\w(v)$ is
	\begin{align}
		\label{eq:sinPhi}
		h_{\w}\w'(v) = - \sigma_u \sin\phi.
	\end{align}
	Computing the curvature function $\bP$ gives another equation relating $h_{\w}$ and $\sigma_u$. From \eqref{eq:thirdFFPQW} we know $\bP = h_{\w} \sqrt{1 - \w'(v)^2}$. On the other hand, along a $u$ curvature line we have $n_u = - \bP e^{-h} f_u$, so differentiating \eqref{eq:nIsothermicPlanar} gives $n_u = - \sigma_u \cos \phi e^{-h} f_u$. Thus, $p = \sigma_u \cos\phi$, and so
	\begin{align}
		\label{eq:cosPhi}
		h_{\w} \sqrt{1 - \w'(v)^2} = \sigma_u \cos\phi.
	\end{align}
	In terms of $\tilde \phi(v)$ defined by $\w'(v) = \sin\tilde\phi(v)$ and $\sqrt{1-\w'(v)^2} = \cos\tilde\phi(v)$, the two equations \eqref{eq:sinPhi} and \eqref{eq:cosPhi} are equivalent to
	\begin{align}
		h_{\w} = - \sigma_u \quad \text{ and } \quad \tilde \phi = \pi - \phi.
	\end{align}
	Note the second identity proves \eqref{eq:sincosphi}.
	
	Now, define $\sigma_0(u,\w(v))$ so that $h + \ci \sigma_0$ is holomorphic with respect to $u + \ci \w$. From $h_{\w} = - \sigma_u$ we know the general solution $\sigma(u,v) = \sigma_0(u,v) + \tilde \sigma(v)$. In contrast to $\sigma$ and $\sigma_0$, the integration function $\tilde \sigma$ only depends on $v$. By regauging the rotation function $\Phi(v)$, one can assume that $\tilde \sigma(v)$ vanishes for all $v$. Indeed, the tangents and Gauss map with $\Phi$ and $\sigma = \sigma_0 + \tilde\sigma$ are equivalent to the tangents and Gauss map with $\Phi \mapsto e^{\ci \frac{\tilde \sigma}{2}} \Phi$ and $\sigma \mapsto \sigma_0$.
	
	Therefore $h + \ci \sigma$ is holomorphic with respect to $u + \ci \w$.
	
	It remains to prove the structure of $\Phi'(v) \Phi^{-1}(v)$. Using holomorphicity and that $\w'(v) = \sin\phi(v)$, we have $h_v = -\sigma_u \sin\phi$ and $\sigma_v = h_u \sin\phi$. The compatibility condition \eqref{eq:prePhifuvCompatibility} reduces to
	\begin{align}
		[e^{i \sigma} \qj, \Phi'\Phi^{-1}] = -h_u \cos\phi \, \qi. 
	\end{align}
	The right hand side of this equation lies parallel to $\qi$. Therefore, $\Phi' \Phi^{-1}$ must lie in the $\qj,\qk$-plane, and can be written in the form of \eqref{eq:phiPrimePhiInverseW1Formula} for some complex valued function $W_1(\w(v)))$ satisfying
	\begin{align}
		[e^{i \sigma(u,\w(v))} \qj, W_1(\w(v)) \qk] = h_u(u,\w(v)) \qi.
	\end{align}
	Expanding the left hand side and using $h_u = \sigma_{\w}$ yields \eqref{eq:sigmawForW1}, the equivalent complex equation for $W_1$.
\end{proof}

\begin{remark}
	The notation for the complex valued function $W_1(\w) = \overline{W(\w)}$ comes from $\sigma_{\w} = 2 \Re\left(W_1(\w) e^{- \ci \sigma}\right) = W(\w) e^{\ci \sigma} + W_1(\w) e^{- \ci \sigma}$, which resembles the previous Riccati equation $h_u = U(u) e^h + U_1(u) e^{-h}$.
\end{remark}

\subsection{Infinitesimal rotation axis in the plane of $\gamma$}
Let $X$ represent the $\qj,\qk$-plane. For each $v$, the $u$-curvature line plane is given by $X(v) = \Phi^{-1}(v) X \Phi(v)$. For a small shift $v + \Delta v$ the plane is $X(v + \Delta v) = \Phi^{-1}(v + \Delta v) X \Phi(v + \Delta v)$.

To study the infinitesimal rotation, we define $\tilde \Phi(v) = \Phi^{-1}(v) \Phi(v + \Delta v)$ and compute using little-o notation that
\begin{align*}
	\tilde \Phi(v) &= \Phi^{-1}(v) ( I + \Delta v A(v) + o(\Delta v)) \Phi(v), \\
	&= I + \Delta v \Phi^{-1}(v) A(v) \Phi(v) + o(\Delta v).
\end{align*}
The rotation axis of the rotation $X(v) \to X(v + \Delta v), X \mapsto \tilde \Phi^{-1} X \tilde \Phi(v)$ is spanned by $\Phi^{-1}(v) A(v) \Phi(v)$, where $A(v)$ is defined by $A(v) = \Phi'(v) \Phi^{-1}(v)$. Thus we have the following proposition.

\begin{proposition}
	For each $v$, consider the plane curve $u \mapsto \gamma(u, \w(v)) \qj$ and axis spanned by $W_1(\w(v)) \qk = \ci W_1(\w(v)) \qj$ lying in the $\qj,\qk$-plane. Then, the corresponding planar $u$-curvature line of the isothermic immersions $f$ is found by rotating $\gamma$ using $\Phi(v)$. The axis of the infinitesimal rotation of this plane to its neighboring one is spanned by $\Phi^{-1}(v) W_1(\w(v)) \qk \Phi(v)$.
\end{proposition}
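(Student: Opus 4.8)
The plan is to assemble the statement from Lemma~\ref{lem:fPartialDerivatives} and the infinitesimal-rotation computation performed just above, so that the only real work is bookkeeping with quaternions. First I would make precise that, for fixed $v$, the planar $u$-curvature line is a rigid motion of the model plane curve $u\mapsto\gamma(u,\w(v))\qj$. Integrating the tangent formula \eqref{eq:fuIsothermicPlanar} in $u$ gives
\begin{equation*}
f(u,v)=\Phi(v)^{-1}\bigl(\gamma(u,\w(v))\qj\bigr)\Phi(v)+c(v)
\end{equation*}
for some function $c(v)$ depending on $v$ alone, so the curvature line is obtained from $\gamma(\cdot,\w(v))\qj$ by the rotation $X\mapsto\Phi(v)^{-1}X\Phi(v)$ together with a translation. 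This is exactly the assertion that $\gamma$ is rotated into the surface by $\Phi(v)$.

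Next I would record the elementary identity $W_1(\w)\qk=\ci\,W_1(\w)\qj$, which holds because $W_1(\w)$ lies in the commutative subalgebra $\C=\langle\mathbf 1,\qi\rangle$ and $\ci\qj=\qk$, so $\ci W_1(\w)\qj=W_1(\w)\,\ci\qj=W_1(\w)\qk$; geometrically this says $W_1(\w)\qk$ is the image of the direction $W_1(\w)\qj$ under the complex structure $\ci$ of the $\qj,\qk$-plane, hence it lies in the plane of $\gamma$. Then I would run the computation preceding the statement with the explicit value of $A(v)$: writing $X$ for the $\qj,\qk$-plane and $X(v)=\Phi(v)^{-1}X\Phi(v)$, the transition $X(v)\to X(v+\Delta v)$ is conjugation by $\tilde\Phi(v)=\Phi(v)^{-1}\Phi(v+\Delta v)=\mathbf 1+\Delta v\,\Phi(v)^{-1}A(v)\Phi(v)+o(\Delta v)$ with $A(v)=\Phi'(v)\Phi^{-1}(v)\in\mathrm{Im}\,\H$, so the instantaneous rotation axis is spanned by $\Phi(v)^{-1}A(v)\Phi(v)$. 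Substituting $A(v)=-\cos\phi(v)\,W_1(\w(v))\qk$ from \eqref{eq:phiPrimePhiInverseW1Formula} and discarding the real scalar $-\cos\phi(v)=\sqrt{1-\w'(v)^2}$ gives the claimed axis $\Phi(v)^{-1}W_1(\w(v))\qk\,\Phi(v)$.

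I do not expect a genuine obstacle here: the analytic content is entirely contained in Lemma~\ref{lem:fPartialDerivatives}, and what remains is keeping track of scalings and identifications. The two points worth a sentence of care are that a rotation axis is only defined up to a nonzero real multiple — which is what legitimizes dropping the factor $-\cos\phi(v)$ and identifying $W_1(\w)\qk$ with $\ci W_1(\w)\qj$ as descriptions of the same axis — and that one should note $A(v)$ is imaginary, obtained by differentiating $\Phi(v)\overline{\Phi(v)}=\mathbf 1$, so that $\exp(tA(v))$ is a one-parameter family of unit quaternions rotating about $A(v)$, which is the input to the first-order expansion above. Finally one should keep in mind that the statement is vacuous at points where $\cos\phi(v)=0$, since there the neighboring curvature line planes agree to first order and no rotation axis is defined.
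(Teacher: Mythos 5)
Your proposal is correct and follows essentially the same route as the paper: the paper's justification for this proposition is precisely the first-order expansion of $\tilde\Phi(v)=\Phi^{-1}(v)\Phi(v+\Delta v)$ given just before the statement, combined with the formula \eqref{eq:phiPrimePhiInverseW1Formula} for $A(v)=\Phi'(v)\Phi^{-1}(v)$ from Lemma~\ref{lem:fPartialDerivatives}. Your added remarks (that $W_1\qk=\ci W_1\qj$ by commutativity of $\C=\langle\mathbf 1,\qi\rangle$, that $A(v)$ is imaginary, and that the axis is only defined up to a nonzero real scalar, degenerating where $\cos\phi(v)=0$) are accurate bookkeeping that the paper leaves implicit.
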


\subsection{Immersion formulas: generic case when curvature line planes are tangent to a cone }
We say an isothermic surface has one \emph{generic} family of planar curvature lines when the curvature line planes are tangent to a cone, since their normals span three dimensional space. The special case when the curvature line planes are tangent to a cylinder occurs when the parameter $\omega$ introduced below is $0$ or $\frac\pi2$. We discuss these after the generic setting.

Below we complete Darboux's classification result by providing explicit formulas for both the rectangular and rhombic cases. The family of curvature line planes are tangent to a cone. Examples are shown in Figure~\ref{fig:isothermic-reparametrization}.

\begin{theorem}
	\label{thm:localIsothermicPlanar}
	Every isothermic surface $f(u,v)$ with one generic family ($u$-curves) of planar curvature lines has its planes tangent to a cone, and is given, up to Euclidean motion, by	
	\begin{align}
		\label{eq:fImmersionsPlanar}
		f(u,v) &= \Phi^{-1}(v) \gamma(u, \w(v)) \qj \Phi(v), \\
		\Phi'(v) \Phi^{-1}(v) &= \sqrt{1 - \w'(v)^2} W_1(\w(v)) \qk,
		\label{eq:W1ImmersionsPlanar}
	\end{align}
	with reparametrization function $\w(v)$. There are two cases, depending on the lattice of the elliptic curve.
		\begin{enumerate}
		\item(Rectangular) The lattice is rectangular, i.e., $\tau \in \ci \R$, and $\omega \neq 0, \frac\pi2 \in \R$.
		\begin{align}
			\label{eq:gammaRect}
			\gamma(u,\w) &=  -\ci \frac{2 \vartheta _4(\omega)^2}{\vartheta_1^{\prime}(0) \vartheta _1(2 \omega)} \frac{\vartheta_1\left(\frac{u+\ci \w-3 \omega}{2}\right)}{\vartheta_1\left(\frac{u+\ci\w+\omega}{2}\right)}e^{(u + \ci \w)\frac{\vartheta_4'(\omega)}{\vartheta_4(\omega)}}. \\
			\label{eq:gammaURect}
			\gamma_u(u,\w) &= -\ci \gamma_{\w}(u,\w) = -\ci \left(\frac{\vartheta_4\left(\frac{u + \ci \w - \omega}{2}\right)}{\vartheta_1\left(\frac{u + \ci \w + \omega}{2}\right)}\right)^2e^{(u + \ci \w) \frac{\vartheta_4'(\omega)}{\vartheta_4(\omega)}}. \\
			\label{eq:W1Rect}
			W_1(\w) &= \ci \frac{\vartheta _1^{\prime}(0)}{2\vartheta _4(\omega)}\frac{\vartheta _4(\omega -\ci \w)}{\vartheta_1(\ci \w)}e^{\ci \w \frac{\vartheta_4'(\omega)}{\vartheta_4(\omega)}}.
		\end{align}
		\item(Rhombic) The lattice is rhombic, i.e., $\tau \in \frac12 + \ci \R$, and $\omega \neq 0, \frac\pi2 \in \R$.
		\begin{align}
			\label{eq:gammaRhombic}
			\gamma(u,\w) &=  -\ci \frac{2 \vartheta _2(\omega)^2}{\vartheta_1^{\prime}(0) \vartheta _1(2 \omega)} \frac{\vartheta_1\left(\frac{u+\ci \w-3 \omega}{2}\right)}{\vartheta_1\left(\frac{u+\ci\w+\omega}{2}\right)}e^{(u + \ci \w)\frac{\vartheta_2'(\omega)}{\vartheta_2(\omega)}}. \\
			\label{eq:gammaURhombic}
			\gamma_u(u,\w) &= -\ci \gamma_{\w}(u,\w) = -\ci \left(\frac{\vartheta_2\left(\frac{u + \ci \w - \omega}{2}\right)}{\vartheta_1\left(\frac{u + \ci \w + \omega}{2}\right)}\right)^2e^{(u + \ci \w) \frac{\vartheta_2'(\omega)}{\vartheta_2(\omega)}}. \\
			\label{eq:W1Rhombic}
			W_1(\w) &= \ci \frac{\vartheta _1^{\prime}(0)}{2\vartheta _2(\omega)}\frac{\vartheta _2(\omega -\ci \w)}{\vartheta_1(\ci \w)}e^{\ci \w \frac{\vartheta_2'(\omega)}{\vartheta_2(\omega)}}.
		\end{align}
	\end{enumerate}
\end{theorem}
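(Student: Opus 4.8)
The plan is to deduce everything from Lemma~\ref{lem:fPartialDerivatives}, which already records the frame $f_u,f_v,n$ in the conjugated form $\Phi^{-1}(v)(\cdot)\Phi(v)$ together with all compatibility relations, and then to make the metric-dependent pieces explicit in the two real reductions. First, the structural formulas \eqref{eq:fImmersionsPlanar}--\eqref{eq:W1ImmersionsPlanar}: by Darboux's trichotomy a non-revolution isothermic surface with planar $u$-curves has its curvature-line planes tangent either to a cone or to a cylinder, and \emph{generic} (normals spanning $\R^3$) is the cone case; taking the origin at the apex of that cone, which lies on every curvature-line plane since each such plane contains a ruling, the rotation $\Phi(v)$ carrying the plane curve $\gamma$ onto the surface needs no translation, so integrating $f_u=\Phi^{-1}\gamma_u\qj\Phi$ from the lemma yields $f=\Phi^{-1}\gamma\qj\Phi$, and \eqref{eq:W1ImmersionsPlanar} is \eqref{eq:phiPrimePhiInverseW1Formula} rewritten via $-\cos\phi(v)=\sqrt{1-\w'(v)^{2}}$ from \eqref{eq:sincosphi}. (Equivalently, the right integration constant for $\gamma$ is the one making $2\,\Re(\gamma\,\overline{W_1})=e^{h}$, a $\vartheta$-identity verified below.) The cylinder degeneration will turn out to be exactly $\omega\in\{0,\tfrac\pi2\}$, hence the hypothesis $\omega\neq 0,\tfrac\pi2$.

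The substance is the explicit family of plane curves. By Lemma~\ref{lem:fPartialDerivatives}, $\gamma_u(u,\w)=e^{h+\ci\sigma}$ with $h+\ci\sigma$ the holomorphic function of $w:=u+\ci\w$ whose real part is the known metric factor $h$. Using $\overline{\vartheta_i(z,q)}=\vartheta_i(\overline z,q)$ (rectangular, real nome), resp.\ the conjugation identities \eqref{eq:theta_conjugation} (rhombic, imaginary nome), the metric \eqref{eq:complexMetricWithRho} with $\rho=-\ci$, resp.\ \eqref{eq:rhombicMetric}, factors as $e^{h}=\lvert A(w)\rvert^{2}$ with
\[
A(w)=\frac{\vartheta_4\!\big(\tfrac{w-\omega}{2}\big)}{\vartheta_1\!\big(\tfrac{w+\omega}{2}\big)}\,e^{\frac{w}{2}\frac{\vartheta_4'(\omega)}{\vartheta_4(\omega)}},
\qquad\text{resp.}\qquad
A(w)=\frac{\vartheta_2\!\big(\tfrac{w-\omega}{2}\big)}{\vartheta_1\!\big(\tfrac{w+\omega}{2}\big)}\,e^{\frac{w}{2}\frac{\vartheta_2'(\omega)}{\vartheta_2(\omega)}}.
\]
Hence $h+\ci\sigma=2\log A(w)$ up to an additive imaginary constant, so $\gamma_u=A(w)^{2}$ up to a unimodular constant, which is \eqref{eq:gammaURect}, resp.\ \eqref{eq:gammaURhombic}, once the normalization $-\ci$ is fixed. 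To recover $\gamma$ itself one checks that the $w$-derivative of the closed form \eqref{eq:gammaRect}, resp.\ \eqref{eq:gammaRhombic}, equals $\gamma_u$: I would compare the multipliers of both meromorphic functions under $w\mapsto w+2\pi$ and $w\mapsto w+2\pi\tau$ (they agree, thanks to the $e^{w\vartheta'/\vartheta}$ factors) and then their divisors; the matching at the poles ($w\equiv-\omega$) and at the double zeros of $A(w)^{2}$ uses the quasi-periodicities of the $\vartheta_i$ and the value of the exponent rate $\vartheta_4'(\omega)/\vartheta_4(\omega)$, resp.\ $\vartheta_2'(\omega)/\vartheta_2(\omega)$, so that $\gamma'/\gamma_u$ is a nonvanishing elliptic function, hence constant, pinned down by one evaluation together with standard $\vartheta$-addition formulas.

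Finally, for $W_1$: by \eqref{eq:sigmawForW1} and the remark after Lemma~\ref{lem:fPartialDerivatives}, $\sigma_\w=W(\w)e^{\ci\sigma}+W_1(\w)e^{-\ci\sigma}$ with $W_1=\overline{W}$, a Riccati equation in $\w$ mirroring \eqref{eq:riccati} with the harmonic conjugate $\sigma$ in place of $h$. Substituting the now-explicit $\sigma_\w=\Re\!\big(2A'(w)/A(w)\big)$ and $e^{-\ci\sigma}=\ci\,\overline{A(w)}/A(w)$ into \eqref{eq:sigmawForW1} and solving for $W_1$ gives \eqref{eq:W1Rect}, resp.\ \eqref{eq:W1Rhombic}, after one more $\vartheta$-identity. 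The main obstacle throughout is this theta-function bookkeeping: verifying $\tfrac{d}{dw}\gamma=\gamma_u$, extracting $W_1$, and checking $2\,\Re(\gamma\,\overline{W_1})=e^{h}$ each reduce to identities of the standard finite-gap flavour — compare quasi-periods and divisors of elliptic functions, then fix a single constant — so they are conceptually routine, but both real reductions and several addition formulas must be carried carefully; a point to state once is that the unimodular constant in $\gamma_u$, the imaginary constant in $\sigma$, and the choice of origin merely single out a representative of the Euclidean-motion (and gauge) orbit.
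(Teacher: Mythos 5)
Your proposal follows essentially the same route as the paper: factor the real metric as $e^{h}=A(w)\overline{A(w)}$ to read off $\gamma_u=-\ci A(w)^2$ as the analytic extension, integrate to the closed form for $\gamma$ by comparing quasi-periodicity multipliers and residues/divisors of theta quotients, extract $W_1$ from the Riccati equation \eqref{eq:Riccati_sigma} for the unitary factor, and verify the normalization identity $2\,\Re\big(W_1(\w)\overline{\gamma(u,\w)}\big)=e^{h(u,\w)}$. The one place where your primary argument falls short of the paper's is the final integration of $f$. Placing the origin at the cone's apex only shows that $f(u,v)=\Phi^{-1}(v)\big(\gamma(u,\w(v))+c(v)\big)\qj\,\Phi(v)$ for some complex-valued $c(v)$, since adding any $c(v)$ keeps the curve in the same plane through the origin; ``the rotation needs no translation'' does not by itself force $c\equiv 0$. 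What eliminates $c$ is the $f_v$-compatibility: expanding $\big(\Phi^{-1}(\gamma+c)\qj\Phi\big)_v$ and comparing with \eqref{eq:fvIsothermicPlanar} via $[z\qj,W_1\qk]=2\Re(W_1\bar z)\,\qi$ yields $c'=0$ and $\Re\big(W_1(\w)\bar c\big)=0$ for all $\w$, hence $c=0$ because $W_1$ has non-constant direction in the generic (cone) case --- and this is exactly the computation \eqref{eq:f_v} in which the identity $2\Re(W_1\bar\gamma)=e^{h}$ enters. Since you state that identity as the ``equivalent'' characterization of the integration constant and list its verification among your theta-function tasks, the omission is presentational rather than fatal, but the geometric shortcut should not be offered as an independent proof of \eqref{eq:fImmersionsPlanar}; the divisor/residue verifications you defer (that $\frac{d}{dw}\gamma=\gamma_u$, the formula for $W_1$, and the normalization identity) are precisely the three theta identities the paper checks, so the remaining content matches.
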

\begin{figure}
	\centering
	\includegraphics[width=\textwidth]{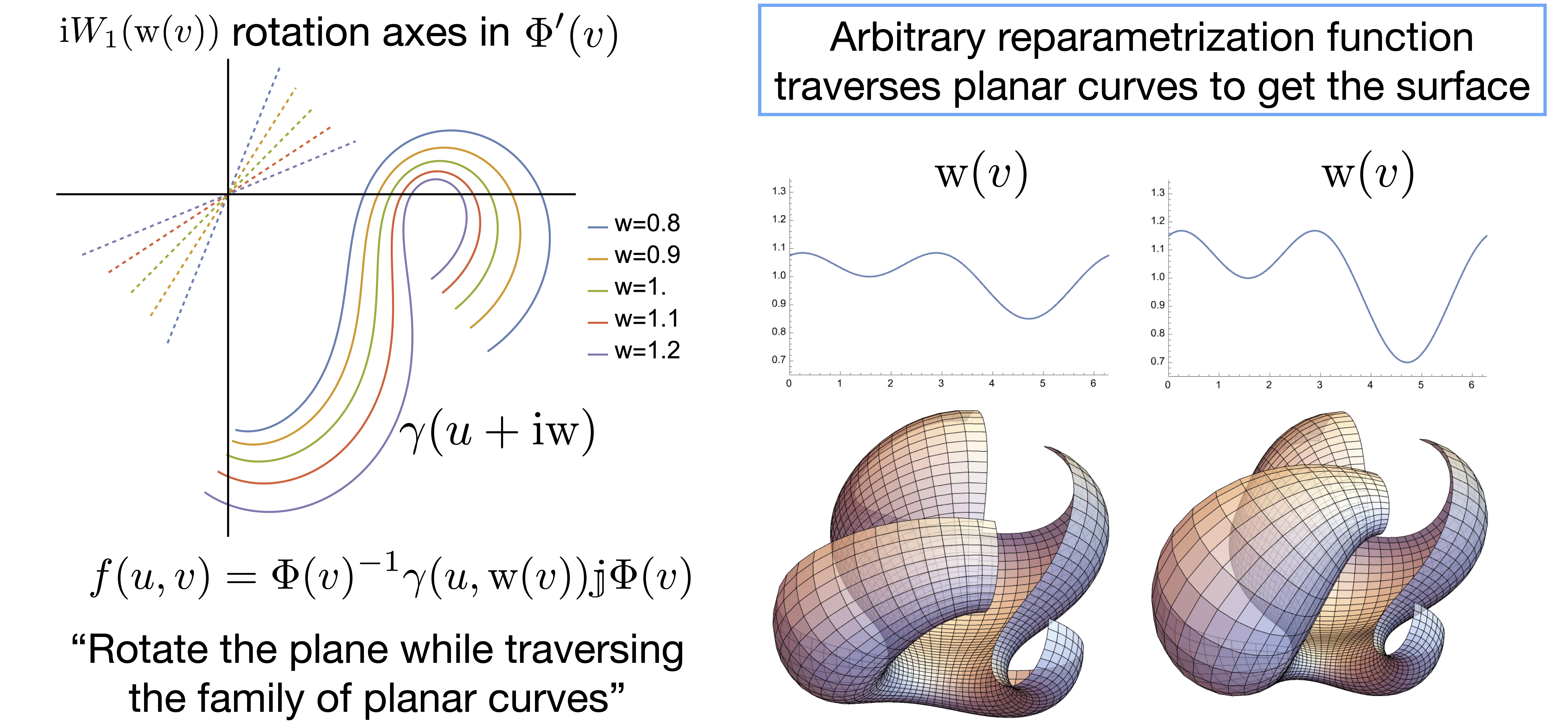}
	\caption{Left: An example family of curves $\gamma$ from a lattice of rhombic type and parameter $\omega$, together with their infinitesimal rotation axes. Choosing a reparametrization function $\w(v)$ determines an isothermic surface with one generic family of planar curvature lines. Right: Two example choices of reparametrization functions and their corresponding surfaces.}
	\label{fig:isothermic-reparametrization}
\end{figure}
\begin{proof}
	The proofs are similar for the rectangular and rhombic cases. We detail the rhombic case. We extend the metric into $\gamma_u = e^{h + \ci \sigma}$ and find $W_1(\w)$. Then we integrate the explicit formula for the family of planar curves $\gamma(u,\w)$ and find the immersion $f(u,v)$.
	
	The rhombic metric $e^h$ is given by \eqref{eq:rhombicMetric}, and is positive since
	\begin{align}
		e^{h(u,\w)} = \left( \frac{\vartheta_2\left(\frac{u + \ci \w - \omega}{2}\right)}{\vartheta_1\left(\frac{u + \ci \w + \omega}{2}\right)}e^{\frac{u}{2} \frac{\vartheta_2'(\omega)}{\vartheta_2(\omega)}} \right) \overline{\left( \frac{\vartheta_2\left(\frac{u + \ci \w - \omega}{2}\right)}{\vartheta_1\left(\frac{u + \ci \w + \omega}{2}\right)}e^{\frac{u}{2} \frac{\vartheta_2'(\omega)}{\vartheta_2(\omega)}} \right)}.
	\end{align}
	
	The analytic extension is therefore \eqref{eq:gammaURhombic}. Note we rotated by $-\ci$, when compared against Darboux's formulas for the family in the rectangular case.
	
Now we use the Riccati equation for the unitary factor in $\gamma_u = e^{h+ \ci \sigma}$, which we formulated as Proposition \ref{prop:Riccati_sigma} below, to derive the formula for  $W_1(\w)$. Indeed, $W_1(\w)$ is uniquely determined by \eqref{eq:sigmawForW1}, which coincides with \eqref{eq:Riccati_sigma}.

The formula \eqref{eq:gammaRhombic} for $\gamma$ is found by integrating \eqref{eq:gammaURhombic} with respect to the meromorphic coordinate $z = u + \ci \w$. Indeed, differentiating \eqref{eq:gammaRhombic} and equating it with \eqref{eq:gammaURhombic} is equivalent to
$$
\frac{\vartheta_1'(0)\vartheta_1(2\omega)}{\vartheta_2^2(\omega)}\frac{\vartheta_2^2(p-\frac{\omega}{2})}{\vartheta_1(p+\frac{\omega}{2})\vartheta_1(p-\frac{3\omega}{2})}=
\frac{\vartheta_1'(p-\frac{3\omega}{2})}{\vartheta_1(p-\frac{3\omega}{2})}-
\frac{\vartheta_1'(p+\frac{\omega}{2})}{\vartheta_1(p+\frac{\omega}{2})}+
2\frac{\vartheta_2'(\omega)}{\vartheta_2(\omega)}.
$$	
	Both sides of this identity are elliptic functions of $p=\frac{z}{2}$. They have singularities with coinciding residues, and a zero at $z=\omega+\pi$, which yields equality.
	
	With $\sigma_{\w} = h_u$ equation \eqref{eq:sigmawForW1} is equivalent to
	\begin{equation}
		e^{h(u,\w)} h_u(u,\w) = 2 \Re \left(W_1(\w) \overline{\gamma_u(u,\w)}\right).
	\end{equation}
As $W_1(\w)$ is independent of $u$, we integrate with respect to $u$ to find
	\begin{equation}
		\label{eq:metricAsRealPart}
		e^{h(u,\w)} = 2 \Re \left(W_1(\w) \overline{\gamma(u,\w)}\right).
	\end{equation}
To prove that the integration constant here must be zero we substitute the formulas for $e^h, \gamma$ and $W_1$:
\begin{align*}
\frac{\vartheta_2(\frac{u+\ci\w-\omega}{2})\vartheta_2(\frac{u-\ci\w-\omega}{2})}{\vartheta_1(\frac{u+\ci\w+\omega}{2})\vartheta_1(\frac{u-\ci\w+\omega}{2})}=
-\frac{\vartheta_2(\omega)}{\vartheta_1(2\omega)}
\left(
\frac{\vartheta_2(\omega-\ci\w)\vartheta_1(\frac{u-\ci\w-3\omega}{2})}{\vartheta_1(\ci\w)\vartheta_1(\frac{u-\ci\w+\omega}{2})}+
\frac{\vartheta_2(\omega+\ci\w)\vartheta_1(\frac{u+\ci\w-3\omega}{2})}{\vartheta_1(-\ci\w)\vartheta_1(\frac{u+\ci\w+\omega}{2})}
\right).
\end{align*}
Consider both parts of this identity as functions of $\tilde{u}=\frac{u}{2}$. They have the same multiplicative periodicity factors. Computing the residues of the left hand side at the poles $-\frac{\ci \w+\omega}{2}$ and $\frac{\ci \w-\omega}{2}$ we obtain the right hand side.
	
Further, to integrate the immersion $f$ we note that  
	\begin{align}
		[\gamma(u,\w) \qj, W_1(\w)\qk] = 2 \Re \left(W_1(\w) \overline{\gamma(u,\w)}\right) \qi = e^{h(u,\w)} \qi.
	\end{align}
	Therefore, we can write $f_v$ from \eqref{eq:fvIsothermicPlanar} as
	\begin{align}
		f_v &= \Phi^{-1} \left(- \cos \phi \, e^h \qi + \sin \phi \, \gamma_u \qk \right)\Phi 
			 = \Phi^{-1} \left([\gamma \qj,- \cos \phi \, W_1 \qk]
			+ \sin \phi \, \gamma_u \qk \right)\Phi \nonumber \\
			& = \Phi^{-1} \left([\gamma \qj, \Phi' \Phi^{-1}] 
			+ \gamma_v \qj \right) \Phi 
			 = (\Phi^{-1} \gamma \qj \Phi )_v. \label{eq:f_v}
	\end{align}
	 
	In the second to last equality we used that $\Phi'\Phi^{-1} = -\cos\phi W_1\qk$ from \eqref{eq:phiPrimePhiInverseW1Formula}, that $\sin\phi = \w'$ from \eqref{eq:sincosphi}, and that $\gamma_v = \w' \gamma_{\w} = \ci \w' \gamma_u$.
	
	Combinbing this expression for $f_v$ with $f_u = \Phi^{-1} \gamma_u \qj \Phi$ from \eqref{eq:fuIsothermicPlanar} proves that, up to a global constant translation, $f(u,v)$ is \eqref{eq:fImmersionsPlanar}.
\end{proof}
\begin{proposition}
\label{prop:Riccati_sigma}
The unitary factor in $\gamma_u=e^{h+\ci \sigma}$ satisfies the Riccati equation
\begin{equation}
\label{eq:Riccati_sigma}
\sigma_{\w}=We^{\ci \sigma}+W_1e^{-\ci\sigma}
\end{equation}
with functions $W_1$ and $W$ depending on $\w$ only and given by \eqref{eq:W1Rect}, \eqref{eq:W1Rhombic} and $W(\w)=\overline{W_1(\w)}$. Explicit formulas for $e^{\ci\sigma}$ for the two real types of the elliptic curve are as follows: 
		\begin{enumerate}
		\item(Rectangular)
		\begin{align}
			\label{eq:sigmaRect}
			e^{\ci\sigma(u,\w)} &=  -\ci \frac{\vartheta_4\left(\frac{u+\ci \w-\omega}{2}\right) \vartheta_1\left(\frac{u-\ci\w+\omega}{2}\right)}{\vartheta_1\left(\frac{u+\ci\w+\omega}{2}\right)\vartheta_4\left(\frac{u-\ci\w-\omega}{2}\right)}
			e^{ \ci \w\frac{\vartheta_4'(\omega)}{\vartheta_4(\omega)}},
		\end{align}
		\item(Rhombic) 
		\begin{align}
			\label{eq:sigmaRhombic}
						e^{\ci\sigma(u,\w)} &=  -\ci \frac{\vartheta_2\left(\frac{u+\ci \w-\omega}{2}\right) \vartheta_1\left(\frac{u-\ci\w+\omega}{2}\right)}{\vartheta_1\left(\frac{u+\ci\w+\omega}{2}\right)\vartheta_2\left(\frac{u-\ci\w-\omega}{2}\right)}
			e^{ \ci \w\frac{\vartheta_2'(\omega)}{\vartheta_2(\omega)}}.
		\end{align}
	\end{enumerate}
\end{proposition}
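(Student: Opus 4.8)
The plan is to establish the three assertions — the Riccati form \eqref{eq:Riccati_sigma}, the closed formula for $e^{\ci\sigma}$, and the closed formula for $W_1$ — in that order, each time reducing to something already at hand.

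The Riccati form is just a rewriting of \eqref{eq:sigmawForW1}. Lemma~\ref{lem:fPartialDerivatives} already supplies a function $W_1$ of $\w$ alone with $\sigma_{\w} = 2\,\Re\!\bigl(W_1(\w)\,e^{-\ci\sigma}\bigr)$; since $\sigma$ is real we have $\overline{e^{-\ci\sigma}} = e^{\ci\sigma}$, so $2\,\Re(W_1 e^{-\ci\sigma}) = W_1 e^{-\ci\sigma} + \overline{W_1}\,e^{\ci\sigma}$, and putting $W := \overline{W_1}$ gives \eqref{eq:Riccati_sigma} with $W,W_1$ depending on $\w$ only. Thus the actual content is the two closed forms.

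For $e^{\ci\sigma}$ I would use the defining relation $\gamma_u = e^{h+\ci\sigma}$ of \eqref{eq:gammaUExpHPISigma}, so that $e^{\ci\sigma} = \gamma_u\,e^{-h}$ with $h$ the real metric factor, and then factor the metric holomorphically. Writing $w = u + \ci\w$ and, in the rhombic case, $G(w) := \dfrac{\vartheta_2\!\left(\frac{w-\omega}{2}\right)}{\vartheta_1\!\left(\frac{w+\omega}{2}\right)}\,e^{\frac{w}{2}\frac{\vartheta_2'(\omega)}{\vartheta_2(\omega)}}$, the conjugation identities \eqref{eq:theta_conjugation} together with $\omega\in\R$ (hence also $\vartheta_2'(\omega)/\vartheta_2(\omega)\in\R$) give $\overline{G(w)} = G(\bar w)$, so that \eqref{eq:rhombicMetric} reads $e^{h} = G(w)\,\overline{G(w)}$ (which re-proves positivity). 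Comparing with \eqref{eq:gammaURhombic} gives $\gamma_u = -\ci\,G(w)^2$, whence $e^{\ci\sigma} = -\ci\,G(w)\big/\overline{G(w)} = -\ci\,G(w)\big/G(\bar w)$; written out in $u,\w$ this is exactly \eqref{eq:sigmaRhombic}, and $\lvert e^{\ci\sigma}\rvert = 1$ is automatic. The rectangular case is verbatim the same with $\vartheta_2\!\to\!\vartheta_4$, $q\in\R$, and $\overline{\vartheta_i(z)} = \vartheta_i(\bar z)$.

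For $W_1$, note that on the parameter range where $\sigma$ is nonconstant in $u$ the relation \eqref{eq:sigmawForW1} determines $W_1(\w)$ uniquely, so it is enough to check that the claimed closed form \eqref{eq:W1Rhombic} (resp.\ \eqref{eq:W1Rect}) solves \eqref{eq:Riccati_sigma}. Inserting the explicit $e^{\pm\ci\sigma}$ found above together with $\sigma_{\w} = h_u$ (Cauchy--Riemann, \eqref{eq:hsigmaCauchyRiemann}, with $h = \log G(w) + \log G(\bar w)$) and clearing denominators, the Riccati equation becomes a theta identity of the shape $G'(w)\,G(\bar w) + G(w)\,G'(\bar w) = \ci\,W_1(\w)\,G(\bar w)^2 - \ci\,\overline{W_1(\w)}\,G(w)^2$, both sides continued holomorphically in $u$. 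This is the one genuine computation, and I expect it to yield to the argument used throughout this section: both sides are quasi-elliptic in $\tilde u = u/2$ with the same multiplicative periodicity factors, their poles sit at the same points with matching principal parts, and they agree at a zero of $G(w)$, hence they coincide; alternatively one simply reads $\overline{W_1}$ and then $W_1$ off the residues at the two simple poles of $\sigma_\w$ as a function of $u$. The things to watch are the residue/quasi-period bookkeeping on both lattices, carrying the $-\ci$ normalization of $\gamma_u$ through consistently, and checking that the $\sigma$ fixed by $e^{\ci\sigma} = \gamma_u e^{-h}$ is the real $\sigma$ of Lemma~\ref{lem:fPartialDerivatives} (i.e.\ that its gauge, which removed the $v$-only summand $\tilde\sigma$, is the one in use). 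Uniqueness in \eqref{eq:sigmawForW1} then identifies the verified $W_1$ with the one of the lemma, and $W = \overline{W_1}$ has already been recorded.
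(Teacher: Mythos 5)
Your proposal is correct in substance but reaches the formulas by a genuinely different route than the paper. Your derivation of $e^{\ci\sigma}$ is clean and checks out: writing the metric as $e^{h}=G(w)\,G(\bar w)$ with $G(w)=\vartheta_2(\tfrac{w-\omega}{2})\vartheta_1(\tfrac{w+\omega}{2})^{-1}e^{\frac{w}{2}\vartheta_2'(\omega)/\vartheta_2(\omega)}$ and $\overline{G(w)}=G(\bar w)$ (via \eqref{eq:theta_conjugation} and the reality of $\vartheta_2'(\omega)/\vartheta_2(\omega)$), the relation $\gamma_u=-\ci G(w)^2$ immediately gives $e^{\ci\sigma}=-\ci G(w)/G(\bar w)$, which is exactly \eqref{eq:sigmaRhombic}; the paper never isolates this factorization so explicitly, and it is a nice clarification. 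Where you diverge is the verification of the $W_1$ formula. The paper does no new theta-function work at all: it observes that the identity implied by the $\sigma$-Riccati equation \eqref{eq:Riccati_sigma} with the coefficients \eqref{eq:W1Rhombic}, \eqref{eq:sigmaRhombic} is the already-proven identity \eqref{eq:theta_identity_metric} behind the metric Riccati equation \eqref{eq:riccati}, read after the interchange $u\leftrightarrow \ci\w$ (which swaps the roles of $U,U_1,e^{h}$ with $W,W_1,e^{\ci\sigma}$). You instead propose to verify the equivalent identity $G'(w)G(\bar w)+G(w)G'(\bar w)=\ci W_1 G(\bar w)^2-\ci\overline{W_1}G(w)^2$ from scratch by matching quasi-periodicity factors and residues in $u$. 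That method is exactly the one the paper uses for the other identities in this section (e.g.\ for integrating $\gamma$ and for $e^{h}=2\Re(W_1\overline\gamma)$), and it will work here — reading $W_1$ off the residue of $h_u$ at the zero of $\vartheta_2(\tfrac{w-\omega}{2})$ does reproduce \eqref{eq:W1Rhombic} — but note that you leave this, the only nontrivial computation in the proposition, as a plan rather than carrying it out. If you want to close that step without any fresh residue bookkeeping, the paper's $u\leftrightarrow\ci\w$ symmetry is the shortcut: it recycles the one hard identity already established for the metric. The trade-off is that your route is self-contained and makes visible where the formula for $W_1$ comes from, while the paper's is shorter but opaque. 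Your preliminary reduction of \eqref{eq:Riccati_sigma} to \eqref{eq:sigmawForW1} via $W=\overline{W_1}$, and your uniqueness remark identifying the verified $W_1$ with that of Lemma~\ref{lem:fPartialDerivatives}, are both fine.
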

\begin{proof}
One can observe that the theta functional identities implied by equations \eqref{eq:riccati} and  \eqref{eq:Riccati_sigma} coincide up to change of variables. In particular, in the rhombic case, substituting expressions \eqref{eq:rhombicU}, \eqref{eq:rhombicU1}, \eqref{eq:rhombicMetric} into \eqref{eq:riccati} and interchanging $u\leftrightarrow \ci\w$ we obtain the identity \eqref{eq:Riccati_sigma} with the coefficients given by \eqref{eq:W1Rhombic} and \eqref{eq:sigmaRhombic}. 
\end{proof}

\subsection{Immersion formulas: special case when curvature line planes are tangent to a cylinder}
The immersion formulas require special treatment when the parameter $\omega$ is $0$ or $\frac\pi2$.
 In these cases the planes of planar curvature lines become tangent not to a cone but to a cylinder. By direct computation one can prove the following.

\begin{lemma}
\label{lem:asymptotics_omega=0}
The theta functional formulas \eqref{eq:gammaRhombic}, \eqref{eq:W1Rhombic} for $\gamma$ and $W=\bar{W}_1$  in the rhombic case have the following asymptotic behavior for $\omega\to 0$: 
\begin{eqnarray}
\gamma(u,\w)&=&\frac{c}{\omega}+\hat{\gamma}(u,\w)+ o(1),\\
W(\w)&=&\hat{W}(\w)(1+\omega d(\w)+o(\omega)),\\
c&=&-\ci \frac{\vartheta_2^2(0)}{\vartheta_1^{'2}(0)},\\
\hat{\gamma}(u,\w)&=& -\ci \frac{\vartheta_2^{''}(0)\vartheta_2 (0)}{\vartheta_1^{'2}(0)}(u+\ci \w)+
2\ci \frac{\vartheta_2^2 (0) \vartheta_1' (\frac{1}{2}(u+\ci\w))}{\vartheta_1^{'2} (0)  \vartheta_1(\frac{1}{2}(u+\ci\w))},
\label{eq:hat_gamma}\\
\hat{W}(\w)&=& \ci \frac{\vartheta_1'(0) \vartheta_2(\ci\w)}{2\vartheta_2(0) \vartheta_1(\ci\w)},
\label{eq:hat_W}\\
d(\w)&=&\frac{\vartheta_2'(\ci\w)}{\vartheta_2(\ci\w)}-\ci\w \frac{\vartheta_2''(0)}{\vartheta_2(0)}.
\end{eqnarray}
In the rectangular case $\tau\in\ci{\mathbb R}$ one should replace all theta functions $\vartheta_2$ in these formulas by $\vartheta_4$ and $\vartheta_3$ for the asymptotics $\omega\to 0$ and $\omega\to\frac{\pi}{2}$ respectively. There is no limit $\omega\to\frac{\pi}{2}$ in the rhombic case.
\end{lemma}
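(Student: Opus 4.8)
The plan is a direct first-order asymptotic expansion of the rhombic formulas \eqref{eq:gammaRhombic} and \eqref{eq:W1Rhombic} as $\omega\to0$. Three elementary facts drive the computation: (i) $\vartheta_1$ has a simple zero at the origin and is odd, so $\vartheta_1(2\omega)=2\omega\,\vartheta_1'(0)\bigl(1+O(\omega^2)\bigr)$; (ii) $\vartheta_2$ is even and nonvanishing at the origin, so $\vartheta_2(\omega)=\vartheta_2(0)+O(\omega^2)$ and $\vartheta_2'(\omega)/\vartheta_2(\omega)=\omega\,\vartheta_2''(0)/\vartheta_2(0)+O(\omega^3)$; and (iii) for $(u,\w)$ ranging over a compact set avoiding the lattice, $\vartheta_1\bigl(\tfrac{u+\ci\w-3\omega}{2}\bigr)/\vartheta_1\bigl(\tfrac{u+\ci\w+\omega}{2}\bigr)=1-2\omega\,\vartheta_1'(p)/\vartheta_1(p)+O(\omega^2)$ with $p=\tfrac12(u+\ci\w)$, and $\vartheta_2(\omega-\ci\w)=\vartheta_2(\ci\w)\bigl(1-\omega\,\vartheta_2'(\ci\w)/\vartheta_2(\ci\w)+O(\omega^2)\bigr)$, the latter again using evenness of $\vartheta_2$.

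For $\gamma$: inserting (i)--(iii) into \eqref{eq:gammaRhombic}, the prefactor $-\ci\,2\vartheta_2(\omega)^2/\bigl(\vartheta_1'(0)\vartheta_1(2\omega)\bigr)$ reproduces the simple pole $c/\omega$ up to an $O(\omega)$ term, the $\vartheta_1$-ratio contributes the $\vartheta_1'(p)/\vartheta_1(p)$ piece of $\hat\gamma$, and the exponential $e^{(u+\ci\w)\vartheta_2'(\omega)/\vartheta_2(\omega)}$ contributes the piece linear in $u+\ci\w$; collecting the $\omega^{-1}$ and $\omega^0$ coefficients yields precisely $c$ and $\hat\gamma$ as in \eqref{eq:hat_gamma}, with the $o(1)$ remainder absorbing the higher-order terms, uniformly on compacta away from $u+\ci\w\in$ lattice.

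For $W=\overline{W_1}$: the same expansion of \eqref{eq:W1Rhombic} gives $W_1(\w)=\hat W(\w)\bigl(1-\omega\,d(\w)+O(\omega^2)\bigr)$, with $\hat W$ the $\omega=0$ value of $W_1$ (matching \eqref{eq:hat_W}) and $d$ as stated. The sign change to $+\omega\,d(\w)$ arises from conjugation: on a rhombic lattice $q=e^{\ci\pi\tau}$ is purely imaginary, so the relations \eqref{eq:theta_conjugation} apply, and a short check --- in which the phase factors $e^{-\ci\pi/4}$ and the sign from the oddness of $\vartheta_1$ combine --- shows $\overline{\hat W(\w)}=\hat W(\w)$ for real $\w$, while the oddness of $\vartheta_2'$ forces $\overline{d(\w)}=-d(\w)$; hence $W=\overline{W_1}=\hat W\bigl(1+\omega\,d+O(\omega^2)\bigr)$.

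The rectangular case $\omega\to0$ is the verbatim computation with $\vartheta_4$ (also even, nonzero at $0$) replacing $\vartheta_2$ and with $\rho=-\ci$, $z=u$. For $\omega\to\tfrac\pi2$ one substitutes $\omega=\tfrac\pi2+\varepsilon$ and uses $\vartheta_4(\cdot+\tfrac\pi2)=\vartheta_3(\cdot)$ together with $\vartheta_1(z\pm\pi)=-\vartheta_1(z)$ to fold the shifted arguments back, landing on the identical expansion with $\vartheta_3$ in place of $\vartheta_2$. I would also flag the one genuinely case-specific point: there is no $\omega\to\tfrac\pi2$ limit in the rhombic case because $\vartheta_2(\omega)^2$ in the numerator of \eqref{eq:gammaRhombic} has a double zero at $\omega=\tfrac\pi2$ while $\vartheta_1(2\omega)$ in the denominator has only a simple zero, so $\gamma$ stays bounded --- in fact $\gamma\to0$ --- rather than developing the $1/\omega$ pole that signals a cylinder limit. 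No step here is deep; the only part demanding genuine care is the conjugation argument for $W$, where tracking the $e^{-\ci\pi/4}$ factors and the parities of $\vartheta_1',\vartheta_2'$ is exactly what produces the stated $d$ with the correct sign.
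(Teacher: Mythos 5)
Your expansion is exactly the ``direct computation'' the paper invokes (it gives no further proof), and the core of it is correct: the prefactor $-\ci\,2\vartheta_2(\omega)^2/(\vartheta_1'(0)\vartheta_1(2\omega))$ produces $c/\omega$ with an $O(\omega)$ correction, the $\vartheta_1$-ratio contributes $1-2\omega\,\vartheta_1'(p)/\vartheta_1(p)+O(\omega^2)$, the exponential contributes the term linear in $u+\ci\w$ since $\vartheta_2'(\omega)/\vartheta_2(\omega)=\omega\,\vartheta_2''(0)/\vartheta_2(0)+O(\omega^3)$, and your conjugation argument for $W=\overline{W_1}$ --- $\hat W$ real, $d$ purely imaginary via \eqref{eq:theta_conjugation} and the parities of $\vartheta_1,\vartheta_2$ --- correctly turns $W_1=\hat W(1-\omega d+O(\omega^2))$ into $W=\hat W(1+\omega d+O(\omega^2))$. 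This matches the paper's observation that $\hat W$ is real valued and $c,d$ are purely imaginary.

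The one genuinely wrong step is your explanation of why there is no $\omega\to\frac{\pi}{2}$ limit in the rhombic case. You account only for the prefactor and conclude $\gamma$ stays bounded, in fact $\gamma\to 0$. That is false: since $\vartheta_2$ has a simple zero at $\frac{\pi}{2}$ (indeed $\vartheta_2(\frac{\pi}{2}+\varepsilon)=\vartheta_1(\pi+\varepsilon)=-\vartheta_1(\varepsilon)$), the logarithmic derivative in the exponent blows up,
\begin{align*}
\frac{\vartheta_2'(\omega)}{\vartheta_2(\omega)}=\frac{1}{\omega-\frac{\pi}{2}}+O\!\left(\omega-\tfrac{\pi}{2}\right),
\end{align*}
so the factor $e^{(u+\ci\w)\vartheta_2'(\omega)/\vartheta_2(\omega)}$ has an essential singularity at $\omega=\frac{\pi}{2}$ and $\gamma$ diverges for $u>0$ (and vanishes for $u<0$) as $\omega\to\frac{\pi}{2}^-$. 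The correct reason there is no limit is this essential singularity --- the singularity of $\gamma$ in $\omega$ is not a simple pole, so the structure $\frac{c}{\omega'}+\hat\gamma+o(1)$ required for the cylinder-tangent limit cannot be extracted. In the rectangular case this problem does not arise because there the relevant theta function $\vartheta_4$ is nonvanishing at $\frac{\pi}{2}$, so the exponent stays finite and the shift $\vartheta_4(\cdot+\frac{\pi}{2})=\vartheta_3(\cdot)$ reduces the computation to the $\omega\to 0$ one, as you say.
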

Observe that $\hat{\gamma}$ is complex valued, $\hat{W}$ is real valued, and $c$ and $d$ are purely imaginary.

\begin{remark}
\label{rem:thetas_234}
Since for $\tau\in\ci{\mathbb R}$ the theta constants $\vartheta_4''(0)$ and $\vartheta_3''(0)$ never vanish, there are no curves $\hat{\gamma}(u,\w)$ periodic in $u$ corresponding to the rectangular type with $\omega=0$ or $\omega=\frac{\pi}{2}$.
As explained in the next section there exists a unique  elliptic  curve of  rhombic type such that $\vartheta_2''(0|\tau_0)=0$. The corresponding curves $\hat{\gamma}(u,\w)$ are periodic in $u$.
\end{remark}
We proceed with the investigation of the rhombic case we are interested in the most. The other cases are considered in exactly the same way.

In the limit $\omega\to 0$ the formula $e^h=2 \Re(W\gamma)$ for the metric becomes 
$$
e^h=2\Re (\hat{W}\hat{\gamma})+2 cd \hat{W}
$$
with the coefficients from Lemma~\ref{lem:asymptotics_omega=0}. 
The frame satisfies 
$$
\Phi_v=\sqrt{1-\w^{' 2}}\hat{W}(\w)\qk \Phi,
$$
and can be explicitly integrated
\begin{equation}
\label{eq:a_v}
\Phi(v)=e^{a(v)\qk}\quad \text{with}\quad \frac{da}{dv}=\sqrt{1-\w^{' 2}}\hat{W}(\w).
\end{equation}
Now repeating the computations \eqref{eq:f_v} we obtain
\begin{equation}
\label{eq:f_v_omega=0}
f_v=(\Phi^{-1}\hat{\gamma}\qj\Phi)_v+ \sqrt{1-\w^{' 2}}\Phi^{-1}r\qi\Phi
\end{equation}
with $r=2cd\hat{W}$.  Integrating we come to the following result.
\begin{theorem}
\label{prop:f_omega=0}
Let $f(u,v)$ be an isothermic surface with one family ($u$-curves) of planar curvature lines parametrized as in Theorem~\ref{thm:localIsothermicPlanar} by \eqref{eq:fImmersionsPlanar} and \eqref{eq:W1ImmersionsPlanar}, but with parameter $\omega$ equal to $0$ or $\frac\pi2$. Then the planes of the planar curvature lines are tangent to a cylinder and the immersion $f(u,v)$ is given as follows.
\begin{eqnarray}
f(u,v)=\Imc(\hat{\gamma}(u,\w))\qk + \Re (\hat{\gamma}(u,\w))\qj e^{2a(v)\qk}+\nonumber\\ 
\int^v \sqrt{1-\w^{' 2}(\tilde{v})}r(w(\tilde{v}))\qi e^{2a(\tilde{v})\qk}d \tilde{v}, \label{eq:f_omega=0}
\end{eqnarray}
where in the rhombic $\omega=0$ case the coefficients are given by \eqref{eq:hat_gamma}, \eqref{eq:hat_W} and \eqref{eq:a_v}, and
$$
r(\w)=\frac{\vartheta_2(0) \vartheta_2(\ci\w)}{\vartheta_1'(0) \vartheta_1(\ci\w)}
\left(\frac{\vartheta_2'(\ci\w)}{\vartheta_2(\ci\w)}-\ci\w \frac{\vartheta_2''(0)}{\vartheta_2(0)} \right).
$$
For a rectangular lattice $\tau\in\ci{\mathbb R}$ one should replace all theta functions $\vartheta_2$ in these formulas by $\vartheta_4$ and $\vartheta_3$ for the cases $\omega= 0$ and $\omega=\frac{\pi}{2}$ respectively.
\end{theorem}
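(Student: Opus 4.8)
The plan is to obtain the statement by letting $\omega \to 0$ (and, for the rectangular lattice, also $\omega \to \tfrac{\pi}{2}$) in the generic cone formulas of Theorem~\ref{thm:localIsothermicPlanar}, carrying the computation of $f_v$ from the chain \eqref{eq:f_v} through the limit rather than taking a limit of $\Phi^{-1}\gamma\qj\Phi$ directly. The inputs are the expansions of Lemma~\ref{lem:asymptotics_omega=0}, $\gamma = c/\omega + \hat\gamma + o(1)$ and $W = \hat W(1 + \omega d + o(\omega))$, hence $W_1 = \overline W = \hat W(1 - \omega d + o(\omega))$, together with the structural facts that $c$ is purely imaginary, $\hat W$ is real valued, and $d$ is purely imaginary. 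The first point I would record is why the divergence of $\gamma$ is harmless: $\gamma$ enters the frame \eqref{eq:fuIsothermicPlanar}, \eqref{eq:fvIsothermicPlanar} only through $\gamma_u$ and through $e^h = 2\Re(W_1\overline\gamma)$, and in the latter the divergent part $-\hat W c/\omega$ has vanishing real part since $c$ is imaginary and $\hat W$ real, while the cross term $\hat W(1-\omega d)(-c/\omega)$ leaves the finite contribution $\hat W c d$, so $e^h \to 2\Re(\hat W\hat\gamma) + 2cd\hat W$. Likewise the limiting frame equation $\Phi_v = \sqrt{1-(\w')^2}\,\hat W(\w)\qk\,\Phi$ integrates explicitly --- everything commutes --- to $\Phi(v) = e^{a(v)\qk}$ with $a'(v) = \sqrt{1-(\w')^2}\,\hat W(\w)$, which is \eqref{eq:a_v}.

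The crux is recomputing $f_v$. In the cone case one used $[\gamma\qj, W_1\qk] = 2\Re(W_1\overline\gamma)\qi = e^h\qi$ to turn $-\cos\phi\,e^h\qi$ into $[\gamma\qj, \Phi'\Phi^{-1}]$. In the limit, by the same cancellations (the $1/\omega$ piece of the bracket vanishes because $c\qj$ is a real multiple of $\qk$) one finds $[\hat\gamma\qj,\hat W\qk] = 2\Re(\hat W\hat\gamma)\qi$, which is only part of $e^h\qi = (2\Re(\hat W\hat\gamma)+2cd\hat W)\qi$; the residue $2cd\hat W\,\qi$ is the new ingredient. Using $\Phi'\Phi^{-1} = -\cos\phi\,\hat W\qk$, $\cos\phi = -\sqrt{1-(\w')^2}$, and $\sin\phi\,\hat\gamma_u\qk = \w'\hat\gamma_{\w}\qj = \hat\gamma_v\qj$ (holomorphicity of $\hat\gamma$ in $u+\ci\w$ gives $\hat\gamma_{\w} = \ci\hat\gamma_u$, and $\sin\phi = \w'$), the chain \eqref{eq:f_v} becomes
\begin{align*}
f_v &= \Phi^{-1}\bigl([\hat\gamma\qj,\Phi'\Phi^{-1}] + \hat\gamma_v\qj\bigr)\Phi + \sqrt{1-(\w')^2}\,(2cd\hat W)\,\Phi^{-1}\qi\Phi \\
    &= (\Phi^{-1}\hat\gamma\qj\Phi)_v + \sqrt{1-(\w')^2}\,r\,\Phi^{-1}\qi\Phi, \qquad r = 2cd\hat W,
\end{align*}
which is \eqref{eq:f_v_omega=0}, while $f_u = \Phi^{-1}\gamma_u\qj\Phi \to (\Phi^{-1}\hat\gamma\qj\Phi)_u$ since $\Phi$ is $u$-independent.

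Integrating, $f = \Phi^{-1}\hat\gamma\qj\Phi + \int^v \sqrt{1-(\w'(\tilde v))^2}\,r(\w(\tilde v))\,\Phi^{-1}(\tilde v)\qi\Phi(\tilde v)\,d\tilde v$ up to a constant. Substituting $\Phi = e^{a\qk}$ and using that $\qk$ commutes with itself and anticommutes with $\qi,\qj$, so that $\Phi^{-1}\qk\Phi = \qk$, $\Phi^{-1}\qj\Phi = \qj e^{2a\qk}$, $\Phi^{-1}\qi\Phi = \qi e^{2a\qk}$, together with $\hat\gamma\qj = \Re(\hat\gamma)\qj + \Imc(\hat\gamma)\qk$, one gets $\Phi^{-1}\hat\gamma\qj\Phi = \Imc(\hat\gamma)\qk + \Re(\hat\gamma)\qj e^{2a(v)\qk}$ and therefore \eqref{eq:f_omega=0}; the explicit theta expression for $r$ follows by substituting $c$, $d$, $\hat W$ from Lemma~\ref{lem:asymptotics_omega=0}. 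That the curvature-line planes envelope a cylinder is immediate, since each plane $\Phi^{-1}(v)\,\mathrm{span}\{\qj,\qk\}\,\Phi(v)$ contains the fixed direction $\qk$. The rectangular cases $\omega \to 0$ and $\omega \to \tfrac{\pi}{2}$ are handled verbatim with $\vartheta_2$ replaced by $\vartheta_4$ and $\vartheta_3$ respectively. I expect the main obstacle to be the careful bookkeeping of the degeneration --- in particular, checking that the only trace of the $c/\omega$ blow-up surviving in $f_u, f_v$ is the finite residue $2cd\hat W$, which is exactly the mismatch between $e^h$ and $[\hat\gamma\qj,\hat W\qk]$ producing the genuinely new integral term that distinguishes the cylinder case from the cone case.
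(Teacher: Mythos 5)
Your proposal is correct and follows essentially the same route as the paper: the expansions of Lemma~\ref{lem:asymptotics_omega=0}, the limiting metric $e^h = 2\Re(\hat W\hat\gamma)+2cd\hat W$, the explicit integration $\Phi=e^{a(v)\qk}$, and the re-derivation of the chain \eqref{eq:f_v} producing the extra term $\sqrt{1-\w'^2}\,r\,\Phi^{-1}\qi\Phi$ with $r=2cd\hat W$, followed by integration in $v$ and the conjugation identities $\Phi^{-1}\qk\Phi=\qk$, $\Phi^{-1}\qj\Phi=\qj e^{2a\qk}$. The only cosmetic imprecision is the parenthetical claim that the $1/\omega$ piece of the bracket vanishes: the divergent part $[c\qj/\omega,\hat W\qk]=0$ indeed, but the cross term $[c\qj/\omega,-\omega d\hat W\qk]=2cd\hat W\,\qi$ is finite and is exactly the residue you recover instead from the mismatch between $e^h\qi$ and $[\hat\gamma\qj,\hat W\qk]$ (the two bookkeepings agree because $[\gamma\qj,W_1\qk]=e^h\qi$ holds identically in $\omega$), so the final accounting is right.
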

\begin{proof}
For the first term in \eqref{eq:f_v_omega=0} we have 
$$
\Phi^{-1}\hat{\gamma}\qj\Phi=e^{-a\qk}(\Re (\hat{\gamma})\qj +\Imc (\hat{\gamma})\qk)e^{a\qk}.
$$
The last term in \eqref{eq:f_v_omega=0} is a function of $v$ only and $r$ is real valued.
\end{proof}
The first two terms in \eqref{eq:f_omega=0} describe a curve in the  plane $(\qj, \qk)$  rotated about the $\qk$-axis, 
and the integral term describes its translation in the  plane $(\qi, \qj)$.

\subsection{Parametrization of the moduli space}

\begin{corollary}
	The moduli space of isothermic surfaces with one family of planar curvature lines is parametrized by a choice of real valued function $\w(v)$, a real parameter $\omega$, and a real lattice parameter $\Imc\tau$.
\end{corollary}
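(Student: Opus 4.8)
The plan is to assemble the parametrization from the pieces already established in the preceding sections, verifying that each datum appearing in the immersion formulas is genuinely free and that together they capture everything. First I would recall, from Theorem~\ref{lem:isothermicPlanarW}, that an isothermic surface with one generic family of planar curvature lines (setting aside the surface of revolution degeneration, where the metric depends on a single variable) is determined by the metric factor $h$ together with the reparametrization function $\w(v)$: the metric is harmonic in $u + \ci\w$ and solves the Riccati equation \eqref{eq:riccati}. The coefficients $U(u), U_1(u)$ of that Riccati equation are, by the Lam\'e analysis culminating in Proposition~\ref{prop:UU1BothCases}, real reductions of Hermite's one-gap solutions, hence pinned down up to the affine reparametrization $\alpha^2\theta(\alpha z + \beta)$ (which changes only the parametrization, not the geometry) by the lattice parameter $\tau$ and the zero $\omega$ of the product $\theta = UU_1$. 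Reality of $U, U_1$ forces, as shown there, $\tau \in \ci\R$ (rectangular) or $\tau \in \tfrac12 + \ci\R$ (rhombic); in either case the remaining lattice freedom is the single real number $\Imc\tau$.

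Next I would account for the integration constants that entered when passing from $U, U_1$ to the metric $e^h$ and then to the immersion. The complexified Riccati solution \eqref{eq:h_complex} carried a constant $c \in \C$, but imposing the harmonic equation $h_{zz} + h_{\w\w} = 0$ forced $c = \tfrac{\ci\w}{2}$ up to trivial normalizations, so no new parameter survives there. The conformal frame $\Phi(v)$ and angle $\phi(v)$ of Lemma~\ref{lem:fPartialDerivatives} are \emph{not} free: \eqref{eq:sincosphi} determines $\phi$ from $\w'$, and \eqref{eq:phiPrimePhiInverseW1Formula} together with the explicit $W_1(\w)$ of Theorem~\ref{thm:localIsothermicPlanar} determines $\Phi$ by a linear ODE up to a constant left multiplication, i.e.\ up to a Euclidean motion of the surface, which we quotient out. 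The holomorphic conjugate $\sigma$ of $h$ likewise involved an additive $v$-dependent function $\tilde\sigma(v)$, but this was absorbed by regauging $\Phi$. Hence, modulo Euclidean motions, the only surviving data are $\w(v)$, $\omega$, and $\Imc\tau$ — this gives the ``at most'' direction.

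For the converse direction I would invoke Theorem~\ref{thm:localIsothermicPlanar} (generic case) and Theorem~\ref{prop:f_omega=0} (the $\omega = 0, \tfrac\pi2$ cylinder case) directly: given any admissible $\w(v)$ (in the sense to be fixed in Definition~\ref{def:admissibleReparametrizationFunction}), any real $\omega$, and any choice of lattice type with parameter $\Imc\tau$, the stated formulas produce an isothermic surface with one family of planar curvature lines realizing exactly those parameters. One must check the map from parameters to surfaces is well-defined and that distinct parameter values give non-congruent surfaces, which follows because $\Imc\tau$ and $\omega$ are conformal invariants of the underlying elliptic curve and its distinguished point, while $\w(v)$ is recovered from the surface as the reparametrization function intrinsic to Theorem~\ref{lem:isothermicPlanarW}.

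The main obstacle I anticipate is bookkeeping rather than any single hard computation: one has to be scrupulous about which constants are ``trivial normalizations'' or Euclidean motions versus which are genuine moduli, and about the precise class of admissible $\w(v)$ (this is why the statement defers to Definition~\ref{def:admissibleReparametrizationFunction}). A secondary subtlety is the boundary between the generic cone case and the cylinder cases $\omega \in \{0, \tfrac\pi2\}$, and the exclusion of the rhombic $\omega \to \tfrac\pi2$ limit noted in Lemma~\ref{lem:asymptotics_omega=0} and Remark~\ref{rem:thetas_234}; I would state the corollary for the generic stratum and remark that the cylinder strata sit on its closure as codimension-one boundaries, with $\omega$ still a legitimate coordinate there.
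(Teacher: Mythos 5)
Your proposal is correct and matches the paper's intent: the paper gives no separate proof of this corollary, treating it as an immediate summary of Theorem~\ref{thm:localIsothermicPlanar} and Theorem~\ref{prop:f_omega=0}, and your argument is precisely the careful assembly of those results (Riccati/harmonic characterization, the Lam\'e analysis fixing $U,U_1$ by $\omega$ and $\Imc\tau$ up to trivial normalizations, the forced choice $c=\tfrac{\ci\w}{2}$, and the determination of $\Phi,\phi$ from $\w$ up to Euclidean motion). The only minor caveat is that the residual real scaling of $\rho$ acts as a homothety rather than a Euclidean motion, so strictly the parametrization is up to similarity (or with $\rho$ fixed by the convention of Proposition~\ref{prop:UU1BothCases}), a normalization the paper also leaves implicit.
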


\section{Planar curvature lines as hyperbolic elastica} 
\label{sec:hyperbolic-elastica}
Each isothermic surface with one generic family of planar curvature lines is given explicitly by
\begin{align*}
	f(u,v) &= \Phi(v) \gamma(u, \w(v))\qj \Phi(v), \\
	\Phi'(v)\Phi^{-1}(v) &= \sqrt{1-\w'(v)^2} W_1(\w(v)) \qk.
\end{align*}
The reparametrization function $\w(v)$ traverses a family of plane curves $\gamma(u,\w)$, while $\Phi(v)$ rotates the planes about axes encoded by $\ci W_1(\w)$. 

Each plane curve is infinitesimally rotated about an axis lying in its plane. We interpret this axis as the infinite boundary of a hyperbolic plane in the conformal half plane model. This allows to prove that each curve is a critical point of total squared hyperbolic curvature subject to constraints on length, area, and quasiperiodicity with translational period. In short, each curve is a constrained hyperbolic elastica.

\subsection{Conformal model, hyperbolic curvature, and elastica}
We study curves immersed into the hyperbolic plane $\mathrm{H^2}$ using the conformal Poincar\'e half-plane model given by $\mathrm{H^2} = \C_+ = \{ z = x + \ci y \in \C \, \vert \, y > 0, x,y \in \R\}$ with metric $ds^2 = \frac{dx^2 + dy^2}{y^2}$. Since this model is conformal, each osculating circle of a hyperbolic plane curve is the same as its corresponding Euclidean curve, only measured differently. Using this observation, we compute the curvature function of a hyperbolic curve in terms of its Euclidean unit tangent vector.

\begin{lemma}
	\label{lem:hyperbolicConstantSpeed}
	Let $L, a > 0$. An immersed curve $\tilde \gamma: [0, a L] \to \C_+$ has hyperbolic parametrization with constant speed $a$ if and only if \begin{align}
		\label{eq:hyperbolicConstantSpeed}
		\Imc \tilde \gamma(u) = a |\tilde \gamma _u(u)|_{\mathrm{Euc}}.
	\end{align}
\end{lemma}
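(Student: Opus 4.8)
The plan is to unwind the definition of hyperbolic arc length in the Poincar\'e half-plane model and read off the equivalence from a single chain of equalities. Write $\tilde\gamma(u) = x(u) + \ci y(u)$ with $x, y : [0, aL] \to \R$, so that $y(u) = \Imc\tilde\gamma(u) > 0$ since the image lies in $\C_+$. Because $\tilde\gamma$ is an immersion, $\tilde\gamma_u(u) \neq 0$ for every $u$, hence $|\tilde\gamma_u(u)|_{\mathrm{Euc}} = \sqrt{x_u(u)^2 + y_u(u)^2} > 0$, and all the quantities below are well defined and positive.

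Next I would compute the hyperbolic speed of $u \mapsto \tilde\gamma(u)$ by pulling back the metric $ds^2 = (dx^2 + dy^2)/y^2$ along $\tilde\gamma$:
\begin{align}
	\left( \frac{ds}{du} \right)^2 = \frac{x_u(u)^2 + y_u(u)^2}{y(u)^2} = \frac{|\tilde\gamma_u(u)|_{\mathrm{Euc}}^2}{\left(\Imc\tilde\gamma(u)\right)^2},
\end{align}
so the hyperbolic speed is the positive root $\frac{ds}{du} = \frac{|\tilde\gamma_u(u)|_{\mathrm{Euc}}}{\Imc\tilde\gamma(u)}$. By definition, $\tilde\gamma$ is a hyperbolic parametrization with constant speed $a$ precisely when this quantity equals the constant $a > 0$ for all $u \in [0, aL]$; clearing the strictly positive denominator $\Imc\tilde\gamma(u)$ turns this into exactly \eqref{eq:hyperbolicConstantSpeed}. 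Both implications follow simultaneously, since every step is reversible.

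Since the argument is a direct computation, there is no genuine obstacle; the only points requiring care are conventions and bookkeeping: that $\Imc\tilde\gamma = y$ is legitimately strictly positive (so it may sit in a denominator), that immersedness forces the speed to be nonzero (so that prescribing a positive constant speed $a$ is the only sensible normalization and no degenerate case intrudes), and that ``hyperbolic parametrization with constant speed $a$'' is read as $ds/du \equiv a$. With these fixed, the lemma is immediate from the conformal form of the metric, and in particular uses crucially that the model is conformal so that the Euclidean tangent $\tilde\gamma_u$ controls the hyperbolic speed up to the pointwise factor $1/\Imc\tilde\gamma$.
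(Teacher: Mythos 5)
Your proof is correct and follows essentially the same route as the paper's: both compute the hyperbolic speed as $|\tilde\gamma_u|_{\mathrm{Euc}}/\Imc\tilde\gamma$ from the conformal metric and clear the positive denominator to obtain the equivalence. Your version merely spells out the pullback computation and positivity bookkeeping in more detail.
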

\begin{proof}
	We have $|\tilde \gamma_u(u)|^2_{\mathrm{Hyp}} = |\tilde \gamma_u(u)|^2_{\mathrm{Euc}} / (\Imc \tilde \gamma(u))^2$ for all $u \in I$. As $\Imc \tilde \gamma > 0$ we conclude $a = |\tilde \gamma_u(u)|_{\mathrm{Hyp}}$ if and only if $\Imc \tilde \gamma(u) = a |\tilde \gamma _u(u)|_{\mathrm{Euc}}$.
\end{proof}

\begin{proposition}
	Let $L, a > 0$. Consider an immersed curve $\tilde \gamma: [0, a L] \to \C_+$ with hyperbolic parametrization of constant speed $a > 0$. Let $\tilde \sigma: [0, a L] \to \R$ be the angle function describing the Euclidean unit tangents via $e^{\ci \tilde \sigma}$. Then, the hyperbolic curvature function is 
	\begin{align}
		\label{eq:hyperbolicCurvatureFormula}
		\kappa_{\mathrm{Hyp}} = \tilde \sigma _s + \cos \tilde \sigma,
	\end{align}
	where $s = u/a$ is the hyperbolic arclength parameter of $\tilde \gamma$.
\end{proposition}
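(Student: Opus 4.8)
The plan is to exploit the conformality of the half-plane model: the osculating circle of $\tilde\gamma$ at a point is a genuine Euclidean object, so the hyperbolic curvature can be read off from the Euclidean geometry of $\tilde\gamma$ together with the pointwise conformal factor $1/\Imc\tilde\gamma$. Concretely, I would start from the general formula for how geodesic curvature transforms under a conformal change of metric. If $g_{\mathrm{Euc}}$ is rescaled to $g_{\mathrm{Hyp}} = e^{2\lambda} g_{\mathrm{Euc}}$ with $\lambda = -\log\Imc\tilde\gamma$, then for any curve the (signed) curvatures are related by
\begin{align}
	\kappa_{\mathrm{Hyp}} = e^{-\lambda}\left(\kappa_{\mathrm{Euc}} + \partial_N \lambda\right),
\end{align}
where $\partial_N$ is the derivative in the direction of the Euclidean unit normal (chosen consistently with orientation) and $\kappa_{\mathrm{Euc}}$, $\kappa_{\mathrm{Hyp}}$ are measured with respect to the respective arclengths. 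I would either quote this standard fact or derive it in one or two lines from the Koszul formula / the variation of the Christoffel symbols under $g\mapsto e^{2\lambda}g$ in dimension two.

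Next I would feed in the specifics. By Lemma~\ref{lem:hyperbolicConstantSpeed}, along $\tilde\gamma$ we have $\Imc\tilde\gamma = a|\tilde\gamma_u|_{\mathrm{Euc}}$, so $e^{\lambda} = 1/\Imc\tilde\gamma$ and $e^{-\lambda} = \Imc\tilde\gamma = a|\tilde\gamma_u|_{\mathrm{Euc}}$. Writing the Euclidean unit tangent as $e^{\ci\tilde\sigma}$ and the Euclidean unit normal as $\ci e^{\ci\tilde\sigma} = e^{\ci(\tilde\sigma + \pi/2)}$, the Euclidean curvature with respect to Euclidean arclength is $\kappa_{\mathrm{Euc}} = \tilde\sigma_{s_{\mathrm{Euc}}}$, and since Euclidean arclength differs from $u$ by the factor $|\tilde\gamma_u|_{\mathrm{Euc}}$ we get $e^{-\lambda}\kappa_{\mathrm{Euc}} = a|\tilde\gamma_u|_{\mathrm{Euc}}\cdot \tilde\sigma_u/|\tilde\gamma_u|_{\mathrm{Euc}} = a\,\tilde\sigma_u = \tilde\sigma_s$, using $s = u/a$. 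It remains to compute $e^{-\lambda}\partial_N\lambda$. Here $\lambda = -\log\Imc\tilde\gamma = -\log y$ along the curve, so $\nabla^{\mathrm{Euc}}\lambda = (0,-1/y)$, and $\partial_N\lambda = \langle \nabla^{\mathrm{Euc}}\lambda, N\rangle$ with $N = (-\sin\tilde\sigma, \cos\tilde\sigma)$ the Euclidean unit normal; this gives $\partial_N\lambda = -\cos\tilde\sigma/y$, hence $e^{-\lambda}\partial_N\lambda = y\cdot(-\cos\tilde\sigma/y) = -\cos\tilde\sigma$. The sign then depends on the orientation convention for the normal and the resulting sign of $\kappa_{\mathrm{Hyp}}$; choosing the normal so that the signs work out yields $\kappa_{\mathrm{Hyp}} = \tilde\sigma_s + \cos\tilde\sigma$, which is \eqref{eq:hyperbolicCurvatureFormula}.

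The main obstacle is bookkeeping of signs and orientations: the transformation rule for $\kappa$ under a conformal change has a sign tied to the choice of unit normal, and one must make that choice compatible with the convention implicit in \eqref{eq:hyperbolicCurvatureFormula} (so that, e.g., a horocycle or a geodesic gets the expected curvature). I would sanity-check the formula on two explicit curves — a vertical line $x = \mathrm{const}$ (a geodesic, $\tilde\sigma \equiv \pi/2$, giving $\kappa_{\mathrm{Hyp}} = 0$) and a horizontal line $y = \mathrm{const}$ (a horocycle, $\tilde\sigma \equiv 0$, giving $\kappa_{\mathrm{Hyp}} = 1$) — to pin down the orientation and confirm the $+\cos\tilde\sigma$ term and its sign. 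An alternative, perhaps cleaner, route that avoids quoting the conformal-transformation lemma is to parametrize by hyperbolic arclength from the outset, write $\tilde\gamma_s = \Imc\tilde\gamma\cdot e^{\ci\tilde\sigma}$ (from Lemma~\ref{lem:hyperbolicConstantSpeed} with $a=1$, after rescaling), differentiate again, and directly compute the hyperbolic covariant acceleration $\nabla_s\tilde\gamma_s$ using the Christoffel symbols of $ds^2 = (dx^2+dy^2)/y^2$ (namely $\Gamma^x_{xy} = -1/y$, $\Gamma^y_{xx} = 1/y$, $\Gamma^y_{yy} = -1/y$); the magnitude of the component of $\nabla_s\tilde\gamma_s$ orthogonal to $\tilde\gamma_s$ is $|\kappa_{\mathrm{Hyp}}|$, and expanding it gives $\tilde\sigma_s + \cos\tilde\sigma$ after the $y$-factors cancel. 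I would present whichever of the two computations is shorter, with the orientation fixed by the horocycle check.
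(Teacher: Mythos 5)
Your route is genuinely different from the paper's and, once one sign is fixed, it works. The paper never invokes the conformal transformation law for geodesic curvature: it uses conformality only to say that the osculating circle of $\tilde\gamma$ is the same point set in both metrics, computes its Euclidean center and radius, locates the hyperbolic center via the equidistance relation $y^2=y^+y^-$ for the top and bottom points of that circle, obtains $R_{\mathrm{Hyp}}=\frac12\log(y^+/y^-)$, and finishes with $\kappa_{\mathrm{Hyp}}=1/\tanh R_{\mathrm{Hyp}}$. That argument is elementary and self-contained, at the price of being somewhat ad hoc (and of formally manipulating $R_{\mathrm{Hyp}}$ even when the osculating circle meets the boundary). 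Your argument is shorter, makes the two terms $\tilde\sigma_s$ and $\cos\tilde\sigma$ appear for structural reasons (intrinsic curvature term plus gradient-of-conformal-factor term), and generalizes to any conformal metric; the cost is that you must import or derive the transformation law and nail its sign. Your alternative route via $\nabla_s\tilde\gamma_s$ and the Christoffel symbols is also valid and also differs from the paper.

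One concrete correction to your sketch: with the counterclockwise normal $N=\ci e^{\ci\tilde\sigma}=(-\sin\tilde\sigma,\cos\tilde\sigma)$ --- the convention under which $\kappa_{\mathrm{Euc}}=\tilde\sigma_{s_{\mathrm{Euc}}}$ --- the transformation law for $g_{\mathrm{Hyp}}=e^{2\lambda}g_{\mathrm{Euc}}$ is $\kappa_{\mathrm{Hyp}}=e^{-\lambda}\left(\kappa_{\mathrm{Euc}}-\partial_N\lambda\right)$, with a minus sign, not the plus sign you wrote. Then $\partial_N\lambda=\left\langle(0,-1/y),(-\sin\tilde\sigma,\cos\tilde\sigma)\right\rangle=-\cos\tilde\sigma/y$ and $-e^{-\lambda}\partial_N\lambda=+\cos\tilde\sigma$, as required. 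Your proposed repair of ``choosing the normal so that the signs work out'' cannot fix the $+$ version: reversing $N$ flips \emph{both} $\kappa_{\mathrm{Euc}}$ and $\partial_N\lambda$, hence only the overall sign of $\kappa_{\mathrm{Hyp}}$, never the relative sign of the two terms. Your horocycle check ($\tilde\sigma\equiv0$, expecting $\kappa_{\mathrm{Hyp}}=+1$ with the normal pointing away from the boundary) does detect the error, so the plan as a whole is sound; just be aware that the fix lives in the transformation law itself, not in the choice of normal.
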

\begin{proof}
The Euclidean centers $C_{\mathrm{Euc}}$ of the osculating circles are 
\begin{align}
	C_{\mathrm{Euc}} &= \tilde \gamma + \ci e^{\ci \tilde \sigma} R_{\mathrm{Euc}} \text{ with } \nonumber \\
	R_{\mathrm{Euc}} &= \kappa_{\mathrm{Euc}}^{-1} = \frac{|\tilde \gamma_u |_{\mathrm{Euc}}}{\tilde \sigma_u}, \label{eq:euclideanRadiusOfCurvature}
\end{align}
and where $R_{\mathrm{Euc}}, \kappa_{\mathrm{Euc}}$ are the Euclidean radius and curvature functions.

The hyperbolic centers $C_{\mathrm{Hyp}}$ of these osculating circles have the same real coordinate as the Euclidean centers, so we have to determine their imaginary parts. For each $u$, the pair of points $C_{\mathrm{Euc}}(u) \pm \ci R_{\mathrm{Euc}}(u)$ lie on the osculating circle and are hyperbolically equidistant to the hyperbolic center $C_{\mathrm{Hyp}}(u)$. Denote their imaginary parts by $y^\pm(u) = \Imc\left(C_{\mathrm{Euc}}(u) \pm \ci R_{\mathrm{Euc}}(u)\right)$ and $y(u) = \Imc C_{\mathrm{Hyp}}(u)$. Using \eqref{eq:euclideanRadiusOfCurvature} and \eqref{eq:hyperbolicConstantSpeed} we compute
\begin{align}
	y^\pm 
	&= \frac{|\tilde \gamma_u |_{\mathrm{Euc}}}{\tilde \sigma_u} ( a \tilde \sigma_u + \cos\tilde\sigma  \pm 1). \label{eq:yPMFormula}
\end{align}

Note that the hyperbolic distance between $\ci t^+, \ci t^- \in \C_+$ for $t^+ > t^- \in \R$ is $d_{\mathrm{Hyp}}( \ci t^+, \ci t^-) = \log \frac {t^+}{t^-}$. Thus, $y$ is hyperbolically equidistant to $y^\pm$ if and only if $\log\frac{y}{y^-} = \log\frac{y^+}{y}$, and so $y^2 = y^+ y^-$.
The hyperbolic radius of curvature $R_{\mathrm{Hyp}}$ is therefore 
\begin{align}
	R_{\mathrm{Hyp}} = \log \frac{y}{y^-} = \frac12 \log \frac{y^2}{(y^-)^2} = \frac12 \log\frac{y^+}{y^-}.
\end{align}
Plugging in $y^\pm$ from \eqref{eq:yPMFormula} yields
\begin{align}
		R_{\mathrm{Hyp}} = \frac12 \log\frac{a \tilde \sigma_u + \cos\tilde\sigma  + 1}{a \tilde \sigma_u + \cos\tilde\sigma  - 1}.
\end{align}
The hyperbolic curvature function is given by $\kappa_{\mathrm{Hyp}} = \frac{1}{\tanh R_{\mathrm{Hyp}}}$ and so we find
\begin{align}
	\kappa_{\mathrm{Hyp}} = a \tilde \sigma_u + \cos \tilde \sigma = \tilde \sigma_s + \cos \tilde \sigma,
\end{align}
where $s = u/a$ is the hyperbolic arclength of $\tilde \gamma$.
\end{proof}

The map $\kappa_{\mathrm{Hyp}}: [0, L] \to \R$ of an arclength parametrized curve determines $\tilde \gamma: [0, L] \to \mathrm{H^2} = \C_+$ uniquely up to hyperbolic motions.

Hyperbolic elastica are critical points of the elastic energy $\int_0^L \kappa_{\mathrm{Hyp}}^2 ds$ subject to the constraint of total hyperbolic length $L$. 
We want to study a particular class of quasiperiodic area constrained elastica. Namely, those that have a monodromy given by a hyperbolic translation along a geodesic. In the upper half plane model we normalize so that this translation is along the imaginary axis. In this case, hyperbolic translation is just a real scaling by $\mathrm{p} \in \R$. The period along the imaginary axis is $\mathrm{p} = \frac{\Imc \tilde \gamma(L)}{\Imc \tilde \gamma(0)}$. Thus, $\log \mathrm{p} = \log \Imc \tilde \gamma(L) - \log \Imc \tilde \gamma(0) = \int_0^L \sin \tilde \sigma \, ds$.

We constrain the enclosed area as follows. By the Gauss--Bonnet theorem the area of the hyperbolic triangle enclosed above an arclength element $ds$ is $\left( \tilde \sigma_s + \kappa_{\mathrm{Hyp}}\right) ds$. Integrating we obtain $\int_0^L \left( \tilde \sigma_s + \kappa_{\mathrm{Hyp}} \right) ds = \tilde \sigma(L) - \tilde\sigma(0) + \int_0^L \kappa_{\mathrm{Hyp}} ds.$ Quasiperiodicity means that $\sigma(L) = \tilde\sigma(0)$, so the boundary terms vanish.

\begin{definition}
An \emph{area constrained quasiperiodic hyperbolic elastica with translational monodromy} is an arclength parametrized curve $\tilde \gamma: [0, L] \to \mathrm{H^2} = C_+$ whose $\kappa_{\mathrm{Hyp}}: [0, L] \to \R$ is a critical point of the elastic energy
\begin{align}
	\label{eq:areaConstrainedElasticEnergyKHyp}
	\int_0^L \kappa_{\mathrm{Hyp}}^2 ds.
\end{align}
with fixed length $L > 0$, fixed area, and fixed translational period $\mathrm{p}$ with equal tangent directions at its endpoints.
\end{definition}

\begin{remark}
Admissible variations to derive hyperbolic elastica must preserve total hyperbolic length and boundary conditions. In the classical setup, the boundary conditions are fixed endpoints $\tilde{\gamma}(0), \tilde{\gamma}(L)$ and tangent directions $\tilde{\sigma}(0), \tilde{\sigma}(L)$. For (area constrained) quasiperiodic hyperbolic elastica only the translation period $\mathrm{p} = \frac{\Imc \tilde \gamma(L)}{\Imc \tilde \gamma(0)}$ with equal tangent directions $\tilde{\sigma}(0)=\tilde{\sigma}(L)$ are constrained. The endpoints $\tilde{\gamma}(0), \tilde{\gamma}(L)$ are not constrained.
\end{remark}

\begin{proposition}
Let $L, a > 0$. Consider an immersed curve $\tilde \gamma: [0, a L] \to \C_+$ with hyperbolic parametrization of constant speed $a > 0$. Let $\tilde \sigma: [0, a L] \to \R$ be the angle function describing the Euclidean unit tangents via $e^{\ci \tilde \sigma}$.

Then $\tilde \gamma$ is area constrained quasiperiodic hyperbolic elastica with translational monodromy if it is a critical point of the following energy $E_{\lambda, \delta}$ with Lagrange multipliers $\lambda, \delta \in \R$. 

\begin{align}
E_{\lambda,\delta}(\tilde \gamma) = \int_0^L \left( \tilde \sigma_s^2 + \cos^2 \tilde \sigma + \lambda \cos \tilde \sigma + \delta \sin \tilde \sigma \right)ds,
\end{align}
with fixed length $L > 0$ and equal tangent directions $\tilde \sigma(0) = \tilde \sigma(L)$ at its endpoints.
Here, $s = u/a$ is the hyperbolic arclength of $\tilde \gamma$.
\end{proposition}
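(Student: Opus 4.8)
The plan is to push the whole problem down to the scalar angle function $\tilde\sigma$ and then recognize $E_{\lambda,\delta}$ as the Lagrange functional attached to the length, area, and period constraints. First I would reconstruct $\tilde\gamma$ from $\tilde\sigma$: writing $\tilde\gamma = x + \ci y$ with $y > 0$, the fact that the Euclidean unit tangent is $e^{\ci\tilde\sigma}$ together with the constant-speed identity $\Imc\tilde\gamma = a\,|\tilde\gamma_u|_{\mathrm{Euc}}$ of Lemma~\ref{lem:hyperbolicConstantSpeed} forces $\tilde\gamma_u = \tfrac1a\,y\,e^{\ci\tilde\sigma}$, i.e.\ $y_u = \tfrac1a\,y\sin\tilde\sigma$ and $x_u = \tfrac1a\,y\cos\tilde\sigma$. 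Hence $\tilde\gamma$ is determined by $\tilde\sigma$ and the arclength normalization $s = u/a$ up to the hyperbolic isometries $z \mapsto \mu z + \nu$ ($\mu > 0$, $\nu \in \R$) preserving the imaginary axis. The elastic energy $\int_0^L\kappa_{\mathrm{Hyp}}^2\,ds$, the length $L$, the hyperbolic area, the translational period $\mathrm{p}$ (indeed $\log\mathrm{p} = \log y(aL) - \log y(0) = \int_0^L\sin\tilde\sigma\,ds$), and the endpoint tangent directions $e^{\ci\tilde\sigma(0)}, e^{\ci\tilde\sigma(L)}$ are all invariant under these isometries, so they descend to functionals of $\tilde\sigma$ alone. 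It therefore suffices to do the variational calculus over arclength-parametrized $\tilde\sigma: [0,L] \to \R$ with $\tilde\sigma(0) = \tilde\sigma(L)$.

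Next I would rewrite the relevant functionals using the curvature formula $\kappa_{\mathrm{Hyp}} = \tilde\sigma_s + \cos\tilde\sigma$ proved just above. Expanding and using $\tfrac{d}{ds}\sin\tilde\sigma = \tilde\sigma_s\cos\tilde\sigma$,
\begin{align*}
\int_0^L \kappa_{\mathrm{Hyp}}^2\, ds = \int_0^L \left( \tilde\sigma_s^2 + \cos^2\tilde\sigma \right) ds + 2\big( \sin\tilde\sigma(L) - \sin\tilde\sigma(0) \big),
\end{align*}
whose boundary term vanishes on curves with equal endpoint tangent directions. The Gauss--Bonnet computation preceding the definition of area constrained quasiperiodic hyperbolic elastica already identifies, under the same boundary condition, the area constraint with $\int_0^L\cos\tilde\sigma\,ds$ held fixed and the period constraint with $\int_0^L\sin\tilde\sigma\,ds$ held fixed, while the length constraint is encoded in the arclength parametrization on $[0,L]$.

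Finally I would apply the Lagrange multiplier principle in the asserted ``if'' direction. Assume $\tilde\sigma$ is a critical point of $E_{\lambda,\delta}$ among all variations fixing $L$ and the relation $\tilde\sigma(0) = \tilde\sigma(L)$, and let $\tilde\gamma_t$ be any variation admissible for the constrained elastica problem. By the previous paragraph such a variation additionally holds $\int_0^L\cos\tilde\sigma\,ds$ and $\int_0^L\sin\tilde\sigma\,ds$ constant, so the $t$-derivative at $0$ of $\lambda\int_0^L\cos\tilde\sigma\,ds + \delta\int_0^L\sin\tilde\sigma\,ds$ vanishes; combined with the energy identity above (whose boundary term is constant along the variation) this gives
\begin{align*}
\frac{d}{dt}\Big|_{0} \int_0^L \kappa_{\mathrm{Hyp}}^2\, ds = \frac{d}{dt}\Big|_{0} E_{\lambda,\delta}(\tilde\gamma_t) = 0,
\end{align*}
the last equality because the constrained-elastica variations form a subfamily of those allowed for $E_{\lambda,\delta}$. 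Thus $\tilde\gamma$ is critical for $\int_0^L\kappa_{\mathrm{Hyp}}^2\,ds$ under all variations preserving $L$, area, and period with equal endpoint tangent directions, which is exactly the assertion.

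The main obstacle is the bookkeeping of the reduction step: verifying carefully that variations of the immersed curve $\tilde\gamma$ in $\mathrm{H}^2$ correspond faithfully to variations of $\tilde\sigma$ modulo the normalizing isometries, that the admissible class for $E_{\lambda,\delta}$ genuinely contains that of the constrained elastica problem, and that energy, length, area, and period are all well-defined functionals of $\tilde\sigma$. Once this is in place the remaining steps are a routine use of the Lagrange heuristic together with the curvature formula.
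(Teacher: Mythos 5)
Your proposal is correct and follows essentially the same route as the paper: the core step in both is substituting $\kappa_{\mathrm{Hyp}} = \tilde\sigma_s + \cos\tilde\sigma$ into the Lagrange functional for the length/area/period-constrained elastic energy and observing that the cross term $2\tilde\sigma_s\cos\tilde\sigma$ and the multiplier term $\lambda\tilde\sigma_s$ integrate to boundary contributions that vanish when $\tilde\sigma(0)=\tilde\sigma(L)$, leaving exactly $E_{\lambda,\delta}$. Your additional bookkeeping (reconstruction of $\tilde\gamma$ from $\tilde\sigma$, and the explicit one-directional Lagrange-multiplier argument showing that criticality for $E_{\lambda,\delta}$ implies criticality for the constrained problem) only makes explicit what the paper leaves implicit.
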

\begin{proof}
Using \eqref{eq:hyperbolicCurvatureFormula} for $\kappa_{\mathrm{Hyp}}$ in \eqref{eq:areaConstrainedElasticEnergyKHyp}, together with Lagrange multipliers for the area and translational period constraints, we compute
\begin{align*}
	\int_0^L \kappa_{\mathrm{Hyp}}^2 + \lambda \, \kappa_{\mathrm{Hyp}} + \delta \sin \tilde \sigma \,  ds
	&= \int_0^L \left( (\tilde \sigma_s + \cos \tilde \sigma)^2 + \lambda (\tilde \sigma_s + \cos \tilde \sigma) + \delta \sin \tilde \sigma \right)ds \\
	&= \left( + 2 \sin \tilde\sigma + \lambda \tilde \sigma \right) \big \vert_0^L + E_{\lambda,\delta}(\tilde \gamma) = E_{\lambda,\delta}(\tilde \gamma).
\end{align*}
\end{proof}

The corresponding Euler--Lagrange equation describes an elliptic curve.

\begin{proposition}
	The Euler--Lagrange equation for $E_{\lambda,\delta}(\tilde \gamma)$ is expressed in terms of $\tilde Q = e^{\ci \tilde \sigma}$ by
	\begin{align}
		\tilde Q_s^2 + \frac14 \tilde Q^4 + \frac{\lambda - \ci \delta}{2} \tilde Q^3 + \mu \, \tilde Q^2  + \frac{\lambda + \ci \delta}{2} \tilde Q + \frac14 = 0,
	\end{align}
	for some constants $\lambda, \delta, \mu \in \R$.
\end{proposition}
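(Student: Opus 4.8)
The plan is to view $E_{\lambda,\delta}$ as the action of the single scalar unknown $\tilde\sigma(s)$ on $[0,L]$, with autonomous Lagrangian $\mathcal{L}(\tilde\sigma,\tilde\sigma_s)=\tilde\sigma_s^2+\cos^2\tilde\sigma+\lambda\cos\tilde\sigma+\delta\sin\tilde\sigma$, to derive its Euler--Lagrange equation, integrate it once using the absence of explicit $s$-dependence, and then rewrite the resulting first integral in the variable $\tilde Q=e^{\ci\tilde\sigma}$. The statement should be read in this integrated sense, with $\mu$ the resulting constant of integration.

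First I would compute $\frac{d}{ds}\mathcal{L}_{\tilde\sigma_s}-\mathcal{L}_{\tilde\sigma}=0$, which here is $2\tilde\sigma_{ss}+\sin 2\tilde\sigma+\lambda\sin\tilde\sigma-\delta\cos\tilde\sigma=0$; the boundary term in the first variation drops because admissible variations are periodic (the constraint $\tilde\sigma(0)=\tilde\sigma(L)$ is imposed on them), equivalently a critical curve meets the natural condition $\tilde\sigma_s(0)=\tilde\sigma_s(L)$, so no extra terms survive. Multiplying this equation by $\tilde\sigma_s$ and integrating once---equivalently, using that $\tilde\sigma_s\mathcal{L}_{\tilde\sigma_s}-\mathcal{L}$ is conserved for an autonomous Lagrangian---gives
\begin{align*}
\tilde\sigma_s^2=\cos^2\tilde\sigma+\lambda\cos\tilde\sigma+\delta\sin\tilde\sigma-E_0
\end{align*}
for some real constant $E_0$.

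Finally I would substitute $\tilde Q=e^{\ci\tilde\sigma}$. From $\tilde Q_s=\ci\tilde\sigma_s\tilde Q$ one has $\tilde\sigma_s^2=-\tilde Q_s^2/\tilde Q^2$, and using $\cos\tilde\sigma=\tfrac12(\tilde Q+\tilde Q^{-1})$, $\sin\tilde\sigma=-\tfrac{\ci}{2}(\tilde Q-\tilde Q^{-1})$ (note $1/\ci=-\ci$, which is exactly what produces the conjugate pair of coefficients $\tfrac{\lambda\mp\ci\delta}{2}$) and $\cos^2\tilde\sigma=\tfrac14(\tilde Q^2+2+\tilde Q^{-2})$, the first integral becomes, after multiplying through by $-\tilde Q^2$,
\begin{align*}
\tilde Q_s^2+\tfrac14\tilde Q^4+\tfrac{\lambda-\ci\delta}{2}\tilde Q^3+\bigl(\tfrac12-E_0\bigr)\tilde Q^2+\tfrac{\lambda+\ci\delta}{2}\tilde Q+\tfrac14=0,
\end{align*}
which is the asserted relation with $\mu=\tfrac12-E_0\in\R$. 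I do not expect a genuine obstacle here: the only care needed is the sign bookkeeping when converting the trigonometric functions to $\tilde Q$, and recording that the first integral is legitimate purely because $\mathcal{L}$ is $s$-independent, so the prescribed periodic boundary condition is not what produces it. (One may further note that the relation $\tilde Q_s^2=-(\text{quartic in }\tilde Q)$ exhibits $\tilde Q$ as an elliptic function of $s$, which is the point of the sentence preceding the proposition.)
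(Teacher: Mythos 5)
Your proposal is correct and follows essentially the same route as the paper: derive the Euler--Lagrange equation for $\tilde\sigma$, multiply by $\tilde\sigma_s$ and integrate once to obtain the first integral with constant $\mu$, then substitute $\tilde Q=e^{\ci\tilde\sigma}$ and clear denominators by multiplying by $\tilde Q^2$. Your sign bookkeeping and the identification $\mu=\tfrac12-E_0$ are consistent with the paper's version (which writes the first integral with $\tfrac12\cos 2\tilde\sigma$ in place of your $\cos^2\tilde\sigma$).
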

\begin{proof}
	Variational calculus yields the Euler--Lagrange equation in the form 
	\begin{align}
		- 2 \tilde \sigma_{ss} - \underbrace{2 \cos \tilde \sigma \sin \tilde \sigma}_{\sin 2 \tilde \sigma} - \lambda \sin \tilde \sigma  + \delta \cos \tilde \sigma = 0.
	\end{align}
	Multiplying by $\tilde \sigma_s$ and integrating yields
$
		- \tilde \sigma_{s}^2 + \frac12 \cos 2 \tilde \sigma + \lambda \cos \tilde \sigma + \delta \sin \tilde \sigma + \mu = 0
$	for some $\mu \in \R$. Lastly, multiply through by $e^{2 \ci \tilde \sigma}$ and rewrite cosine and sine in terms of exponentials.
\end{proof}

\subsection{Proofs that the $\gamma$ curves are hyperbolic elastica}

\begin{figure}
	\includegraphics[width=\linewidth]{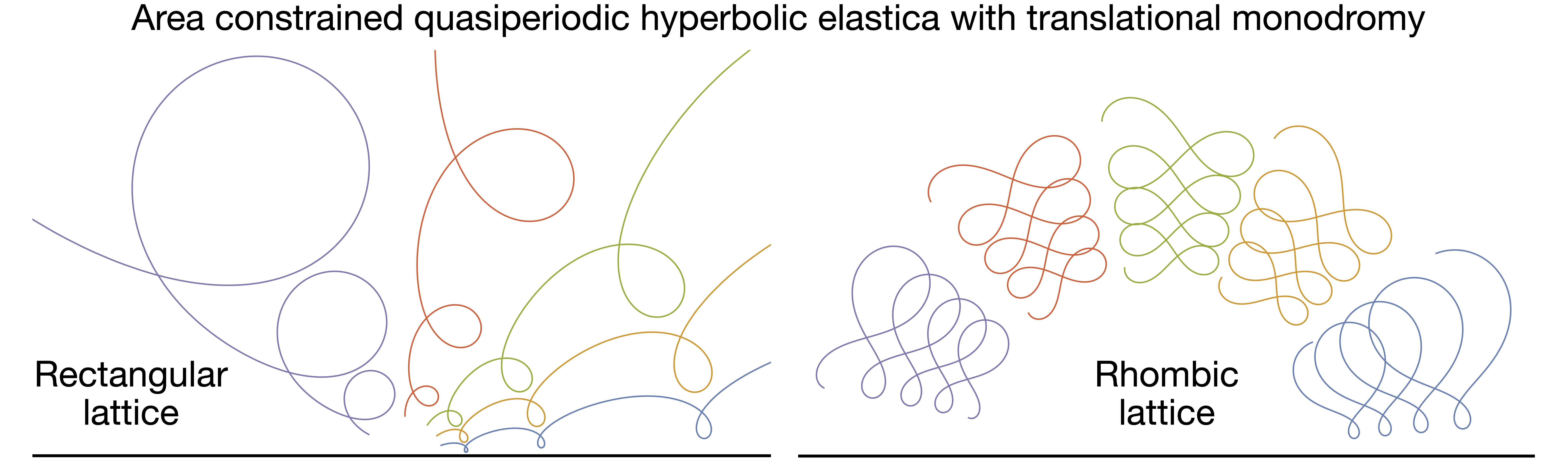}
	\caption{Example hyperbolic standardizations $u \mapsto \tilde \gamma(u,\w)$ for five values of $\w$ in the half-plane model. Left: Rectangular case with $\tau \approx 0.953571 \, \ci $ and parameter $\omega = 0.3$. Rectangular lattices never have closed curves, see Lemma~\ref{lem:rectangularNeverClosed}. Right: Rhombic case with $\tau = \frac12 + \frac{25}{78} \ci$ and parameter $\omega = 0.08$. Some rhombic lattices admit a critical value of $\omega$ so that the curves close, see Lemma~\ref{lem:rhombic_zeros}.}
	\label{fig:hyperbolicElastica-examples}
\end{figure}

Consider a planar curve $u \mapsto \gamma(u, \w)$ with its axis spanned by $\ci W_1(\w)$.
To use the standard upper half-plane model of the hyperbolic plane, we rotate so that $\ci W_1(\w)$ lies along the real axis and $\gamma$ lies above it. Examples are shown in Figure~\ref{fig:hyperbolicElastica-examples}.

\begin{definition}
For each $\w$, we define \emph{the hyperbolic standardization of $\gamma$} by
\begin{align}
\tilde \gamma(u,\w) = -\frac{|W_1(\w)|}{\ci W_1(\w)} \gamma(u, \w).
\end{align}
\end{definition}

\begin{proposition}
Let $\tilde \gamma$ be the hyperbolic standardization of $\gamma$. Then, $\tilde \gamma$ is parametrized with constant hyperbolic speed $a = 2 | W_1(\w))|.$ In particular, $s \mapsto \tilde \gamma(s/a)$ is parametrized by hyperbolic arclength.
\end{proposition}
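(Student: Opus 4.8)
The goal is to show that the hyperbolic standardization $\tilde\gamma(u,\w) = -\frac{|W_1(\w)|}{\ci W_1(\w)}\gamma(u,\w)$ is parametrized with constant hyperbolic speed $a = 2|W_1(\w)|$; by Lemma~\ref{lem:hyperbolicConstantSpeed} this is exactly the claim that $\Imc\tilde\gamma(u) = a\,|\tilde\gamma_u(u)|_{\mathrm{Euc}}$ for all $u$. Since the multiplier $-\frac{|W_1(\w)|}{\ci W_1(\w)}$ is a unit complex number, it does not change Euclidean lengths, so $|\tilde\gamma_u|_{\mathrm{Euc}} = |\gamma_u|_{\mathrm{Euc}} = e^{h(u,\w)}$ by \eqref{eq:gammaUExpHPISigma}. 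Thus it suffices to prove that $\Imc\tilde\gamma(u,\w) = 2|W_1(\w)|\,e^{h(u,\w)}$.

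The key input is the identity \eqref{eq:metricAsRealPart}, namely $e^{h(u,\w)} = 2\Re\bigl(W_1(\w)\,\overline{\gamma(u,\w)}\bigr)$, established inside the proof of Theorem~\ref{thm:localIsothermicPlanar}. (In the $\omega = 0,\tfrac\pi2$ cylinder case one uses the analogous formula $e^h = 2\Re(\hat W\hat\gamma) + 2cd\hat W$ from Section~2; alternatively one just notes the standardization formula is continuous in $\omega$ and passes to the limit.) First I would rewrite $2\Re(W_1\overline\gamma)$ as $W_1\overline\gamma + \overline{W_1}\gamma = \overline{W_1}\bigl(\gamma + \tfrac{W_1}{\overline{W_1}}\overline\gamma\bigr)$. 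Now compute $\Imc\tilde\gamma$ directly: writing $\epsilon = -\frac{|W_1|}{\ci W_1}$ (a unit complex number, $|\epsilon|=1$), one has $\Imc\tilde\gamma = \Imc(\epsilon\gamma) = \tfrac{1}{2\ci}(\epsilon\gamma - \overline{\epsilon\gamma}) = \tfrac{1}{2\ci}(\epsilon\gamma - \overline\epsilon\,\overline\gamma)$. Substituting $\epsilon = -\frac{|W_1|}{\ci W_1} = \frac{\ci|W_1|}{W_1}$ and $\overline\epsilon = \frac{-\ci|W_1|}{\overline{W_1}}$, this becomes $\tfrac{1}{2\ci}\cdot\ci|W_1|\bigl(\tfrac{\gamma}{W_1} + \tfrac{\overline\gamma}{\overline{W_1}}\bigr) = \tfrac{|W_1|}{2}\cdot\tfrac{W_1\overline\gamma + \overline{W_1}\gamma}{|W_1|^2} = \tfrac{1}{2|W_1|}\cdot 2\Re(W_1\overline\gamma) = \tfrac{1}{2|W_1|}e^{h}$.

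That last line gives $\Imc\tilde\gamma = \tfrac{1}{2|W_1|}e^h$, which is off by the wrong power of $2|W_1|$ — so the main thing to get right is the bookkeeping of the unit complex factor and its conjugate, and double-checking the sign/normalization convention of $\epsilon$ against the stated speed $a = 2|W_1|$; I expect the correct computation to yield $\Imc\tilde\gamma = 2|W_1(\w)|\,e^h$ once the definition is read precisely (the factor $2$ in $\gamma_u = -\ci\gamma_\w = \ldots$ versus the factor $2$ in the Riccati coefficient $W_1$, and possibly a convention whereby the relevant metric identity carries a compensating factor). Concretely, the step I expect to be the only real obstacle is verifying that $\Imc\tilde\gamma = 2|W_1|\,e^h$ rather than some other multiple of $|W_1|^{\pm1}e^h$; once that is pinned down, positivity of $\Imc\tilde\gamma$ (so that $\tilde\gamma$ really lands in $\C_+$) follows because $e^h > 0$ and $|W_1| > 0$, and the ``in particular'' clause is immediate: if $|\tilde\gamma_u|_{\mathrm{Hyp}} = a$ is constant then $s = u/a$ is a unit-speed parameter. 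I would close by remarking that the case $\omega \in\{0,\tfrac\pi2\}$ is handled identically using the cylinder-case metric formula from Theorem~\ref{prop:f_omega=0}, or by continuity in $\omega$.
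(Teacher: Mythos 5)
Your setup and your computation are exactly the paper's: the paper also notes $|\tilde\gamma_u|=|\gamma_u|=e^h$ and rewrites \eqref{eq:metricAsRealPart} in terms of $\tilde\gamma$, obtaining $e^{h}=2|W_1(\w)|\,\Imc\tilde\gamma$, i.e.\ $\Imc\tilde\gamma=\tfrac{1}{2|W_1|}e^{h}$ --- precisely the identity you derived. The problem is your last paragraph, where you disown this correct result and assert that the ``right'' answer should be $\Imc\tilde\gamma=2|W_1|\,e^{h}$ and that some factor-of-$2|W_1|$ bookkeeping must be fixed. That target identity is false, and no reading of the definitions will produce it. The half-plane metric gives $|\tilde\gamma_u|_{\mathrm{Hyp}}=|\tilde\gamma_u|_{\mathrm{Euc}}/\Imc\tilde\gamma=e^{h}/\Imc\tilde\gamma$, so the statement ``$\tilde\gamma$ has hyperbolic speed $a=2|W_1|$'' is exactly the statement $\Imc\tilde\gamma=\tfrac{1}{a}\,e^{h}=\tfrac{1}{2|W_1|}e^{h}$, which is what you proved. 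Had $\Imc\tilde\gamma=2|W_1|\,e^{h}$ held, the speed would be $(2|W_1|)^{-1}$, not $2|W_1|$. So your proof is complete one sentence before you start doubting it; you only need to add the line $|\tilde\gamma_u|_{\mathrm{Hyp}}=e^{h}\big/\bigl(\tfrac{1}{2|W_1|}e^{h}\bigr)=2|W_1|$, after which the ``in particular'' clause is immediate.

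The source of your confusion is real but is not in your computation: the displayed criterion \eqref{eq:hyperbolicConstantSpeed} in Lemma~\ref{lem:hyperbolicConstantSpeed} reads $\Imc\tilde\gamma=a\,|\tilde\gamma_u|_{\mathrm{Euc}}$, whereas its own one-line proof (from $|\tilde\gamma_u|_{\mathrm{Hyp}}=|\tilde\gamma_u|_{\mathrm{Euc}}/\Imc\tilde\gamma$) shows that this condition corresponds to $|\tilde\gamma_u|_{\mathrm{Hyp}}=1/a$; the $a$ in that display is the reciprocal of the speed in the usual sense (equivalently, the display should read $a\,\Imc\tilde\gamma=|\tilde\gamma_u|_{\mathrm{Euc}}$ to match ``speed $a$'' and ``$s\mapsto\tilde\gamma(s/a)$ is arclength''). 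You trusted the literal display over the metric computation; do the opposite, keep your identity $\Imc\tilde\gamma=\tfrac{1}{2|W_1|}e^{h}$, and the proof closes. Your remarks about positivity of $\Imc\tilde\gamma$ and about handling $\omega\in\{0,\tfrac\pi2\}$ by the cylinder-case metric formula or by continuity are fine.
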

\begin{proof}
Note that $| \tilde \gamma_u| = | \gamma_u | = e^h$. Now, rewrite \eqref{eq:metricAsRealPart} in terms of $\tilde \gamma$. This leads to $e^{h(u,\w)}$ equal to $2 \Re \left(W_1(\w) \overline{(- \ci \frac{W_1(\w)}{|W_1(\w)|} \tilde \gamma(u,\w))}\right) = 2 |W_1(\w)| \Imc \tilde \gamma.$
By Lemma~\ref{lem:hyperbolicConstantSpeed}, $\tilde \gamma$ has constant hyperbolic speed $2 |W_1(\w)|$.
\end{proof}

We use the Euclidean unit tangent vectors to study $\gamma$, and then rewrite the result in terms of $\tilde \gamma$.

\begin{lemma}
Let $e^{\ci \sigma(u, \w)}$ be the unit tangents from the family of curves $\gamma(u, \w)$. Then there exists a real valued function $W_2(\w)$ such that
\begin{align}\nonumber
	\left( \frac{\partial}{\partial u} e^{\ci \sigma(u,\w)}\right)^2 &=
	e^{4 \ci \sigma(u,\w)} W(\w)^2 - 
	2 \ci e^{3 \ci \sigma(u,\w)} W'(\w)  + \\ & +  
	e^{2 \ci \sigma(u,\w)} W_2(\w) + 
	2 \ci e^{\ci \sigma(u,\w)} W_1'(\w) + W_1(\w)^2.
	\label{eq:expISigmaEllipticCurve}
\end{align}
\end{lemma}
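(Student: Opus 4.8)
The plan is to run, for the Riccati equation \eqref{eq:Riccati_sigma} satisfied by the unitary factor $e^{\ci\sigma}$, the same ``first integral'' argument that produced \eqref{eq:hwSquaredU2} from the metric Riccati equation \eqref{eq:riccati}. Two inputs are needed: the Riccati equation $\sigma_{\w} = W e^{\ci \sigma} + W_1 e^{-\ci \sigma}$ of Proposition~\ref{prop:Riccati_sigma}, whose coefficients $W(\w), W_1(\w)$ depend on $\w$ only and satisfy $W = \overline{W_1}$; and the harmonicity of $\sigma$ in $(u,\w)$, which follows from $h + \ci \sigma$ being holomorphic with respect to $u + \ci \w$ (Lemma~\ref{lem:fPartialDerivatives}, equation \eqref{eq:hsigmaCauchyRiemann}). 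Note that the roles of $u$ and $\w$ are interchanged relative to the metric computation: \eqref{eq:Riccati_sigma} is an ODE in $\w$ for each fixed $u$, so the spectator variable is now $u$ and the analogue of $h_{\w}^2$ is $\sigma_u^2$.

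First I would differentiate \eqref{eq:Riccati_sigma} with respect to $\w$ and eliminate $\sigma_{\w}$ using \eqref{eq:Riccati_sigma} itself; the cross terms $\pm\ci W W_1$ cancel, leaving
\[ \sigma_{\w\w} = W' e^{\ci \sigma} + \ci W^2 e^{2 \ci \sigma} + W_1' e^{-\ci \sigma} - \ci W_1^2 e^{-2 \ci \sigma}. \]
Harmonicity gives $\sigma_{uu} = -\sigma_{\w\w}$, an explicit expression in the exponentials $e^{k \ci \sigma}$, $k \in \{-2,-1,1,2\}$, with $u$-independent coefficients. Multiplying by $2\sigma_u$ turns the left side into $\partial_u(\sigma_u^2)$; since $\partial_u e^{k \ci \sigma} = k \ci \, \sigma_u e^{k \ci \sigma}$ and $W, W_1$ do not depend on $u$, the right side is $\partial_u$ of an explicit combination of the same exponentials. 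Integrating in $u$ produces an integration ``constant'' depending only on $\w$, which I name $-W_2(\w)$, and gives
\[ \sigma_u^2 = - W^2 e^{2 \ci \sigma} + 2 \ci W' e^{\ci \sigma} - 2 \ci W_1' e^{-\ci \sigma} - W_1^2 e^{-2 \ci \sigma} - W_2. \]
Multiplying through by $-e^{2 \ci \sigma}$ and using $\left( \partial_u e^{\ci \sigma} \right)^2 = \left( \ci \sigma_u e^{\ci \sigma} \right)^2 = - \sigma_u^2 e^{2 \ci \sigma}$ yields exactly \eqref{eq:expISigmaEllipticCurve}.

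It remains to check that $W_2$ is real valued. Solving the last display for $W_2$ and conjugating, one uses that $\sigma$ is real (so $\overline{e^{\ci \sigma}} = e^{-\ci \sigma}$ and $\sigma_u^2 \in \R$) together with $W = \overline{W_1}$, and hence $W' = \overline{W_1'}$ since $\w$ is a real variable, to see that the conjugate of the right-hand side coincides with itself; thus $\overline{W_2} = W_2$. The whole argument is routine; the only points requiring care are the bookkeeping of the factors of $\ci$ coming from $\partial_u e^{k \ci \sigma} = k \ci \, \sigma_u e^{k \ci \sigma}$ (this is why the terms $2 \ci e^{\ci \sigma} W_1'$ and $-2 \ci e^{3 \ci \sigma} W'$ in \eqref{eq:expISigmaEllipticCurve} carry a factor $\ci$ absent in \eqref{eq:hwSquaredU2}), and the reality of $W_2$, which is the single place where the reality reduction $W = \overline{W_1}$ is used.
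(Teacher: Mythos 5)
Your proposal is correct and follows essentially the same route as the paper: use the Cauchy--Riemann relations to get $\sigma_{uu}+\sigma_{\w\w}=0$, differentiate the Riccati equation \eqref{eq:Riccati_sigma} in $\w$ and substitute it back to express $\sigma_{\w\w}$ in the exponentials $e^{k\ci\sigma}$, then multiply by $2\sigma_u$, integrate in $u$ to produce $W_2(\w)$, and multiply through by $e^{2\ci\sigma}$. The only difference is that you explicitly verify the reality of $W_2$ via conjugation using $W=\overline{W_1}$, a point the paper asserts without proof; your check is valid.
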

\begin{proof}
	By Lemma~\ref{lem:fPartialDerivatives} we have $\gamma_u = e^{h + \ci \sigma}$ with $h_u = \sigma_\w$ and $h_\w = -\sigma_u$. Thus, $h_{u\w} = h_{\w u}$ implies the harmonic equation $\sigma_{uu} + \sigma_{\w\w} = 0$. Moreover, $\sigma$ satisfies the Riccati equation \eqref{prop:Riccati_sigma} for $\sigma_\w$. Now, differentiate with respect to $\w$, replace $\sigma_\w$ using \eqref{prop:Riccati_sigma}, and plug the resulting expression for $\sigma_{\w\w}$ into the harmonic equation to find
	\begin{align}
		\sigma_{uu} + \ci e^{2 \ci \sigma} W^2 + e^{\ci \sigma} W' +e^{-\ci \sigma} W_1' - \ci e^{-2 \ci \sigma} W_1^2 = 0.
	\end{align}
Multiplying by $2 \sigma_u$ and integrating with respect to $u$ gives 
\begin{align}
		\sigma_{u}^2 + e^{2 \ci \sigma} W^2 - 2 \ci e^{\ci \sigma} W' +W_2 + 2 \ci e^{-\ci \sigma} W_1' + e^{-2 \ci \sigma} W_1^2 = 0.
\end{align}
for some real valued function $\w \mapsto W_2(\w)$. Conclude by multiplying with $e^{2 \ci \sigma}$.
\end{proof}

\begin{theorem}
	\label{thm:elasticaLowerOrder}
	Let $\tilde \gamma$ be the hyperbolic standardization of $\gamma$.  Then for each $\w$, the curve $\tilde \gamma( \cdot , \w)$ is an area constrained quasiperiodic hyperbolic elastica with translational monodromy.
	
	In particular, for each $\w$, the Euclidean unit tangents $\tilde Q = e^{\ci \tilde \sigma}$ satisfy
	\begin{align}
		\tilde Q_s^2 = \frac14 \tilde Q^4 + \bar \Lambda \tilde Q^3 + \mu \tilde Q^2 + \Lambda \tilde Q + \frac14	\end{align}
	for some $\mu \in \R$ and $\Lambda = \frac{(\frac{\partial}{\partial \w}\log W_1(\w))}{a} \in \C$. Here, $a = 2 |W_1(\w)|$ and $s = u/a$ is the hyperbolic arclength parameter.
\end{theorem}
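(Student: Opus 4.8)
The plan is to deduce the statement from results already in hand: the lemma giving the elliptic-curve identity \eqref{eq:expISigmaEllipticCurve} for $\left(\partial_u e^{\ci\sigma}\right)^2$, the proposition that the hyperbolic standardization $\tilde\gamma$ has constant hyperbolic speed $a = 2|W_1(\w)|$ in the parameter $u$, and the two propositions characterising area constrained quasiperiodic hyperbolic elastica with translational monodromy, namely that such curves are exactly the critical points of $E_{\lambda,\delta}$ and that their Euler--Lagrange equation is a quartic relation for $e^{\ci\tilde\sigma}$. The bridge between the analytic input and the variational characterisation is the elementary observation that, for each fixed $\w$, the hyperbolic standardization is a rotation of the plane that does \emph{not} depend on $u$.

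First I would set $c = c(\w) := -\tfrac{|W_1(\w)|}{\ci W_1(\w)}$, note $|c| = 1$ and $\tilde\gamma_u = c\,\gamma_u$, and conclude by comparing Euclidean unit tangents (using $|\tilde\gamma_u|_{\mathrm{Euc}} = |\gamma_u|_{\mathrm{Euc}} = e^h$) that $\tilde Q := e^{\ci\tilde\sigma} = c\,e^{\ci\sigma}$, hence $e^{\ci\sigma} = \bar c\,\tilde Q$ and $\tilde Q_u = c\,\partial_u e^{\ci\sigma}$. Substituting $e^{\ci\sigma} = \bar c\,\tilde Q$ into \eqref{eq:expISigmaEllipticCurve} and multiplying by $c^2$, the powers of $c$ cancel in conjugate pairs because $c\bar c = 1$; using $W = \overline{W_1}$ and $W' = \overline{W_1'}$ (the prime being $\partial_\w$), the quartic and the constant coefficients become equal and real, of modulus $|W_1|^2$, the cubic and linear coefficients become complex conjugates of one another (proportional to $W_1\overline{W_1'}$ and $\overline{W_1}\,W_1'$), and the quadratic coefficient is still the \emph{real} function $W_2$. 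Then, invoking the constant-speed proposition to pass to the hyperbolic arclength parameter via $\partial_s = a^{-1}\partial_u$ with $a = 2|W_1(\w)|$, I would divide through by $a^2 = 4|W_1|^2$: this normalises the quartic and constant coefficients to $\pm\tfrac14$, turns the cubic coefficient into $\pm\bar\Lambda$ and the linear one into $\pm\Lambda$ with $\Lambda = a^{-1}\,\partial_\w\log W_1 \in \C$, and produces a real constant $\mu$ as the quadratic coefficient. This is the stated quartic relation for $\tilde Q = e^{\ci\tilde\sigma}$.

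Next I would recognise this identity as the integrated Euler--Lagrange equation of $E_{\lambda,\delta}$: by the Euler--Lagrange proposition it has exactly that form with $\lambda = 2\Re\Lambda$, $\delta = 2\Imc\Lambda$ and integration constant $\mu$, so $\tilde\gamma(\cdot,\w)$ is a critical point of $E_{\lambda,\delta}$ for these real multipliers. To apply the other proposition verbatim and conclude that $\tilde\gamma(\cdot,\w)$ is an area constrained quasiperiodic hyperbolic elastica with translational monodromy, it remains to supply the boundary data over one period of the lattice in $u$: from the explicit formula \eqref{eq:gammaRhombic} (respectively \eqref{eq:gammaRect}) the theta quotient is periodic in $u$, so advancing $u$ by the real period multiplies $\gamma$ by a factor $\mathrm{p} = e^{L\,\vartheta_2'(\omega)/\vartheta_2(\omega)}$; the conjugation relations \eqref{eq:theta_conjugation} give $\vartheta_2'(\omega)/\vartheta_2(\omega) \in \R$ for $\omega \in \R$, so $\mathrm{p} > 0$ and, after standardization, the monodromy of $\tilde\gamma$ is the hyperbolic translation $z \mapsto \mathrm{p}z$ along the imaginary axis, while $e^{\ci\sigma}$---hence $\tilde Q$---is genuinely periodic in $u$, giving equal Euclidean tangent directions at the two endpoints. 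This is exactly the data required by the definition.

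The step I expect to be the main obstacle is the coefficient bookkeeping in the second paragraph: one must verify that under the $u$-independent rotation $e^{\ci\sigma}\mapsto c\,e^{\ci\sigma}$ and the subsequent rescaling to arclength the five coefficients of \eqref{eq:expISigmaEllipticCurve} really reassemble into the conjugate-symmetric (palindromic up to conjugation) quartic governed by the single complex parameter $\Lambda = \partial_\w\log W_1/a$, and in particular that the middle coefficient $\mu$ is real---it is precisely this reality together with the palindromic structure that makes $\tilde\gamma$ a genuine real hyperbolic elastica rather than merely a solution of a complex ODE. By comparison, matching the equation with the Euler--Lagrange equation of $E_{\lambda,\delta}$ and checking the translational monodromy are routine once the reality of $\vartheta_2'(\omega)/\vartheta_2(\omega)$ is noted.
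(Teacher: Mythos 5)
Your proposal is correct and takes essentially the same route as the paper: the paper's proof is just the two observations that $e^{\ci\tilde\sigma}=-\frac{|W_1|}{\ci W_1}e^{\ci\sigma}$ and that one rewrites \eqref{eq:expISigmaEllipticCurve} in terms of $e^{\ci\tilde\sigma}$ using $W_1=\overline{W}$ and $|W_1|=(WW_1)^{1/2}$, which is exactly your second paragraph. The coefficient bookkeeping you flag as the main obstacle is indeed the whole content (and works out as you describe, with the quartic/constant and cubic/linear pairs becoming conjugate-symmetric after dividing by $a^2=4|W_1|^2$), while your verification of the translational monodromy via the reality of $\vartheta_2'(\omega)/\vartheta_2(\omega)$ and the $u$-periodicity of $e^{\ci\sigma}$ supplies detail the paper leaves implicit in its earlier propositions.
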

\begin{proof}
	The Euclidean unit tangents for $\tilde \gamma$ are $e^{\ci \tilde \sigma}$. Since $\tilde \gamma$ is just a rotated version of $\gamma$, we have that $e^{\ci \tilde \sigma} = -\frac{|W_1(\w)|}{\ci W_1(\w)} e^{\ci \sigma}$. Now, rewrite \eqref{eq:expISigmaEllipticCurve} in terms of $e^{\ci \tilde \sigma}$, using that $W_1 = \overline W$, so $| W_1 | = (W W_1)^{1/2}$.
\end{proof}
\begin{remark}
The coefficients $\mu$ and $\Lambda$ are independent of $u$, but are given in terms of $W_1$~\eqref{eq:W1Rect},\eqref{eq:W1Rhombic}, which depends on $\w$, $\omega$ and lattice parameter $\Imc \tau$.
\end{remark}

\section{Isothermic surfaces with one family of closed planar curvature lines: global theory} 
\label{sec:isothermic-planar-u-global}
In this section we classify isothermic cylinders with one family of closed planar curvature lines and study their global geometric properties. We focus on the generic case when the curvature line planes are tangent to a cone. In some instances we provide formulas in the limit case when the planes are tangent to a cylinder. Surfaces of constant mean curvature with one family of closed planar curvature lines, such as Wente tori~\cite{Wente1986,Walter1987,Abresch1987} are found in this limit.

\subsection{Classification of cylinders}
Theorem~\ref{thm:localIsothermicPlanar} gives the immersion formulas for isothermic surfaces with one generic family ($u$-curves) of planar curvature lines. In this section we investigate when the planar $u$-curves are closed, so that $f$ is a cylinder.

The global properties of the two cases are very different. The rectangular case does not have closed curves, but the rhombic case does.

\begin{lemma}
	Let $f(u,v)$ be an isothermic surface with one family ($u$-curves) of planar curvature lines given by $\gamma(u,\w)$. Then, $f(u + 2\pi,v) = f(u,v)$ if and only if one of the following conditions is satisfied:
	\begin{enumerate}
		\item(Rectangular) 
		\begin{itemize}
		\item general: there exists an $\omega \in \R$, $\omega \neq 0, \frac\pi2$ such that $\vartheta_4'(\omega) = 0$,
		\item limiting $\omega=0$: there exists $\tau \in  \ci\R$ such that $\vartheta_4''(0)=0$,
		\item limiting $\omega=\frac{\pi}{2}$: there exists $\tau \in \ci \R$ such that $\vartheta_3''(0)=0$,
		\end{itemize}
		\item(Rhombic) 
		\begin{itemize}
		\item general: there exists an $\omega \in \R$, $\omega \neq 0, \frac\pi2$ such that $\vartheta_2'(\omega) = 0$,
		\item limiting $\omega=0$: there exists $\tau \in \frac12 + \ci \R$ such that $\vartheta_2''(0)=0$.
		\end{itemize}
	\end{enumerate}
\end{lemma}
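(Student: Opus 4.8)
The plan is to reduce the $2\pi$-periodicity of the immersion $f(u,v)$ in $u$ to the $2\pi$-periodicity of the single plane curve $\gamma(u,\w)$ in $u$, and then to read off the periodicity of $\gamma$ from the explicit theta function formulas of Theorem~\ref{thm:localIsothermicPlanar} (and Lemma~\ref{lem:asymptotics_omega=0} in the cylinder limit). Since $f(u,v) = \Phi^{-1}(v)\gamma(u,\w(v))\qj\,\Phi(v)$ and $\Phi(v)$ is independent of $u$, we have $f(u+2\pi,v)=f(u,v)$ for all $v$ if and only if $\gamma(u+2\pi,\w) = \gamma(u,\w)$ for all $u$ and all $\w$ in the relevant range. (One should note that $\gamma(u+2\pi,\w)$ and $\gamma(u,\w)$ differ at most by a $\w$-dependent Euclidean rotation/translation that is then absorbed, but in fact the formulas show the difference is a genuine constant shift, so closure of $\gamma$ is equivalent to closure of $f$.) So the whole statement becomes: determine when $u\mapsto\gamma(u,\w)$ is $2\pi$-periodic.

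First I would treat the generic rhombic case. From \eqref{eq:gammaRhombic}, $\gamma(u,\w)$ is, up to a constant factor, $\frac{\vartheta_1(\frac{u+\ci\w-3\omega}{2})}{\vartheta_1(\frac{u+\ci\w+\omega}{2})}e^{(u+\ci\w)\vartheta_2'(\omega)/\vartheta_2(\omega)}$. Under $u\mapsto u+2\pi$, the argument $\frac{z}{2}$ of each $\vartheta_1$ shifts by $\pi$; using the quasiperiodicity $\vartheta_1(x+\pi)=-\vartheta_1(x)$, the ratio of $\vartheta_1$ factors is unchanged, while the exponential picks up the factor $e^{2\pi\vartheta_2'(\omega)/\vartheta_2(\omega)}$. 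Hence $\gamma$ is $2\pi$-periodic in $u$ exactly when $\vartheta_2'(\omega)/\vartheta_2(\omega)=0$, i.e. $\vartheta_2'(\omega)=0$ (noting $\vartheta_2(\omega)\neq0$ for $\omega\in\R$ on a rhombic lattice, by the conjugation identities \eqref{eq:theta_conjugation}). The generic rectangular case is identical with $\vartheta_2$ replaced by $\vartheta_4$ throughout, using \eqref{eq:gammaRect}: periodicity holds iff $\vartheta_4'(\omega)=0$.

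Next come the two cylinder limits $\omega\to0$ and (rectangular only) $\omega\to\frac\pi2$. Here $\gamma$ itself blows up like $c/\omega$ but the finite part $\hat\gamma(u,\w)$ from \eqref{eq:hat_gamma} is the relevant curve. In the rhombic $\omega=0$ case, $\hat\gamma(u,\w) = -\ci\frac{\vartheta_2''(0)\vartheta_2(0)}{\vartheta_1'(0)^2}(u+\ci\w) + 2\ci\frac{\vartheta_2(0)^2}{\vartheta_1'(0)^2}\frac{\vartheta_1'(\frac12(u+\ci\w))}{\vartheta_1(\frac12(u+\ci\w))}$. The function $\vartheta_1'/\vartheta_1$ is $\pi$-periodic (its argument shifts by $\pi$ under $u\mapsto u+2\pi$, and $\vartheta_1'(x+\pi)/\vartheta_1(x+\pi)=\vartheta_1'(x)/\vartheta_1(x)$ since both numerator and denominator pick up the same sign), so the second term is $2\pi$-periodic in $u$; the first, linear term is $2\pi$-periodic iff its coefficient vanishes, i.e. iff $\vartheta_2''(0)=0$. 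The rectangular limits $\omega\to0$ and $\omega\to\frac\pi2$ go through verbatim with $\vartheta_2$ replaced by $\vartheta_4$ and $\vartheta_3$ respectively, giving the conditions $\vartheta_4''(0)=0$ and $\vartheta_3''(0)=0$. Throughout, the excluded values $\omega\neq0,\frac\pi2$ in the "general" sub-cases are exactly the values where the theta formulas degenerate into the cylinder limits, which is why those are handled separately.

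The only genuinely delicate point — the main obstacle — is the reduction step at the very beginning: making precise that the $\w$-dependent frame $\Phi(v)$ and angle $\phi(v)$ do not interfere, so that periodicity of $f$ in $u$ really is equivalent to periodicity of $\gamma$ in $u$ (rather than merely to $\gamma(u+2\pi,\w)$ being a $\w$-dependent rigid motion of $\gamma(u,\w)$). This follows because the conjugating frame $\Phi(v)$ is constant in $u$, so any failure of closure of $f$ in $u$ is conjugate to the failure of closure of $\gamma$; and conversely, if $\gamma$ closes then so does $f$. Once this is granted, the rest is the bookkeeping of theta quasiperiodicity factors under $u\mapsto u+2\pi$ sketched above, with the positivity/nonvanishing statements for $\vartheta_2(\omega)$, $\vartheta_4(\omega)$ on real $\omega$ supplied by the conjugation identities \eqref{eq:theta_conjugation} already established in the excerpt.
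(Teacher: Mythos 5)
Your proposal is correct and follows essentially the same route as the paper: reduce $u$-periodicity of $f$ to $u$-periodicity of $\gamma$ (immediate since the conjugating frame $\Phi(v)$ is independent of $u$), then observe that under $u\mapsto u+2\pi$ the ratio of $\vartheta_1$-factors is invariant by quasiperiodicity, so the only obstruction is the exponential factor, giving $\vartheta_4'(\omega)=0$ resp.\ $\vartheta_2'(\omega)=0$, with the limiting cases reading off $\vartheta_k''(0)=0$ from the linear term of $\hat\gamma$. (A minor imprecision: in the generic case $\gamma(u+2\pi,\w)$ differs from $\gamma(u,\w)$ by a real multiplicative factor $e^{2\pi\vartheta_2'(\omega)/\vartheta_2(\omega)}$ rather than a ``constant shift''; the shift description applies only to the $\omega=0$ limit, but this does not affect the argument.)
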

\begin{proof}
	From \eqref{eq:fImmersionsPlanar} we see that the $u$-periodicity of $f(u,v)$ is equivalent to the $u$-periodicity of $\gamma(u,\w)$. For both rectangular \eqref{eq:gammaRect} and rhombic \eqref{eq:gammaRhombic} lattices $\gamma(u + 2\pi,\w) = \gamma(u, \w)$ if and only if the exponential term vanishes for all $u + \ci w$.
Therefore, in the general rectangular case $\omega$ must satisfy $\vartheta_4'(\omega) = 0$, and in the rhombic case $\omega$ must satisfy $\vartheta_2'(\omega) = 0$. In the limiting cases governed by Proposition \ref{prop:f_omega=0} the periodicity condition $\hat{\gamma}(u+2\pi,w)=\hat{\gamma}(u,w)$ is equivalent  to vanishing of the corresponding theta constants $\vartheta_k''(0)$.
\end{proof}

The following two technical lemmas show that such parameters $\tau$ and $\omega$ do not exist for rectangular lattices, but exist for some rhombic lattices.

\begin{lemma}
	\label{lem:rectangularNeverClosed}
	For all elliptic curves of rectangular type, i.e., $\tau \in \ci \R$, and $\omega \in (0, \frac\pi2)$, $\vartheta_4'(\omega | \tau) \neq 0$, and $\vartheta_4''(0 | \tau) \neq 0$ and $\vartheta_3''(0 | \tau) \neq 0$. 
	
	Therefore, for the case of elliptic curves of rectangular type, there are no isothermic surfaces with one family of planar curvature lines that are closed.
\end{lemma}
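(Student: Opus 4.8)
### Proof Proposal

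The plan is to exploit the reality structure of theta functions on a rectangular lattice, where $\tau \in \ci\R$ forces the nome $q = e^{\ci\pi\tau}$ to lie in $(0,1)$, so all the $\vartheta_i(z|\tau)$ have \emph{real} Taylor/Fourier coefficients in $z$. Consequently $\vartheta_2, \vartheta_3, \vartheta_4$ are real-valued and strictly positive on the real segment $[0, \pi/2]$ (this is immediate from their $q$-series: e.g. $\vartheta_4(z,q) = 1 + 2\sum_{n\geq 1}(-1)^n q^{n^2}\cos 2nz$ is, for $q \in (0,1)$, a positive sum once one groups terms, and more robustly $\vartheta_4 = \prod(1-q^{2n})(1-q^{2n-1}e^{2\ci z})(1-q^{2n-1}e^{-2\ci z}) > 0$ for real $z$ by the triple product formula). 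First I would record these positivity facts. The key point is then a \emph{convexity/monotonicity} argument: I want to show $\vartheta_4'(\omega|\tau)$ has no zero in the open interval $(0,\pi/2)$.

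For the statement about $\vartheta_4'(\omega)$, the cleanest route is to use the fact that $\vartheta_4$ on a rectangular lattice is, up to a positive exponential factor, a constant multiple of a translate of $\vartheta_1$ — indeed $\vartheta_4(z+\tfrac{\pi\tau}{2}) = \ci q^{-1/4} e^{-\ci z}\vartheta_1(z)$, and $\vartheta_1$ has its only real zeros at the lattice points $\pi\Z$. So $\vartheta_4(z)$ for real $z$ is nonvanishing on all of $\R$ (no real zeros), which I'll use implicitly. Then consider $g(\omega) := \vartheta_4'(\omega)/\vartheta_4(\omega) = \partial_\omega \log\vartheta_4(\omega)$. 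Using the standard product expansion one computes
\begin{align*}
\frac{\vartheta_4'(\omega)}{\vartheta_4(\omega)} = 4\sin 2\omega \sum_{n=1}^\infty \frac{q^{2n-1}}{(1-q^{2n-1}e^{2\ci\omega})(1-q^{2n-1}e^{-2\ci\omega})} = 4\sin 2\omega \sum_{n=1}^\infty \frac{q^{2n-1}}{1 - 2q^{2n-1}\cos 2\omega + q^{4n-2}}.
\end{align*}
For $q \in (0,1)$ and $\omega \in (0, \pi/2)$ every term in the sum is strictly positive (the denominators are $|1-q^{2n-1}e^{2\ci\omega}|^2 > 0$) and $\sin 2\omega > 0$, hence $g(\omega) > 0$ on $(0,\pi/2)$. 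Since $\vartheta_4(\omega) > 0$ there as well, we get $\vartheta_4'(\omega) = g(\omega)\vartheta_4(\omega) > 0$, so it never vanishes on $(0,\pi/2)$. (At the endpoints $\omega = 0, \pi/2$ one has $\sin 2\omega = 0$, consistent with $\vartheta_4$ being even about both, but those cases are handled separately by the second-derivative conditions.)

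For the limiting cases I need $\vartheta_4''(0|\tau) \neq 0$ and $\vartheta_3''(0|\tau) \neq 0$ for all $\tau \in \ci\R$. Differentiating the product formula twice and evaluating at $z = 0$ gives $\vartheta_4''(0) = -8\,\vartheta_4(0)\sum_{n\geq 1} \frac{q^{2n-1}}{(1-q^{2n-1})^2} < 0$ for $q \in (0,1)$, and likewise $\vartheta_3''(0) = -8\,\vartheta_3(0)\sum_{n\geq 1}\frac{q^{2n-1}}{(1+q^{2n-1})^2} < 0$; both series have all positive terms and $\vartheta_3(0),\vartheta_4(0) > 0$, so neither second derivative can vanish. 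Combining the three facts with the preceding lemma (which characterizes $u$-periodicity of $\gamma$, hence of $f$, by one of these theta conditions holding) gives that no isothermic surface with one family of closed planar curvature lines exists in the rectangular case. The main obstacle is getting the sign of $g(\omega) = \vartheta_4'/\vartheta_4$ cleanly; the product-formula series above is the way I would make it transparent, since it manifestly shows positivity term by term rather than requiring delicate cancellation estimates on the original alternating $\cos$-series. An alternative I would keep in reserve, if the series manipulation is deemed too terse, is a purely real-analytic argument: $\log\vartheta_4(\cdot\,,q)$ is strictly convex on $(-\pi/2,\pi/2)$ for $q\in(0,1)$ (its second derivative $g'$ is a positive sum) and even, hence has a strict minimum at $0$, forcing $g = (\log\vartheta_4)' > 0$ on $(0,\pi/2)$ — but this needs the convexity computation, which is essentially the same series, so I would just present the direct formula for $g$.
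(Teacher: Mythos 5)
Your proof is correct and follows essentially the same route as the paper: positivity and monotonicity of each factor in the product representation of $\vartheta_4$ (which you make explicit via the term-by-term positive series for the logarithmic derivative $\vartheta_4'/\vartheta_4$) give $\vartheta_4'(\omega)>0$ on $(0,\tfrac\pi2)$, and the one-signed series for the theta constants give $\vartheta_4''(0)\neq 0$ and $\vartheta_3''(0)\neq 0$. One minor slip: the correct sign is $\vartheta_4''(0)=+8\,\vartheta_4(0)\sum_{n\geq 1} q^{2n-1}/(1-q^{2n-1})^2>0$ (your negative sign is wrong there, though your $\vartheta_3''(0)<0$ is right), but since only nonvanishing is needed this does not affect the argument.
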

\begin{proof}
	These facts follow immediately from the product representations for theta functions, see \cite{whittaker_watson_1996},
	$$
	\vartheta_4(\omega)=G\prod_{n=1}^\infty (1-2q^{2n-1}\cos 2\omega +q^{4n-2}), \quad G=\prod_{n=1}^\infty (1-q^{2n}), \ 0<q<1.
	$$
	This is a monotonically increasing function of $\omega\in (0, \frac{\pi}{2})$, therefore $\vartheta_4(\omega)>0$.  We also obtain
	$$
	\vartheta_4''(0)=\vartheta_4(0) \sum_{n=1}^\infty \frac{8q^{2n-1}}{(1-q^{2n})^2},
	$$
	which implies $\vartheta_4''(0)>0$. The proof for $\vartheta_3''(0)>0$ is the same.
\end{proof}

\begin{lemma}
\label{lem:rhombic_zeros}
	\begin{itemize}
\item[(i)] There exists a unique elliptic curve of rhombic type, i.e., $\tau_0 = \frac12 + \ci \lambda_0$, such that
\begin{align}
	\label{eq:imctau0}
	\vartheta_2''(0 | \tau_0) = 0\;. \quad \lambda_0
	\approx 0.354729892522\;.
\end{align}
\item[(ii)] For each elliptic curve of rhombic type, i.e., $\tau \in \frac12 + \ci \lambda$, there exists a unique $\omega \in (0,\pi/4)$ such that $\vartheta_2'(\omega | \tau) = \vartheta_2'(-\omega | \tau) = 0$ exactly when $0 < \lambda < \lambda_0$.

The planar curvature lines on the corresponding isothermic surfaces are closed.
\end{itemize}
\end{lemma}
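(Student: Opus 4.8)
The plan is to pass to the rhombic nome $q=e^{\ci\pi\tau}=\ci p$, $p:=e^{-\pi\lambda}\in(0,1)$, for $\tau=\tfrac12+\ci\lambda$. Because $(2n+1)^2\equiv 1\pmod 8$, the series $\vartheta_2(z,q)=2\sum_{n\ge 0}q^{(n+1/2)^2}\cos((2n+1)z)$ factors as $\vartheta_2(z,q)=e^{\ci\pi/8}\,\hat\vartheta_2(z)$ with $\hat\vartheta_2(z):=2p^{1/4}\sum_{n\ge 0}(-1)^{n(n+1)/2}p^{n(n+1)}\cos((2n+1)z)$ real-analytic and real-valued for real $z$, consistent with \eqref{eq:theta_conjugation}. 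Thus all sign questions about $\vartheta_2,\vartheta_2',\vartheta_2''$ at real arguments become sign questions about the real $p$-power series $\hat\vartheta_2,\hat\vartheta_2',\hat\vartheta_2''$, in which the first one or two terms dominate for moderate $p$; in particular $\hat\vartheta_2''(0)=-2p^{1/4}g(p)$ with $g(p):=\sum_{n\ge 0}(-1)^{n(n+1)/2}(2n+1)^2p^{n(n+1)}=1-9p^2-25p^6+49p^{12}-\cdots$, and $\hat\vartheta_2(0)=2p^{1/4}(1-p^2-p^6+\cdots)>0$.

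For \textbf{(ii)} I would exploit the shift $\vartheta_2(z)=\vartheta_1(z+\tfrac\pi2)$, so that $\varphi(\omega):=\vartheta_2'(\omega)/\vartheta_2(\omega)=(\vartheta_1'/\vartheta_1)(\omega+\tfrac\pi2)$ satisfies $\varphi'(\omega)=c-\wp(\omega+\tfrac\pi2)$, where $\wp$ is the Weierstrass function of the lattice $\Z\pi+\Z\pi\tau$ and $c=\vartheta_1'''(0)/(3\vartheta_1'(0))$ is the constant appearing in \eqref{eq:U_asymptotics}. Since $\bar\tau=1-\tau$, the lattice is invariant under conjugation, $\wp$ is real on $\R$, and being symmetric about $\tfrac\pi2$ with a double pole at $0$ it increases strictly from $e_1=\wp(\tfrac\pi2)$ to $+\infty$ on $(\tfrac\pi2,\pi)$; hence $\varphi'$ is strictly decreasing on $(0,\tfrac\pi2)$. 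Now $\varphi(0)=0$, $\varphi\to-\infty$ as $\omega\to(\tfrac\pi2)^-$ (because $\vartheta_2(\tfrac\pi2)=0$), and $\varphi'(0)=\vartheta_2''(0)/\vartheta_2(0)$ has sign opposite to $g(p)$. Therefore: if $g(p)\ge 0$ then $\varphi<0$ throughout $(0,\tfrac\pi2)$ and $\vartheta_2'$ has no zero there; if $g(p)<0$ then $\varphi$ is strictly unimodal (up then down) on $(0,\tfrac\pi2)$ and has exactly one zero $\omega_*$ there. To locate $\omega_*$ in $(0,\tfrac\pi4)$, one evaluates at $\omega=\tfrac\pi4$: the sign pattern of $\cos((2n+1)\tfrac\pi4)$ against $(-1)^{n(n+1)/2}$ collapses to $+1$ for every $n$, giving $\hat\vartheta_2(\tfrac\pi4)=\vartheta_2(0,p)/\sqrt2>0$, while the pattern for $\hat\vartheta_2'(\tfrac\pi4)$ collapses to $(-1)^n$, giving $\hat\vartheta_2'(\tfrac\pi4)=-\sqrt2\,p^{1/4}\sum_{n\ge0}(-1)^n(2n+1)p^{n(n+1)}=-\sqrt2\,p^{1/4}\prod_{m\ge1}(1-p^{2m})^3<0$ by Jacobi's identity. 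Hence $\varphi(\tfrac\pi4)<0$, so $\omega_*<\tfrac\pi4$, and $\omega_*$ is the unique zero of $\vartheta_2'$ in $(0,\tfrac\pi4)$. The final sentence of the lemma is then immediate from the preceding periodicity lemma, since $\omega_*\neq 0,\tfrac\pi2$.

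For \textbf{(i)} it remains to prove $g$ has a unique zero $p_0\in(0,1)$, whence $\lambda_0:=-\tfrac1\pi\log p_0$ and $0<\lambda<\lambda_0\Leftrightarrow p_0<p<1\Leftrightarrow g(p)<0$, with $\lambda_0\approx 0.3547$ obtained numerically from $g(p_0)=0$. Since $g(0)=1>0$ and $g$ is already negative by $p=0.4$ (there $9p^2>1$ while the tail $49p^{12}+81p^{20}-\cdots$ is tiny), a zero exists in $(0,0.4)$; on $(0,0.4)$ the bound $g'(p)=-18p-150p^5+588p^{11}+\cdots<0$ (estimating the tail against its leading term) makes it unique there, consistent with $p_0\approx 0.328$. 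To exclude further zeros in $[0.4,1)$ I would show that $\lambda\mapsto\varphi'(0\,|\,\tfrac12+\ci\lambda)=c-e_1$ (equal to $-g(p)$ up to a positive factor) is strictly monotone in $\lambda$: the heat equation $\vartheta_2''(0|\tau)=\tfrac{4\ci}{\pi}\partial_\tau\vartheta_2(0|\tau)$ together with the product formula for $\vartheta_2(0|\tau)$ expresses $\varphi'(0)$ through an Eisenstein-type series whose $\lambda$-derivative has a fixed sign, and a modular transformation carrying the rhombic axis to a neighbourhood of the cusp controls the limit $\lambda\to 0^+$ (where $\varphi'(0)>0$) against $\lambda\to\infty$ (where $\varphi'(0)\to -1$); monotonicity plus these opposite-sign limits yield the unique $p_0$.

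The \emph{main obstacle} is the quantitative control of these theta $p$-series: turning ``the first terms dominate'' and the numerical threshold $p_0\approx 0.33$ into rigorous inequalities. The Weierstrass reformulation in (ii) bypasses almost all of it — the only exact input there is $\hat\vartheta_2'(\tfrac\pi4)\propto-\prod_{m\ge1}(1-p^{2m})^3\neq 0$ from Jacobi's identity, which makes the endpoint sign unconditional — but the uniqueness of $p_0$ away from $p=0$ in (i) genuinely requires either the global monotonicity of $\varphi'(0)$ in $\lambda$ (heat equation together with a modular transformation at the cusp) or a delicate direct proof that $g(p)<0$ on all of $[0.4,1)$. That is where I expect the real work to lie.
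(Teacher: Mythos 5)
Your argument for part (ii) is correct and takes a genuinely different route from the paper. The paper applies the modular transformation $\tau\mapsto 1-1/\tau$ to move to the lattice $\hat\tau=\tfrac12+\tfrac{\ci}{4\lambda}$, rewrites $\vartheta_2'(\omega)=0$ as $\tfrac{4\omega}{\pi}=\psi(\omega)$, and gets existence and uniqueness from concavity of $\psi$ together with $\psi(\tfrac\pi4)<1$ and the sign of $\psi'(0)-\tfrac4\pi$. You instead work directly on the original lattice via $\varphi'(\omega)=c-\wp(\omega+\tfrac\pi2)$, so that strict monotonicity of $\wp$ on $(\tfrac\pi2,\pi)$ gives unimodality of $\varphi$ for free, and you pin the zero into $(0,\tfrac\pi4)$ by the exact evaluation $\hat\vartheta_2'(\tfrac\pi4)=-\sqrt2\,p^{1/4}\prod_{m\ge1}(1-p^{2m})^3<0$. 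This is cleaner than the paper's chain of inequalities for $\chi$, $\chi'$, $\chi''$, and it correctly reduces the dichotomy to the single sign of $\varphi'(0)=\vartheta_2''(0)/\vartheta_2(0)$ --- which is also how both proofs identify the threshold $\lambda_0$. Note, however, that the ``exactly when $0<\lambda<\lambda_0$'' clause of (ii) is not yet proved by this reduction alone: it requires knowing that $\vartheta_2''(0|\tfrac12+\ci\lambda)$ is negative precisely for $\lambda<\lambda_0$, i.e.\ the global single-sign-change statement of part (i).

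And that is where your proposal has a genuine gap, which you yourself flag. Your treatment of (i) establishes a unique zero of $g(p)$ only on $(0,0.4)$ in your variable $p=e^{-\pi\lambda}$, i.e.\ only for $\lambda\gtrsim 0.29$; on $[0.4,1)$ (equivalently $0<\lambda\lesssim 0.29$) you offer two unexecuted strategies. The sparse series $g(p)=\sum(-1)^{n(n+1)/2}(2n+1)^2p^{n(n+1)}$ is a poor vehicle near $p=1$, where its terms do not decay until $n(n+1)\gg (1-p)^{-1}$ and ``first terms dominate'' fails completely. The paper closes exactly this gap by a different representation: logarithmically differentiating the product formula gives
\begin{align*}
\frac{\vartheta_2''(0)}{\vartheta_2(0)}=-1-8\sum_{n=1}^\infty\frac{q^{2n}}{(1-q^{2n})^2},
\end{align*}
and substituting $q^2=-e^{-2\pi\lambda}$ turns the condition $\vartheta_2''(0)=0$ into $\phi(\mathrm{p})=\tfrac18$ for the real alternating Lambert series $\phi(\mathrm{p})=\sum_{n\ge1}(-1)^{n+1}\mathrm{p}^n(1+(-1)^n\mathrm{p}^n)^{-2}$ with $\mathrm{p}=e^{-2\pi\lambda}$. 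Global monotonicity of $\phi$ on $(0,1)$ is then proved by pairing the terms $\phi_{2k-1}+\phi_{2k}$ and checking positivity of the paired derivative, which together with $\phi(0)=0$ and $\phi(\tfrac12)>\tfrac18$ gives existence and uniqueness in one stroke. This Lambert-series representation is precisely the ``Eisenstein-type series whose $\lambda$-derivative has a fixed sign'' you were hoping for; it requires no modular transformation at the cusp and no numerical threshold. If you adopt it, your part (i) closes and your part (ii) then inherits the full ``exactly when'' statement.
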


\begin{proof}
(i). Using the product formula
\begin{equation}
\label{eq:theta2_product}
\vartheta_2(\omega)=2Gq^{1/4}\prod_{n=1}^\infty (1+2q^{2n}\cos 2z +q^{4n}), \quad q=e^{i\pi\tau}=ie^{-\pi\lambda}
\end{equation} 
and $\vartheta_2'(0)=0$ we obtain for the second derivative
$$
	\vartheta_2''(0)=\vartheta_2(0) \left(-1-8\sum_{n=1}^\infty \frac{q^{2n}}{(1-q^{2n})^2}\right).
$$
Denoting
$$
p=-q^2= e^{-2\pi\lambda},
$$
the condition $\vartheta_2''(0)=0$ reads as 
\begin{equation}
\label{eq:theta2''=0_1}
\sum_{n=1}^\infty \frac{(-1)^{n+1}p^n}{(1+(-1)^n p^n)^2}=\frac{1}{8}.
\end{equation}
The left  hand side of this equation
$$
\phi(p)=\sum_{n=1}^\infty \phi_n(p), \quad \phi_n(p)=\frac{(-1)^{n+1}p^n}{(1+(-1)^n p^n)^2}
$$
is a monotonically increasing  function. Indeed, by direct computation we obtain
\begin{align*}
\phi_{2k-1}'(p)+\phi_{2k}'(p)= \frac{p^{2k-2}( (2k-1)(1+p^{2k-1})(1+p^{2k})^3 - 2k (1-p^{2k})(1-p^{2k-1})^3)}{(1-p^{2k})^3 (1+p^{2k})^3}.
\end{align*}
It is easy to see that the numerator is positive for any $p\in (0,\frac{1}{2})$, with a little bit more work one can show that it is positive for all $p\in (0,1)$.

Now using $\phi(0)=0, \phi(\frac{1}{2})>\phi_1(\frac{1}{2})+\phi_2(\frac{1}{2})=\frac{46}{25}>\frac{1}{8}$ 
we obtain the existence and uniqueness of a solution $p\in (0,\frac{1}{2})$ of \eqref{eq:theta2''=0_1}.

(ii).
Applying the modular transformation, see \cite{whittaker_watson_1996},
$$
\frac{1}{2}+\ci\lambda=\tau\to -\frac{1}{\tau} \to -\frac{1}{\tau}+1= \frac{1}{2}+\frac{\ci}{4\lambda},
$$
we obtain 
$$
\vartheta_2(z | \frac{1}{2}+\ci\lambda)=\frac{1}{\sqrt{2\lambda}}\exp(-\frac{z^2}{\pi\lambda}) \vartheta_2(-\frac{iz}{2\lambda} | \frac{1}{2}+\frac{\ci}{4\lambda}).
$$
Then $\vartheta_2'(z)=0$ is equivalent to
$$
\frac{4z}{\pi}=i\frac{\vartheta_2'(\hat{z} | \hat{\tau})}{\vartheta_2(\hat{z} | \hat{\tau})}, \quad \hat{z}=\frac{iz}{2\lambda}, \ \hat{\tau}=\frac{1}{2}+\frac{\ci}{4\lambda}.
$$
The product representation \eqref{eq:theta2_product} yields
\begin{equation}
\label{eq:theta2'=0}
\frac{4z}{\pi}=\tanh\frac{z}{2\lambda}+4\sinh\frac{z}{\lambda}\sum_{n=1}^\infty \frac{(-1)^n\hat{p}^n}{1+2(-1)^n\hat{p}^n\cosh\frac{z}{\lambda}+\hat{p}^{2n}}, \quad \hat{p}=e^{-\frac{\pi}{2\lambda}}.
\end{equation} 
Consider the right hand side as a function of $z$:
\begin{equation}
\label{eq:psi}
\psi(z):=\tanh\frac{z}{2\lambda}+4\sinh\frac{z}{\lambda}\chi (z),
\end{equation}
where
$$
\chi(z)=\sum_{n=1}^\infty \chi_n(z), \quad \chi_n(z)=\frac{(-1)^n\hat{p}^n}{1+2(-1)^n\hat{p}^n\cosh\frac{z}{\lambda}+\hat{p}^{2n}}.
$$
Using the representation 
$$
\chi_n(z)=\frac{1}{2(\cosh\frac{z}{\lambda}+(-1)^n\cosh\frac{\pi n}{2\lambda})}
$$
it is easy to check that
$$
\chi_{2k-1}(z)+\chi_{2k}(z)<0 \quad \forall z<\frac{\pi}{2},
$$
which yields $\chi(z)<0$. In the same way one can show that
$ \chi'(z)<0, \chi''(z)<0$ as well.
This, in particular, implies
\begin{equation}
\label{eq:psi_bounded}
\psi(\frac{\pi}{4})<1.
\end{equation}
Further 
$$
\psi'(0)=\frac{1}{2\lambda}(1+8\chi(0)),
$$
and $\chi(0)\to 0$ when $\hat{p}\to 0$. Thus for sufficiently small $\lambda$ we have $\psi'(0)>\frac{4}{\pi}$. This together with \eqref{eq:psi_bounded} implies the existence of a solution 
$\hat{p}_0\in (0,\frac{\pi}{4})$ of \eqref{eq:theta2'=0}.
Differentiating \eqref{eq:psi} we obtain 
$$
\psi''(z)=-\frac{1}{3\lambda^2}\frac{\sinh\frac{z}{2\lambda}}{\cosh^3\frac{z}{2\lambda}}
+4\sinh\frac{z}{\lambda}(\chi''(z)+\frac{1}{\lambda^2}\chi(z)) +
\frac{8}{\lambda}\cosh\frac{z}{\lambda}\chi'(z).
$$
All terms in this expression are negative, so $\psi''(z)<0$. The uniqueness of the solution of \eqref{eq:theta2'=0} then follows from the concavity of $\psi(z)$, see Fig.~\ref{fig:lemma_rhombic_zeros}.

\begin{figure}[tbh!]
	\begin{center}
		\includegraphics[width=0.5\textwidth]{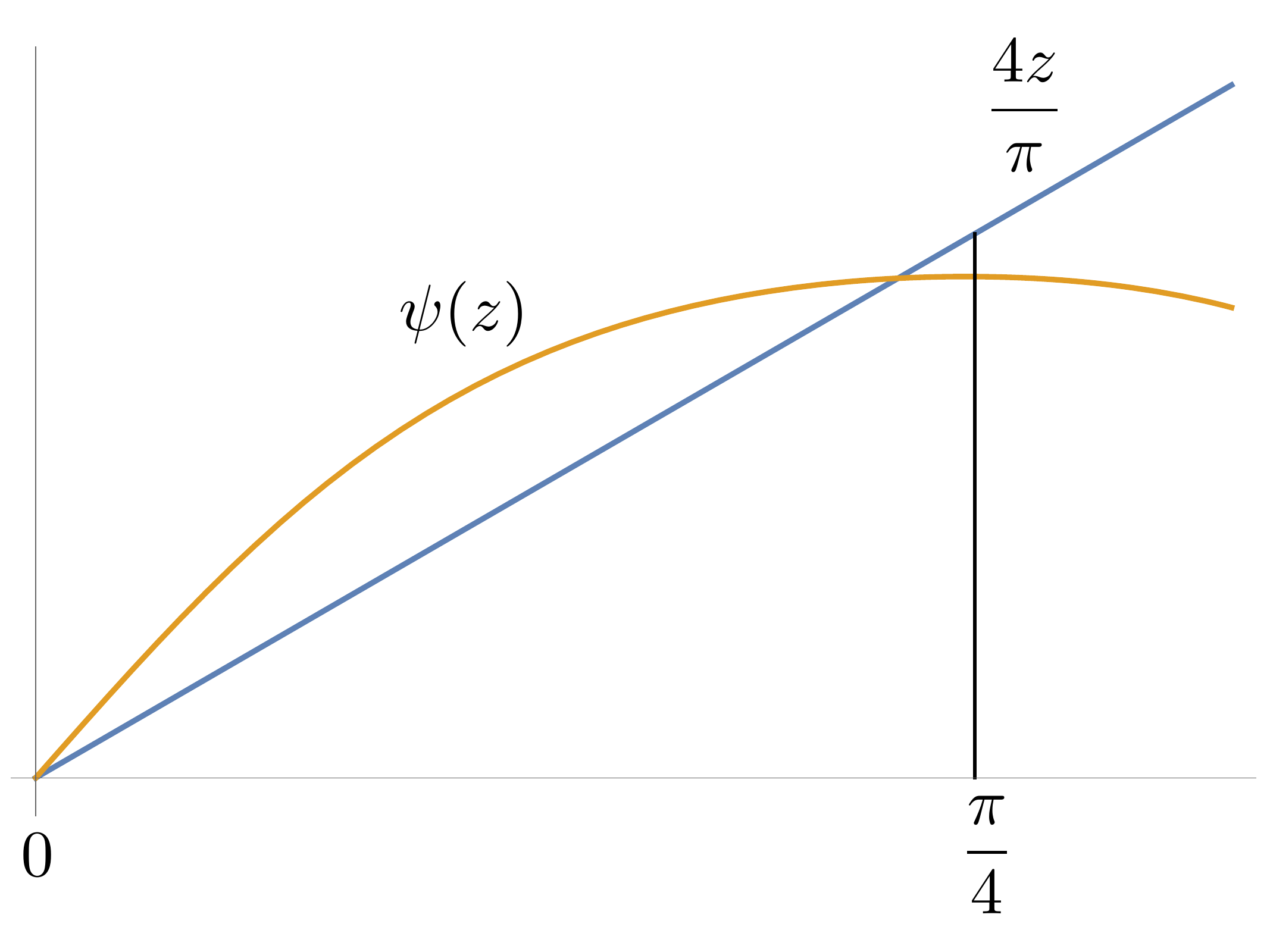}
	\end{center}
	\caption{To the proof of Lemma~\ref{lem:rhombic_zeros}. If $\psi'(0)> \frac{4}{\pi}$ the solution $0<z<\frac{\pi}{4}$ of $\psi(z)=\frac{4z}{\pi}$ exists and is unique since  $\psi(\frac{\pi}{4})<1$ and $\psi$ is concave.}
	\label{fig:lemma_rhombic_zeros}
\end{figure}

When $\psi'(0)=\frac{4}{\pi}$ the graph of the function $\psi(z)$ is tangent to the line $\frac{4z}{\pi}$ at the origin, the solution of $\vartheta'_2(\omega)=0$ goes to 0, and we arrive at the case $\vartheta_2''(0)=0$ described by \eqref{eq:theta2''=0_1}. Thus the same $\lambda_0$ is the unique solution of another equation
$$
\frac{4}{\pi}=\frac{1}{2\lambda}+\frac{4}{\lambda}\sum_{n=1}^\infty \frac{(-1)^n\hat{p}^n}{(1+(-1)^n \hat{p}^n)^2}, \quad \hat{p}=e^{-\frac{\pi}{2\lambda}}.
$$
When $\lambda > 
\lambda_0$, $\psi'(0) < \frac{4}{\pi}$ and so \eqref{eq:theta2'=0} has no real solutions.
\end{proof}

\begin{remark}
It can be checked that the condition $\vartheta_2''(0 | \tau_0)=0$ in terms of the Weierstrass data can be formulated as $\omega_1 e_1+\eta_1=0$. In this form it appeared in \cite[Theorem~3.10]{MR3667223} in its relationship to the spectrum of a Lam\'e operator, where also existence and uniqueness of the corresponding rhombic lattice was shown. 
\end{remark}

To state global results, we give need to midly restrict the allowable set of reparametrization functions. These are the exact conditions for the isothermic surface with one family of planar curvature lines to be smooth (or real analytic, see Remark~\ref{rem:admissibleReparametrization}).

\begin{definition} 	\label{def:admissibleReparametrizationFunction}Fix $\tau \in \frac12 + \ci \R$ such that $0 < \Imc \tau < \Imc \tau_0 \approx 0.3547$. A map $\w: \R \to \R$ is called a $\tau$-\emph{admissible reparametrization function} if
	\begin{enumerate}
		\item $\w: \R \to (0, 2 \pi \Imc \tau)$ is a smooth function and
		\item $\sqrt{1-\w'(\cdot)^2}: \R \to \R$ is also a smooth function.
	\end{enumerate}
\end{definition}

\begin{remark}
	\label{rem:admissibleReparametrization}
	\begin{itemize}
		\item The range of $\w$ is bounded because the closed curve $\gamma(\cdot, \w)$ degenerates for $\w = 0$ and $\w = 2\pi \Imc \tau$.
		
		\item We necessarily have $|\w'(v)| \leq 1$ for all $v \in \R$. Note that the associated square root function $\sqrt{1-\w'(\cdot)^2}$ can change sign at $v_0$ where $\w'(v_0) = 1$.
		
		\item Replacing the word `smooth' with `real analytic' in Definition~\ref{def:admissibleReparametrizationFunction} are the exact conditions for the isothermic surfaces with one family of planar curvature lines to be real analytic.
	\end{itemize}

\end{remark}

Combining Theorem~\ref{thm:localIsothermicPlanar} and the previous two lemmas proves the following global classification.

\begin{theorem}
	\label{thm:planarIsothermicCylinderFormulas}
	Every isothermic cylinder with one generic
	family ($u$-curves) of closed planar curvature lines is given in isothermic parametrization by the following formulas.
	\begin{align}
		f(u,v) &= \Phi^{-1}(v) \gamma(u,\w(v)) \qj \Phi(v), 
		\label{eq:isothermicCylinder}\\
		\gamma(u,\w) &=  -\ci \frac{2 \vartheta _2(\omega)^2}{\vartheta_1^{\prime}(0) \vartheta _1(2 \omega)} \frac{\vartheta_1\left(\frac{1}{2} (u+\ci \w-3 \omega)\right)}{\vartheta_1\left(\frac{1}{2} (u+\ci\w+\omega)\right)},
		\label{eq:gammaClosedCurves} \\
		\Phi'(v) \Phi^{-1}(v) &= \sqrt{1-\w'(v)^2} W_1(\w(v)) \qk, \label{eq:phiPrimePhiInverse}\\
		W_1(\w) &=\ci \frac{\vartheta _1^{\prime}(0)}{2\vartheta _2(\omega)}\frac{\vartheta _2(\omega -\ci \w)}{\vartheta_1(\ci \w)}.
	\end{align}
	The theta functions $\vartheta_i(z | \tau)$ are defined on an elliptic curve of rhombic type spanned by $\pi$ and $\pi \tau$ with $\tau = \frac12 + \ci \R$ satisfying $0 < \Imc \tau < \Imc \tau_0 \approx 0.3547$ (defined by $\vartheta_2''(0 | \tau_0) = 0$). The parameter $\omega$ is the unique \emph{critical} $\omega \in (0, \pi/4)$ satisfying $\vartheta _2^{\prime}(\omega | \tau) = 0$. The curvature line planes are tangent to a cone with apex at the origin. The function $\w(v)$ is a $\tau$-admissible reparametrization function.

	The degenerate case $\omega=0$ when all curvature line planes are tangent to a cylinder is given by immersion formula \eqref{eq:f_omega=0} with $\vartheta_2''(0)=0$ so
\begin{align}
\hat{\gamma}(u,\w)= 
2\ci \frac{\vartheta_2^2 (0) \vartheta_1' (\frac{1}{2}(u+\ci\w))}{\vartheta_1^{'2} (0)  \vartheta_1(\frac{1}{2}(u+\ci\w))}, \quad 
r(\w)=\frac{\vartheta_2(0) \vartheta_2'(\ci\w)}{\vartheta_1'(0) \vartheta_1(\ci\w)}.
\end{align}		
\end{theorem}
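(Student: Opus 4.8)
This statement is a synthesis of the local classification with the closing conditions, so the plan is to assemble it from results already in hand. By Theorem~\ref{thm:localIsothermicPlanar}, every isothermic surface with one generic family of planar curvature lines is, up to Euclidean motion, $f(u,v) = \Phi^{-1}(v)\gamma(u,\w(v))\qj\Phi(v)$ with $\Phi'(v)\Phi^{-1}(v) = \sqrt{1-\w'(v)^2}\,W_1(\w(v))\qk$, where $(\gamma,W_1)$ is the rectangular data \eqref{eq:gammaRect}, \eqref{eq:W1Rect} or the rhombic data \eqref{eq:gammaRhombic}, \eqref{eq:W1Rhombic}. The task reduces to deciding when the planar $u$-curves are $2\pi$-periodic and then simplifying. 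First I would invoke the lemma characterizing $u$-periodicity, which shows that $\gamma(u+2\pi,\w)=\gamma(u,\w)$ for all $u+\ci\w$ iff the exponential factor is trivial, i.e.\ iff $\vartheta_4'(\omega)=0$ (rectangular) or $\vartheta_2'(\omega)=0$ (rhombic). Lemma~\ref{lem:rectangularNeverClosed} kills the rectangular branch outright. On a rhombic lattice $\tau = \tfrac12 + \ci\,\Imc\tau$, Lemma~\ref{lem:rhombic_zeros}(ii) furnishes a unique $\omega\in(0,\pi/4)$ with $\vartheta_2'(\pm\omega)=0$ precisely when $0<\Imc\tau<\Imc\tau_0$, where $\Imc\tau_0$ is the unique value of Lemma~\ref{lem:rhombic_zeros}(i) determined by $\vartheta_2''(0\,|\,\tau_0)=0$. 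Substituting $\vartheta_2'(\omega)=0$ into \eqref{eq:gammaRhombic} and \eqref{eq:W1Rhombic} removes the exponentials and produces exactly the claimed formulas for $\gamma$ and $W_1$, while \eqref{eq:isothermicCylinder} and \eqref{eq:phiPrimePhiInverse} are inherited verbatim from Theorem~\ref{thm:localIsothermicPlanar}.

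It remains to verify the two geometric assertions and the degenerate case. For the cone with apex at the origin: $W_1(\w)\qk \in \mathrm{Im}\,\H$, so $\Phi(v)$ stays unitary and each curvature-line plane $\Phi^{-1}(v)\,(\C\qj)\,\Phi(v)$ is a $2$-plane through the origin; a one-parameter family of planes through a common point, with normals spanning $\R^3$ (the generic hypothesis), envelopes a cone with apex at that point, namely the origin. For smoothness: the closed curve $\gamma(\cdot,\w)$ degenerates as $\w\to 0$ and $\w\to 2\pi\,\Imc\tau$, and $\sqrt{1-\w'(v)^2}$ must remain smooth in \eqref{eq:phiPrimePhiInverse}; these are exactly the requirements of Definition~\ref{def:admissibleReparametrizationFunction}, and conversely any $\tau$-admissible $\w$ inserted in these formulas yields a smooth (resp.\ real analytic) isothermic cylinder, as in Remark~\ref{rem:admissibleReparametrization}. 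Finally, the $\omega=0$ case is the limit $\omega\to 0$: by Lemma~\ref{lem:asymptotics_omega=0} and Theorem~\ref{prop:f_omega=0} the immersion is \eqref{eq:f_omega=0}; closure of the $u$-curves now forces $\vartheta_2''(0)=0$, hence $\tau=\tau_0$, and setting $\vartheta_2''(0)=0$ in \eqref{eq:hat_gamma} and in the formula for $r$ deletes the term linear in $u+\ci\w$, leaving the stated $\hat\gamma$ and $r$.

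The genuine analytic content — that such an $\omega$ exists only for $0<\Imc\tau<\Imc\tau_0$ and never on a rectangular lattice — is already carried by Lemmas~\ref{lem:rectangularNeverClosed} and~\ref{lem:rhombic_zeros}, so the main obstacle here is bookkeeping rather than a new idea: one must check that the exponential factors vanish identically in the complex variable $u+\ci\w$ (not merely at a single value), that the substitution $\vartheta_2'(\omega)=0$ reproduces the printed formulas verbatim, and that the degeneration $\omega\to 0$ is compatible with $\Imc\tau\to\Imc\tau_0$. The one point requiring real care is the equivalence between Definition~\ref{def:admissibleReparametrizationFunction} and smoothness of $f$ — in particular the admissible sign change of $\sqrt{1-\w'^2}$ at points where $\w'=1$ — which is treated separately in Remark~\ref{rem:admissibleReparametrization}.
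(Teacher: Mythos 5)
Your proposal is correct and follows the same route as the paper, which proves this theorem simply by combining Theorem~\ref{thm:localIsothermicPlanar} with the closure lemma and Lemmas~\ref{lem:rectangularNeverClosed} and~\ref{lem:rhombic_zeros}; your substitution of $\vartheta_2'(\omega)=0$ (resp.\ $\vartheta_2''(0)=0$ in the degenerate case) to kill the exponential factors reproduces the printed formulas exactly. The extra details you supply (the cone apex at the origin from the planes $\Phi^{-1}(v)(\C\qj)\Phi(v)$ all passing through $0$, and the role of $\tau$-admissibility) are consistent with the paper's supporting remarks.
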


\begin{corollary} Let $f(u,v)$ be an isothermic cylinder with one generic family of planar curvature lines, as in Theorem~\ref{thm:planarIsothermicCylinderFormulas}. Then its frame is 
	\begin{align}
		\label{eq:fuIsothermicPlanarClosed}
		f_u(u,v) &= e^{h(u,\w(v))} \Phi(v)^{-1} e^{\ci \sigma(u,\w(v))} \qj \Phi(v), \\
		\label{eq:fvIsothermicPlanarClosed}
		f_v(u,v) &= e^{h(u,\w(v))} \Phi(v)^{-1} \left( \sqrt{1-\w'(v)^2} \, \qi + \w'(v) \, e^{\ci \sigma(u,\w(v))} \qk \right) \Phi(v), \\
		\label{eq:nIsothermicPlanarClosed}
		n(u,v) &= \Phi(v)^{-1} \left( \w'(v) \, \qi - \sqrt{1-\w'(v)^2} \, e^{\ci \sigma(u,\w(v))} \qk \right) \Phi(v),
	\end{align}
	and the family of closed planar curves $\gamma(u,\w)$ satisfies the following.
	\begin{align}
		\label{eq:gammaUClosedCurves}
		\gamma_u(u,\w) &= -\ci \gamma_{\w}(u,\w) = e^{h(u,\w) + \ci \sigma(u,\w)} = -\ci \left(\frac{\vartheta_2\left(\frac{u + \ci \w - \omega}{2}\right)}{\vartheta_1\left(\frac{u + \ci \w + \omega}{2}\right)}\right)^2, \\
		\label{eq:etohClosedCurves}
		e^{h(u,\w)} &= \frac{\vartheta_2\left(\frac{u + \ci \w - \omega}{2}\right)}{\vartheta_1\left(\frac{u + \ci \w + \omega}{2}\right)}\frac{\vartheta_2\left(\frac{u - \ci \w - \omega}{2}\right)}{\vartheta_1\left(\frac{u - \ci \w + \omega}{2}\right)},\\
		\label{eq:etoisigmaClosedCurves}
		e^{\ci \sigma(u,\w)} &= -\ci \frac{\vartheta_2\left(\frac{u + \ci \w - \omega}{2}\right)}{\vartheta_1\left(\frac{u + \ci \w + \omega}{2}\right)}\frac{\vartheta_1\left(\frac{u - \ci \w + \omega}{2}\right)}{\vartheta_2\left(\frac{u - \ci \w - \omega}{2}\right)}.
	\end{align}
\end{corollary}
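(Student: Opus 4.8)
The plan is to read off every formula in the statement from the local description already established, specialized to the \emph{critical} value of $\omega$. Recall from Lemma~\ref{lem:rhombic_zeros}(ii) and Theorem~\ref{thm:planarIsothermicCylinderFormulas} that in the closed rhombic case $\omega\in(0,\pi/4)$ is the unique zero of $\vartheta_2'(\cdot\mid\tau)$, so $\vartheta_2'(\omega)=0$; this single fact makes every exponential factor of the form $e^{(\cdot)\vartheta_2'(\omega)/\vartheta_2(\omega)}$ appearing in Theorem~\ref{thm:localIsothermicPlanar} and Proposition~\ref{prop:Riccati_sigma} collapse to $1$.

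First I would record the frame. Lemma~\ref{lem:fPartialDerivatives} already gives $f_u$, $f_v$, $n$ in the forms \eqref{eq:fuIsothermicPlanar}, \eqref{eq:fvIsothermicPlanar}, \eqref{eq:nIsothermicPlanar} for a general reparametrization function, with the angle $\phi(v)$ pinned down by \eqref{eq:sincosphi}: $\sin\phi=\w'$ and $\cos\phi=-\sqrt{1-\w'^2}$. Substituting these into \eqref{eq:fvIsothermicPlanar} and \eqref{eq:nIsothermicPlanar}, so that $-\cos\phi=\sqrt{1-\w'^2}$, yields exactly \eqref{eq:fuIsothermicPlanarClosed}--\eqref{eq:nIsothermicPlanarClosed}. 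At this point I would remark that $\w$ being a $\tau$-admissible reparametrization function (Definition~\ref{def:admissibleReparametrizationFunction}) is precisely what makes $\sqrt{1-\w'^2}$ a well-defined smooth, possibly sign-changing, function of $v$, so that the frame formulas are meaningful globally.

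Next I would specialize the curve data. The holomorphicity of $\gamma$ with respect to $u+\ci\w$ from Lemma~\ref{lem:fPartialDerivatives} gives the Cauchy--Riemann identity $\gamma_u=-\ci\gamma_\w$, and by construction $\gamma_u=e^{h+\ci\sigma}$. Setting $\vartheta_2'(\omega)=0$ in the rhombic formulas \eqref{eq:gammaURhombic}, \eqref{eq:rhombicMetric}, and \eqref{eq:sigmaRhombic} removes the exponential factors and produces \eqref{eq:gammaUClosedCurves}, \eqref{eq:etohClosedCurves}, and \eqref{eq:etoisigmaClosedCurves}, respectively. As an internal consistency check I would multiply \eqref{eq:etohClosedCurves} by \eqref{eq:etoisigmaClosedCurves}: the factors $\vartheta_1(\tfrac{u-\ci\w+\omega}{2})$ and $\vartheta_2(\tfrac{u-\ci\w-\omega}{2})$ cancel, leaving $-\ci\,\vartheta_2(\tfrac{u+\ci\w-\omega}{2})^2/\vartheta_1(\tfrac{u+\ci\w+\omega}{2})^2$, which is exactly $e^{h+\ci\sigma}=\gamma_u$ as in \eqref{eq:gammaUClosedCurves}.

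There is no genuine obstacle: the corollary is essentially a transcription of results that are already proved, evaluated at the critical $\omega$. The only points deserving care are the sign conventions — that $\cos\phi=-\sqrt{1-\w'^2}$ is the correct branch, consistent with $\Phi'\Phi^{-1}=\sqrt{1-\w'^2}\,W_1\qk$ in Theorem~\ref{thm:planarIsothermicCylinderFormulas} — and the observation that differentiating the closed-curve formula \eqref{eq:gammaClosedCurves} for $\gamma$ reproduces \eqref{eq:gammaUClosedCurves}, which is precisely the theta-function identity already verified inside the proof of Theorem~\ref{thm:localIsothermicPlanar}, now read with the exponential factor absent.
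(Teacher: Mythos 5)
Your proposal is correct and follows exactly the route the paper intends: the corollary is stated without proof as an immediate specialization of Lemma~\ref{lem:fPartialDerivatives} (with $\sin\phi=\w'$, $\cos\phi=-\sqrt{1-\w'^2}$ from \eqref{eq:sincosphi}) and of the rhombic formulas \eqref{eq:gammaURhombic}, \eqref{eq:rhombicMetric}, \eqref{eq:sigmaRhombic} at the critical $\omega$ with $\vartheta_2'(\omega)=0$, which is precisely what you carry out. Your consistency check $e^{h}\cdot e^{\ci\sigma}=\gamma_u$ is a nice added verification but not a new idea.
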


\begin{corollary}
	The moduli space of isothermic cylinders with one generic family of closed planar curvature lines is parametrized by a real lattice parameter $0 < \Imc \tau < \Imc \tau_0 \approx 0.3547$ \eqref{eq:imctau0} that determines critical $\omega \in (0,\pi/4)$ and a $\tau$-admissible reparametrization function $\w(v)$.
\end{corollary}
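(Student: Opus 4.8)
The plan is to combine the explicit classification of Theorem~\ref{thm:planarIsothermicCylinderFormulas} with the two arithmetic Lemmas~\ref{lem:rectangularNeverClosed} and~\ref{lem:rhombic_zeros}, and then to verify that the resulting assignment of data to surfaces is a bijection onto the moduli space.

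\textbf{Step 1: surjectivity and reduction of the parameters.} First I would recall that, by Theorem~\ref{thm:localIsothermicPlanar}, every isothermic surface with one generic family of planar curvature lines is, up to Euclidean motion, determined by an elliptic curve of rectangular or rhombic type, a parameter $\omega$, and a reparametrization function $\w(v)$, the residual affine freedom $z\mapsto\alpha z+\beta$ in Darboux's Lam\'e data having already been quotiented out. Such a surface is a cylinder exactly when its planar $u$-curves close, which by the $u$-periodicity lemma preceding Lemma~\ref{lem:rectangularNeverClosed} forces $\vartheta_4'(\omega\,|\,\tau)=0$ in the rectangular case and $\vartheta_2'(\omega\,|\,\tau)=0$ in the rhombic case (the degenerate $\omega=0$ situation, where the planes are tangent to a cylinder rather than a cone, is not generic and is excluded here). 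Lemma~\ref{lem:rectangularNeverClosed} eliminates the rectangular case outright, and Lemma~\ref{lem:rhombic_zeros}(ii) shows that in the rhombic case a critical $\omega\in(0,\pi/4)$ with $\vartheta_2'(\omega\,|\,\tau)=0$ exists and is unique precisely when $0<\Imc\tau<\Imc\tau_0$. Thus $\omega$ ceases to be an independent modulus: it becomes a function of $\tau$, equivalently of $\Imc\tau$, and the data reduce to the pair $(\Imc\tau,\w)$. Conversely, for any $\Imc\tau\in(0,\Imc\tau_0)$ Lemma~\ref{lem:rhombic_zeros}(ii) supplies the critical $\omega$, and for any $\tau$-admissible reparametrization function $\w$ (Definition~\ref{def:admissibleReparametrizationFunction}) the formulas of Theorem~\ref{thm:planarIsothermicCylinderFormulas} return a genuine isothermic cylinder with one generic family of closed planar curvature lines; the admissibility conditions on $\w$ are exactly what keeps $\gamma(\cdot,\w(v))$ non-degenerate and the immersion \eqref{eq:isothermicCylinder} smooth for all $v$.

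\textbf{Step 2: injectivity.} Then I would argue that distinct pairs $(\Imc\tau,\w)$ yield non-congruent cylinders. The metric $e^h$ and the principal curvatures are Euclidean invariants, and imposing that $2\pi$ is the minimal $u$-period pins down the scaling $\alpha$ in the affine freedom, translations in $u$ acting trivially. From the explicit metric \eqref{eq:etohClosedCurves} the configuration of its zeros and poles along the $u$-line, together with the rhombic normalization $\tau=\tfrac12+\ci\,\Imc\tau$, recovers both $\Imc\tau$ and $\omega$; equivalently one uses that the conformal type of the underlying rhombic elliptic curve depends strictly monotonically on $\Imc\tau$ over $(0,\Imc\tau_0)$. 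Finally $\w(v)$ is recovered from $\w'(v)=\sin\phi(v)$, see \eqref{eq:sincosphi}, where $\phi(v)$ is the Joachimsthal angle between the Gauss map and the $u$-curvature-line plane, again a Euclidean invariant, together with the normalization $\w(\R)\subset(0,2\pi\,\Imc\tau)$. Hence $(\Imc\tau,\w)\mapsto[f]$ is a bijection onto the moduli space.

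\textbf{Main obstacle.} The delicate step is injectivity: one must check carefully that $\Imc\tau$, hence the critical $\omega$, is genuinely recoverable from the intrinsic geometry despite the affine reparametrization freedom in Darboux's construction --- namely that $2\pi$ really is the minimal $u$-period, so the scaling $\alpha$ is determined, and that no two rhombic lattices with parameter in $(0,\Imc\tau_0)$ are isomorphic. Everything else is bookkeeping, assembling Theorem~\ref{thm:planarIsothermicCylinderFormulas} with Lemmas~\ref{lem:rectangularNeverClosed} and~\ref{lem:rhombic_zeros}.
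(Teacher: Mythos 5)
Your Step 1 is exactly the paper's argument: the corollary is stated without a separate proof because it is an immediate summary of Theorem~\ref{thm:planarIsothermicCylinderFormulas}, which in turn rests on Lemma~\ref{lem:rectangularNeverClosed} (killing the rectangular case) and Lemma~\ref{lem:rhombic_zeros}(ii) (existence and uniqueness of the critical $\omega\in(0,\pi/4)$ exactly for $0<\Imc\tau<\Imc\tau_0$, so that $\omega$ is no longer an independent modulus). Your exclusion of the degenerate $\omega=0$ case is also right, since the corollary concerns the generic (cone) case only. So the surjectivity half is the same route as the paper's.

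Your Step 2 goes beyond what the paper does: the paper reads ``parametrized by'' in the weak sense of the classification (every such cylinder arises from some admissible datum) and does not argue injectivity. Your sketch of how to recover $\Imc\tau$, $\omega$, and $\w$ from Euclidean invariants is plausible --- in particular, distinct $\lambda,\lambda'\in(0,\lambda_0)$ do give non-isomorphic lattices, since the modular identification on the rhombic line $\Re\tau=\tfrac12$ pairs $\lambda$ with $\tfrac{1}{4\lambda}$, and $\tfrac{1}{4\lambda}>\lambda_0$ for $\lambda<\lambda_0$. But as literally stated your injectivity claim is too strong: two admissible reparametrization functions related by a shift or reflection of the $v$-parameter (e.g.\ $\w(v+c)$) produce congruent, indeed identical, cylinders, so the map $(\Imc\tau,\w)\mapsto[f]$ is only injective after quotienting $\w$ by such trivial reparametrizations. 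If you want to keep the bijectivity statement you should either build that quotient into the parametrization or weaken the claim to match the paper's surjective reading. None of this affects the validity of the corollary as the paper intends it.
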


\subsection{Global symmetries: sphere inversion and Christoffel duality} 

The explicit formulas in the above classification reveal the following remarkable global geometric properties. Figure~\ref{fig:uCurvePlaneDualAndInverse} illustrates that the spherical inversion and dualization operations described below map each planar curvature line, and therefore the entire surface, onto (minus) itself.

\begin{figure}[tbh]
\centering
\includegraphics[width=0.7\hsize]{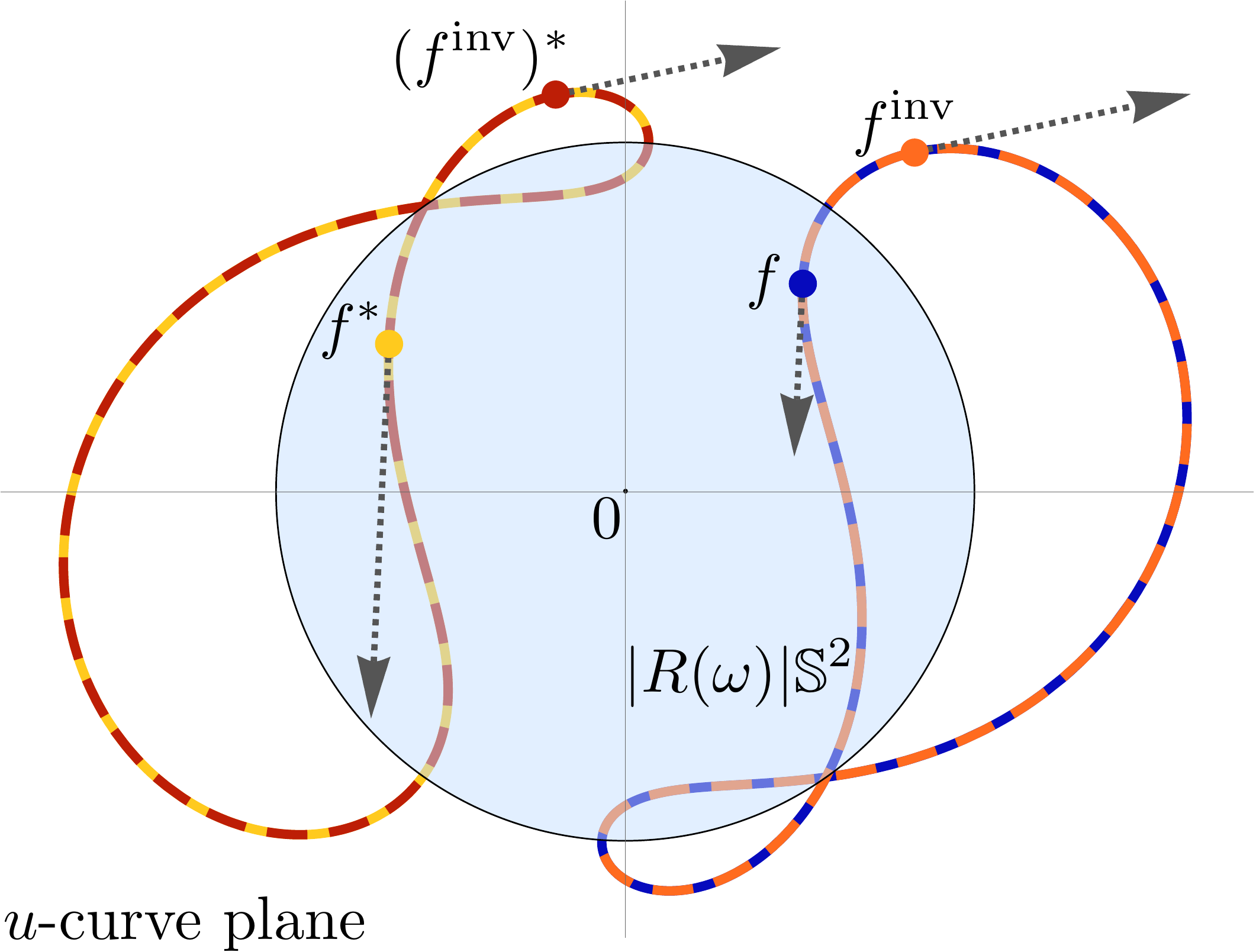}
\caption{A $u$-curve plane of an isothermic surface $f$ with one generic family of closed planar curvature lines. The transformations $(\cdot)^{\textup{inv}}$, inversion in the sphere of radius $| R(\omega) |$ centered at the origin, and $(\cdot)^*$ a Christoffel dualization, are noncommuting involutions that map every closed planar curvature line onto (possibly minus) itself.}
\label{fig:uCurvePlaneDualAndInverse}
\end{figure}

\begin{theorem}
	\label{thm:planarIsothermicCylinderGeometry}
	Every isothermic cylinder $f(u,v)$ with one generic family of closed planar curvature $u$-lines has the following geometric properties:
	\begin{enumerate}
		\item 	 The $v$-curve defined by critical $u = \omega$ lies on a sphere of radius $| R(\omega)|$, where $R(\omega)$ is the real number given by
		\begin{align} R(\omega) = \frac{2 \vartheta _2(\omega)^2}{\vartheta_1^{\prime}(0) \vartheta _1(2 \omega)} \label{eq:radiusOmega}.
		\end{align}
	 Moreover, $f(\omega,v)$ and $f_u(\omega,v)$ are parallel and satisfy
		\begin{align}
			\frac{1}{R(\omega)}f(\omega,v) = -\frac{f_u(\omega,v)}{|f_u(\omega,v)|}. \label{eq:fomegafuParallel}
		\end{align}
		\item Define $f^{\textup{inv}} = -R(\omega)^2 f^{-1}$ as the inversion of $f$ in the sphere of radius $| R(\omega) |$. Then,
		\begin{align}
			\label{eq:fInvUShift}
			f^{\textup{inv}}(u, v) = f(2 \omega-u, v).
		\end{align}
		In other words, this inversion maps $f$ onto itself and is an involution. The plane of each $u$-curve is mapped to itself.
		\item A Christoffel dual $f^*$ of $f$, with $(f^*)_u = \frac{f_u}{| f_u |^2}$ and $(f^*)_v = - \frac{f_v}{| f_v |^2}$, is 
		\begin{align}
			\label{eq:fDualUShift}
			f^*(u,v) = - f(\pi - u, v). 
		\end{align}
		In other words, this duality maps $f$ onto (minus) itself and is an involution. The plane of each $u$-curve is mapped to itself.
	\end{enumerate}
\end{theorem}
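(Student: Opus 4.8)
The unifying observation is that $\Phi(v)$ is unit--quaternion valued: its logarithmic derivative $\Phi'\Phi^{-1}=\sqrt{1-\w'(v)^2}\,W_1(\w(v))\,\qk$ is always imaginary (whatever the complex value of $W_1$), so $|\Phi|$ is constant and may be normalized to $1$, and hence $X\mapsto\Phi^{-1}X\Phi$ is a rotation of $\R^3$ fixing the origin. Such a rotation preserves Euclidean lengths, commutes with inversion in any origin--centered sphere, and commutes with the quaternionic reciprocal $X\mapsto X^{-1}$; moreover $f_u,f_v$ are themselves $\Phi$--conjugates of explicit plane--curve data by \eqref{eq:fuIsothermicPlanarClosed}--\eqref{eq:fvIsothermicPlanarClosed}. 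Consequently all three assertions reduce to scalar identities for the theta quotients $\gamma(u,\w)$ of \eqref{eq:gammaClosedCurves}, $\gamma_u=e^{h+\ci\sigma}$ of \eqref{eq:gammaUClosedCurves}, and $e^{h},e^{\ci\sigma}$ of \eqref{eq:etohClosedCurves}--\eqref{eq:etoisigmaClosedCurves}, which I prove using the rhombic conjugation relations \eqref{eq:theta_conjugation}, the oddness of $\vartheta_1$, and the half--period relation $\vartheta_1(z+\tfrac{\pi}{2})=\vartheta_2(z)$ (whence $\vartheta_2(\tfrac{\pi}{2}-w)=\vartheta_1(w)$ and $\vartheta_1(\tfrac{\pi}{2}-w)=\vartheta_2(w)$).

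\textbf{Part (1).} Since $\Phi$ is unitary, $|f(u,v)|=|\gamma(u,\w(v))|$. Evaluating \eqref{eq:gammaClosedCurves} at the critical $u=\omega$ gives $\gamma(\omega,\w)=-\ci R(\omega)\,\vartheta_1(\tfrac{\ci\w}{2}-\omega)/\vartheta_1(\tfrac{\ci\w}{2}+\omega)$; applying \eqref{eq:theta_conjugation} together with $\vartheta_1(-z)=-\vartheta_1(z)$ shows $|\vartheta_1(\tfrac{\ci\w}{2}-\omega)|=|\vartheta_1(\tfrac{\ci\w}{2}+\omega)|$, and the same phase bookkeeping (using $\overline{\vartheta_1'(0)}=e^{-\ci\pi/4}\vartheta_1'(0)$, obtained by differentiating \eqref{eq:theta_conjugation}) shows that $R(\omega)$ in \eqref{eq:radiusOmega} is real. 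Hence $|f(\omega,v)|=|R(\omega)|$ independently of $v$, so the $v$--curve at $u=\omega$ lies on the sphere of radius $|R(\omega)|$ about the origin. For \eqref{eq:fomegafuParallel} it suffices, by $\Phi$--equivariance, to show $\gamma(\omega,\w)/\gamma_u(\omega,\w)=-R(\omega)/|\gamma_u(\omega,\w)|$; computing this ratio from \eqref{eq:gammaUClosedCurves} and simplifying with an addition theorem for $\vartheta_1$ shows it is real of the correct magnitude, and the sign is pinned down by a product--representation positivity argument in the spirit of Lemma~\ref{lem:rectangularNeverClosed}.

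\textbf{Part (2).} Writing $f=\Phi^{-1}(\gamma\qj)\Phi$ and using that $\gamma\qj$ is imaginary, so $(\gamma\qj)^{-1}=-\gamma\qj/|\gamma|^2$, the inversion becomes $f^{\textup{inv}}=-R(\omega)^2 f^{-1}=\Phi^{-1}\big(\tfrac{R(\omega)^2}{|\gamma(u,\w)|^2}\gamma(u,\w)\big)\qj\,\Phi$ (legitimate since $R(\omega)^2>0$). Comparing with $f(2\omega-u,v)=\Phi^{-1}\gamma(2\omega-u,\w)\qj\,\Phi$ and multiplying by $\overline{\gamma(u,\w)}$, the shift \eqref{eq:fInvUShift} is equivalent to the single scalar identity $\gamma(2\omega-u,\w)\,\overline{\gamma(u,\w)}=R(\omega)^2$. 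Expanding $\gamma(2\omega-u,\w)$ — the substitution $u\mapsto 2\omega-u$ reflects both theta arguments, and oddness of $\vartheta_1$ restores the sign and interchanges numerator with denominator — and $\overline{\gamma(u,\w)}$ via \eqref{eq:theta_conjugation}, the two $\vartheta_1$--quotients are mutually reciprocal and cancel, leaving exactly $R(\omega)^2$. Since $u\mapsto 2\omega-u$ is an involution fixing $u=\omega$ and leaving $\w$ untouched, $f^{\textup{inv}}$ is an involution carrying each $u$--curve plane to itself.

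\textbf{Part (3) and the main obstacle.} A Christoffel dual is characterized, up to translation, by $df^{*}=-f_u^{-1}\,du+f_v^{-1}\,dv$ (the quaternionic form of \eqref{eq:isothermic_dual}). For the candidate $g(u,v)=-f(\pi-u,v)$ one has $g_u=f_u(\pi-u,v)$ and $g_v=-f_v(\pi-u,v)$, so I must check $f_u(\pi-u,v)=-f_u(u,v)^{-1}$ and $f_v(\pi-u,v)=-f_v(u,v)^{-1}$. Using \eqref{eq:fuIsothermicPlanarClosed}--\eqref{eq:fvIsothermicPlanarClosed}, the facts that $e^{\ci\sigma}\qj$ and $\sqrt{1-\w'^2}\,\qi+\w'\,e^{\ci\sigma}\qk$ are unit imaginary quaternions (hence each equals minus its own reciprocal, and $|f_u|^2=|f_v|^2=e^{2h}$), and comparing components across the common conjugation by $\Phi$, both identities reduce to the two scalar relations $e^{h(\pi-u,\w)}=e^{-h(u,\w)}$ and $e^{\ci\sigma(\pi-u,\w)}=e^{\ci\sigma(u,\w)}$. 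Each is immediate from \eqref{eq:etohClosedCurves} and \eqref{eq:etoisigmaClosedCurves} upon applying $\vartheta_2(\tfrac{\pi}{2}-w)=\vartheta_1(w)$ and $\vartheta_1(\tfrac{\pi}{2}-w)=\vartheta_2(w)$ to each theta argument, after which the four factors are permuted into the reciprocal of (respectively, the same as) the original product. Thus $g$ is a Christoffel dual, proving \eqref{eq:fDualUShift}, and since $u\mapsto\pi-u$ is an involution leaving $\w$ fixed, the duality is an involution carrying each $u$--plane to itself. There is no deep obstruction anywhere; the only genuinely fussy point is the phase--and--sign bookkeeping in Part (1) — verifying $R(\omega)\in\R$ and the precise sign in \eqref{eq:fomegafuParallel} — which forces one to track the $e^{-\ci\pi/4}$ factors in \eqref{eq:theta_conjugation} and their differentiated form carefully, together with a product--formula positivity statement; the remaining theta identities are routine once the half--period relations above are in hand.
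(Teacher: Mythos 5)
Your proposal is correct and follows essentially the same route as the paper: conjugation by the unitary frame $\Phi$ reduces all three claims to scalar theta-function identities for $\gamma$, $e^{h}$, and $e^{\ci\sigma}$, which are then verified with the rhombic conjugation relations \eqref{eq:theta_conjugation}, oddness of $\vartheta_1$, and the half-period shifts (the paper packages the conjugation step as $\overline{\gamma(u,\w)}=-\gamma(u,-\w)$ and as $\gamma_u(\pi-u,\w)=(\overline{\gamma_u(u,\w)})^{-1}$, which is equivalent to your splitting into $e^{h(\pi-u,\w)}=e^{-h(u,\w)}$ and $e^{\ci\sigma(\pi-u,\w)}=e^{\ci\sigma(u,\w)}$). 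The only quibble is in Part (1): no addition theorem or product-formula positivity argument is needed for the sign in \eqref{eq:fomegafuParallel}, since substituting $u=\omega$ into \eqref{eq:gammaClosedCurves} and \eqref{eq:etoisigmaClosedCurves} gives $\gamma(\omega,\w)=-R(\omega)\,e^{\ci\sigma(\omega,\w)}$ exactly, by direct cancellation.
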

\begin{proof}
	We prove each statement in turn.
	\begin{enumerate} 
		\item [\eqref{eq:fomegafuParallel}.] Since $| f | = | \gamma |$, we compute $| \gamma(\omega, \w) |^2$
		using \eqref{eq:gammaClosedCurves}.
		
The conjugation formulas \eqref{eq:theta_conjugation} imply that 
\begin{equation}
\label{eq:gammaConjugation}
\overline{\gamma(u,\w)} =  -\gamma(u,-\w).
\end{equation}
Furthermore, 
		\begin{equation*}
			| \gamma(\omega, \w) |^2 = -\gamma(\omega, \w) \gamma(\omega,-\w) = \left ( \frac{2 \vartheta _2(\omega)^2}{\vartheta_1^{\prime}(0) \vartheta _1(2 \omega)} \right )^2 = R(\omega)^2,
		\end{equation*}
where $R(\omega)$ is given by \eqref{eq:radiusOmega}.
It remains to prove \eqref{eq:fomegafuParallel}. By $\eqref{eq:isothermicCylinder}$, this is equivalent to 
		\begin{align*}
			\frac{1}{R(\omega)} \gamma(\omega, \w(v)) = -\frac{\gamma_u(\omega,\w(v))}{|\gamma_u(\omega,\w(v))|} = -e^{\ci \sigma(u, \w(v))},
		\end{align*}
		which is easily verified using \eqref{eq:gammaClosedCurves} and \eqref{eq:etoisigmaClosedCurves}.		
		\item [\eqref{eq:fInvUShift}.] We find that $f^{\textup{inv}}(u,v) = \Phi^{-1}(v) \gamma^{\textup{inv}}(u,\w(v)) \qj \Phi(v)$, with 
		$\gamma^{\textrm{inv}}(u,\w) := R(\omega)^2 (\overline{\gamma(u,\w)})^{-1}$. 
	From \eqref{eq:gammaConjugation} we have
		\begin{align*}
			\gamma^{\textrm{inv}}(u,\w)  = -R(\omega)^2 (\gamma(u,-\w))^{-1}.
		\end{align*}
		Now using that $\gamma(u,\w) = -\ci R(\omega) \frac{\vartheta_1\left(\frac{1}{2} (u+\ci\w-3\omega)\right)}{\vartheta_1\left(\frac{1}{2} (u+\ci \w-\omega)\right)}$ we obtain $\gamma^{\textrm{inv}}(u,\w) = \gamma(2 \omega-u, \w)$ which implies \eqref{eq:fInvUShift}.	
		\item [\eqref{eq:fDualUShift}.] We need to show $f^*(u,v) := - f(\pi - u, v)$ satisfies $(f^*)_u(u,v) = \frac{f_u(u,v)}{\left | f_u(u,v) \right |^2}$ and $(f^*)_v(u,v) = -\frac{f_v(u,v)}{\left | f_v(u,v) \right |^2}$.
	From \eqref{eq:gammaUClosedCurves}	we have
		\begin{align*}
			\gamma_u(\pi - u, \w) = -\ci \left(\frac{\vartheta _1\left(\frac{1}{2} (-u+\ci\w -\omega)\right)}{\vartheta _2\left(\frac{1}{2} (-u+\ci\w+\omega)\right)}\right)^2.
		\end{align*}
The conjugation formulas \eqref{eq:theta_conjugation} imply
		\begin{align}
			\label{eq:gammaPiShiftInMinusU}
			\gamma_u(\pi - u, \w) = (\overline{\gamma_u(u,\w)})^{-1} =  \frac{\gamma_u(u,\w)}{\left | \gamma_u(u,\w) \right |^2}.
		\end{align}
		Therefore, $(f^*)_u(u,v) = \frac{f_u(u,v)}{\left | f_u(u,v) \right |^2}$. Now, we note that
		\eqref{eq:fvIsothermicPlanar} means
		\begin{equation*}
			\begin{aligned}
			(f^*)_v(u,v) &= -\Phi^{-1} \big( \sqrt{1-\w'(v)^2} \left | \gamma_u(\pi-u,\w(v)) \right | \qi \\&\hspace{2.25cm}+ \w'(v) \gamma_u(\pi-u,\w(v)) \qk \big) \Phi,
			\end{aligned}
		\end{equation*}
		which can be combined with \eqref{eq:gammaPiShiftInMinusU} to prove $(f^*)_v(u,v) = -\frac{f_v(u,v)}{\left | f_v(u,v) \right |^2}$.
	\end{enumerate}
\end{proof}

\begin{remark}
The Christoffel dual of an isothermic torus does not preserve periodicity properties and is usually only simply connected. The remarkable Christoffel symmetry \eqref{eq:fDualUShift} means that if $f$ is a torus, then $f^*$ is a torus. Additionally using the sphere inversion symmetry shows that $(f^{-1})^*$ is also a torus. These topological properties significantly reduce the periodicity conditions to conformally transform $f$ into a Bonnet pair of tori~\cite{short-compact-bonnet}.
\end{remark}

\subsection{Closing the cylinders into tori}
Theorem~\ref{thm:planarIsothermicCylinderFormulas} classified isothermic cylinders with one family ($u$-curves) of closed planar curvature lines, so $f(u,v) = f(u + 2\pi, v)$. Now, we would like to close the cylinders into tori by closing the $v$-curves. The behavior of the $v$-curves depends on the choice of $\tau$-admissible reparametrization function that is periodic $\w(v) = \w( v + \V)$. Either $f(u, v + \V) = f(u,v)$ so the cylinder closes to a torus after one period of $\w(v)$, or a fundamental piece is traced out.

\begin{definition}
	\label{def:isothermicCylinderFromAFundamentalPiece}
	We say $f(u,v) = \Phi(v)^{-1} \gamma(u,\w(v)) \qj \Phi(v)$ is an \emph{isothermic cylinder from a fundamental piece} if
	\begin{enumerate}
		\item $f(u,v)$ is an isothermic cylinder with one generic family ($u$-curves) of closed planar curvature lines as  in Theorem~\ref{thm:planarIsothermicCylinderFormulas} and 
		\item $\w(v)$ has period $\V$, but $f$ is not closed after only one period, i.e., 
		\begin{align*}
			\w(v + \V) = \w(v) \quad \text{ and } \quad	f(u,v + \V) \neq f(u,v). 
		\end{align*}
		The \emph{fundamental piece $\FP$} is the parametrized cylinderical patch
		\begin{align*}
			\FP = \{ f(u,v) \, \big \vert \, u \in [0,2\pi] \text{ and } v \in [0, \V]\}.
		\end{align*}
		\end{enumerate}
The \emph{axis} $A$ and \emph{generating rotation angle} $\angleFP \in [0,\pi]$ are defined by the monodromy matrix of the ODE for $\Phi$ \eqref{eq:phiPrimePhiInverse}:
\begin{equation}
	\label{eq:monodromy}
	\Phi(0)^{-1}\Phi(\V) = \cos (\angleFP/2) + \sin (\angleFP/2) \axisDir.
\end{equation}

\end{definition}

An isothermic cylinder from a fundamental piece $\Pi$ is extended by piecing together congruent copies of $\Pi$ via a fixed rotation about its axis, see Figure~\ref{fig:fundamental-piece-axis}.
\begin{figure}
	\centering
	\includegraphics[width=\hsize]{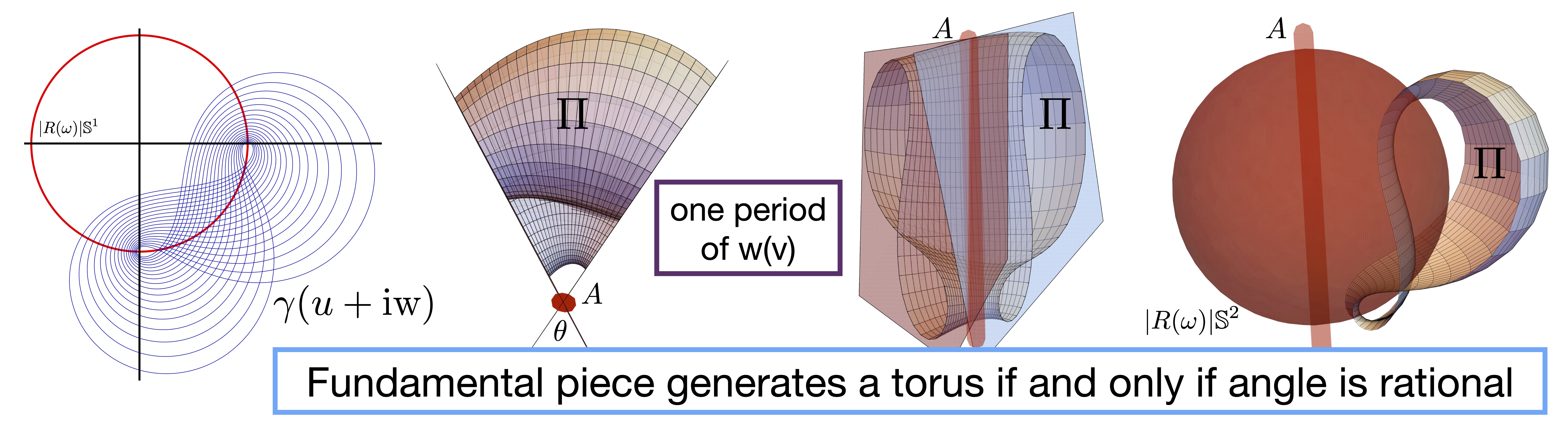}
	\caption{Left: A family of closed area constrained hyperbolic elastica, with its symmetry circle. Middle: Two views of a fundamental piece $\Pi$ with axis $\axisDir$ and angle $\angleFP$ traced out by one period of a periodic $\tau$-admissible reparametrization function $\w(v)$. Right: A third view of the fundamental piece showing the axis and sphere symmetry.}
	\label{fig:fundamental-piece-axis}
\end{figure}

\begin{lemma}
	\label{lem:rotationalSymmetryIsothermicCylinderFromFundamentalPiece}
	Let $f$ be an isothermic cylinder from a fundamental piece $\FP$ with axis $\axisDir$ and angle $\angleFP$. Define the rotation quaternion $R = \cos (\angleFP/2) + \sin (\angleFP/2) \axisDir$. Then for all $1 < k \in \N, u \in [0,2\pi],$ and $v \in [0, \V]$ we have
	\begin{equation}
		\label{eq:rotation_f}
		f(u,v+k \V) = R^{-k} f(u,v) R ^{k}.
	\end{equation}
\end{lemma}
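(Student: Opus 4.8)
The statement is a Floquet-type fact about the linear ODE \eqref{eq:phiPrimePhiInverse} for $\Phi$ together with the $\V$-periodicity of $\w$. The plan is: first observe that all the data entering \eqref{eq:isothermicCylinder}--\eqref{eq:phiPrimePhiInverse} is $\V$-periodic; then identify the monodromy of the $\Phi$-equation with $R$; then substitute back into \eqref{eq:isothermicCylinder}.

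For the first step, since $\w(v+\V)=\w(v)$ we also have $\w'(v+\V)=\w'(v)$, so $\gamma(u,\w(v+\V))=\gamma(u,\w(v))$ in \eqref{eq:isothermicCylinder}, and the coefficient
\[
A(v):=\Phi'(v)\Phi^{-1}(v)=\sqrt{1-\w'(v)^2}\,W_1(\w(v))\,\qk
\]
of \eqref{eq:phiPrimePhiInverse} satisfies $A(v+\V)=A(v)$. Here the one point to be careful about is that the smooth branch of $\sqrt{1-\w'(\cdot)^2}$ is genuinely $\V$-periodic — equivalently, it undergoes an even number of sign changes over $[0,\V]$ at the points where $\w'=\pm1$ — which is implicit in the notion of a periodic reparametrization function underlying an isothermic cylinder from a fundamental piece in Definition~\ref{def:isothermicCylinderFromAFundamentalPiece}. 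Since $A$ takes values in $\mathrm{Im}\,\H$, one has $\frac{d}{dv}|\Phi|^2 = 2\,\Re(A)\,|\Phi|^2 = 0$, so $\Phi$ remains unit-quaternion valued and $R=\Phi(0)^{-1}\Phi(\V)$ is a genuine rotation quaternion, as already used in \eqref{eq:monodromy}.

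For the main step, set $\Psi(v):=\Phi(v+\V)$. Then $\Psi'(v)=\Phi'(v+\V)=A(v+\V)\Phi(v+\V)=A(v)\Psi(v)$, so $\Psi$ solves the same homogeneous linear equation $X'=A(v)X$ as $\Phi$; hence $\frac{d}{dv}(\Phi^{-1}\Psi)=-\Phi^{-1}A\Psi+\Phi^{-1}A\Psi=0$, and $\Phi(v)^{-1}\Psi(v)$ equals the constant $\Phi(0)^{-1}\Phi(\V)=R$. Thus $\Phi(v+\V)=\Phi(v)R$, and induction gives $\Phi(v+k\V)=\Phi(v)R^{k}$ for all $k\in\N$. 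Substituting this together with $\w(v+k\V)=\w(v)$ and $\Phi(v+k\V)^{-1}=R^{-k}\Phi(v)^{-1}$ into \eqref{eq:isothermicCylinder} yields
\[
f(u,v+k\V)=\Phi(v+k\V)^{-1}\gamma(u,\w(v))\,\qj\,\Phi(v+k\V)=R^{-k}\,\big(\Phi(v)^{-1}\gamma(u,\w(v))\,\qj\,\Phi(v)\big)\,R^{k}=R^{-k}f(u,v)R^{k},
\]
which is \eqref{eq:rotation_f}; since $X\mapsto R^{-k}XR^{k}$ is a rotation by $k\angleFP$ about $\axisDir$, this exhibits $f$ as assembled from the fundamental piece by iterated rotations about the axis. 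The only genuine obstacle is the periodicity of the square-root coefficient noted above; once that is granted the rest is the routine monodromy computation just sketched.
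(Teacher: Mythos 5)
Your proof is correct and follows essentially the same route as the paper: both arguments reduce the claim to the $\V$-periodicity of the coefficient in the ODE \eqref{eq:phiPrimePhiInverse}, deduce the monodromy relation $\Phi(v+k\V)=\Phi(v)\bigl(\Phi(0)^{-1}\Phi(\V)\bigr)^{k}$, and substitute into \eqref{eq:isothermicCylinder}. You merely spell out the Floquet computation (via $\Psi(v)=\Phi(v+\V)$) and the periodicity of the signed square root in more detail than the paper, which states these steps without elaboration.
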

\begin{proof}
	The traversed family of planar $u$-curves only depends on $\w(v)$, therefore $\gamma(u,v) = \gamma(u, v + \V)$, and
	$
	f(u,v+k \V)
	= \Phi(v + k \V)^{-1} \gamma(u,\w(v)) \qj \Phi(v + k \V).
	$	
	The frame $\Phi(v)$ is integrated from the ODE \eqref{eq:phiPrimePhiInverse}. When $\w(v)$ is periodic this ODE has periodic coefficients 
	with monodromy matrix $\Phi(0)^{-1}\Phi(\V)$. In other words, 
	\begin{align}
		\label{eq:phiMonodromyForAllV}
		\Phi(v + k \V) = \Phi(v) \left( \Phi(0)^{-1}\Phi(\V) \right)^{k}.
	\end{align}
	Continuing the calculation from above, we arrive at \eqref{eq:rotation_f}.
	This formula desribes the $k$-times rotation with the axis $\axisDir$ and generating rotation angle $\theta \in [0,\pi]$.
\end{proof}

Thus, closing the isothermic cylinder into a torus is a rationality condition.

\begin{lemma}
	An isothermic cylinder $f$ from a fundamental piece is a torus if and only if the generating rotation angle $\angleFP \in [0,\pi]$ satisfies $k \angleFP \in 2\pi \N $ for some $k \in \N$ so the $v$-period is $k \V$.
\end{lemma}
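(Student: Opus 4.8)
The plan is to leverage the rotational symmetry
$$
f(u, v + k\V) = R^{-k} f(u,v) R^{k}, \qquad R = \cos(\angleFP/2) + \sin(\angleFP/2)\axisDir ,
$$
supplied by Lemma~\ref{lem:rotationalSymmetryIsothermicCylinderFromFundamentalPiece}, together with the elementary observation that the unit quaternion $R^{k}$ is central in $\H$ (equivalently equal to $\pm 1$) exactly when $\sin(k\angleFP/2)=0$, i.e.\ when $k\angleFP\in 2\pi\Z$. Everything else is formal bookkeeping around the two directions of the equivalence.

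\emph{Sufficiency.} Suppose $k\angleFP = 2\pi m$ for some $m\in\N$. Then $R^{k}=(-1)^{m}$ is a real scalar, so \eqref{eq:rotation_f} gives $f(u,v+k\V)=R^{-k}f(u,v)R^{k}=f(u,v)$ for all $u,v$. Combined with the $2\pi$-periodicity in $u$ from Theorem~\ref{thm:planarIsothermicCylinderFormulas}, $f$ descends to an immersion of the torus $\R^{2}/(2\pi\Z\times k\V\,\Z)$.

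\emph{Necessity and the value of the period.} Conversely, assume $f$ is a torus, and let $P>0$ be its minimal $v$-period. If $\angleFP=0$ then $R=1$ and \eqref{eq:rotation_f} with $k=1$ yields $f(u,v+\V)=f(u,v)$, contradicting Definition~\ref{def:isothermicCylinderFromAFundamentalPiece}; hence $\angleFP\in(0,\pi]$. I claim $P$ is an integer multiple of $\V$: a period of $f$ is a period of its induced first fundamental form, so $e^{h(u,\w(v+P))}=e^{h(u,\w(v))}$ for all $u,v$, and since $\w\mapsto\bigl(u\mapsto e^{h(u,\w)}\bigr)$ is injective on the admissible range $\w\in(0,2\pi\Imc\tau)$ — equivalently, the closed planar curvature lines $\gamma(\cdot,\w)$ are pairwise non-congruent there — we get $\w(v+P)=\w(v)$ for all $v$, whence $P$ is a period of $\w$ and, $\V$ being the minimal period of $\w$, $P=k\V$ for some $k\in\N$. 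Then \eqref{eq:rotation_f} gives $R^{-k}f(u,v)R^{k}=f(u,v)$ for all $u,v$; differentiating in $u$ and in $v$ shows the rotation $X\mapsto R^{-k}XR^{k}$ fixes the three linearly independent vectors $f_{u},f_{v},f_{u}\times f_{v}$ at any point, hence is the identity of $\R^{3}$, so $R^{k}=\pm1$ and $k\angleFP\in 2\pi\Z$; with $\angleFP\in(0,\pi]$ and $k\geq 1$ this is $k\angleFP\in 2\pi\N$. Minimality of $P=k\V$ forces $k$ to be the least positive integer with $k\angleFP\in 2\pi\N$, since any $1\leq j<k$ with $f(u,v+j\V)=f(u,v)$ would, by the same computation, satisfy $j\angleFP\in 2\pi\N$. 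This proves the equivalence and that the $v$-period equals $k\V$ for this least $k$.

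\emph{Main obstacle.} The only step carrying genuine content is the injectivity of $\w\mapsto e^{h(\cdot,\w)}$ on $(0,2\pi\Imc\tau)$ used to turn a period of $f$ into a period of $\w$. I would derive it from the explicit formula \eqref{eq:etohClosedCurves}: evaluating at the critical slice $u=\omega$ and applying a theta addition formula reduces the claim to strict monotonicity of a single theta quotient along the relevant imaginary interval, an estimate of the same flavour as the monotonicity arguments in the proof of Lemma~\ref{lem:rhombic_zeros}. Alternatively one can argue congruence-invariantly: distinct $\w$ give area constrained hyperbolic elastica (Theorem~\ref{thm:elasticaLowerOrder}) of distinct hyperbolic length $4\pi|W_1(\w)|$, and hyperbolic length is preserved by the rigid motions identifying the curvature lines of $f$, so a period of $f$ cannot alter $\w$.
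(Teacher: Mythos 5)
The paper itself treats this lemma as an immediate corollary of Lemma~\ref{lem:rotationalSymmetryIsothermicCylinderFromFundamentalPiece} and records no separate proof; your core mechanism --- $f(u,v+k\V)=R^{-k}f(u,v)R^{k}$ from \eqref{eq:rotation_f}, and $R^{k}$ acting trivially on the linearly independent vectors $f_u,f_v$ iff $R^{k}=\pm1$ iff $k\angleFP\in2\pi\Z$ --- is exactly the intended argument, and both directions of that part are correct. Where you go beyond the paper is in trying to justify that an arbitrary $v$-period $P$ of a torus must be an integer multiple of $\V$, via injectivity of $\w\mapsto e^{h(\cdot,\w)}$ on $(0,2\pi\Imc\tau)$. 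That injectivity is in fact true and provable: by \eqref{eq:rhombicMetricFactorized} the $\w$-dependence of the whole metric factors through the single quantity $s(\w)=e^{-h(\omega,\w)}$, which satisfies $s'^2=Q_3(s)$ \eqref{eq:uEllipticCurve}; on a rhombic lattice the factorization \eqref{eq:criticalQ3factorized} shows $Q_3$ has exactly one real root (the $\vartheta_3,\vartheta_4$ roots are complex conjugates), $s$ starts at that root at $\w=0$, and its first pole on the real $\w$-axis occurs exactly at $\w=2\pi\Imc\tau$, so $s$ is strictly monotone on the admissible interval. So your first sketch can be completed, but as written it is left as a gap.

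Your second, ``alternative'' justification is wrong and you should delete it: the hyperbolic length $4\pi|W_1(\w)|$ is \emph{not} injective on $(0,2\pi\Imc\tau)$. From \eqref{eq:W1Rhombic}, $W_1(\w)$ has $\vartheta_1(\ci\w)$ in the denominator, which vanishes both at $\w=0$ and at $\w=2\pi\Imc\tau$ (where $\ci\w=2\pi\tau-\pi$ is a lattice point), while the numerator $\vartheta_2(\omega-\ci\w)$ stays away from zero; hence $|W_1(\w)|\to\infty$ at both endpoints and every sufficiently large length value is attained at least twice. (The premise of that argument is also shaky: a Euclidean motion matching two curvature-line planes need not match the chosen hyperbolic axes $\ci W_1(\w)$ in those planes, so it need not be a hyperbolic isometry of the standardizations.) A smaller point you may wish to flag: you implicitly assume the second lattice vector of the torus is a pure $v$-translation $(0,P)$ rather than a general $(a,P)$; the paper makes the same implicit assumption, so this does not separate your argument from theirs.
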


We summarize into the following theorem.

\begin{theorem}
	\label{thm:rationalityTorusClosing}
	Every isothermic torus $f$ with one generic family of planar curvature lines is given by an isothermic cylinder with closed $u$-curves as in Theorem~\ref{thm:planarIsothermicCylinderFormulas} with $\V$-periodic $\tau$-admissible reparametrization function $\w(v)$ such that either
	\begin{enumerate}
		\item $f$ closes in $v$ after one period of $\w$, i.e., $f(u, v + \V) = f(u, v)$ for all $u,v$,
		\item or $f$ closes in $v$ after $k>1, k \in \N$ periods of $\w$. One period of $\w$ defines an isothermic cylinder from a fundamental piece as in Definition~\ref{def:isothermicCylinderFromAFundamentalPiece} with generating rotation angle $\theta$ that is a rational multiple of $2\pi$.
	\end{enumerate}
\end{theorem}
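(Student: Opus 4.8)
The plan is to assemble the statement from material already in hand: Theorem~\ref{thm:planarIsothermicCylinderFormulas} classifies exactly the cylinders that can occur, Lemma~\ref{lem:rotationalSymmetryIsothermicCylinderFromFundamentalPiece} and the lemma immediately preceding this theorem handle the rotational monodromy and the rationality condition, and the one point that needs a short separate argument is that a closed-up $v$-direction forces the reparametrization function $\w$ to be periodic.

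First I would show that the $u$-curvature lines of $f$ are closed. On a torus carrying a global isothermic (hence umbilic-free) parametrization the two curvature line foliations are the coordinate foliations, so every $u$-curvature line is either a closed curve or dense. Since each $u$-curvature line lies in a single plane, a dense one would force the whole torus to lie in that plane, which is impossible for a compact surface. Hence the $u$-curvature lines close; normalizing their common period to $2\pi$, Theorem~\ref{thm:planarIsothermicCylinderFormulas} applies and yields the stated formulas, with a rhombic lattice $0<\Imc\tau<\Imc\tau_0$, the critical $\omega\in(0,\pi/4)$ satisfying $\vartheta_2'(\omega)=0$, and some $\tau$-admissible function $\w(v)$.

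Next I would prove that $\w$ is periodic. The period lattice of $f$ contains $(2\pi,0)$ together with a second generator $(a,T)$ with $T>0$, which we may take with $a\in[0,2\pi)$; from $f(u+a,v+T)=f(u,v)$ we obtain $e^{h(u+a,\w(v+T))}=e^{h(u,\w(v))}$ for all $u$. For fixed $v$ both sides continue to meromorphic functions of $u\in\C$, and by \eqref{eq:etohClosedCurves} the zeros of $e^{h(\cdot,\w)}$ lie at $u=\omega+\pi+\pi\tau\pm\ci\w$; comparing divisors after the translation by $a$, and using that $\w$ takes values in the interval $(0,2\pi\Imc\tau)$ guaranteed by admissibility, one finds the only possibilities are $a=0$ with $\w(v+T)=\w(v)$, or $a=\pi$ with $\w(v+T)=2\pi\Imc\tau-\w(v)$ (which then has period $2T$ and forces $(0,2T)$ to be a period of $f$ as well). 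In either case $\w$ is periodic; let $\V$ denote its minimal period.

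Finally I would dichotomize. If $f(u,v+\V)=f(u,v)$ we are in case (1). Otherwise $f$ is an isothermic cylinder from a fundamental piece in the sense of Definition~\ref{def:isothermicCylinderFromAFundamentalPiece}, and the lemma immediately preceding this theorem states that such a cylinder closes into a torus precisely when $k\angleFP\in2\pi\N$ for some $k\in\N$, with $v$-period $k\V$; this says exactly that $\angleFP$ is a rational multiple of $2\pi$, which is case (2). The main obstacle I anticipate is making the periodicity-of-$\w$ step airtight: one must justify the meromorphic continuation and the divisor comparison cleanly, and in particular rule out a genuinely slanted period lattice by exploiting the range restriction on $\w$. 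Everything else is a direct appeal to the cited statements, which is why this theorem is phrased as a summary.
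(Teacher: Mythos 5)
Your proposal is correct and follows the route the paper intends: Theorem~\ref{thm:rationalityTorusClosing} is stated there as a pure summary (``We summarize into the following theorem''), assembled from Theorem~\ref{thm:planarIsothermicCylinderFormulas}, Definition~\ref{def:isothermicCylinderFromAFundamentalPiece}, Lemma~\ref{lem:rotationalSymmetryIsothermicCylinderFromFundamentalPiece}, and the rationality lemma, with no separate proof given. The value you add is in justifying the two steps the paper leaves implicit --- that the planar $u$-curvature lines of a torus must close (a dense leaf of the linear $u$-foliation on $\R^2/\Lambda$ would force the compact surface into a single plane), and that a second lattice generator $(a,T)$ forces $\w$ to be periodic via the divisor comparison, with the range restriction $\w\in(0,2\pi\Imc\tau)$ pinning down $a=0$ or $a=\pi$. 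One small slip: the zeros of $u\mapsto e^{h(u,\w)}$ from \eqref{eq:etohClosedCurves} sit at $u\equiv \omega+\pi\mp\ci\w \pmod{2\pi,\,2\pi\tau}$, not at $\omega+\pi+\pi\tau\pm\ci\w$; this does not affect the conclusion of your comparison.
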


For case 1. it is unclear how to prove tori exist. For case 2, one proves existence as follows. Each $\V$-periodic $\tau$-admissible reparametrization function $\w(v)$ gives an isothermic cylinder from a fundamental piece with generating rotational angle $\theta$. If $\theta$ is not a rational multiple $2\pi$, perturb the function $\w$.

\section{Isothermic surfaces with one family of planar and one family of spherical curvature lines}
\label{sec:isothermic-planar-u-spherical-v}

Wente's constant mean curvature tori are a prominent example of isothermic tori with one family of planar curvature lines. In fact, each Wente torus has one (non-generic) family of planar curvature lines and one family of spherical curvature lines. Indeed, if a constant mean curvature surface has one family of planar curvature lines then its second family of curvature lines must be spherical.

In contrast, the functional freedom $\w(v)$ determines the second family ($v$-curves) of curvature lines for an isothermic surface with one family ($u$-curves) of curvature lines. In this section, we derive the functional form of the reparamet\-rization function $\w(v)$ that is equivalent to the $v$-curvature lines being spherical. Recall that the planar $u$-curves are governed by an elliptic curve. The functional form equivalent to spherical $v$-curves turns out to be governed by a second elliptic curve, see Remark~\ref{rem:wOfVTwoEllipticCurves}.

Isothermic tori with one generic family of planar and one family of spherical curvature lines are natural generalizations of Wente tori. They may be of independent interest for future research.

\subsection{The axis and cone point of a surface with planar and spherical curvature lines}
We begin by studying a surface $f$ parametrized by curvature lines with one generic family ($u$-curves) of planar curvature lines. Note that we do not assume that $f$ is isothermic, but we retain the generic assumption that the normals to the planes span three dimensional space.

The second family of curvature lines are spherical if for each $u=u_0$ the $v$-curve $f(u_0,v)$ lies on a sphere. Since $f_v(u_0,v)$ lies tangential to the sphere, we have the following in terms of the Gauss map $n(u,v)$, sphere center $Z(u_0)$, and two functions $\sA(u_0)$ and $\sB(u_0)$:
\begin{align}
	\label{eq:sphereCenters}
	Z(u_0) - f(u_0, v) = \sA(u_0) n(u_0, v) + \sB(u_0) \frac{f_u(u_0, v)}{| f_u(u_0, v)|}.
\end{align}
By Joachimsthal's theorem~\cite[p.140 Ex. 59.8]{Eisenhart}, a sphere cuts a surface along a curvature line if and only if the intersection angle is constant. Thus, 
\begin{align}
	\label{eq:sphereRadiusIntersectionAngle}
	\sA(u_0) = R(u_0) \cos\psi(u_0), \quad  \sB(u_0) = R(u_0) \sin\psi(u_0),
\end{align}
where the sphere has radius $R(u_0)$ and the intersection angle is $\psi(u_0)$.

The geometry of surfaces with combinations of planar and spherical curvature lines is a classical topic of differential geometry, see the 1878 book by Enneper~\cite{Enneper1878}. The following result was already well known at that time.

\begin{theorem}
\label{thm:planarSphericalAxisConePoint}
If a surface has one generic family of planar and one family of spherical curvature lines, then the planes intersect in a point and the centers of the spheres lie on a line.
\end{theorem}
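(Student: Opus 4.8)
The plan is to distill from Joachimsthal's theorem a single scalar identity linking the plane of each $u$-curve to the center of the sphere carrying each $v$-curve, and then to extract both conclusions from that identity by linear algebra, using the generic hypothesis.

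First I would adapt a frame to the planar family. Each $u$-curve $u \mapsto f(u,v)$ is a curvature line lying in a plane $\Pi(v)$, so by Joachimsthal's theorem~\cite[p.140 Ex. 59.8]{Eisenhart} the surface meets $\Pi(v)$ at a constant angle along it. Hence there are a unit normal field $v \mapsto N(v)$ of $\Pi(v)$, a signed distance $c(v)$, and an angle $\phi(v)$ with
\begin{align}
	\langle f(u,v), N(v) \rangle = c(v), \qquad \langle n(u,v), N(v) \rangle = \cos\phi(v),
\end{align}
both independent of $u$, and with $\langle f_u(u,v), N(v) \rangle = 0$ since $f_u$ is tangent to the planar curve. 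The generic assumption is exactly that $\{ N(v) \}$ spans $\R^3$.

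The key step is to pair the spherical condition \eqref{eq:sphereCenters} with $N(v)$: because $f_u \perp N(v)$ and $\langle n, N \rangle = \cos\phi(v)$, the $\beta$-term drops out and \eqref{eq:sphereCenters} collapses to
\begin{align}
	\label{eq:keyPlanarSpherical}
	\langle Z(u), N(v) \rangle = c(v) + \alpha(u)\cos\phi(v)
\end{align}
for all $u,v$, where $\alpha(u)$ is the function from \eqref{eq:sphereCenters}, which by \eqref{eq:sphereRadiusIntersectionAngle} equals $R(u)\cos\psi(u)$. Now I would invoke genericity: choosing $v_1,v_2,v_3$ so that $N(v_1),N(v_2),N(v_3)$ is a basis of $\R^3$, evaluating \eqref{eq:keyPlanarSpherical} at these three values and inverting the resulting $3 \times 3$ linear system writes $Z(u)$ in the form $Z(u) = P + \alpha(u) A$ for two fixed vectors $P, A \in \R^3$. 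So the centers $Z(u)$ all lie on the line through $P$ with direction $A$, which is the statement about the sphere centers. Feeding $Z(u) = P + \alpha(u)A$ back into \eqref{eq:keyPlanarSpherical} gives
\begin{align}
	\langle P, N(v)\rangle - c(v) = \alpha(u)\bigl(\cos\phi(v) - \langle A, N(v)\rangle\bigr),
\end{align}
whose left-hand side is independent of $u$; as soon as $\alpha$ is nonconstant this forces $\langle P, N(v)\rangle = c(v)$ for every $v$, i.e.\ $P \in \Pi(v)$ for all $v$, so the planes all pass through the single point $P$ (and, incidentally, $\cos\phi(v) = \langle A, N(v)\rangle$). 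In the language of this section, $P$ is the common point of the curvature-line planes and the line through $P$ in direction $A$ is the axis carrying the sphere centers.

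It remains to treat the degenerate case in which $\alpha$ is constant. Then \eqref{eq:keyPlanarSpherical} already forces $Z(u)$ to equal a fixed point $Z_0$, so the $v$-curves lie on concentric spheres about $Z_0$; differentiating the normal component $\langle Z(u)-f, n\rangle = \alpha(u)$ of \eqref{eq:sphereCenters} in $u$ and using $n_u \parallel f_u$ shows that on any open set where the $u$-curves are not straight lines one has $Z_0 - f \perp f_u$, hence $Z_0 - f \parallel n$ (it is already $\perp f_v$, the $v$-curves being spherical), so all the spheres share the radius $|\alpha|$ and the surface is an open piece of a round sphere; but a one-parameter family of planar curvature lines on a round sphere is a pencil of circles, whose plane normals span at most a two-dimensional subspace, contradicting genericity, while straight $u$-curves are excluded since a line determines no plane. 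I expect this degenerate case --- more precisely, arguing that the generic hypothesis excludes the spherical and ruled configurations --- to be the only delicate point; the identity \eqref{eq:keyPlanarSpherical} and the linear algebra it feeds are short and robust.
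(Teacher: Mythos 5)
Your main argument is correct and follows essentially the same route as the paper: both proofs hinge on the identity $\langle Z(u), N(v)\rangle = c(v) + \alpha(u)\cos\phi(v)$ obtained by pairing \eqref{eq:sphereCenters} with the plane normal and invoking Joachimsthal for the planar family, and both then use genericity of $\{N(v)\}$ to force $Z(u)$ to be affine in $\alpha(u)$. Where the paper differentiates twice in $u$ (dividing by $\sA'(u)$ in between) to conclude $\left(Z'/\sA'\right)'\perp N(v)$ and hence vanishes, you evaluate at three $v$'s with independent normals and invert a $3\times 3$ system to get $Z(u)=P+\alpha(u)A$ directly; this is a cosmetic but arguably cleaner variant, since it avoids dividing by $\sA'$ and yields the common point $P$ by simply reading off $\langle P,N(v)\rangle=c(v)$ from the identity once $\alpha$ is nonconstant.

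The divergence, and the one genuine gap, is in the degenerate case $\alpha\equiv\mathrm{const}$. The paper does not try to exclude it: from $\langle Z'(u),m(v)\rangle=0$ and genericity it concludes $Z'(u)=0$, so the centers collapse to a single point and the conclusion holds degenerately. You instead try to rule the case out by showing the surface is a piece of a round sphere and that this contradicts genericity. Two steps there do not hold up. First, $n_u=-k_1 f_u$ only gives $Z_0-f\perp f_u$ where $k_1\neq 0$; a planar curvature line with $k_1=0$ need not be a straight line (it can have geodesic curvature while $n$ is constant along it), so your parenthetical dismissal of that subcase is not sufficient. Second, and more seriously, the claim that a one-parameter family of planar curvature lines on a round sphere is a pencil of circles with plane normals spanning at most a two-dimensional subspace is unjustified: on a totally umbilic sphere every orthogonal net is a curvature-line net, and a foliation of an annular band by circles need not be coaxial, so genericity is not obviously contradicted. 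If you want to keep your structure, the simplest fix is to do what the paper does: in the constant-$\alpha$ case conclude from the key identity and genericity that $Z(u)$ is a single point, which already gives both assertions of the theorem in degenerate form, rather than attempting to show the case is vacuous.
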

\begin{proof}
	We consider $f(u,v)$ parametrized by curvature lines with planar $u$-curves and spherical $v$-curves. Let $m(v)$ be the normals to the planes and $n(u,v)$ the Gauss map of the surface. By Joachimsthal's theorem the angle between the planes and the surface is a function $\delta(v)$ satisfying $\langle n(u,v), m(v) \rangle = \cos\delta(v)$. Moreover, $m(v)$ lies perpendicular to $f_u(u,v)$, so $\langle f_u(u,v), m(v)\rangle = 0$, which implies $\langle f(u,v), m(v)\rangle = c(v)$. Now compute the inner product of the sphere equation \eqref{eq:sphereCenters} with $m(v)$ to find
	\begin{align}
		\label{eq:innerProductSphereCenters}
		\left \langle Z(u), m(v) \right\rangle = \sA(u) \cos\delta(v) + c(v).
	\end{align}
	The proof is in two cases, depending on if $\sA(u)$ is constant.
	
	We first consider the case when $\sA(u)$ is nonconstant.
	Differentiating with respect to $u$, dividing by $\sA'(u)$, and differentiating with respect to $u$ again gives $\left \langle \left(\frac{Z'(u)}{\sA'(u)} \right)', m(v) \right\rangle = 0$.
	The generic assumption that $m(v)$ spans three dimensional space implies $\left(\frac{Z'(u)}{\sA'(u)} \right)' = 0$. Integrating gives constant vectors $A, B\in\R^3$ with
	\begin{align}
		\label{eq:linearSphereCenters}
		Z(u) = \alpha(u) A + B.
	\end{align}
	Therefore, the centers of the spheres lie on a line with direction $A$ passing through $B$. It remains to show that all planes pass through $B$. Substitute \eqref{eq:linearSphereCenters} for $Z(u)$ into \eqref{eq:sphereCenters} to find
	\begin{align*}
		\left \langle \sA(u) A + B - f(u,v), m(v) \right\rangle = \sA(u)\cos\delta(v).
	\end{align*}
	Dividing by $\sA(u)$, differentiating with respect to $u$, and using that $f_u(u,v)$ is perpendicular to $m(v)$, implies 
	\begin{align*}
		\left \langle -B + f(u,v), m(v) \right \rangle = 0,
	\end{align*}
	This equation means that for each $v = v_0$, $f(u,v_0)$ spans the plane through $B$ with normal $m(v_0)$. Therefore, all planes pass through $B$.
	
	If $\sA(u)$ is constant, differentiate \eqref{eq:innerProductSphereCenters} to find $\left \langle Z'(u), m(v) \right\rangle = 0$. Thus, the centers $Z(u)$ collapse to a point, through which all planes pass.
\end{proof}

\subsection{An elliptic curve for the sphericality of the $v$-curves}
We now restrict to the isothermic case. We start with the classification of isothermic surfaces with one generic family ($u$-curves) of planar curvature lines studied in the previous section and derive conditions so that the second family ($v$-curves) of curvature lines is spherical. We use the following algebraic characterization, adapted from \cite[p. 316, eq. 100]{Eisenhart}, to encode sphericality.

\begin{lemma}
An isothermic surface $f(u,v)$ has spherical $v$-curves if and only if there exist $\sA(u),\sB(u)$, functions of $u$ only, with
\begin{align}
\label{eq:sphericalVCharacterization}
0 = e^{h(u,v)} -\sA(u) \bQ(u,v) + \sB(u) h_u(u,v),
\end{align}
where $e^{2h}$ is the metric, and $\bP, \bQ$ are the curvature functions in \eqref{eq:PandQ}.
\end{lemma}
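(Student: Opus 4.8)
The plan is to reduce the geometric sphericality condition from \eqref{eq:sphereCenters}–\eqref{eq:sphereRadiusIntersectionAngle} to an algebraic identity in the metric and curvature functions, by taking inner products with the surface frame and differentiating along the $v$-direction. First I would start from the vector equation \eqref{eq:sphereCenters}, which asserts that for each fixed $u=u_0$ the $v$-curve lies on a sphere centered at $Z(u_0)$: equivalently, $\langle Z(u_0) - f(u_0,v), f_v(u_0,v)\rangle = 0$ for all $v$, together with the requirement that $Z(u_0)-f(u_0,v)$ has no component along $f_v$, i.e. it lies in $\mathrm{span}\{n, f_u\}$ with coefficients $\sA(u_0), \sB(u_0)/|f_u(u_0,\cdot)|$ that depend on $u_0$ only (this is where Joachimsthal's theorem is used to guarantee that $\sA, \sB$ — the normal and tangential components, scaled as in \eqref{eq:sphereRadiusIntersectionAngle} — are functions of $u$ alone, since the intersection angle $\psi$ and radius $R$ are). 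So the content of ``$v$-curve is spherical'' is exactly: there exist functions $\sA(u), \sB(u)$ such that the vector field
\begin{align*}
  Z(u) - f(u,v) = \sA(u)\, n(u,v) + \sB(u)\, \frac{f_u(u,v)}{|f_u(u,v)|}
\end{align*}
has $\partial_v$-derivative independent of... no — such that its $\partial_v$-derivative is consistent with $Z$ depending on $u$ only, i.e. $\partial_v\big(\sA(u) n + \sB(u) f_u/|f_u|\big) = -f_v$.

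The key step is to expand $\partial_v\big(\sA(u)\,n(u,v) + \sB(u)\, f_u(u,v)/|f_u(u,v)|\big) + f_v(u,v) = 0$ using the structure equations \eqref{eq:isothermicFrameEquationsDV} for an isothermic surface, together with $|f_u| = e^{h}$ and the definitions \eqref{eq:PandQ} of $\bP = k_1 e^h$, $\bQ = k_2 e^h$. From \eqref{eq:isothermicFrameEquationsDV} we have $n_v = -k_2 f_v$, $(f_u)_v = h_v f_u + h_u f_v$, and $\partial_v(e^{-h}) = -h_v e^{-h}$, so $\partial_v(e^{-h} f_u) = e^{-h} h_u f_v$. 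Substituting, the $f_u$ and $n$ components of the resulting vector equation must vanish separately, and the $f_v$ component yields the scalar identity
\begin{align*}
  -\sA(u) k_2(u,v)\, e^{h} + \sB(u)\, h_u(u,v) + e^{h(u,v)} = 0,
\end{align*}
after multiplying through by $e^{h}$ and recalling $k_2 e^{h} = \bQ$; this is precisely \eqref{eq:sphericalVCharacterization}. Conversely, given \eqref{eq:sphericalVCharacterization}, one reverses the computation: define $Z(u)$ by the right-hand side of the displayed vector equation for any fixed $v$, and use the structure equations to check $Z_v = 0$, so that $Z$ depends on $u$ only and each $v$-curve lies on the sphere of center $Z(u)$; the radius is $\sqrt{\sA(u)^2 + \sB(u)^2}$, finite because $f$ is an immersion.

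The main obstacle — really a bookkeeping point rather than a deep one — is keeping track of which components of the frame $\{f_u, f_v, n\}$ survive after the $v$-differentiation, and confirming that the $f_u$- and $n$-components of the vector identity are \emph{automatically} satisfied (they encode no extra condition beyond $\sA, \sB$ being $u$-dependent and the intersection-angle normalization), so that the single scalar equation \eqref{eq:sphericalVCharacterization} genuinely captures sphericality. One must also be slightly careful that the functions $\sA(u), \sB(u)$ appearing here are the same as those in \eqref{eq:sphereCenters}–\eqref{eq:sphereRadiusIntersectionAngle}: this follows because the normal component of $Z - f$ is $\sA$ and the (normalized) $f_u$-component is $\sB$ by construction, matching \eqref{eq:sphereRadiusIntersectionAngle} with $\sA = R\cos\psi$, $\sB = R\sin\psi$. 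The reference to \cite[p. 316, eq. 100]{Eisenhart} confirms this is the classical form of the condition.
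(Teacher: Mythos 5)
Your proposal is correct and follows essentially the same route as the paper: differentiate the sphere-center decomposition $Z(u)-f = \sA(u)\,n + \sB(u)\,e^{-h}f_u$ in $v$, expand via the frame equations \eqref{eq:isothermicFrameEquationsDV}, and observe that the entire derivative is proportional to $f_v$, so the single scalar coefficient gives \eqref{eq:sphericalVCharacterization}; the converse is the same computation run backwards with $Z(u)$ as the integration constant. Your attention to why $\sA,\sB$ depend on $u$ only (Joachimsthal plus constancy of $|Z-f|$ along the $v$-curve) matches the paper's setup preceding the lemma.
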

\begin{proof}
The $v$-curves are spherical if and only if \eqref{eq:sphereCenters} holds for all $u_0$, i.e., the centers of the spheres $Z(u)$ satisfy
\begin{align*}
	Z(u) - f(u,v) = \sA(u)n(u,v)+\sB(u)e^{-h(u,v)}f_u(u,v),
\end{align*}
for some $\sA(u)$ and $\sB(u)$ as in \eqref{eq:sphereRadiusIntersectionAngle}.

Differentiating with respect to $v$ yields 
\begin{equation}
\label{eq:vDerivativeOfSphericalCenters}
\begin{aligned}
-f_v(u,v) = \sA(u) n_v(u,v) + \sB(u) & \left( -h_v(u,v) e^{-h(u,v)} f_u(u,v) \right. +  \\ &  + \left. e^{-h(u,v)}f_{uv}(u,v) \right).
\end{aligned}
\end{equation}
Expanding $n_v$ and $f_{uv}$ using \eqref{eq:isothermicFrameEquationsDV} and the definition \eqref{eq:PandQ} for $\bP$ and $\bQ$ gives
\begin{align*}
	0 &= \left(e^{h(u,v)}- \sA(u) \bQ(u,v) + \sB(u) h_u(u,v)\right) \frac{f_v(u,v)}{e^{h(u,v)}},
\end{align*}
which is equivalent to \eqref{eq:sphericalVCharacterization}.

Conversely, if there exist $\sA(u), \sB(u)$ satisfying \eqref{eq:sphericalVCharacterization}, then we can consider the preceding equation. Using the integrability conditions we can rewrite it again as \eqref{eq:vDerivativeOfSphericalCenters}, which can then be integrated with respect to $v$. This recovers the equation for the spherical centers, where $Z(u)$ is the function of integration, so the $v$-curves are spherical. 
\end{proof}

The following technical lemma derives the equations for $\sA(u)$ and $\sB(u)$.

\begin{lemma}
\label{lem:sphericalAlphaBetaExist}
An isothermic surface with one generic family ($u$-curves) of planar curvature lines has its second family ($v$-curves) of curvature lines lying on spheres if and only if $\sA(u), \sB(u)$ from \eqref{eq:sphericalVCharacterization} are
\begin{equation}
	\label{eq:sphericalAlphaBetaFormulas}
	\begin{aligned}
		\sA(u) = \frac{U_1(u)}{U_1(u)\sPhi_0(u)+U(u) \sPhi_2(u)}, \
		\sB(u) = \frac{-\sPhi_2(u)}{U_1(u)\sPhi_0(u)+U(u) \sPhi_2(u)},
	\end{aligned}
\end{equation}
where $U(u)$ and $U_1(u)$ are as in Proposition~\ref{prop:UU1BothCases} and $\sPhi_2(u), \sPhi_1(u), \sPhi_0(u)$ satisfy the system of ordinary differential equations
\begin{align}
\label{eq:sphericalODESystem}
\begin{pmatrix} 
\sPhi_2(u) \\ \sPhi_1(u) \\ \sPhi_0(u)
\end{pmatrix}' = 
\begin{pmatrix}
 0 & -U_1(u) & 0 \\
 2 U(u) & 0 & -2U_1(u) \\
 0 & U(u) & 0 \\
\end{pmatrix} 
\begin{pmatrix} 
\sPhi_2(u) \\ \sPhi_1(u) \\ \sPhi_0(u)
\end{pmatrix}.
\end{align}
\end{lemma}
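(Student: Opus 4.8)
The plan is to start from the sphericality characterization \eqref{eq:sphericalVCharacterization}, namely that $e^{h} = \sA(u)\bQ - \sB(u) h_u$ for functions $\sA, \sB$ of $u$ only, and convert it into an ODE system by differentiating with respect to $\w$ (equivalently $v$) and using the structure equations from Theorem~\ref{lem:isothermicPlanarW}. The key inputs are the Riccati equation $h_u = U e^h + U_1 e^{-h}$ \eqref{eq:riccati}, the relation $\bQ = \bP_\w / h_\w$ with $\bP = h_\w \sqrt{1-\w'^2}$ from \eqref{eq:thirdFFPQW} (so that $\bQ$ can be re-expressed purely through $h$ and its derivatives), and the expression \eqref{eq:hwSquaredU2} for $h_\w^2$ in terms of $U, U_1, U_2$ and powers of $e^{h}$. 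The idea is that \eqref{eq:sphericalVCharacterization}, after substituting these, becomes a relation among $1, e^{\pm h}, e^{\pm 2h}$ (and possibly $h_\w$, which drops out since it appears only through $h_\w^2$) with coefficients depending only on $u$; since $h(u,\w)$ genuinely varies with $\w$, each such coefficient must vanish separately.

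First I would rewrite \eqref{eq:sphericalVCharacterization}: using $\bQ = \bQ_u = h_u \bP$ — wait, more carefully, from \eqref{eq:PQwithV}/\eqref{eq:thirdFFPQW} one has $\bQ$ expressible via $h_\w$ and $h_{\w\w}$; substituting $h_{\w\w} = -h_{uu}$ (harmonicity \eqref{eq:harmonic}) and then $h_{uu}$ from differentiating the Riccati equation \eqref{eq:riccati}, one obtains $\bQ$ as an explicit combination of $e^{\pm h}$ with $u$-dependent coefficients built from $U, U_1$ and their derivatives. Similarly $h_u$ is already $U e^h + U_1 e^{-h}$. So \eqref{eq:sphericalVCharacterization} becomes
\begin{align*}
0 = e^{h} - \sA(u)\,\big(\text{comb.\ of } e^{\pm h}\big) + \sB(u)\,\big(U e^h + U_1 e^{-h}\big),
\end{align*}
which is a linear relation in $e^{2h}, e^{h}, 1, e^{-h}$ (after clearing $e^{-h}$). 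The resulting vanishing of coefficients should be reorganized by introducing the auxiliary functions $\sPhi_0, \sPhi_1, \sPhi_2$: I expect $\sA, \sB$ to be forced into the rational form \eqref{eq:sphericalAlphaBetaFormulas} with a common denominator $U_1 \sPhi_0 + U \sPhi_2$, and the compatibility conditions (the remaining equations, plus one differentiation step) to collapse exactly into the first-order linear system \eqref{eq:sphericalODESystem}. Conversely, given a solution $(\sPhi_2,\sPhi_1,\sPhi_0)$ of \eqref{eq:sphericalODESystem}, one defines $\sA, \sB$ by \eqref{eq:sphericalAlphaBetaFormulas} and checks \eqref{eq:sphericalVCharacterization} holds, invoking the previous lemma to conclude sphericality.

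The main obstacle I anticipate is purely computational: identifying the correct substitution that makes the $3\times 3$ matrix in \eqref{eq:sphericalODESystem} appear, i.e., guessing what $\sPhi_1$ is in terms of $h, \sA, \sB, U, U_1$ so that the differentiated sphericality equation closes as a linear system. One natural guess is that $\sPhi_0, \sPhi_2$ are (up to the common factor) related to $\sA$ and $\sB$ themselves, with $\sPhi_1$ an intermediate quantity arising when differentiating once more; the structure of the matrix — skew-symmetric-like, with the pattern $(-U_1), (2U, -2U_1), (U)$ — strongly suggests it comes from differentiating a quadratic-type expression built from $U e^{h}$ and $U_1 e^{-h}$, reminiscent of how \eqref{eq:hwSquaredU2} was obtained. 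Verifying the conversion in both directions, and checking that the denominator $U_1\sPhi_0 + U\sPhi_2$ is a first integral (or at least nonvanishing generically) of \eqref{eq:sphericalODESystem}, will be the delicate bookkeeping; everything else is reversible algebra using the already-established structure equations.
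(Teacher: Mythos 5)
Your starting point and ingredients are the right ones, but the central reduction you propose does not go through as described, and the two steps that actually produce the lemma are missing. You claim that after substituting for $\bQ$ and $h_u$, the characterization \eqref{eq:sphericalVCharacterization} becomes a linear relation among $1, e^{\pm h}, e^{\pm 2h}$ with $u$-only coefficients, with $h_\w$ dropping out "since it appears only through $h_\w^2$". That is not the case: from \eqref{eq:thirdFFPQW} one has $\bQ = \bP_\w/h_\w$ with $\bP = h_\w\sqrt{1-\w'^2}$, so $\bQ = \sqrt{1-\w'^2}\,\tfrac{h_{\w\w}}{h_\w} + \tfrac{d}{d\w}\sqrt{1-\w'^2}$. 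This contains $h_\w$ to an \emph{odd} power (so it does not reduce via \eqref{eq:hwSquaredU2}), and it contains the unknown reparametrization data $\sqrt{1-\w'^2}$ and its $\w$-derivative, which are functions of $\w$ alone and cannot be absorbed into $u$-dependent coefficients of powers of $e^h$. So the direct coefficient-matching you envisage cannot be set up. Moreover, even if it could, matching coefficients of a single algebraic identity yields algebraic relations for $\sA,\sB$, not the first-order ODE system \eqref{eq:sphericalODESystem}; some additional differentiation of a $u$-independent quantity is needed, and your proposal does not identify which one.

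The paper's proof supplies exactly the two missing moves. First, multiply \eqref{eq:sphericalVCharacterization} by $h_\w$ and observe that, by \eqref{eq:huhw}, every term is a total $\w$-derivative; integrating in $\w$ introduces the integration constant $\sA(u)\sPhi_1(u)$ (this is where $\sPhi_1$ comes from — the point you flagged as your "main obstacle") and, since the Riccati equation gives $\tfrac{h_{u\w}}{h_\w} = U e^h - U_1 e^{-h}$, converts the sphericality condition into the statement that $\bP = \sPhi_2 e^{-h} + \sPhi_1 + \sPhi_0 e^{h}$ with $\sPhi_2 = -U_1\sB/\sA$ and $\sPhi_0 = U\sB/\sA + 1/\sA$; inverting these yields \eqref{eq:sphericalAlphaBetaFormulas}. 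Second, the ODE system comes from differentiating in $u$ the relation $\sqrt{1-\w'^2} = \bP/h_\w$ from \eqref{eq:thirdFFPQW}, whose left-hand side is independent of $u$: this gives $\bP_u = \bP\,\tfrac{h_{u\w}}{h_\w}$, and only \emph{now} are both sides Laurent polynomials in $e^h$ with $u$-only coefficients (the $e^{\pm 2h}$ terms cancel automatically), so that matching the $e^{-h}, e^0, e^{h}$ coefficients produces precisely \eqref{eq:sphericalODESystem}. Without the $\w$-integration and the subsequent $u$-differentiation of $\bP/h_\w$, the argument cannot be completed.
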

\begin{proof} To prove that having spherical $v$-curves is compatible with an isothermic surface with planar $u$-curves, we find $\sA(u)$ and $\sB(u)$ in \eqref{eq:sphericalVCharacterization}. The main idea is to rewrite \eqref{eq:sphericalVCharacterization} until one side is a function of $\w$ only. Differentiating with respect to $u$ then yields differential equations that must be satisfied.

We plug $\bQ$ from \eqref{eq:thirdFFPQW} into \eqref{eq:sphericalVCharacterization}, multiply through by $h_\w$, and find
\begin{align*}
0 = e^h h_\w -\sA(u) \bP_\w + \sB(u) h_u h_\w.
\end{align*}
Using \eqref{eq:huhw} 
we can integrate with respect to $\w$. 
\begin{align*}
 \sA(u) \bP = e^h + \sB(u) \frac{h_{u \w}}{h_\w} + \sA(u) \sPhi_1(u)
\end{align*}
for some function $\sPhi_1(u)$ (we multiplied by $\sA(u)$ to ease computation).

Solving for $\bP$ and noting the Riccati equation \eqref{eq:riccati} implies $\frac{h_{u \w}}{h_\w} = U e^h - U_1 e^{-h}$ gives
\begin{align}
\label{eq:bigP}
\bP = \sPhi_2(u) e^{-h} + \sPhi_1(u) + \sPhi_0(u) e^{h},
\end{align}
where we set
\begin{align*}
\sPhi_2(u) = - U_1(u) \frac{\beta(u)}{\alpha(u)} \quad \text{and} \quad \sPhi_0(u) = U(u) \frac{\beta(u)}{\alpha(u)} + \frac{1}{\alpha(u)}.
\end{align*}
To derive differential equations for the $\sPhi_i$, we rewrite \eqref{eq:thirdFFPQW} as
\begin{align}
\label{eq:functionOfWOnly}
\sqrt{1 - \w'(v)^2}= \frac{\bP}{h_\w}
\end{align}
and differentiate with respect to $u$. Since the left hand side is independent of $u$, the resulting equation is equivalent to
\begin{align*}
0 = \bP_u - \bP \frac{h_{u \w}}{h_\w}.
\end{align*}
We compute the right hand side using \eqref{eq:bigP} for $\bP$. Using the Riccati equation \eqref{eq:riccati} again we find that
\begin{align*}
\bP_u &= U \sPhi_0 e^{2h} + \sPhi_0' e^h + (\sPhi_1' + U_1 \sPhi_0 - U \sPhi_2) + \sPhi_2' e^{-h} - U_1 \sPhi_2 e^{-2h}, \\
\bP \frac{h_{u \w}}{h_\w} &= U \sPhi_0 e^{2h} + U \sPhi_1 e^h + (U \sPhi_2 - U_1 \sPhi_0) - U_1 \sPhi_1 e^{-h} - U_1 \sPhi_2 e^{-2h}.
\end{align*}
Their difference only has powers $e^{-h}, e^0, e^h$ and is zero exactly when each coefficient vanishes independently, as $e^h$ depends on both $u,v$, while the coefficients only depend on $u$.  This gives the system \eqref{eq:sphericalODESystem} of three ordinary differential equations for three functions. It has a unique solution for every choice of initial condition.
\end{proof}

\begin{remark} The system \eqref{eq:sphericalODESystem} can be solved explicitly by converting it into a third order differential equation. There are three quantities on equal footing. One is geometric, and is the tangent of the intersection angle
\begin{align}
\tan\psi(u) = \frac{\sB(u)}{\sA(u)}= \frac{-\sPhi_2(u)}{U_1(u)}.
\end{align}
The other two are $\sPhi_1(u)$ and $\frac{\sPhi_0(u)}{U(u)}$. Through long computations to eliminate the other two variables, and by using properties of $U(u)$ and $U_1(u)$, one can find that each of $\frac{-\sPhi_2(u)}{U_1(u)},\sPhi_1(u), \frac{\sPhi_0(u)}{U(u)}$ solve an equation of the form
\begin{align}
	\label{eq:thirdOrderTanIntersectionAngleEquation}
	Y'''(u) - 4 \left( \frac{3}{4} \wp(u-c) \right) Y'(u) - 2 \left(\frac{3}{4} \wp'(u-c) \right) Y(u) = 0,
\end{align}
with $c = \omega, \frac{\pi}{2}, -\omega$, respectively.  Here, $\wp(z)$ is the Weierstrass P function with periods $\pi$ and $\tau \pi$, and the same elliptic curve as $U(u)$ and $U_1(u)$.

The general solution of \eqref{eq:thirdOrderTanIntersectionAngleEquation} depends on three constants $A_0, A_1, A_2$:
\begin{align}
	\label{eq:bigYofu}
		Y(u) = \frac{A_2 \wp^2(\frac{u-c}{2}) + A_1 \wp(\frac{u-c}{2}) + A_0}{\wp'(\frac{u-c}{2})}.
\end{align}
By \cite[Ch. 23, Ex. 17]{whittaker_watson_1996}, $Y(u)$ is the product of two solutions of the $\frac{3}{4}$\mbox{-}Lam\'e equation $y''(u) = \frac{3}{4}\wp\left(u-c\right) y(u)$.

One can therefore integrate the system \eqref{eq:sphericalODESystem} in terms of the intersection angle, which is explicitly given by $\tan{\psi(u)} = Y(u)$ \eqref{eq:bigYofu} with $c = \omega$.
\end{remark}

Remarkably, we do not have to solve the system of differential equations from the previous lemma in order to locally construct surfaces. Instead, we derive the functional form of the reparametrization function $\w(v)$ that is equivalent to having spherical $v$-curves. This depends only on three real parameters, which are the initial data of the differential equations.

\begin{lemma}
	An isothermic surface with one generic family ($u$-curves) of planar curvature lines has its second family ($v$-curves) of curvature lines lying on spheres if and only if there exist three real constants $\sPhi_2(u_0), \sPhi_1(u_0)$, and $\sPhi_0(u_0)$ so that $\w(v)$ is defined by
	\begin{align}
		\label{eq:vPrimeOfWNotYetEllipticIntegral}
		\left(\w'(v)\right)^2 = \frac{h_{\w}(u_0,\w)^2-\bP(u_0,\w)^2}{h_{\w}(u_0,\w)^2} \geq 0.
	\end{align}
	Here, the entry of the third fundamental form at $u = u_0$ is
	\begin{align}
		\label{eq:bPSpherical}
		\bP(u_0,\w) = \sPhi_2(u_0) e^{-h(u_0,\w)} + \sPhi_1(u_0) + \sPhi_0(u_0) e^{h(u_0,\w)}.
	\end{align}
\end{lemma}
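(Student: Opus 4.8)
The plan is to obtain the statement by combining two earlier facts: identity \eqref{eq:thirdFFPQW} (equivalently \eqref{eq:functionOfWOnly}), which writes $\sqrt{1-\w'(v)^2}$ as the $u$-independent quotient $\bP/h_\w$, and Lemma~\ref{lem:sphericalAlphaBetaExist}, which characterizes sphericality of the $v$-curves through the form \eqref{eq:bigP} of $\bP$ together with the ODE system \eqref{eq:sphericalODESystem}. For the ``only if'' direction I assume the $v$-curves are spherical; then Lemma~\ref{lem:sphericalAlphaBetaExist} and equation \eqref{eq:bigP} in its proof provide functions $\sPhi_0(u),\sPhi_1(u),\sPhi_2(u)$ solving \eqref{eq:sphericalODESystem} with $\bP(u,\w)=\sPhi_2(u)e^{-h(u,\w)}+\sPhi_1(u)+\sPhi_0(u)e^{h(u,\w)}$. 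Taking the three real constants to be the values $\sPhi_0(u_0),\sPhi_1(u_0),\sPhi_2(u_0)$, the third-fundamental-form entry at $u=u_0$ is exactly \eqref{eq:bPSpherical}; then substituting $\bP(u_0,\w)=h_\w(u_0,\w)\sqrt{1-\w'(v)^2}$ from \eqref{eq:thirdFFPQW}, squaring, and solving for $\w'(v)^2$ gives \eqref{eq:vPrimeOfWNotYetEllipticIntegral}, whose right-hand side is non-negative since $|\sqrt{1-\w'(v)^2}|\le 1$.

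For the ``if'' direction I suppose such constants exist and that $\w(v)$ solves \eqref{eq:vPrimeOfWNotYetEllipticIntegral} with $\bP(u_0,\cdot)$ the form \eqref{eq:bPSpherical}. By \eqref{eq:functionOfWOnly} the genuine third-fundamental-form entry of the surface satisfies $\bP(u_0,\w)^2=h_\w(u_0,\w)^2\bigl(1-\w'(v)^2\bigr)$, which by \eqref{eq:vPrimeOfWNotYetEllipticIntegral} equals the square of the form \eqref{eq:bPSpherical}. Both sides are continuous in $\w$ and generically nonzero, hence agree up to one global sign; after replacing the three constants by their negatives if necessary (which leaves \eqref{eq:vPrimeOfWNotYetEllipticIntegral} unchanged) I may assume $\bP(u_0,\w)$ equals \eqref{eq:bPSpherical}. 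Next I extend $\sPhi_0,\sPhi_1,\sPhi_2$ to functions of $u$ by solving \eqref{eq:sphericalODESystem} with these initial values and set $\bP_{\mathrm{form}}(u,\w)=\sPhi_2(u)e^{-h}+\sPhi_1(u)+\sPhi_0(u)e^{h}$. The computation in the proof of Lemma~\ref{lem:sphericalAlphaBetaExist} shows that \eqref{eq:sphericalODESystem} is equivalent to $\partial_u\bigl(\bP_{\mathrm{form}}/h_\w\bigr)=0$; since $\bP/h_\w=\sqrt{1-\w'(v)^2}$ is also $u$-independent and the two quotients agree at $u=u_0$, I conclude $\bP_{\mathrm{form}}(u,\w)=\bP(u,\w)$ for all $u$. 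Finally, defining $\sA(u),\sB(u)$ by \eqref{eq:sphericalAlphaBetaFormulas} and using $\bQ=\bP_\w/h_\w=\sPhi_0(u)e^{h}-\sPhi_2(u)e^{-h}$ together with the Riccati equation \eqref{eq:riccati}, a direct check gives $e^{h}-\sA(u)\bQ+\sB(u)h_u=0$, which is the characterization \eqref{eq:sphericalVCharacterization}, so the $v$-curves are spherical.

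The one genuinely nontrivial point will be the backward propagation: matching \eqref{eq:bPSpherical} to $\bP$ on the single slice $u=u_0$ says nothing a priori about other values of $u$, and it is the ODE system \eqref{eq:sphericalODESystem} --- which forces $\bP_{\mathrm{form}}/h_\w$ to be constant in $u$ --- that carries the identity to all $u$, after which \eqref{eq:sphericalVCharacterization} can be verified algebraically. The accompanying sign bookkeeping, namely that $\sqrt{1-(\w')^{2}}$ may change sign so that $\bP(u_0,\w)$ is recovered only up to an overall sign, is the place that requires a little care, together with the tacit genericity assumption that $h_\w(u_0,\cdot)$ does not vanish identically.
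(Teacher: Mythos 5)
Your proof is correct and follows essentially the same route as the paper's: both rest on solving \eqref{eq:functionOfWOnly} for $(\w'(v))^2$ and on the fact that the ODE system \eqref{eq:sphericalODESystem} from Lemma~\ref{lem:sphericalAlphaBetaExist} makes $\bP/h_{\w}$ independent of the choice of $u_0$, so that the three initial values serve as the parameters. The paper's own proof is only three sentences; your extra care on the converse direction (propagating the match on the slice $u=u_0$ to all $u$ via the ODE system) and on the overall sign of $\bP(u_0,\cdot)$ --- which is indeed global here since all quantities are real analytic --- simply fills in details the paper leaves implicit.
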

\begin{proof}
	Start by solving for
	$\left( \w'(v) \right)^2$
	in \eqref{eq:functionOfWOnly} and noting that the differential equations for the $\sPhi$ variables guarantee that the right hand side of this equation is independent of the choice of $u_0$. We can therefore use the initial conditions $\sPhi_2(u_0), \sPhi_1(u_0), \sPhi_0(u_0) \in \R$ as parameters in \eqref{eq:vPrimeOfWNotYetEllipticIntegral}. To conclude, the necessary condition $|\w'(v)| \leq 1$ is satisfied as $h_{\w}^2 - \bP^2 \geq 0$.
\end{proof}

Through a change of variables, we see that the functional form for $\w'(v)$ \eqref{eq:vPrimeOfWNotYetEllipticIntegral}, characterizing sphericality of the $v$-curves, is governed by an elliptic curve. The coordinate of the elliptic curve is $e^{-h(u_0, \w)}$, which depends on the choice of $u_0$, but $\w'(v)$ is independent of this choice. Due to its geometric significance, we choose $u_0 = \omega$.

\begin{theorem}
	\label{thm:sphericalEllipticCurve}
	The following are equivalent for an isothermic surface with one generic family ($u$-curves) of planar curvature lines with reparametrization function $\w(v)$.
	\begin{enumerate}
		\item The second family ($v$-curves) of curvature lines are spherical.
\item The reparametrization function is given by $\w'(v) = \w'(s) s'(v)$ where
\begin{align}
		\label{eq:sphericalEllipticCurveLocal}
		\w'(s) &= \frac{1}{\sqrt{Q_3(s)}} \quad \text{ and } \quad v'(s) = \frac{\delta}{\sqrt{Q(s)}} \text{ with }\\
		\label{eq:sphericalEllipticCurveFormulaLocal}
		Q(s) &= -(s-s_1)^2(s-s_2)^2 + \delta^{2}Q_3(s) \text{ and }  \\
		\label{eq:uEllipticCurveLocal}
		Q_3(s) &= 2U_1'(\omega) s^3 -U_2(\omega) s^2 - 2 U'(\omega) s - U(\omega)^2
	\end{align}
		for some $0 \neq \delta \in \R$ and either $s_1, s_2 \in \R$ or $s_2 = \overline{s_1} \in \C$. Moreover, $\sqrt{1-\w'(\cdot)^2}$ is given as a signed real valued function by
		\begin{align}
			\label{eq:signedSquareRootWithBigP}
			\sqrt{1-\w'(v)^2} &= \frac{\bP(\omega, e^{-h(\omega, \w(v))})}{h_{\w}(\omega, \w(v))},  \text{ where } \\
			\label{eq:bigPAtOmega}
			\bP(\omega,s) &= s^{-1} \delta^{-1} (s-s_1)(s-s_2).
			\end{align}
	\end{enumerate}
\end{theorem}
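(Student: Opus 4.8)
The plan is to reduce the statement to the characterization in the immediately preceding lemma and then run the change of variables $s = e^{-h(\omega,\w)}$, under which $h_{\w}(\omega,\w)^2$ and $\bP(\omega,\w)^2$ both become rational functions of $s$ whose numerators are, respectively, the cubic $Q_3$ and the square of a quadratic. First I would invoke that lemma with its free base point $u_0$ specialized to $u_0 = \omega$; this is legitimate since the right-hand side of \eqref{eq:vPrimeOfWNotYetEllipticIntegral} is independent of $u_0$. So sphericality of the $v$-curves becomes equivalent to the existence of three real constants $\sPhi_2(\omega),\sPhi_1(\omega),\sPhi_0(\omega)$ with $(\w'(v))^2 = 1 - \bP(\omega,\w)^2/h_{\w}(\omega,\w)^2$ and $\bP(\omega,\w) = \sPhi_2(\omega)e^{-h(\omega,\w)} + \sPhi_1(\omega) + \sPhi_0(\omega)e^{h(\omega,\w)}$ as in \eqref{eq:bPSpherical}. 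The point of choosing $u = \omega$ is that the explicit formulas of Proposition~\ref{prop:UU1BothCases} give $U_1(\omega) = 0$ (the numerator of $U_1$ contains $\vartheta_1(u-\omega)$, which vanishes at $u=\omega$; one also has $U_1'(\omega)\neq 0$), so that the $e^{-2h}$ term in the corollary's identity \eqref{eq:hwSquaredU2} drops out.

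Next, writing $e^{-h(\omega,\w)} = s$ and $e^{h(\omega,\w)} = s^{-1}$, I would substitute into \eqref{eq:hwSquaredU2} and multiply by $s^2$ to obtain $s^2 h_{\w}(\omega,\w)^2 = Q_3(s)$, the genuine cubic \eqref{eq:uEllipticCurveLocal}; in particular $Q_3(s) > 0$ along the curve. Likewise $s\,\bP(\omega,\w) = \sPhi_2(\omega)s^2 + \sPhi_1(\omega)s + \sPhi_0(\omega)$, which in the generic case $\sPhi_2(\omega) \neq 0$ (recorded by $\delta \neq 0$) I would factor as $\delta^{-1}(s-s_1)(s-s_2)$ with $\delta := 1/\sPhi_2(\omega) \in \R$ and roots $s_1, s_2$; reality of the coefficients forces $s_1, s_2 \in \R$ or $s_2 = \overline{s_1}$. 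This is precisely \eqref{eq:bigPAtOmega}, $\bP(\omega,s) = s^{-1}\delta^{-1}(s-s_1)(s-s_2)$. Multiplying numerator and denominator of $\bP(\omega,\w)^2/h_{\w}(\omega,\w)^2$ by $s^2$ then turns $(\w'(v))^2 = 1 - \bP^2/h_{\w}^2$ into
\[
(\w'(v))^2 = 1 - \frac{\delta^{-2}(s-s_1)^2(s-s_2)^2}{Q_3(s)} = \frac{-(s-s_1)^2(s-s_2)^2 + \delta^2 Q_3(s)}{\delta^2 Q_3(s)} = \frac{Q(s)}{\delta^2 Q_3(s)},
\]
with $Q$ as in \eqref{eq:sphericalEllipticCurveFormulaLocal}; non-negativity of $Q$ along the curve is then automatic.

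To pass to the parametric form I would differentiate the substitution: $ds/d\w = -h_{\w}(\omega,\w)\,s$, hence $(ds/d\w)^2 = s^2 h_{\w}(\omega,\w)^2 = Q_3(s)$, and fixing the orientation of $\w$ gives $\w'(s) = 1/\sqrt{Q_3(s)}$. Combining with $(\w'(v))^2 = Q/(\delta^2 Q_3)$ gives $v'(s) = (dv/d\w)(d\w/ds) = \w'(v)^{-1}\w'(s) = \delta/\sqrt{Q(s)}$, and then $\w'(v) = \w'(s)/v'(s) = \w'(s)\,s'(v)$ — this is \eqref{eq:sphericalEllipticCurveLocal}. For the converse I would reverse this: given $\delta \neq 0$ and $s_1, s_2$ with the stated reality, set $\sPhi_2(\omega) = 1/\delta$, $\sPhi_1(\omega) = -(s_1+s_2)/\delta$, $\sPhi_0(\omega) = s_1 s_2/\delta$ (all real), which reconstructs $\bP(\omega,\cdot)$ and hence, by the preceding lemma, a surface with spherical $v$-curves. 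Finally the signed identity \eqref{eq:signedSquareRootWithBigP} is \eqref{eq:functionOfWOnly} evaluated at $u = \omega$ with $\bP(\omega,\cdot)$ rewritten through \eqref{eq:bigPAtOmega}; this fixes the sign of $\sqrt{1-\w'(\cdot)^2}$, not merely its square.

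The main obstacle I anticipate is purely bookkeeping: keeping the orientations of $\w$, $v$, $s$ and the signs in $\w'(s)$, $v'(s)$, and the signed function $\sqrt{1-\w'(\cdot)^2}$ mutually consistent, and recording that $\w \leftrightarrow s$ is a local diffeomorphism away from the branch points of $w^2 = Q_3(s)$ (the zeros of $Q_3$, i.e.\ where $h_{\w}(\omega,\w)=0$). Once $U_1(\omega) = 0$ is in hand, the core identity $s^2 h_{\w}(\omega,\w)^2 = Q_3(s)$ and the factorization of $s\,\bP$ are routine algebra.
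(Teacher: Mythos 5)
Your proposal is correct and follows essentially the same route as the paper: specialize the preceding lemma to $u_0=\omega$, substitute $s=e^{-h(\omega,\w)}$, use $U_1(\omega)=0$ so that \eqref{eq:hwSquaredU2} reduces the quartic to the cubic $Q_3$, trade the parameters $\sPhi_i(\omega)$ for $\delta,s_1,s_2$ to factor $s\,\bP(\omega,s)$, and read off $\w'(s)$, $v'(s)$, and the signed square root from \eqref{eq:thirdFFPQW} and \eqref{eq:vPrimeOfWNotYetEllipticIntegral}. The paper's proof is terser, but your added care about orientations, the converse direction, and where $s\leftrightarrow\w$ fails to be a diffeomorphism only fills in details the paper leaves implicit.
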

\begin{proof}
	Define $s(\w) =  e^{-h(\omega, \w)}$. Exchange parameters $\sPhi_0(\omega), \sPhi_1(\omega), \sPhi_2(\omega)$ in \eqref{eq:bigP} for $\delta, s_1, s_2$ by 
	\begin{align}
		\label{eq:sphericalPhisToDeltaS1S2}
		\sPhi_0(\omega) = \delta^{-1} s_1 s_2,\quad  \sPhi_1(\omega) = -\delta^{-1} (s_1 + s_2),\quad \sPhi_2(\omega) = \delta^{-1},
	\end{align}
	so that \eqref{eq:bigPAtOmega} holds. The third fundamental form equation \eqref{eq:thirdFFPQW} gives \eqref{eq:signedSquareRootWithBigP}. Now, notice that \eqref{eq:hwSquaredU2} implies that $h_{\w}(\omega,s)^2 = s^{-2} Q_3(s)$, since $U_1(\omega)=0$. This gives $\w'(s)$ as in \eqref{eq:sphericalEllipticCurveLocal}, while the form of $v'(s)$ is found by rewriting \eqref{eq:vPrimeOfWNotYetEllipticIntegral} with $u_0 = \omega$. 
\end{proof}

\begin{remark}
	The elliptic curve $\tilde y^2 = Q_3(s)$ governs the planar curvature lines, while the elliptic curve $y^2 = Q(s)$ governs the spherical curvature lines. We refer to these elliptic curves as $Q_3$ and $Q$, respectively.
\end{remark}

\begin{remark}
	\label{rem:wOfVTwoEllipticCurves}
	The variable $s$ is an intermediary between $\w$ and $v$ via
	\begin{align}
		\w'(v) = \w'(s) s'(v).
	\end{align}
	 The functional form governing $\w(v)$, implied by \eqref{eq:sphericalEllipticCurve}, is not often studied.
	 Both $\w$ and $v$ are elliptic integrals of the same variable $s$. Equating the inverse of these elliptic integrals gives the following representation for $\w(v)$.
	 \begin{align}
	 	\label{eq:wOfsISvOfs}
	 	\frac{a_3 \, \wp(\w ; Q_3) + b_3}{c_3 \, \wp(\w ; Q_3) + d_3} = \frac{a \, \wp(v ; \delta^{-2}Q) + b}{c \, \wp(v ; \delta^{-2} Q) + d}.
	 \end{align}
 	On the isothermic surface, $Q_3$ governs the planar $u$-curves via $\w'(s)$, while $Q$ governs the spherical $v$-curves via $v'(s)$.
 	
 	We use a nonstandard notation for the Weierstrass $\wp$ function, motivated by the following result in \cite[Sec. 20.6]{whittaker_watson_1996}. For
 	\begin{align}
 	\label{eq:generalQuartic}
 	\mathcal{Q}(x) = c_4 + 4 c_3 x + 6 c_2 x^2 + 4 c_1 x^3 + c_0 x^4
 	\end{align}
 	the inverse of the elliptic integral $z = \int_{x_0}^x \mathcal{Q}(t)^{-\frac12}\,dt$ (with $\mathcal{Q}(x_0) = 0$) is a rational function of $\wp(z; g_2(\mathcal{Q}), g_3(\mathcal{Q}))$ where the invariants are
 	\begin{align}
 		\label{eq:g2}
 		g_2(\mathcal{Q}) &= c_0 c_4 - 4 c_1 c_3 + 3 c_2^2, \\
 		\label{eq:g3}
 		g_3(\mathcal{Q}) &= c_0 c_2 c_4 + 2 c_1c_2c_3 - c_2^3 - c_0c_3^2-c_1^2c_4.
 	\end{align}
 	We use the notation $\wp(z; \mathcal{Q})$ to emphasize the dependence on $\mathcal{Q}$.
\end{remark}

We summarize the local classification of isothermic surfaces with one generic family of planar and one family of spherical curvature lines.

\begin{corollary}
\label{cor:planarSphericalIsothermicClassification}
The moduli space of isothermic surfaces with one generic family of planar curvature lines and one family of spherical curvature lines has real dimension five.

With the notation of Theorem~\ref{thm:sphericalEllipticCurve} the parameters are: a real elliptic curve $Q_3$ and parameter $\omega \in \R$ (governing the planar $u$-curves), which combine with additional parameters $\delta \neq 0, s_1, s_2$ to determine a second real elliptic curve $Q$ (governing the spherical $v$-curves). There are four cases. Each real elliptic curve is either of rectangular or rhombic type.
\end{corollary}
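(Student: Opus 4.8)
\textit{Proof strategy.} The proof is a bookkeeping argument that assembles the count from two results already established: the parametrization of isothermic surfaces with one generic family of planar curvature lines, and the sphericality characterization of Theorem~\ref{thm:sphericalEllipticCurve}. First, recall from Theorem~\ref{thm:localIsothermicPlanar} and Proposition~\ref{prop:UU1BothCases} that, up to Euclidean motion, an isothermic surface with one generic family of planar curvature lines is determined by a real lattice parameter $\Imc\tau$ --- rectangular when $\tau \in \ci\R$, rhombic when $\tau \in \frac12 + \ci\R$ --- together with a real parameter $\omega \neq 0, \frac\pi2$ and a reparametrization function $\w(v)$. The two continuous parameters $(\Imc\tau, \omega)$ are precisely the data of the real elliptic curve $\tilde y^2 = Q_3(s)$ of \eqref{eq:uEllipticCurveLocal} together with $\omega$, since the coefficients $U(\omega), U'(\omega), U_1'(\omega), U_2(\omega)$ appearing in $Q_3$ are explicit theta functions of $\tau$ evaluated at $\omega$. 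This accounts for two real dimensions, while $\w(v)$ is so far an unconstrained functional freedom.

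Next, Theorem~\ref{thm:sphericalEllipticCurve} states that requiring the second family ($v$-curves) of curvature lines to be spherical is \emph{equivalent} to $\w(v)$ being given by the elliptic integral formulas \eqref{eq:sphericalEllipticCurveLocal}--\eqref{eq:sphericalEllipticCurveFormulaLocal}, which depend only on $Q_3$ and on three further real parameters: $0 \neq \delta \in \R$, and the pair $s_1, s_2$, which is either an unordered pair of reals or a complex-conjugate pair $s_2 = \overline{s_1}$ and in either case carries two real dimensions. Hence imposing sphericality collapses the infinite-dimensional functional freedom to a three-parameter family, and the moduli space has real dimension $2 + 3 = 5$. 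The admissibility inequality $|\w'(v)| \leq 1$ is automatic here, since along the elliptic integral $h_\w^2 - \bP^2 \geq 0$.

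It remains to organize the four cases. The planar $u$-curves are governed by the real elliptic curve $\tilde y^2 = Q_3(s)$, whose real period lattice is rectangular or rhombic by Proposition~\ref{prop:UU1BothCases}. Independently, the spherical $v$-curves are governed by the real quartic $Q(s) = -(s-s_1)^2(s-s_2)^2 + \delta^2 Q_3(s)$ of \eqref{eq:sphericalEllipticCurveFormulaLocal}, whose real elliptic curve $y^2 = Q(s)$ is rectangular when all four branch points are real and rhombic when there are two real branch points and one complex-conjugate pair --- and both types occur as $(\delta, s_1, s_2)$ range over the admissible set. Since the two real structures may be prescribed independently, there are $2 \times 2 = 4$ cases.

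\textit{Main obstacle.} The point requiring care is that this dimension count is sharp, i.e., that the map $(\Imc\tau, \omega, \delta, s_1, s_2) \mapsto f$ is generically finite-to-one modulo Euclidean motions and orientation-preserving reparametrization, so that there is no hidden continuous redundancy collapsing the dimension. The only such redundancy in the construction --- the affine normalization $\theta \mapsto \alpha^2 \theta(\alpha z + \beta)$ noted after \eqref{eq:theta}, which changes the parametrization but not the surface --- has already been used up by fixing the period lattice to $\pi, \pi\tau$ and by the normalization of $\rho$ in Proposition~\ref{prop:UU1BothCases}; the surviving redundancies (for instance $\omega \mapsto -\omega$ and $s_1 \leftrightarrow s_2$) are discrete and do not affect the real dimension.
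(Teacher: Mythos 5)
Your proposal is correct and matches the paper's (implicit) argument: the corollary is stated there without proof as a direct summary of Theorem~\ref{thm:sphericalEllipticCurve} combined with the earlier parametrization of the planar-curvature-line moduli by $(\Imc\tau,\omega,\w(v))$, and your count $2+3=5$ with the $2\times 2$ case split is exactly the intended bookkeeping. Your added remark on finite-to-oneness of the parametrization is a reasonable extra precaution not present in the paper, but it does not change the substance.
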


\subsubsection{Relationship between $Q_3$ and $\tau$}
We clarify how $Q_3$ exactly encodes the lattice spanned by $\pi$ and $\tau \pi$ with nome $q = e^{\ci \pi \tau}$ that governs the planar $u$-curvature lines. The latter elliptic curve arises from combining the Ricatti and harmonic compatibility equations for the isothermic surface. An immediate consequence of these equations is \eqref{eq:hwSquaredU2}, which we restate for convenience: 
	\begin{align*}
		h_{\w}^2 = - U_1(u)^2 e^{-2h} + 2 U_1'(u) e^{-h} - U_2(u) - 2 U'(u)e^h - U(u)^2 e^{2h}. 
	\end{align*}
This equation describes an elliptic curve. Specifically, for each $u_0 \in \R$ define $x(\w) = e^{-h(u_0, \w)}$ and $y(\w) = x'(\w) = - h_\w e^{-h}$ to find that 
	\begin{align}
	\label{eq:planarEllipticCurves}
	 y^2 &= - U_1(u_0)^2 \, x^4 + 2 U_1'(u_0) \, x^3 - U_2(u_0) \, x^2 - 2 U'(u_0) \, x - U(u_0)^2 \\
		&= -U(u_0)^2 (e^{h(u_0, 0)} x - 1)(e^{h(u_0, \frac\pi \ci)} x - 1) (e^{h(u_0, \frac{\pi + \tau \pi}{\ci})} x-1)(e^{h(u_0, \frac{\tau\pi}\ci)} x-1) \nonumber
	\end{align}
for an elliptic curve with lattice spanned by $\pi$ and $\tau \pi$.
	
Strictly speaking this gives a family of elliptic curves, one for each $u_0$. Using 
 \eqref{eq:g2}\eqref{eq:g3} we can compute the lattice Weierstrass invariants $g_2(u_0)$ and $g_3(u_0)$ as a function of $u_0$. Differentiating in $u_0$ yields zero, showing that $g_2$ and $g_3$ are independent of $u_0$. Thus, the apparent family of elliptic curves is in fact different parametrizations of the same elliptic curve. It is the one spanned by $\pi, \tau \pi$ encoded in the system \eqref{eq:complexUSystem} which we recall for convenience is $\frac{U_1''}{U_1} = \frac{U''}{U} = U_2-2 U_1 U$ and $(U_2 + 6 U U_1)' = 0$. We perform the computation to show that $g_2$ is independent of $u_0$. We omit the longer, but analogous, computation showing that $g_3$ is also independent of $u_0$.
	
Applying the coefficients of \eqref{eq:planarEllipticCurves} to the formula \eqref{eq:g2} for $g_2(u_0)$ with the notation of a general quartic \eqref{eq:generalQuartic} yields:
	\begin{align}
		g_2(u_0) = U_1(u_0)^2 U(u_0)^2 + U'(u_0) U_1'(u_0) + \frac{1}{12} U_2(u_0)^2.
	\end{align}
	Differentiating with respect to $u_0$ and collecting terms yields 
	\begin{align}
		g_2' = U_1' U (2 U U_1 + U''/U) + U' U_1 (2 U U_1 + U_1'' / U_1) + \frac{1}{6}U_2 U_2'.
	\end{align}
	Now, using the system \eqref{eq:complexUSystem} yields zero:
	\begin{align}
		g_2' = U_2 (U_1' U + U' U_1 + 1/6 (6 U U_1)') = 0.
	\end{align}

Choosing $u_0 = \omega$ and noting that $e^{h(\omega, \frac\pi \ci)} = 0$ and $U_1(\omega) = 0$ yields the following.

\begin{proposition}
	\label{prop:uLineEllipticCurve}
	The real elliptic curve with lattice spanned by $\pi, \tau \pi$ governing the planar $u$-curves is given by $\tilde y^2 = Q_3(s)$ \eqref{eq:uEllipticCurveLocal} with coordinate $s = e^{-h(\omega, \w)}$.
	Moreover, we have the factorization
	\begin{align}
		\label{eq:Q3factorization}
		Q_3(s) = U(\omega)^2(e^{h(\omega, 0)} x - 1)(e^{h(\omega, \frac{\pi + \tau \pi}{\ci})} x-1)(e^{h(\omega, \frac{\tau\pi}\ci)} x-1).
	\end{align}
\end{proposition}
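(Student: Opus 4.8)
The plan is to obtain both assertions by specializing to $u_0=\omega$ the family of elliptic curves \eqref{eq:planarEllipticCurves} that the discussion above has already analyzed. Recall from that discussion that, for each fixed $u_0$, the relation \eqref{eq:hwSquaredU2} rewritten through $x=e^{-h(u_0,\w)}$ and $y=x'(\w)=-h_\w e^{-h}$ is precisely the quartic curve \eqref{eq:planarEllipticCurves}; that its Weierstrass invariants $g_2,g_3$ do not depend on $u_0$ (via the Lam\'e system \eqref{eq:complexUSystem}); and that the resulting elliptic curve is the one with period lattice spanned by $\pi$ and $\tau\pi$. Hence everything reduces to writing this curve down at the geometrically distinguished value $u_0=\omega$.

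First I would record the two vanishings that occur there. The explicit formula for $U_1$ in Proposition~\ref{prop:UU1BothCases} carries the factor $\vartheta_1(u-\omega)$ in its numerator, so $U_1(\omega)=0$; consequently the $x^4$ term of \eqref{eq:planarEllipticCurves} drops out, and with $s=e^{-h(\omega,\w)}$ and $\tilde y=s_\w=-h_\w s$ equation \eqref{eq:planarEllipticCurves} collapses to $\tilde y^2=2U_1'(\omega)s^3-U_2(\omega)s^2-2U'(\omega)s-U(\omega)^2=Q_3(s)$ from \eqref{eq:uEllipticCurveLocal} — the same computation already used inside the proof of Theorem~\ref{thm:sphericalEllipticCurve}. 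Together with the $u_0$-independence of $g_2,g_3$ this identifies $Q_3$ with the $\pi,\tau\pi$-lattice curve and settles the first claim. For the factorization I would evaluate the factored form of \eqref{eq:planarEllipticCurves} at $u_0=\omega$: substituting $u=\omega$ and $\ci\w=\pi$ into the metric formula \eqref{eq:rhombicMetric}, the theta quotient has numerator $\vartheta_2(\tfrac{\pi}{2})\vartheta_2(-\tfrac{\pi}{2})=\vartheta_2(\tfrac{\pi}{2})^2$, which vanishes because $\vartheta_2(\tfrac{\pi}{2})=\vartheta_1(\pi)=0$, while the denominator $\vartheta_1(\omega+\tfrac{\pi}{2})\vartheta_1(\omega-\tfrac{\pi}{2})=-\vartheta_2(\omega)^2\neq 0$. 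Thus $e^{h(\omega,\pi/\ci)}=0$, the factor $\bigl(e^{h(\omega,\pi/\ci)}s-1\bigr)$ equals $-1$, and absorbing its sign into the prefactor $-U(\omega)^2$ turns the four-factor product of \eqref{eq:planarEllipticCurves} into the three-factor product \eqref{eq:Q3factorization}. In the rectangular reduction the argument is identical after replacing $\vartheta_2$ by $\vartheta_4$, the only change being that the degenerate factor is then $e^{h(\omega,\tau\pi/\ci)}$, since $\vartheta_4$ has its zero at $\tfrac{\pi\tau}{2}$.

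I do not expect a genuine obstacle: the analytic substance is already contained in the preceding discussion, so the remaining work is the bookkeeping of signs in the factored quartic together with the two elementary theta-constant identities $U_1(\omega)=0$ and $e^{h(\omega,\pi/\ci)}=0$. The only point requiring mild care is to keep track of which of the four half-periods produces the degenerate factor in the rectangular versus the rhombic case, so that the three surviving factors displayed in \eqref{eq:Q3factorization} are indeed the correct ones.
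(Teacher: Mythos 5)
Your proposal is correct and follows essentially the same route as the paper: the paper's ``proof'' is precisely the preceding discussion of the family of curves \eqref{eq:planarEllipticCurves}, the $u_0$-independence of $g_2,g_3$ via \eqref{eq:complexUSystem}, and the specialization $u_0=\omega$ using $U_1(\omega)=0$ and $e^{h(\omega,\pi/\ci)}=0$ to drop the quartic term and the degenerate factor. Your added care about which half-period degenerates in the rectangular reduction (there it is $e^{h(\omega,\tau\pi/\ci)}$ that vanishes, so the surviving three factors differ) is a point the paper glosses over, since its stated factorization \eqref{eq:Q3factorization} is tailored to the rhombic case of interest.
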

\section{Isothermic cylinders with one generic family of closed planar curvature lines whose second family of curvature lines are spherical}
\label{sec:isothermic-planar-u-spherical-v-global}
\subsection{Classification in terms of two elliptic curves}
We summarize the key results of the local theory into the setting of closed generic $u$-curvature lines.
\begin{corollary}
	\label{cor:cylinderEllipticCurves}
Let $f(u,v)$ be an isothermic cylinder with one generic family ($u$-curves) of closed planar curvature lines, as in Theorem~\ref{thm:planarIsothermicCylinderFormulas}, with rhombic lattice spanned by $\pi, \tau \pi$, critical parameter $\omega$ satisfying $\vartheta_2'(\omega)=0$, and $\tau$-admissible reparametrization function $\w(v)$. Then,
\begin{enumerate}
	\item The function $s(\w) = e^{-h(\omega, \w)}$ satisfies
	\begin{align}
		s'(\w)^2 &= Q_3(s), \label{eq:uEllipticCurve} \text{ where }\\
		\label{eq:criticalQ3factorized}
		Q_3(s) &= \frac{\vartheta _1^{\prime }(0)^2}{\vartheta_2(\omega)^2} \left(s-\frac{\vartheta _1(\omega)^2}{\vartheta _2(0)^2}\right) \left(s-\frac{\vartheta_3(\omega)^2}{\vartheta _4(0)^2}\right) \left(s-\frac{\vartheta _4(\omega)^2}{\vartheta _3(0)^2}\right) \\
		&= 2U_1'(\omega) s^3 -U_2(\omega) s^2 - 2 U'(\omega) s - U(\omega)^2,
		\label{eq:criticalQ3}
	\end{align}
	and
	\begin{align}
		\label{eq:bigUAtOmegaAndRadius}
		\quad U(\omega) &= -\frac12  \frac{\vartheta_1'(0)}{\vartheta_2(\omega)^2} \vartheta_1(2\omega) = -R(\omega)^{-1} , \quad U_1(\omega) = 0,\\
		U'(\omega) &= -\frac12 \frac{\vartheta_1'(0)}{\vartheta_2(\omega)^2} \vartheta_1'(2\omega) , \quad			U_1'(\omega) = \frac12 \frac{\vartheta_1'(0)^2}{\vartheta_2(\omega)^2}, \\
		U_2(\omega) &= \frac{\vartheta _1^{\prime }(0)^2}{\vartheta _2(\omega)^2} \left(\frac{\vartheta _1(\omega)^2}{\vartheta _2(0)^2}+\frac{\vartheta _4(\omega)^2}{\vartheta_3(0)^2}+\frac{\vartheta _3(\omega)^2}{\vartheta _4(0)^2}\right).
	\end{align}
	Moreover, the elliptic curve \eqref{eq:uEllipticCurve} has rhombic lattice $\pi, \tau \pi$.
		\item The $v$-curvature lines are spherical if and only if $\w'(v) = \w'(s) s'(v)$ where
		\begin{align}
			\label{eq:sphericalEllipticCurve}
			\w'(s) &= \frac{1}{\sqrt{Q_3(s)}} \quad \text{ and } \quad v'(s) = \frac{\delta}{\sqrt{Q(s)}}, \text{ with }\\
			\label{eq:sphericalEllipticCurveFormula}
			Q(s) &= -(s-s_1)^2(s-s_2)^2 + \delta^{2}Q_3(s)
		\end{align}
		for some $0 \neq \delta \in \R$ and either $s_1, s_2 \in \R$ or $s_2 = \overline{s_1} \in \C$.
		In this case, $\sqrt{1-\w'(\cdot)^2}$ as a signed real-valued function is
		\begin{align}
			\label{eq:signedSquareRoot}
			\sqrt{1-\w'(v)^2} &= \frac{\delta^{-1} \left(s(\w(v))-s_1\right)\left(s(\w(v))-s_2\right)}{s'(\w(v))},  \text{ where } \\
			s(\w) &= e^{-h(\omega, \w)} = \frac{\vartheta _2(\omega)^2}{\vartheta _2(0)^2} \left(\frac{\vartheta _1(\omega)^2}{\vartheta _2(\omega)^2}-\frac{\vartheta _1\left(\frac{\ci \w}{2}\right)^2}{\vartheta _2\left(\frac{\ci \w}{2}\right)^2}\right).
		\end{align}
	\end{enumerate}
\end{corollary}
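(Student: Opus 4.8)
The plan is to recognize this corollary as the specialization of the local theory of Section~\ref{sec:isothermic-planar-u-spherical-v} to the closed situation of Theorem~\ref{thm:planarIsothermicCylinderFormulas}, the remaining content being explicit theta-function evaluations at the critical parameter $\omega$ characterized by $\vartheta_2'(\omega)=0$. I would proceed in three steps: first establish the ODE \eqref{eq:uEllipticCurve} with the explicit data \eqref{eq:bigUAtOmegaAndRadius}; then establish the factorization \eqref{eq:criticalQ3factorized} together with the closed form of $s(\w)$; and finally read off part~2 from Theorem~\ref{thm:sphericalEllipticCurve}.

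For the first step, evaluate \eqref{eq:hwSquaredU2} at $u=\omega$ and set $s(\w)=e^{-h(\omega,\w)}$, so that $s'(\w)=-h_\w(\omega,\w)\,s$ and $s'(\w)^2=h_\w(\omega,\w)^2 s^2$; multiplying \eqref{eq:hwSquaredU2} by $s^2=e^{-2h}$ gives $s'(\w)^2 = -U_1(\omega)^2 s^4 + 2U_1'(\omega) s^3 - U_2(\omega) s^2 - 2U'(\omega) s - U(\omega)^2$. The rhombic formula \eqref{eq:rhombicU1} carries a factor $\vartheta_1(u-\omega)$, which vanishes at $u=\omega$ since $\vartheta_1(0)=0$, so $U_1(\omega)=0$ and the quartic collapses to the cubic $Q_3$ of \eqref{eq:criticalQ3}. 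The values of $U(\omega), U'(\omega), U_1'(\omega)$ then come from differentiating the rhombic formulas \eqref{eq:rhombicU}, \eqref{eq:rhombicU1}: at the critical $\omega$ the exponential factors $e^{\mp u\vartheta_2'(\omega)/\vartheta_2(\omega)}$ reduce to $1$ and their $u$-derivatives vanish because $\vartheta_2'(\omega)=0$, and in $U_1'(\omega)$ only the term differentiating the simple zero of $\vartheta_1(u-\omega)$ survives; the identity $U(\omega)=-R(\omega)^{-1}$ is then \eqref{eq:radiusOmega} combined with the doubling formula $\vartheta_1(2\omega)\vartheta_2(0)\vartheta_3(0)\vartheta_4(0)=2\vartheta_1(\omega)\vartheta_2(\omega)\vartheta_3(\omega)\vartheta_4(\omega)$. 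The remaining coefficient $U_2(\omega)$ I would simply read off the $s^2$-term of the factorization obtained in the next step (alternatively $U_2(\omega)=C_1$ by \eqref{eq:complexU2} since $U_1(\omega)=0$).

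For the second step, combine the degeneration of \eqref{eq:planarEllipticCurves} with an explicit evaluation of $s(\w)$. By Proposition~\ref{prop:uLineEllipticCurve}, $e^{h(\omega, \pi/\ci)}=0$, so one of the four roots of the quartic \eqref{eq:planarEllipticCurves} at $u_0=\omega$ escapes to infinity and $Q_3$ is the cubic with the remaining three finite roots, which is \eqref{eq:Q3factorization}. Inserting $u=\omega$ into the rhombic metric \eqref{eq:rhombicMetric}, using $\vartheta_2'(\omega)=0$ and the addition formula $\vartheta_2(0)^2\vartheta_1(x+y)\vartheta_1(x-y)=\vartheta_1(x)^2\vartheta_2(y)^2-\vartheta_2(x)^2\vartheta_1(y)^2$ (as in \eqref{eq:rhombicMetricFactorized}) with $x=\omega$, $y=\ci\w/2$, yields $s(\w)=\frac{\vartheta_2(\omega)^2}{\vartheta_2(0)^2}\left(\frac{\vartheta_1(\omega)^2}{\vartheta_2(\omega)^2}-\frac{\vartheta_1(\ci\w/2)^2}{\vartheta_2(\ci\w/2)^2}\right)$, the formula asserted at the end of the statement. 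Evaluating this at the half-periods $\ci\w/2=0,\ \pi\tau/2,\ (\pi+\pi\tau)/2$ via the quasiperiodicity relations for $\vartheta_1,\vartheta_2$ gives the three roots as $\frac{\vartheta_1(\omega)^2}{\vartheta_2(0)^2}$ together with two combinations of $\vartheta_i(\omega)^2$ and $\vartheta_i(0)^2$ that reduce, through Jacobi's theta identities, to $\frac{\vartheta_3(\omega)^2}{\vartheta_4(0)^2}$ and $\frac{\vartheta_4(\omega)^2}{\vartheta_3(0)^2}$; the leading coefficient $2U_1'(\omega)=\vartheta_1'(0)^2/\vartheta_2(\omega)^2$ from step one fixes the overall normalization in \eqref{eq:criticalQ3factorized}, and matching the constant term with $-U(\omega)^2$ is once more the doubling formula. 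That the elliptic curve \eqref{eq:uEllipticCurve} has rhombic lattice $\pi,\tau\pi$ is immediate from Proposition~\ref{prop:uLineEllipticCurve} since $\tau\in\frac12+\ci\R$.

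For the third step, part~2 is exactly Theorem~\ref{thm:sphericalEllipticCurve} with the distinguished choice $u_0=\omega$ already made there, and with $Q_3$ the curve computed in the first two steps; the only rewriting needed is to substitute $\bP(\omega,s)=\delta^{-1}s^{-1}(s-s_1)(s-s_2)$ from \eqref{eq:bigPAtOmega} into \eqref{eq:signedSquareRootWithBigP} and replace $h_\w(\omega,\w)$ by $-s'(\w)/s$, which produces \eqref{eq:signedSquareRoot}. I expect the main obstacle to be the theta bookkeeping in step two — carrying out the half-period evaluations of $\vartheta_1^2/\vartheta_2^2$ and recognizing the resulting expressions as the symmetric triple $\vartheta_1(\omega)^2/\vartheta_2(0)^2,\ \vartheta_3(\omega)^2/\vartheta_4(0)^2,\ \vartheta_4(\omega)^2/\vartheta_3(0)^2$ requires the right choice of Jacobi identities; everything else is either a direct substitution or a citation of an earlier result.
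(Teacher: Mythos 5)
Your proposal is correct and follows essentially the same route as the paper's proof: part 1 via Proposition~\ref{prop:uLineEllipticCurve} and evaluation of the rhombic formulas for $U, U_1$ at the critical $\omega$, the coefficient $U_2(\omega)$ and the factorization \eqref{eq:criticalQ3factorized} via the factorized metric \eqref{eq:rhombicMetricFactorized} combined with \eqref{eq:Q3factorization} (avoiding second derivatives of theta functions), and part 2 as a direct specialization of Theorem~\ref{thm:sphericalEllipticCurve} with $h_{\w}(\omega,\w)$ rewritten through $s'(\w)/s$. The only differences are cosmetic: you spell out the half-period evaluations and doubling-formula checks that the paper leaves implicit.
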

\begin{proof}
	The real elliptic curve for the $u$-lines follows from Proposition~\ref{prop:uLineEllipticCurve}. Most of the coefficients of $Q_3$ are found by using the formulas for $U, U_1$ \eqref{eq:rhombicU}, \eqref{eq:rhombicU1} with critical parameter $\omega$ satisfying $\theta_2'(\omega)=0$. To find $U_2(\omega)$ without computing second derivatives of theta functions, one first derives \eqref{eq:criticalQ3factorized} by combining the factorizations of the metric \eqref{eq:rhombicMetricFactorized} and $Q_3$ \eqref{eq:Q3factorization} with critical $\omega$. Then, one equates this expression with \eqref{eq:criticalQ3} which is just a restatement of \eqref{eq:uEllipticCurveLocal}.
	
	The equivalent formulation of spherical $v$-curvature lines follows from Theorem~\ref{thm:sphericalEllipticCurve}. To find \eqref{eq:signedSquareRoot} we 
	use \eqref{eq:signedSquareRootWithBigP} and compute $h_{\w}(\omega, \w)$ as minus the logarithmic derivative of $e^{-h(\omega, \w)}$ from \eqref{eq:rhombicMetricFactorized}.
	\end{proof}

To close the isothermic cylinder into a torus it is necessary to consider periodic $\tau$-admissible reparametrization functions $\w(v)$. Each choice either closes the torus after one period or gives rise to a fundamental piece with axis $\axisDir$ and angle $\angleFP$, recall Theorem~\ref{thm:rationalityTorusClosing}.

\subsection{A local formula for the axis}
In general, the axis $\axisDir$ of an isothermic cylinder from a fundamental piece (recall Definition~\ref{def:isothermicCylinderFromAFundamentalPiece}) is a global object, found by integrating the ODE for $\Phi(v)$ \eqref{eq:phiPrimePhiInverse} over one period of the reparametrization function $\w(v)$. In this section we prove that if the second family of curvature lines are spherical, however, one can derive a local formula for the axis, see Proposition~\ref{prop:bigZPrimeAtOmegaMovingFrameCoefficients}. This provides analytic control over the axis, which is important to torus closing conditions. For example, this axis formula is a key step to proving the existence of compact Bonnet pairs as conformal transformations of isothermic tori with one generic family of planar curvature lines. One first proves existence of isothermic tori with the additional constraint that the second family of curvature lines are spherical and then perturbs away from this constraint~\cite{short-compact-bonnet}.

By Theorem~\ref{thm:planarSphericalAxisConePoint}, the centers $Z(u)$ of the spherical curvature line spheres are collinear and span a line. The symmetry induced by the fundamental piece means that the axis $\axisDir$ agrees with this line. The symmetry sphere at $u = \omega$ is centered at the origin, see \eqref{eq:fomegafuParallel}, so we study $Z'(\omega)$ to obtain a direction. Thus,
\begin{equation}
	\label{eq:A-Z}
	\pm \axisDir = \frac{Z'(\omega)}{|Z'(\omega)|}.
\end{equation} 

The centers $Z(u)$ of the spherical curvature line spheres satisfy \eqref{eq:sphereCenters}
where $\alpha(u),\beta(u)$ are given by \eqref{eq:sphericalAlphaBetaFormulas}. Differentiating gives
\begin{align*}
	\begin{aligned}
		Z'(u) - f_u(u,v) = \sA'(u) n(u,v) & + \sA(u) n_u(u,v) \\ & + \sB'(u) \left(e^{-h(u,v)}f_u(u,v)\right) + \sB(u) \left( e^{-h(u,v)}f_u(u,v) \right)_u.
	\end{aligned}
\end{align*}
In isothermic coordinates $f_{uu} = h_u f_u - h_v f_v + \bP e^h n$, so $\left(e^{-h(u,v)}f_u(u,v)\right)_u = -h_v \left(e^{-h(u,v)}f_v(u,v)\right) + \bP n$. Thus,
\begin{align*}
	\begin{aligned}
		Z'(u) = (\sA'(u)& + \sB(u) \bP(u,v)) n(u,v) + \sA(u) n_u(u,v) + \\ 
		&+ (\sB'(u) + e^{h(u,v)}) \left(e^{-h(u,v)}f_u(u,v)\right) - \sB(u) h_v(u,v) \left(e^{-h(u,v)}f_v(u,v)\right).
	\end{aligned}
\end{align*}
To evaluate this expression at $ u = \omega $ we prove the following.
\begin{align}
	\label{eq:alphaAlphaPrimeAtOmega}
	\sA(\omega) = 0, \qquad \sA'(\omega) &= -\delta R(\omega) U_1'(\omega),\\
	\sB(\omega) = R(\omega), \qquad \sB'(\omega) &= R(\omega)^2 \left( U'(\omega) + s_1 s_2 U_1'(\omega)\right).
	\label{eq:betaBetaPrimeAtOmega}
\end{align}
The values are derived from \eqref{eq:sphericalAlphaBetaFormulas} for $\sA(u)$ and $\sB(u)$. For $\sA(\omega), \sB(\omega)$ use that $U_1(\omega) = 0$ and $R(\omega) = -U(\omega)^{-1}$. For $\sA'(\omega), \sB'(\omega)$ we differentiate and again use $U_1(\omega) = 0$ and $R(\omega)= -U(\omega)^{-1}$ to find $\alpha'(\omega) = -R(\omega)\frac{U_1'(\omega)}{\sPhi_2(\omega)}$ and $\beta'(\omega) = R(\omega)^2 \left(U'(\omega) + \frac{\sPhi_0(\omega)}{\sPhi_2(\omega)} U_1'(\omega) \right)$. Finally, use $\sPhi_2 = \delta^{-1}$ and $\sPhi_0 = \delta^{-1} s_1 s_2$ as given by \eqref{eq:sphericalPhisToDeltaS1S2}.

Plugging in $u = \omega$ gives
\begin{align}
	\label{eq:bigZPrimeAtOmega}
	\begin{aligned}
		Z'(\omega) = (\sB'(\omega) & + e^{h(\omega,v)})\left(e^{-h(\omega,v)}f_u(\omega,v)\right) - \sB(\omega) h_v(\omega,v) \left(e^{-h(\omega,v)}f_v(\omega,v)\right) + \\
		& + (\sA'(\omega) + \sB(\omega) \bP(\omega,v)) n(\omega,v).
	\end{aligned}
\end{align}

\begin{proposition}
	\label{prop:bigZPrimeAtOmegaMovingFrameCoefficients}
	Let $f(u,v)$ be an isothermic cylinder from a fundamental piece with axis $\axisDir$ and whose second family of curvature lines are spherical. Then, the centers $Z(u)$ of the spherical curvature line spheres are collinear and
	\begin{align}
	\pm \axisDir = \frac{Z'(\omega)}{|Z'(\omega)|} &= 	z_1(s(v))\left(e^{-h(\omega,v)}f_u(\omega,v)\right) + z_2(s(v))\left(e^{-h(\omega,v)}f_v(\omega,v)\right) + z_3(s(v)) n(\omega, v),
	\end{align}
	where
	\begin{align}
		z_1(s) &= \frac{s^{-1}}{|Z'(\omega)|} \left(1 + s R(\omega)^2 \left( U'(\omega) + s_1 s_2 U_1'(\omega)\right) \right),\\
		z_2(s) &= \frac{s^{-1}R(\omega)\delta^{-1}}{|Z'(\omega)|} \sqrt{Q(s)},\\
		z_3(s) &= \frac{s^{-1}R(\omega)\delta^{-1}}{|Z'(\omega)|} \left(-\delta^2 U_1'(\omega) s + (s-s_1)(s-s_2) \right),
	\end{align}
	and
	\begin{align}
		\label{eq:bigZPrimeNormSquared}
		\begin{aligned}
			|Z'(\omega)|^2 =  R(\omega)^2 &\left( 2(s_1 + s_2)U_1'(\omega) + \delta^2 U_1'(\omega)^2 - U_2(\omega)\right) + \\ & + R(\omega)^4 \left( U'(\omega) + s_1 s_2 U_1'(\omega)\right)^2.
		\end{aligned}
	\end{align}
\end{proposition}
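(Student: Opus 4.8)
The plan is to start from the expression \eqref{eq:bigZPrimeAtOmega} for $Z'(\omega)$ and simply substitute the values \eqref{eq:alphaAlphaPrimeAtOmega}, \eqref{eq:betaBetaPrimeAtOmega} together with the sphericality data. Since $\{e^{-h}f_u, e^{-h}f_v, n\}$ is an orthonormal frame at $u=\omega$, the coefficients of $Z'(\omega)$ in this frame are read off directly: the $e^{-h}f_u$ coefficient is $\sB'(\omega) + e^{h(\omega,v)} = e^{h(\omega,v)}(1 + e^{-h(\omega,v)}\sB'(\omega))$, the $e^{-h}f_v$ coefficient is $-\sB(\omega) h_v(\omega,v) = -R(\omega) h_v(\omega,v)$, and the $n$ coefficient is $\sA'(\omega) + \sB(\omega)\bP(\omega,v) = -\delta R(\omega) U_1'(\omega) + R(\omega)\bP(\omega,v)$. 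So the first step is to rewrite each of these three in terms of the variable $s = e^{-h(\omega,\w(v))}$, using \eqref{eq:bigPAtOmega} for $\bP(\omega,s) = s^{-1}\delta^{-1}(s-s_1)(s-s_2)$ and the identity $h_v(\omega,v) = -s'(\w)\w'(v)/s$, where by Corollary~\ref{cor:cylinderEllipticCurves} $s'(\w)^2 = Q_3(s)$ and $\w'(v)^2 = Q_3(s)^{-1}\cdot$(something) — more precisely $(s'(\w)\w'(v))^2 = s'(v)^2$, and by Theorem~\ref{thm:sphericalEllipticCurve} $v'(s) = \delta/\sqrt{Q(s)}$, so $s'(v) = \sqrt{Q(s)}/\delta$. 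Hence $h_v(\omega,v) = -\sqrt{Q(s)}/(\delta s)$ (up to a sign governed by the signed square root convention), which is exactly what makes $z_2(s)$ come out as stated.

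Second, I would factor $e^{h(\omega,v)} = s^{-1}$ out of all three coefficients. The $e^{-h}f_u$ coefficient becomes $s^{-1}\bigl(1 + s\,\sB'(\omega)\bigr) = s^{-1}\bigl(1 + s R(\omega)^2(U'(\omega) + s_1 s_2 U_1'(\omega))\bigr)$, which is $|Z'(\omega)|\,z_1(s)$. The $e^{-h}f_v$ coefficient becomes $s^{-1}R(\omega)\delta^{-1}\sqrt{Q(s)}$, matching $|Z'(\omega)|\,z_2(s)$. The $n$ coefficient $R(\omega)\bigl(\bP(\omega,s) - \delta U_1'(\omega)\bigr) = R(\omega)\bigl(s^{-1}\delta^{-1}(s-s_1)(s-s_2) - \delta U_1'(\omega)\bigr) = s^{-1}R(\omega)\delta^{-1}\bigl((s-s_1)(s-s_2) - \delta^2 U_1'(\omega)s\bigr)$, matching $|Z'(\omega)|\,z_3(s)$. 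This establishes the frame decomposition; collinearity of the centers $Z(u)$ is already given by Theorem~\ref{thm:planarSphericalAxisConePoint}, and that the axis $\axisDir$ coincides with their span, normalized via \eqref{eq:A-Z}, was set up in the text preceding the proposition.

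Third, and this is the only genuinely computational step, I would verify the norm formula \eqref{eq:bigZPrimeNormSquared}. Since the frame is orthonormal, $|Z'(\omega)|^2$ is the sum of the squares of the three \emph{unnormalized} coefficients, i.e. $|Z'(\omega)|^2 = \bigl(1 + s\sB'(\omega)\bigr)^2 s^{-2} + R(\omega)^2\delta^{-2}Q(s)s^{-2} + R(\omega)^2\delta^{-2}\bigl((s-s_1)(s-s_2) - \delta^2 U_1'(\omega)s\bigr)^2 s^{-2}$. The point is that this expression, which visibly depends on $s$, must actually be constant in $s$ (the axis direction cannot depend on which point along it we chose). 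To see this, expand $Q(s) = -(s-s_1)^2(s-s_2)^2 + \delta^2 Q_3(s)$ from \eqref{eq:sphericalEllipticCurveFormula}: the $-(s-s_1)^2(s-s_2)^2$ term cancels the square of $(s-s_1)(s-s_2)$ appearing in the last summand, the cross term $-2\delta^2 U_1'(\omega)s(s-s_1)(s-s_2)$ combines with $\delta^2 Q_3(s) = \delta^2\bigl(2U_1'(\omega)s^3 - U_2(\omega)s^2 - 2U'(\omega)s - U(\omega)^2\bigr)$ from \eqref{eq:criticalQ3}, and the $(1+s\sB'(\omega))^2$ term supplies the remaining polynomial pieces; after multiplying through by $s^2$ and using $R(\omega) = -U(\omega)^{-1}$ together with $\sB'(\omega) = R(\omega)^2(U'(\omega) + s_1 s_2 U_1'(\omega))$, all powers of $s$ above degree $0$ cancel and the constant term is precisely the right-hand side of \eqref{eq:bigZPrimeNormSquared}. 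The main obstacle is purely bookkeeping: making sure the $s^3$, $s^2$, and $s^1$ coefficients each vanish after substituting $Q_3$ and $\sB'(\omega)$ — a straightforward but lengthy polynomial identity, which I would organize by grouping terms according to their dependence on $U'(\omega)$, $U_1'(\omega)$, $U_2(\omega)$, and $s_1 + s_2$, $s_1 s_2$. One sign subtlety worth flagging: the signed square root $\sqrt{Q(s)}$ in $z_2$ inherits the sign convention of $\sqrt{1-\w'(v)^2}$ from \eqref{eq:signedSquareRoot}, so the $\pm$ in front of $\axisDir$ absorbs the remaining ambiguity and no separate argument is needed there.
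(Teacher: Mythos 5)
Your proposal is correct and follows the paper's proof essentially step for step: the same frame decomposition of $Z'(\omega)$ from \eqref{eq:bigZPrimeAtOmega}, the same substitutions of \eqref{eq:alphaAlphaPrimeAtOmega}, \eqref{eq:betaBetaPrimeAtOmega}, \eqref{eq:bigPAtOmega}, and the same identity $-h_v(\omega,v) = s^{-1}\delta^{-1}\sqrt{Q(s)}$ for the $z_2$ coefficient. The only (minor) divergence is in establishing \eqref{eq:bigZPrimeNormSquared}: the paper notes that $|Z'(\omega)|^2(z_1^2+z_2^2+z_3^2)$ is simultaneously a Laurent polynomial in $s$ and a constant, hence equals its $s^0$ coefficient, which spares the explicit cancellation check you propose — and in your description of that check note that after clearing $s^{-2}$ it is the terms of degree \emph{below} two that vanish, with the $s^2$ coefficient surviving as the answer, not the other way around.
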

\begin{proof}	
	The sphere centers are collinear by Theorem~\ref{thm:planarSphericalAxisConePoint} and $\pm \axisDir = \frac{Z'(\omega)}{|Z'(\omega)|}$ from \eqref{eq:A-Z}. The coefficients from \eqref{eq:bigZPrimeAtOmega} can be converted into the expressions for $z_1,z_3$ using \eqref{eq:alphaAlphaPrimeAtOmega}, \eqref{eq:betaBetaPrimeAtOmega}, and \eqref{eq:bigPAtOmega} for $\bP(\omega,s)$. The equation for $z_2$ additionally uses that, at $u=\omega$, we have $s = e^{-h}$ and $s'(\w) = -h_{\w} s$, which implies  $-h_v = -h_{\w} \w'(s)s'(v) = s ^{-1} s'(v) =s^{-1} \delta^{-1}\sqrt{Q(s)}$.
	It remains to find the normalization factor $|Z'(\omega)|^2$. We have  
	\begin{align*}
		|Z'(\omega)|^2 &= |Z'(\omega)|^2 (z_1(s)^2 + z_2(s)^2 + z_3(s)^2).
	\end{align*}
	This expression is simultaneously a Laurent polynomial in $s=e^{-h(\omega,\w)}$ and a constant independent of $\w$. Therefore, it is equal to its constant term as a Laurent polynomial in $s$, which can be computed as \eqref{eq:bigZPrimeNormSquared}. 
\end{proof}

\begin{remark}
	In~\cite{short-compact-bonnet} we construct isothermic tori with planar and spherical curvature lines, together with additional constraints to close the Bonnet pair tori. The axis formulas in Proposition~\ref{prop:bigZPrimeAtOmegaMovingFrameCoefficients} lead to an explicit formula for the fundamental piece angle $\angleFP$. An example with 3-fold symmetry is shown in Figure~\ref{fig:planarSpherical3FoldFundamentalPiece}.
\end{remark}

\begin{figure}[tbh!]
	\begin{center}
		\includegraphics[width=0.7\hsize]{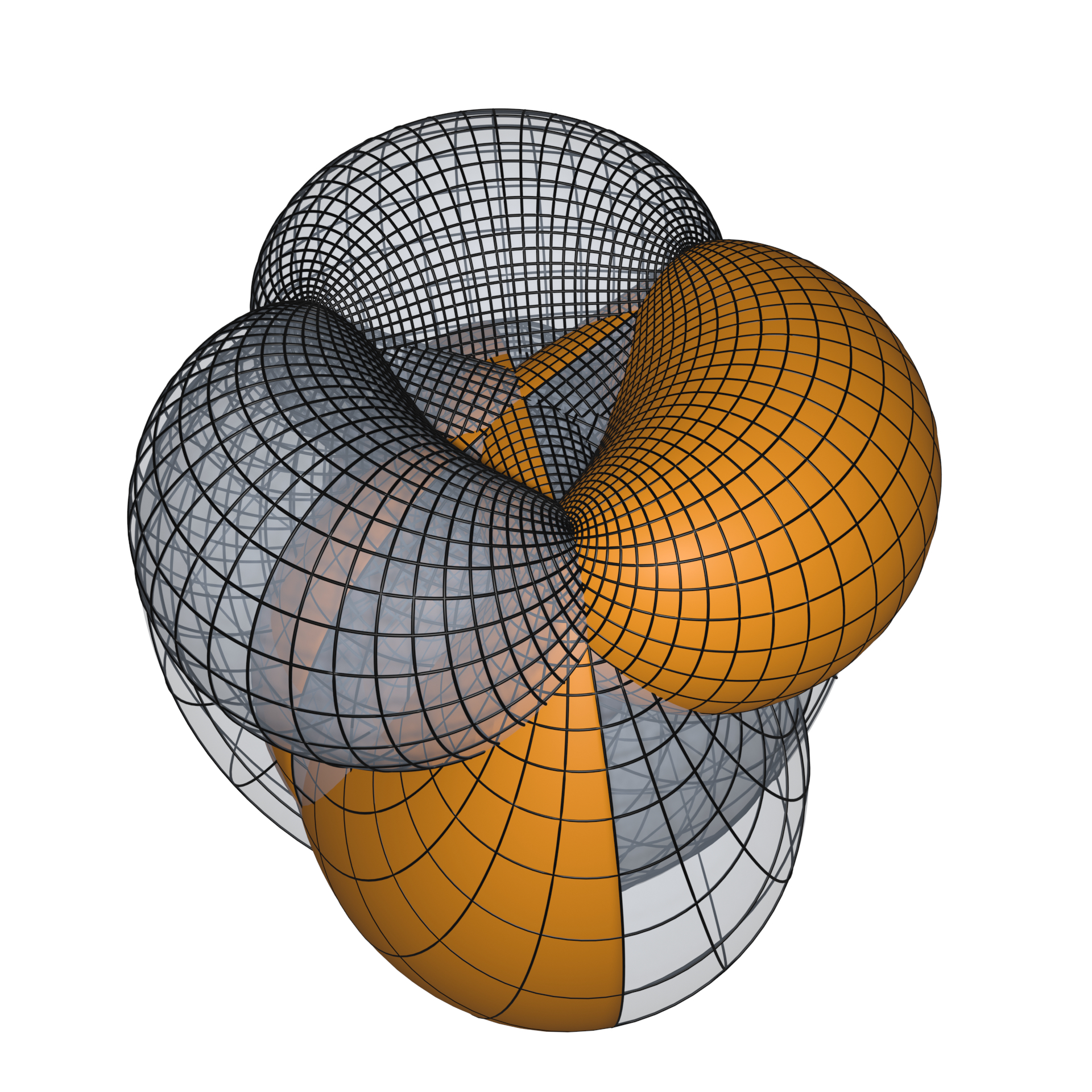}
	\end{center}
	\caption{An isothermic torus with planar and spherical curvature lines constructed in~\cite{short-compact-bonnet}. The fundamental piece with angle $\angleFP = 120^\circ$ rotates about its axis to close the torus with 3-fold symmetry.}
	\label{fig:planarSpherical3FoldFundamentalPiece}
\end{figure}

\footnotesize
\bibliographystyle{abbrv}
\bibliography{short-isothermic-planar}

\begin{thebibliography}{10}

\bibitem{Abresch1987}
U.~Abresch.
\newblock Constant mean curvature tori in terms of elliptic functions.
\newblock {\em J. Reine Angew. Math.}, 374:169--192, 1987.

\bibitem{Adam1893}
P.~Adam.
\newblock Sur les surfaces isothermiques \`a lignes de courbure planes dans un
  syst\`eme ou dans les deux syst\`emes.
\newblock {\em Annales scientifiques de l'\'Ecole Normale Sup\'erieure}, 3e
  s{\'e}rie, 10:319--358, 1893.

\bibitem{BBEIM}
E.~Belokolos, A.~Bobenko, V.~Enolski, A.~Its, and V.~Matveev.
\newblock {\em Algebro-geometric Approach in the Theory of Integrable
  Equations}, volume 1753 of {\em Springer Series in Nonlinear Dynamics}.
\newblock Springer-Verlag, Berlin, 1994.

\bibitem{Bernstein2001}
H.~Bernstein.
\newblock Non-special, non-canal isothermic tori with spherical lines of
  curvature.
\newblock {\em Trans. Amer. Math. Soc.}, 353(6):2245--2274, 2001.

\bibitem{BobenkoPinkall1996}
A.~Bobenko and U.~Pinkall.
\newblock Discrete isothermic surfaces.
\newblock {\em J. Reine Angew. Math.}, 475:187--208, 1996.

\bibitem{Bob_CMC1991}
A.~I. Bobenko.
\newblock Surfaces of constant mean curvature and integrable equations.
\newblock {\em Uspekhi Mat. Nauk}, 46(4(280)):3--42, 192, 1991.
\newblock English translation: \emph{Russian Math. Surveys} 46(4):1--45, 1991.

\bibitem{short-compact-bonnet}
A.~I. Bobenko, T.~Hoffmann, and A.~O. Sageman-Furnas.
\newblock Compact {B}onnet pairs: isometric tori with the same curvatures,
  2023.
\newblock arXiv:2110.06335 [math.DG].

\bibitem{BohlePetersPinkall2008}
C.~Bohle, G.~P. Peters, and U.~Pinkall.
\newblock Constrained {W}illmore surfaces.
\newblock {\em Calc. Var. Partial Differential Equations}, 32(2):263--277,
  2008.

\bibitem{MR0844630}
R.~Bryant and P.~Griffiths.
\newblock Reduction for constrained variational problems and {$\int{1\over
  2}k^2\,ds$}.
\newblock {\em Amer. J. Math.}, 108(3):525--570, 1986.

\bibitem{BurstallChoHertrich-JerominPemberRossman23}
F.~Burstall, J.~Cho, U.~Hertrich-Jeromin, M.~Pember, and W.~Rossman.
\newblock Discrete $\omega$-nets and guichard nets via discrete koenigs nets.
\newblock {\em Proceedings of the London Mathematical Society},
  126(2):790--836, 2023.

\bibitem{BurstallHertrich-JerominPeditPinkall_CurvedFlats}
F.~Burstall, U.~Hertrich-Jeromin, F.~Pedit, and U.~Pinkall.
\newblock Curved flats and isothermic surfaces.
\newblock {\em Math. Z.}, 225(2):199--209, 1997.

\bibitem{BurstallDonaldsonPeditPinkall2011}
F.~E. Burstall, N.~M. Donaldson, F.~Pedit, and U.~Pinkall.
\newblock Isothermic submanifolds of symmetric {$R$}-spaces.
\newblock {\em J. Reine Angew. Math.}, 660:191--243, 2011.

\bibitem{BurstallFerusLeschkePeditPinkall2002}
F.~E. Burstall, D.~Ferus, K.~Leschke, F.~Pedit, and U.~Pinkall.
\newblock {\em Conformal geometry of surfaces in {${\it S}^4$} and
  quaternions}, volume 1772 of {\em Lecture Notes in Mathematics}.
\newblock Springer-Verlag, Berlin, 2002.

\bibitem{BurstallSantos2012}
F.~E. Burstall and S.~D. Santos.
\newblock Special isothermic surfaces of type {$d$}.
\newblock {\em J. Lond. Math. Soc. (2)}, 85(2):571--591, 2012.

\bibitem{ChoPemberSzewieczek2021}
J.~Cho, M.~Pember, and G.~Szewieczek.
\newblock Constrained elastic curves and surfaces with spherical curvature
  lines.
\newblock {\em Indiana Univ. Math. J.}, 72:2059--2099, 2023.

\bibitem{Christoffel1867}
E.~B. Christoffel.
\newblock Ueber einige allgemeine {E}igenschaften der {M}inimumsfl\"{a}chen.
\newblock {\em J. Reine Angew. Math.}, 67:218--228, 1867.

\bibitem{CieslinskiKobus2017}
J.~L. Cie\'{s}li\'{n}ski and A.~Kobus.
\newblock Group interpretation of the spectral parameter. {T}he case of
  isothermic surfaces.
\newblock {\em J. Geom. Phys.}, 113:28--37, 2017.

\bibitem{CieliskiGoldsteinSym1995}
J.~Cieśliński, P.~Goldstein, and A.~Sym.
\newblock Isothermic surfaces in e3 as soliton surfaces.
\newblock {\em Physics Letters A}, 205(1):37–43, Sept. 1995.

\bibitem{MR4622235}
A.~M.~V. Corro and M.~L. Ferro.
\newblock New isothermic surfaces in {$\Bbb S^3$}.
\newblock {\em Sel. Mat.}, 10(1):114--128, 2023.

\bibitem{DajczerTojeiro2016}
M.~Dajczer and R.~Tojeiro.
\newblock A complete solution of {P}. {S}amuel's problem.
\newblock {\em J. Reine Angew. Math.}, 719:75--100, 2016.

\bibitem{Darboux1883}
G.~Darboux.
\newblock D\'etermination d'une classe particuli\`ere de surfaces \`a lignes de
  courbure planes dans un syst\`eme et isothermes.
\newblock {\em Bulletin des Sciences Math\'ematiques et Astronomiques}, 2e
  s{\'e}rie, 7(1):257--276, 1883.

\bibitem{Darboux1896}
G.~Darboux.
\newblock Surfaces isothermiques a lignes de courbure planes.
\newblock In {\em Le\c{c}ons sur la th\'eorie g\'en\'erale des surfaces et les
  applications g\'eom\'etriques du calcul infinit\'esimal}, volume~IV,
  chapter~X, pages 217--238. Gauthier-Villars, Paris, 1887--1896.

\bibitem{MR2392888}
A.~Doliwa.
\newblock Generalized isothermic lattices.
\newblock {\em J. Phys. A}, 40(42):12539--12561, 2007.

\bibitem{MR2927680}
N.~Donaldson and C.-L. Terng.
\newblock Isothermic submanifolds.
\newblock {\em J. Geom. Anal.}, 22(3):827--844, 2012.

\bibitem{DPW}
J.~Dorfmeister, F.~Pedit, and H.~Wu.
\newblock Weierstrass type representation of harmonic maps into symmetric
  spaces.
\newblock {\em Comm. Anal. Geom.}, 6(4):633--668, 1998.

\bibitem{Eisenhart}
L.~P. Eisenhart.
\newblock {\em A treatise on the differential geometry of curves and surfaces.
  Unabridged and unaltered republ. of the first ed. 1909}.
\newblock Dover Publications, Inc., New York, 1960.

\bibitem{Enneper1878}
A.~{Enneper}.
\newblock {Untersuchungen \"uber die Fl\"achen mit planen und sph\"arischen
  Kr\"ummungslinien}.
\newblock {G\"ott. Abh. XXIII., XXVI}, 1878.

\bibitem{MR3667223}
W.~A. Haese-Hill, M.~A. Halln\"{a}s, and A.~P. Veselov.
\newblock On the spectra of real and complex {L}am\'{e} operators.
\newblock {\em SIGMA Symmetry Integrability Geom. Methods Appl.}, 13:Paper No.
  049, 23, 2017.

\bibitem{MR3210758}
L.~Heller.
\newblock Constrained {W}illmore tori and elastic curves in 2-dimensional space
  forms.
\newblock {\em Comm. Anal. Geom.}, 22(2):343--369, 2014.

\bibitem{MR4291610}
L.~Heller, S.~Heller, and C.~B. Ndiaye.
\newblock Isothermic constrained {W}illmore tori in 3-space.
\newblock {\em Ann. Global Anal. Geom.}, 60(2):231--251, 2021.

\bibitem{Hertrich-Jeromin2003}
U.~Hertrich-Jeromin.
\newblock {\em Introduction to {M}\"{o}bius differential geometry}, volume 300
  of {\em London Mathematical Society Lecture Note Series}.
\newblock Cambridge University Press, Cambridge, 2003.

\bibitem{Hertrich-JerominMussoNicolodi}
U.~Hertrich-Jeromin, E.~Musso, and L.~Nicolodi.
\newblock M\"{o}bius geometry of surfaces of constant mean curvature 1 in
  hyperbolic space.
\newblock {\em Ann. Global Anal. Geom.}, 19(2):185--205, 2001.

\bibitem{HSFW17}
T.~Hoffmann, A.~O. Sageman-Furnas, and M.~Wardetzky.
\newblock A discrete parametrized surface theory in {${\Bbb R}^3$}.
\newblock {\em Int. Math. Res. Not.}, 2017(14):4217--4258, 2017.

\bibitem{HSFW24}
T.~Hoffmann, A.~O. Sageman-Furnas, and M.~Wardetzky.
\newblock Discrete {B}onnet pairs and conformal nets, 2024.
\newblock \emph{in preparation}.

\bibitem{JensenMussoNicolodi2016}
G.~R. Jensen, E.~Musso, and L.~Nicolodi.
\newblock {\em Surfaces in classical geometries}.
\newblock Universitext. Springer, 2016.
\newblock A treatment by moving frames.

\bibitem{MR1314459}
V.~Jurdjevic.
\newblock Non-{E}uclidean elastica.
\newblock {\em Amer. J. Math.}, 117(1):93--124, 1995.

\bibitem{KPP}
G.~Kamberov, F.~Pedit, and U.~Pinkall.
\newblock Bonnet pairs and isothermic surfaces.
\newblock {\em Duke Math. J.}, 92(3):637--644, 1998.

\bibitem{Kapouleas1990}
N.~Kapouleas.
\newblock Complete constant mean curvature surfaces in {E}uclidean three-space.
\newblock {\em Ann. of Math. (2)}, 131(2):239--330, 1990.

\bibitem{KorevaarKusner93}
N.~Korevaar and R.~Kusner.
\newblock The global structure of constant mean curvature surfaces.
\newblock {\em Invent. Math.}, 114(2):311--332, 1993.

\bibitem{LamPinkall2017}
W.~Y. Lam and U.~Pinkall.
\newblock Isothermic triangulated surfaces.
\newblock {\em Math. Ann.}, 368(1-2):165--195, 2017.

\bibitem{MR0751827}
J.~Langer and D.~Singer.
\newblock Curves in the hyperbolic plane and mean curvature of tori in
  {$3$}-space.
\newblock {\em Bull. London Math. Soc.}, 16(5):531--534, 1984.

\bibitem{MR0772124}
J.~Langer and D.~A. Singer.
\newblock The total squared curvature of closed curves.
\newblock {\em J. Differential Geom.}, 20(1):1--22, 1984.

\bibitem{LawTri}
H.~B. Lawson, Jr. and R.~d.~A. Tribuzy.
\newblock On the mean curvature function for compact surfaces.
\newblock {\em J. Differential Geometry}, 16(2):179--183, 1981.

\bibitem{MR4089078}
R.~L\'{o}pez and A.~P\'{a}mpano.
\newblock Classification of rotational surfaces in {E}uclidean space satisfying
  a linear relation between their principal curvatures.
\newblock {\em Math. Nachr.}, 293(4):735--753, 2020.

\bibitem{Ma2005}
X.~Ma.
\newblock Isothermic and {$S$}-{W}illmore surfaces as solutions to a problem of
  {B}laschke.
\newblock {\em Results Math.}, 48(3-4):301--309, 2005.

\bibitem{montaldo2020existence}
S.~Montaldo and A.~Pámpano.
\newblock On the existence of closed biconservative surfaces in space forms.
\newblock {\em Communications in Analysis and Geometry}, 31(2):291–319, 2023.

\bibitem{Palmer88}
B.~Palmer.
\newblock Isothermic surfaces and the {G}auss map.
\newblock {\em Proc. Amer. Math. Soc.}, 104(3):876--884, 1988.

\bibitem{PS1989}
U.~Pinkall and I.~Sterling.
\newblock On the classification of constant mean curvature tori.
\newblock {\em Ann. of Math. (2)}, 130(2):407--451, 1989.

\bibitem{Riviere2014}
T.~Rivi\`ere.
\newblock Variational principles for immersed surfaces with {$L^2$}-bounded
  second fundamental form.
\newblock {\em J. Reine Angew. Math.}, 695:41--98, 2014.

\bibitem{Schief2001}
W.~K. Schief.
\newblock Isothermic surfaces in spaces of arbitrary dimension: Integrability,
  discretization, and b\"{a}cklund transformations—a discrete calapso
  equation.
\newblock {\em Studies in Applied Mathematics}, 106(1):85–137, Jan. 2001.

\bibitem{Tojeiro2006}
R.~Tojeiro.
\newblock Isothermic submanifolds of {E}uclidean space.
\newblock {\em J. Reine Angew. Math.}, 598:1--24, 2006.

\bibitem{Walter1987}
R.~Walter.
\newblock Explicit examples to the {$H$}-problem of {H}einz {H}opf.
\newblock {\em Geom. Dedicata}, 23(2):187--213, 1987.

\bibitem{Wente1986}
H.~C. Wente.
\newblock Counterexample to a conjecture of {H}. {H}opf.
\newblock {\em Pacific J. Math.}, 121(1):193--243, 1986.

\bibitem{whittaker_watson_1996}
E.~T. Whittaker and G.~N. Watson.
\newblock {\em A Course of Modern Analysis}.
\newblock Cambridge Mathematical Library. Cambridge University Press, 4
  edition, 1996.

\bibitem{MR1795097}
T.~J. Willmore.
\newblock Surfaces in conformal geometry.
\newblock {\em Ann. Global Anal. Geom.}, 18(3-4):255--264, 2000.
\newblock Special issue in memory of Alfred Gray (1939--1998).

\end{thebibliography}

\end{document}